\DeclareSymbolFontAlphabet{\mathbb}{AMSb}
\DeclareSymbolFontAlphabet{\mathbbl}{bbold}
\theoremstyle{plain}
\newtheorem{theorem}{Theorem}[section]
\newtheorem{proposition}[theorem]{Proposition}
\newtheorem{lemma}[theorem]{Lemma}
\newtheorem{corollary}[theorem]{Corollary}
\theoremstyle{definition}
\newtheorem{definition}[theorem]{Definition}
\newtheorem{example}[theorem]{Example}
\newtheorem{remark}[theorem]{Remark}
\newcommand{\B}{\mathbb{B}}
\newcommand{\BC}{\mathrm{BC}}
\newcommand{\Bun}{\mathrm{Bun}}
\newcommand{\Coh}{{\mathcal{C}oh}}
\newcommand{\D}{\mathbb{D}}
\newcommand{\dR}{\mathrm{dR}}
\newcommand{\et}{\mathrm{{\acute{e}t}}}
\newcommand{\F}{\mathbb{F}}
\newcommand{\Fpbar}{\overline{\F}_p}
\newcommand{\GL}{\mathrm{GL}}
\newcommand{\Hom}{\mathrm{Hom}}
\newcommand{\Id}{\textrm{Id}}
\renewcommand{\inf}{\mathrm{inf}}
\newcommand{\N}{\mathbb{N}}
\newcommand{\Perf}{\mathrm{Perf}}
\newcommand{\Q}{\mathbb{Q}}
\newcommand{\R}{\mathbb{R}}
\newcommand{\Spa}{\mathrm{Spa}}
\newcommand{\Spd}{\mathrm{Spd}}
\newcommand{\Spec}{\mathrm{Spec}}
\newcommand{\Z}{\mathbb{Z}}
\newcommand{\bs}{\blacksquare}
\newcommand{\Qlbar}{\overline{\Q}_\ell}
\newcommand{\red}[1]{{\color{black} #1}}
\begin{document}

\title{A Fourier transform for Banach-Colmez spaces}

\author{Johannes Ansch\"{u}tz}
\address{Mathematisches Institut, Universit\"at Bonn, Endenicher Allee 60, 53115 Bonn, Deutschland}
\email{ja@math.uni-bonn.de}

\author{Arthur-C\'esar Le Bras}
\address{Universit\'e Sorbonne Paris Nord, LAGA, C.N.R.S., UMR 7539, 93430 Villetaneuse,
France}
\email{lebras@math.univ-paris13.fr}

\begin{abstract}
We define and study an $\ell$-adic Fourier transform for a relative version of Banach-Colmez spaces (over a perfectoid space which is not necessarily a geometric point), which can be thought of as some analytic analogue of the $\ell$-adic Fourier transform for unipotent perfect group schemes. We explicitly describe it on some examples. 
\end{abstract}

\begin{abstract}
We define and study an $\ell$-adic Fourier transform for a relative version of Banach-Colmez spaces (over a perfectoid space which is not necessarily a geometric point), which can be thought of as some analytic analogue of the $\ell$-adic Fourier transform for unipotent perfect group schemes. We explicitly describe it in some examples. 
\end{abstract}

\maketitle

\tableofcontents

\section{Introduction}
\label{sec:introduction}
Let $p$ and $\ell$ be two distinct primes.

Let $k$ be a characteristic $p$ perfect field and let $\psi: \mathbb{F}_p \to \Qlbar^\times$ be a fixed non-trivial additive character. Let $S$ be a $k$-scheme, and let $\mathcal{E}$ be a vector bundle of constant rank $d$ on $S$. Let $p_{\mathcal{E}} : \mathbb{V}(\mathcal{E}) \to S$ be the total space of this vector bundle, and $p_{\mathcal{E}^\vee} : \mathbb{V}(\mathcal{E}^\vee) \to S$, where $\mathcal{E}^\vee$ is the $\mathcal{O}_S$-linear dual of $\mathcal{E}$. Denote by $\pi : \mathbb{V}(\mathcal{E}^\vee) \times_S \mathbb{V}(\mathcal{E}) \to \mathbb{V}(\mathcal{E})$, $\pi^\vee: \mathbb{V}(\mathcal{E}^\vee) \times_S \mathbb{V}(\mathcal{E}) \to \mathbb{V}(\mathcal{E}^\vee)$ the two projections. Finally, let $\alpha: \mathbb{V}(\mathcal{E}^\vee) \times_S \mathbb{V}(\mathcal{E}) \to \mathbb{A}_S^1$ coming from the duality pairing between $\mathcal{E}$ and its dual. The \textit{geometric Fourier transform} for $\mathcal{E}$, invented by Deligne in a 1976 letter to Kazhdan, is the functor
$$
\mathcal{F}_\psi : D_c^b(\mathbb{V}(\mathcal{E}), \Qlbar) \to D_c^b(\mathbb{V}(\mathcal{E}^\vee), \Qlbar)
$$
defined by
$$
\mathcal{F}_\psi(-) := R\pi^\vee_! (\pi^\ast (-) \otimes \alpha^\ast \mathcal{L}_\psi)[d],
$$ 
where $\mathcal{L}_\psi$ is the rank $1$ $\Qlbar$-local system on $\mathbb{A}_S^1$ deduced from $\psi$ by Artin-Schreier theory. When $S=\mathrm{Spec}(k)$ and $\mathcal{E}=\mathcal{O}_S^d$ (so that $\mathbb{V}(\mathcal{E})=\mathbb{A}_k^d$), the trace function of the geometric Fourier transform of an object $A \in D_c^b(\mathbb{A}_k^d,\Qlbar)$ which is defined over $\mathbb{F}_q$, for some power $q$ of $p$, agrees up to sign with the Fourier transform, in the function-theoretic sense, of the trace function of $A$, justifying the name. 

The geometric Fourier transform has been studied extensively by Laumon, \cite{laumon_transformation_de_fourier}, who gave several striking applications of it. The key property of the Fourier transform is that it induces an equivalence of triangulated categories, identifying the subcategories of perverse sheaves on both sides.

The possibility of extending this Fourier transform to a more general setting was already noticed by Deligne in the same letter. Assume that $S$ is a perfect scheme over $k$, which is harmless as far as categories of \'etale $\ell$-adic sheaves are concerned. Let $\mathcal{G}$ be a connected unipotent perfect group scheme over $S$ (i.e. a commutative perfect group scheme over $S$ whose pullback to any perfect field-valued point $s$ of $S$ admits a composition series with quotients isomorphic to the perfection of $\mathbb{A}_{k(s)}^1$), with Serre dual\footnote{All the sheaves $\mathcal{E}xt_{\mathcal{S}}^i(\mathcal{G}, \Q_p/\Z_p)$, $i\neq 1$, vanish.}
$$
\mathcal{G}^\vee= \mathcal{E}xt_{\mathcal{S}}^1(\mathcal{G}, \Q_p/\Z_p),
$$
where $\mathcal{S}$ denotes the perfect \'etale site of $S$. (For more on unipotent perfect group schemes, we refer the reader to \cite[\S 2]{milne1976duality} or \cite[\S 2]{berthelot_le_theoreme_de_dualite_plate_pour_les_surfaces}.) If one fixes an additive character $\psi: \Q_p/\Z_p \to \Qlbar^\times$, one can define as before a Fourier transform functor
$$
\mathcal{F}_\psi : D_c^b(\mathcal{G},\Qlbar) \to D_c^b(\mathcal{G}^\vee, \Qlbar),
$$
using the natural pairing
$$
\mathcal{G} \times_S \mathcal{G}^\vee \to [S/(\Q_p/\Z_p)]
$$
coming from the definition of the Serre dual. Since any vector bundle $\mathcal{E}$ on $S$ gives rise to a connected unipotent perfect group scheme $\mathcal{G}_{\mathcal{E}}$, which is simply the perfection of $\mathbb{V}(\mathcal{E})$, this generalizes the Fourier transform defined before\footnote{A computation (cf. e.g. \cite[Proof of Lemma 2.2]{milne1976duality}) shows that the group scheme attached to $\mathcal{E}^\vee= \mathcal{H}om_{\mathcal{O}_S}(\mathcal{E},\mathcal{O}_S)$ agrees with the cokernel of $\varphi-1: \mathcal{H}om_{\mathcal{S}}(\mathcal{G}_{\mathcal{E}},\mathbb{A}_S^1) \to \mathcal{H}om_{\mathcal{S}}(\mathcal{G}_{\mathcal{E}},\mathbb{A}_S^1)$ so that, by the Artin-Schreier sequence and Breen's computations \cite{breen_extensions_du_groupe_additif_sur_le_site_parfait} of self-extensions of the affine line on the perfect \'etale site, $\mathcal{G}_{\mathcal{E}}^\vee \cong \mathcal{G}_{\mathcal{E}^\vee}$, showing that the notions of duality indeed match.}. The basic properties of this Fourier transform were established in Saibi's thesis \cite{saibi1996transformation}. This formalism could be extended to a more general class of perfect unipotent group schemes (perfect unipotent group schemes are perfect group schemes whose pullback to any perfect field-valued point $s$ of $S$ admits a composition series with quotients isomorphic to $\Z/p$ or the perfection of $\mathbb{A}_{k(s)}^1$), not necessarily connected. The price to pay is that one then has to work with complexes in degrees $[-1,0]$ of unipotent perfect group schemes whose cohomology in degree $-1$ is \'etale: this comes from the fact that
$$
\mathcal{H}om_{\mathcal{S}}(\Z/p, \Q_p/\Z_p) \cong \Z/p, \quad \mathcal{E}xt_{\mathcal{S}}^i(\Z/p, \Q_p/\Z_p)=0,~ i>0,
$$ 
so that the duality for \'etale groups comes with a different shift than the one for connected groups.
\\

Let $E$ be a local field of residue characteristic $p$. Our goal in this text is to study an analogue of these constructions when $S$ is assumed to be a perfectoid space in characteristic $p$ (thus living in Huber's world of adic spaces), or even a small v-stack (in the sense of Scholze \cite{scholze_etale_cohomology_of_diamonds}), rather than a perfect scheme. For these analytic objects, the \'etale topology gets replaced by the v-topology, and the role of unipotent perfect group schemes is played by Banach-Colmez spaces for the local field $E$.

Banach-Colmez spaces were first defined, \cite{colmez2002espaces}, when $E=\Q_p$ and $S=\mathrm{Spa}(C)$, with $C$ a complete algebraically closed non-archimedean valued field of characteristic $p$, is a geometric point: they are v-sheaves which are all ``obtained" from $\underline{\Q_p}$ and the adic affine line over $C^{\sharp}$, for a fixed untilt $C^\sharp$ of $C$, which is quite reminiscent of what happens for unipotent perfect group schemes and motivates our analogy. The main result of \cite{le_bras_espaces_de_banach_colmez_et_faisceaux_coherents_sur_la_courbe_de_fargues_fontaine} shows that the category of Banach-Colmez spaces is closely related to the category of coherent sheaves on the Fargues-Fontaine curve. For a local field $E$ other than $\Q_p$ and for a general perfectoid space $S$ in characteristic $p$, a category of Banach-Colmez spaces (for $E$) over $S$ has not been defined and will not be needed for our purposes: rather, we restrict our attention to the v-sheaves of $\underline{E}$-vector spaces coming from (some) coherent sheaves on the relative Fargues-Fontaine curve $X_{S,E}$, in a sense which will be made precise in the first part of the paper, where we discuss coherent sheaves on relative Fargues-Fontaine curves and some of their properties. In fact, since the same dichotomy connected/\'etale as for unipotent perfect group schemes persists in this new setting, it is also necessary to switch from sheaves to stacks. We therefore define a notion of \textit{very nice stack in $E$-vector spaces} and a duality functor making them dualizable, and verify that Banach-Colmez spaces attached to coherent sheaves with only positive slopes or Banach-Colmez spaces attached to semi-stable vector bundles of slope zero (i.e., pro-\'etale $\underline{E}$-local systems) are examples of such. This is a non-trivial statement, which in particular involves a computation of extensions between certain v-sheaves of $\underline{E}$-vector spaces, generalizing (and streamlining) the partial computations done in \cite{le_bras_espaces_de_banach_colmez_et_faisceaux_coherents_sur_la_courbe_de_fargues_fontaine}. Continuing our analogy, Banach-Colmez spaces attached to coherent sheaves with only positive slopes and their duals take the role of connected unipotent groups, while pro-\'etale $\underline{E}$-local systems take the role of \'etale unipotent groups. 

We define the Fourier transform for very nice stacks in $E$-vector spaces. Since the formalism of $\Qlbar$-sheaves on small (Artin) v-stacks, \cite[\S VII]{fargues2021geometrization}, is more complicated to work with than its classical counterpart (e.g., excision fails for solid coefficients), we set up the definitions for \'etale sheaves of $\Lambda$-modules, where $\Lambda$ is a $\Z/\ell^n$-algebra, for some $n \geq 1$, as developed in \cite{scholze_etale_cohomology_of_diamonds}. This Fourier transform satisfies the same formal properties as the other ones, the most important being that it is (essentially) involutive and therefore gives rise to an equivalence of categories commuting with Verdier duality. However, we have nothing to say about preservation of perversity by the Fourier transform -- an important and useful property in the algebraic setting -- since there is currently no developed theory of perverse sheaves on diamonds or Artin v-stacks.

In the last part of the paper, we study more in depth three examples. 
\begin{itemize}
\item The first one is the case of finite dimensional $E$-vector spaces,\ cf.\ \Cref{sec:the-case-of-E-vector-spaces}. We prove that in this situation, the Fourier transform is, unsuprisingly, closely related to the function-theoretic Fourier transform for locally constant functions. A variant of this example is shown to geometrize some constructions of Bernstein-Zelevinsky \cite{bernstein_zelevinsky_representations_of_the_group_glnf}. 
\item The next example is the case of the affine line over a fixed untilt of $S$, when $S$ is a geometric point, cf.\ \Cref{the-case-of-the-affine-line-in-characteristic-zero}. A Fourier transform had been defined already in this setting by Ramero \cite{ramero1998class}; it coincides with ours, which can thus be seen as a generalization of Ramero's Fourier transform to a larger class of stacks in $E$-vector spaces.
\item The final example we analyze is the Banach-Colmez space attached to the line bundle $\mathcal{O}(1)$,\ cf.\ \Cref{sec:case-from-bcmathc}. This Banach-Colmez space is an interesting geometric object: seen over $k$, it is the adic spectrum of ring of integers in a positive characteristic local field (hence, \'etale sheaves on it are related to representations of the Galois group of this field), but after base change to a geometric point $\mathrm{Spa}(C)$, it becomes isomorphic to the (perfectoid) punctured open unit disk over $C$, a nice smooth adic curve, up to perfection. Exploiting this together with Huber's adic version of the Grothendieck-Ogg-Shafarevich formula (\cite{huber2001swan}), we can compute explicitly the rank of the Fourier transform. Remarkably, the result is related to formal degrees of smooth irreducible representations of the non-split quaternion algebra over $E$.
\end{itemize} 

Our main motivation for introducing this Fourier transform comes from our attempt to understand some constructions of Laumon in the geometric Langlands program in the setting of the Fargues-Fontaine curve. This will be explored in another paper and is not developed here, but is briefly mentioned in \Cref{sec-an-inductive-construction} and motivates some of the computations we make, e.g. in \Cref{sec:the-case-of-E-vector-spaces} and \Cref{sec:case-from-bcmathc}.

\subsection*{Notations and conventions}
\label{sec:notations-and-conventions}
Let $p$ be a prime. In all this text, we fix $E$, which is either a finite extension of $\Q_p$, with residue field $\mathbb{F}_q$, or the field of Laurent series $\mathbb{F}_q((\pi))$, where $q$ is a power of $p$. We let $\mathcal{O}_E$ be the ring of integers of $E$ and $\pi$ be a uniformizer of $E$.

The category of perfectoid spaces over a small v-stack $S$ will be denoted $\Perf_S$. The v-site of $S$ is the opposite of this category endowed with the v-topology and is denoted $S_v$. In this text, all v-stacks will be on $\Perf_{\mathbb{F}_q}$. 

One can attach to any $S\in \Perf_{\mathbb{F}_q}$ the relative Fargues-Fontaine curve $X_{S,E}$. We will always abbreviate it to $X_S$; in other words, all the Fargues-Fontaine curves appearing in this text will be Fargues-Fontaine curves for the local field $E$.

We fix a prime $\ell \neq p$, and a $\Z/\ell^n$-algebra $\Lambda$, $n\geq 1$, which will serve as our coefficient ring. We will denote by the same letter the associated condensed ring.

Whenever we write ``cohomologically smooth", we always mean ``$\ell$-cohomologically smooth".

\subsection*{Acknowledgements} We thank Sebastian Bartling, David Hansen, Fabian Hebestreit, Guy Henniart, Lucas Mann, C\'edric P\'epin and Peter Scholze heartily for discussions about the content of this paper. We also thank Laurent Fargues for sharing with us several years ago his great observation that the \textit{stack} of self-extensions of $\mathcal{O}$ by $\mathcal{O}$ can be used to geometrize characters of the additive group $E$. The second author would like to thank Dustin Clausen for offering him the opportunity to lecture on this paper at the University of Copenhagen in the fall 2021, and all the participants there for their questions.

\section{Perfect complexes on relative Fargues-Fontaine curves}
\label{sec-coherent-sheaves-on-relative-ff-curves}
In this section we prove some foundational results on perfect complexes on perfectoid spaces and relative Fargues-Fontaine curves, e.g., v-descent, building on work of Andreychev \cite{andreychev2021pseudocoherent} and Mann \cite{lucas_mann_in_progress}. As an application we introduce \textit{flat coherent sheaves} on relative Fargues-Fontaine curves, and show that they satisfy v-descent.

\subsection{v-descent for perfect complexes on perfectoid spaces}
\label{subsection-v-descent-perfectoid-spaces}

Let $X$ be a perfectoid space (not necessarily over $\F_p$). We denote by
\[
  \mathcal{P}erf(X)=\mathcal{P}erf(X, \mathcal{O})
\]
the ($\infty$-)category of perfect complexes of $\mathcal{O}_X$-modules on $X$, i.e., those complexes which locally for the analytic topology are quasi-isomorphic to a bounded complex of finite, locally free $\mathcal{O}_X$-modules.
If $X=\Spa(R,R^+)$ is affinoid, then by \cite[Theorem 5.43, Lemma 5.46, Lemma 5.47]{andreychev2021pseudocoherent} the natural functor
\[
  \mathcal{P}erf(R)\to \mathcal{P}erf(X)
\]
is an equivalence, where the left-hand side denotes the ($\infty$-)category of perfect complexes of $R$-modules.

If $a\leq b$ are two integers (or $\pm \infty$), we say that a perfect complex has Tor-amplitude in $[a,b]$ if it can locally be represented by a complex whose terms in degrees outside $[a,b]$ are zero. We will denote by 
\[
  \mathcal{P}erf^{[a,b]}(X)
\]
the subcategory of $\mathcal{P}erf(X)$ formed by perfect complexes with Tor-amplitude in $[a,b]$.

Let $X$ be a perfectoid space, mapping to a totally disconnected space $W$ (for example, $W=\mathrm{Spa}(K)$, with $K$ a non-archimedean field). Let 
\[
  f\colon Y\to X
\]
be a v-cover. We want to prove first that perfect complexes descend along $f$. More precisely, consider the \v{C}ech nerve
\[
  Y^{\bullet/X}
\]
of $f$, i.e., the simplicial perfectoid space with $n$-simplices given by the $(n+1)$-fold fiber product
\[
 Y^{n/X}:=Y\times_XY\times\ldots \times_X Y
\]
Let us denote by
\[
  f_n\colon Y^{n/X}\to X 
\]
the natural projection.

The following theorem generalizes \cite[Lemma 17.1.8]{scholze2020berkeley} about v-descent of vector bundles to v-descent of perfect complexes.

\begin{theorem}
  \label{sec:desc-perf-compl-1-descent-of-perfect-complexes-on-perfectoid-spaces}
  With the above assumptions and notations, the canonical functor
  \[
    \Phi\colon \mathcal{P}erf(X)\to \varprojlim\limits_{\Delta} \mathcal{P}erf(Y^{\bullet/X})
  \]
  is an equivalence.
\end{theorem}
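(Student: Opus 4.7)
The plan is to reduce to the case of an affinoid perfectoid $X$ via Andreychev's equivalence $\mathcal{P}erf(\Spa(R,R^+)) \simeq \mathcal{P}erf(R)$, and then handle full faithfulness and essential surjectivity separately: the former from v-descent of the structure sheaf together with the dualizability of perfect complexes, the latter from v-descent inside the larger $\infty$-category of solid $R$-modules.

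First I would reduce to the case $X = \Spa(R,R^+)$ affinoid perfectoid. The assignment $U \mapsto \mathcal{P}erf(U)$ is an analytic sheaf on affinoid perfectoids by the Andreychev equivalence cited above, so both sides of $\Phi$ have analytic descent on $X$ and the statement is local on $X$ in the analytic topology. Using the existence of affinoid perfectoid refinements of v-covers (available in particular because $X$ lives over a totally disconnected space $W$), I may further replace $Y \to X$ by an affinoid perfectoid cover $Y = \Spa(S,S^+)$, in which case $Y^{n/X} \simeq \Spa(S^{\widehat\otimes_R(n+1)}, \ldots)$. The statement becomes the assertion that the functor
\[
\mathcal{P}erf(R) \to \varprojlim_{\Delta}\, \mathcal{P}erf\bigl(S^{\widehat\otimes_R(\bullet+1)}\bigr)
\]
is an equivalence of $\infty$-categories.

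For full faithfulness I would exploit that perfect complexes are dualizable, so that $R\Hom(E,F) \simeq R\Gamma(X, E^\vee \otimes F)$ with $E^\vee\otimes F \in \mathcal{P}erf(X)$ again perfect. Full faithfulness then reduces to v-descent of the global sections of perfect complexes, i.e.\ the statement that $R\Gamma(X,\mathcal{E}) \to \mathrm{Tot}\,R\Gamma(Y^{\bullet/X}, f_\bullet^*\mathcal{E})$ is an equivalence for each $\mathcal{E} \in \mathcal{P}erf(X)$. Since perfect complexes are built from $\mathcal{O}_X$ by shifts, finite colimits and retracts, it suffices to establish this for $\mathcal{E} = \mathcal{O}_X$, which is Scholze's v-acyclicity of the structure sheaf of an affinoid perfectoid.

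Essential surjectivity will be the main obstacle. A descent datum consists of $M \in \mathcal{P}erf(S)$ together with a compatible equivalence $p_1^* M \simeq p_2^* M$ on $Y \times_X Y$ and higher cosimplicial coherences. I would first produce the underlying object $N$ in the larger $\infty$-category of solid $R$-modules, where v-descent is essentially automatic thanks to the analytic ring formalism of Clausen--Scholze, Andreychev and Mann. The hard point is to show that the resulting $N$ lies in $\mathcal{P}erf(R)$ and not merely in the ambient derived category of solid $R$-modules. For this I would argue that perfectness can be tested along the faithfully flat v-cover $R \to S$: being perfect is equivalent to being almost perfect (pseudocoherent) plus having bounded Tor-amplitude, both properties descending along faithfully flat maps of solid rings. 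The former descent is part of Andreychev's theorem for pseudocoherent complexes; the latter follows from flat base change for $\mathrm{Tor}$. Since $N \widehat\otimes_R S \simeq M$ is perfect by assumption, this forces $N \in \mathcal{P}erf(R)$ and yields the desired preimage under $\Phi$.
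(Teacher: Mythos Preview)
Your reduction to the affinoid case and the argument for full faithfulness are fine and match the paper's approach (the paper phrases full faithfulness as a d\'evissage down to the v-acyclicity of $\mathcal{O}$ from \cite[Proposition 8.8]{scholze_etale_cohomology_of_diamonds}, which is equivalent to what you wrote).

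The essential surjectivity argument, however, has two genuine gaps. First, the claim that ``v-descent is essentially automatic'' for the ambient solid category is not correct: Andreychev's results give \emph{analytic} descent, and Mann's v-descent (the paper's Theorem~\ref{theorem-lucas}) is for the almost category $\mathcal{D}^a_\blacksquare(Z,\mathcal{O}^+/\varpi)$ \emph{modulo a pseudo-uniformizer} and only under a weakly-of-perfectly-finite-type hypothesis over $W$. There is no off-the-shelf v-descent for $\mathcal{D}((R,R^+)_\blacksquare)$ on the generic fiber to invoke. Second, and more seriously, the phrase ``faithfully flat v-cover $R\to S$'' is simply false: v-covers of affinoid perfectoids are in general far from flat, so the standard descent of pseudocoherence and Tor-amplitude along faithfully flat maps does not apply. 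This is precisely why the paper cannot argue as you do.

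The paper's route is substantially more involved: one first treats the case where $X$ is a geometric point (where canonical truncations stay perfect and one reduces to the known v-descent of vector bundles), then for general $X$ one descends at a chosen point $x$, chooses an \emph{integral model} of the descended object, and spreads this integral model to a neighbourhood of $x$ via a limit argument (Lemma~\ref{sec:desc-perf-compl-1-integral-models-commute-with-colimits}). Only after arranging a global integral model of the entire descent datum does one invoke Mann's almost v-descent mod $\varpi^n$, approximate by weakly-of-perfectly-finite-type covers, descend the almost dualizable objects mod $\varpi^n$ compatibly in $n$, and finally pass back to the generic fiber via the monoidal functor $\alpha_R$ (using that dualizable solid complexes over $R$ are perfect). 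The integral/almost/mod-$\varpi$ manoeuvre is the substitute for the flatness you are missing.
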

\begin{proof}
  Let $a,b\in \Z$ with $a\leq b$. As pullback of perfect complexes preserves amplitude in $[a,b]$ and the amplitude descends along v-covers we equivalently have to show the claim with $\mathcal{P}erf(-)$ replaced everywhere by its version $\mathcal{P}erf^{[a,b]}(-)$ of perfect complexes with amplitude in $[a,b]$.
  If
  \[
    X=\bigcup\limits_{i\in I}X_i
  \]
  is an open covering of $X$ by affinoid perfectoid spaces $X_i$, then
  $\mathcal{P}erf^{[a,b]}(X)$ identifies with a limit of the $\infty$-categories $\mathcal{P}erf^{[a,b]}(U)$ for $U$ a finite intersection of some of the $X_i$, and similarly
  $\mathcal{P}erf^{[a,b]}(Y)$ identifies with a limit of the $\mathcal{P}erf^{[a,b]}(Y\times_X U)$ (this does not need the results of \cite{andreychev2021pseudocoherent} yet, but follows formally from the definition of perfect complexes via $\mathcal{O}_X$-modules).
  Commuting limits, we can therefore reduce to the case that $W$ and $X=\Spa(R,R^+)$ are affinoid perfectoid. By refining the v-cover $Y\to X$ we can furthermore assume $Y=\Spa(S,S^+)$ is affinoid perfectoid. In this case we can apply \cite[Theorem 1.4]{andreychev2021pseudocoherent} which identifies perfect complexes on affinoid perfectoid spaces with perfect complexes over the ring of global sections.
  
  Let us first check that $\Phi$ is fully faithful.
  The fiber products
\[
  Y^{n/X}\cong \Spa(S_n,S_n^+)
\]
are again affinoid perfectoid with
\[
  S_n:=S\hat{\otimes}_R\hat{\otimes}_R \ldots \hat{\otimes}_R S
\]
the $(n+1)$-fold completed tensor product of $S$ over $R$.
Let $K_1, K_2\in \mathcal{P}erf^{[a,b]}(R)$ be two objects. Then we have to see that the natural morphism
\[
  R\Hom_{R}(K_1,K_2)\to \varprojlim\limits R\Hom_{S_n}(S_n\otimes_R^L K_1, S_n\otimes_R^L K_2)
\]
is an isomorphism in $D(R)$. Indeed, the right-hand side identifies with the spectrum (in the topological sense) of homomorphisms from $\Phi(K_1)$ to $\Phi(K_2)$. If $K_1=R, K_2=R$, then the claim follows from exactness of the complex
\[
  R\to \underset{=S}{S_0}\to S_1 \to S_2\to ...., 
\]
which is proven in \cite[Proposition 8.8]{scholze_etale_cohomology_of_diamonds}.
By d{\'e}vissage in $\mathcal{P}erf^{[a,b]}(R)$ and passage to direct summands we can therefore conclude the same for $K_1,K_2\in \mathcal{P}erf^{[a,b]}(R)$ arbitrary.
This finishes the proof of fully faithfulness of $\Phi$ in the affinoid perfectoid case.

Let $g\colon \tilde{X}\to X$ be an open covering by rational subsets.
Using analytic descent for $g$ and its base change to $Y^{n/X}$, cf. \cite[Theorem 1.4]{andreychev2021pseudocoherent}, and applying fully faithfulness to the constituents of the \v{C}ech nerve for $g\colon \tilde{X}\to X$ it suffices to construct the preimage of some given object
\[
  N^{\bullet}\in \varprojlim\limits_\Delta \mathcal{P}erf^{[a,b]}(Y^{\bullet/X})
\]
locally in the analytic topology on $X$.

For proving essential surjectivity of $\Phi$, we first assume that $X=\Spa(K,K^+)$ is the spectrum of an affinoid perfectoid field. Because $Y\to X$ is a v-cover the space $Y$ is non-empty and thus after a refinement of $Y$ we may assume that $Y=\Spa(L,L^+)$ is the spectrum of an affinoid field, too. In this case the canonical truncations (in $D(L)$) of an object in $\mathcal{P}erf^{[a,b]}(Y)$ lie again in $\mathcal{P}erf^{[a,b]}(Y)$ and acquire canonical descent data. By \cite[Lemma 17.1.8]{scholze2020berkeley} we can therefore conclude descent in this case.

To deal with the general case, we will need to argue integrally. In the following, we call an ``integral model'' of a perfect complex
\[
  K\in \mathcal{P}erf(Z)\cong \mathcal{P}erf(B)
\]
on an affinoid perfectoid space $Z=\Spa(B,B^+)$ a pair of a perfect complex $K^+\in \mathcal{P}erf(B^+)$ and an isomorphism $K^+\otimes_{B^+}^L B \cong K$.

Now assume that $X$ is arbitrary, pick a point $x\in X$, and let $k(x)$ be the completed residue field at $x$. As we saw above it suffices to descend $N^\bullet$ in a neighborhood of $x$.
By the previous case we can descend the restriction $N_x$ of $N$ to the fiber $Y_x$ of $Y\to X$ over $x$, i.e., the descent datum
\[
  N_x^\bullet\in \varprojlim\limits_\Delta \mathcal{P}erf^{[a,b]}(Y^{\bullet/x}_x)
\]
obtained by base change of $N$ is effective. Let $M\in \mathcal{P}erf(k(x))$ be the descent of $N_x^\bullet$. Then we can choose an integral model $M^+\in \mathcal{P}erf^{[a,b]}(k(x)^+)$ of $M$.
Pulling back to $Y^{\bullet/x}_x$ we get a componentwise integral model
\[
  N_x^{+,\bullet}\in \varprojlim\limits_{\Delta} \mathcal{P}erf^{[a,b]}(Y^{\bullet/x}_x,\mathcal{O}^+),
\]
of $N_x^\bullet$.
For an open neighborhood $U\in X$ of $x$ set $Y_U:=Y\times_X U$ with associated \v{C}ech nerve
\[
  Y_U^{\bullet/U}.
\]
We denote the componentwise restriction of the descent datum $N$ to $Y^{\bullet/U}_U$ by
\[
  N_U^{\bullet/U}.
\]
By \Cref{sec:desc-perf-compl-1-integral-models-commute-with-colimits}, we can find for each $n\in \N$ an open rational neighborhood $U\subseteq X$ and an integral model
\[
  N_{U}^{+,n/U}
\]
of $N_U^{n/U}$ whose restriction to $Y_x^{n/x}$ is $N_x^{+, n/x}$. As all the considered perfect complexes have uniform amplitude contained in $[a,b]$ we can choose $U$ independently of $n$. Applying again \Cref{sec:desc-perf-compl-1-integral-models-commute-with-colimits} we can after shrinking $U$ further assume that the descent datum on $N_U^{\bullet/U}$ restricts to $N_U^{+,\bullet/U}$ because the descent datum of $N_x^{\bullet/x}$ restricts to $N_x^{+,\bullet/x}$.
More formally, by \Cref{sec:desc-perf-compl-1-integral-models-commute-with-colimits} and the fact that limits over $\Delta$ over uniformly truncated anima/$\infty$-categories commute with filtered colimits the functor
\[
  (B,B^+)\mapsto \varprojlim\limits_{\Delta} \mathcal{P}erf^{[a,b]}((B\hat{\otimes}_RS_\bullet)^+)\times_{\mathcal{P}erf^{[a,b]}(B\hat{\otimes}_R S_\bullet)} \ast
\]
on complete Tate-Huber $(R,R^+)$-algebras commutes with filtered colimits (here $\ast \to \mathcal{P}erf^{[a,b]}(B\hat{\otimes}_R S_\bullet)$ has image the base change of $N^\bullet$).

After replacing $X$ by $U$, $Y$ by $Y_U$ and $N^{\bullet/X}$ by $N^{\bullet/U}_U$ we can therefore assume that the whole descent datum $N^{\bullet}$ admits an integral model $N^{+,\bullet}$.

  We will use the following result of Mann, \cite{lucas_mann_in_progress}. Recall that a map $\mathrm{Spa}(B,B^{+}) \to \mathrm{Spa}(A,A^{+})$ of affinoid perfectoid spaces is \textit{weakly of perfectly finite type} if there is a surjection $A\langle X_1^{1/p^{\infty}}, \dots, X_n^{1/p^{\infty}} \rangle \to B$ for some $n\geq 1$.  

\begin{theorem}
\label{theorem-lucas}
Let $Z=\mathrm{Spa}(T,T^+)$ be an affinoid perfectoid space, weakly of perfectly finite type over the totally disconnected affinoid perfectoid space $W$, and let $\varpi$ be a pseudo-uniformizer of $T$. The category $$\mathcal{D}_{\bs}^a(Z,\mathcal{O}^{+}/\varpi), $$
introduced in \cite{lucas_mann_in_progress}, satisfies v-descent. Moreover, the full subcategory of dualizable objects in $\mathcal{D}_\bs^a(Z,\mathcal{O}^{+}/\varpi)$ is equivalent to the category $\mathcal{D}ual(T^{+a}/\varpi)$ of dualizable objects in the derived category of almost $T^{+a}/\varpi$-modules via the natural functor $\mathcal{D}(T^{+a}/\varpi)\to \mathcal{D}_\bs^a(Z,\mathcal{O}^{+}/\varpi)$.
\end{theorem} 

  Every map (resp.\ $v$-cover) of affinoid perfectoid spaces over $W$ is a cofiltered limits of maps (resp.\ $v$-covers) between affinoid perfectoid spaces which are weakly of perfectly finite type over $W$,\ cf. \cite{lucas_mann_in_progress}. In the following we will use that the functor $A\mapsto \mathcal{P}erf(A)$ on rings commutes with filtered colimits. Now, let $n\geq 1$ and fix a uniformizer $\varpi\in \mathcal{O}_W(W)$. We can write the $v$-cover $Y\to X$ as the cofiltered limit of $v$-covers $Y_i\to X_i=\Spa(R_i,R_i^+), i\in I,$ with $Y_i, X_i$ weakly of perfect finite type over $W$. As the descent datum $N^{+, \bullet}$ has uniformly bounded amplitude, there exists an $i\in I$, such that $N^{+,\bullet}/\pi^n$ is the base change of a descent datum $N^{+,\bullet}_{i,n}$ for $Y_i\to X_i$ of perfect $\mathcal{O}^+/\varpi^{n}$-modules. By \ref{theorem-lucas} (the almostification of) this descent datum descents to an almost dualizable complex $M^{+a}_{i,n}$ of almost $R^{+a}_i/\varpi^n$-modules. We let
  \[
    M^{+a}_n:=M^{+a}_{i,n}\otimes^{L}_{R_i^{+a}} R^{+a}.
  \]
  Constructing $M^{+a}_n$ inductively, we can arrange that we get isomorphisms
  \[
    M^{+a}_{n+1}\otimes_{R^{+a}/\varpi^{n+1}}^L R^{+a}/\varpi^{n}\cong M^{+a}_{n}
  \]
  for $n\geq 1$, i.e., we construct an object
  of the category
  \[
    \mathcal{D}ual(R^{+a})^{\rm comp}:=\varprojlim\limits_{n} \mathcal{D}ual(R^{+a}/\varpi^n).
  \]
Let $\mathfrak{m}:=\bigcup\limits_{n} \varpi^{1/p^n}R^+\subseteq R^+$, and consider the left adjoint
\[
  j_!(-)= - \otimes_{R^{+}} \mathfrak{m}\colon D(R^{+,a}) \to D(R^+)
\]
of the almostification functor $j^\ast$, which was introduced in \cite[2.2.19]{gabber_ramero_almost_ring_theory}.
Applying $j_!$ componentwise, then taking the inverse limit in $\mathcal{D}((R^+,R^+)_\bs)$, i.e., in the category defined via \cite[Theorem 3.28]{andreychev2021pseudocoherent}, and then inverting $\pi$ yields a natural functor
\[
  \alpha_R\colon \mathcal{D}ual(R^{+a})^{\rm comp}\to \mathcal{D}((R,R^+)_\bs).
\]
By \cite{lucas_mann_in_progress} (and the fact that $j_!(A\otimes_{R^{+a}/\varpi^n}^{\mathbb{L}} B)\cong j_!(A)\otimes^{\mathbb{L}}_{R^+/\varpi^n} j_!(B)$ for every $n\geq 0$) this functor is monoidal, i.e., the $\varpi$-completed tensor product agrees with the solid tensor product in this case. By \cite[Corollary 5.51.1]{andreychev2021pseudocoherent} the functor $\alpha_R$ has image in the full subcategory of perfect complexes over $R$ because by monoidality each object in the image is dualizable. By \cite{lucas_mann_in_progress}, i.e., compatibility of solid tensor products with the $\varpi$-completed tensor products appearing here, the functor $\alpha_R$ is compatible with pullbacks in $(R,R^+)$. From these considerations we can deduce that the perfect complex $\alpha_R(M^{+a})$ defines the desired descent of $N^{\bullet/Y}$. 
\end{proof}

\begin{lemma}
  \label{sec:desc-perf-compl-1-integral-models-commute-with-colimits}
  Let $(A,A^+)$ be a complete uniform Tate-Huber pair, and $M\in \mathcal{P}erf(A)$. Define the functor
  \[
   F_M\colon (B,B^+)\mapsto \{N^+\in \mathcal{P}erf(B^+) \textrm{ with an isomorphism } N^+\otimes_{B^+}^L B\cong M\otimes_A^L B\}
  \]
  on complete uniform Tate-Huber pairs over $(A,A^+)$ with values in spaces/anima. Then $F_M$ commutes with filtered colimits.
\end{lemma}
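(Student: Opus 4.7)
The strategy is to separate the algebraic aspect (perfect complexes over a filtered colimit of plain rings) from the topological one (Tate completion). A filtered colimit $(B, B^+) = \mathrm{colim}_i (B_i, B_i^+)$ in complete uniform Tate-Huber pairs satisfies $B^+ = (\mathrm{colim}_i B_i^+)^{\wedge}$ for the $\pi$-adic topology, and crucially $B^+/\pi^n = \mathrm{colim}_i B_i^+/\pi^n$ for every $n \geq 1$ since $\pi$-adic completion does not affect mod-$\pi^n$ reductions.

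First, since $M$ is perfect it has bounded Tor-amplitude, say $M \in \mathcal{P}erf^{[a,b]}(A)$, and every integral model therefore automatically lives in $\mathcal{P}erf^{[a,b]}(B^+)$. Writing
\[
F_M(B, B^+) \;\simeq\; \mathcal{P}erf^{[a,b]}(B^+)^{\simeq}\,\underset{\mathcal{P}erf^{[a,b]}(B)^{\simeq}}{\times}\,\{M \otimes_A B\},
\]
and noting that anima of equivalences between two objects of $\mathcal{P}erf^{[a,b]}$ are $(b-a)$-truncated (their positive homotopy groups are negative $\mathrm{Ext}$ groups, which vanish beyond degree $a-b$), I see that $F_M$ takes values in anima uniformly truncated at a level depending only on $M$.

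Next, I would pass to mod-$\pi^n$ slices. For any $\pi$-adically complete ring $S$, the natural functor
\[
\mathcal{P}erf^{[a,b]}(S) \;\xrightarrow{\sim}\; \lim_n \mathcal{P}erf^{[a,b]}(S/\pi^n)
\]
is an equivalence, by derived Nakayama together with obstruction-theoretic lifting in a bounded amplitude range. Applying this to $B^+$ and each $B_i^+$ identifies $F_M(B, B^+) \simeq \lim_n F_M^{(n)}(B, B^+)$, where $F_M^{(n)}$ is the evident mod-$\pi^n$ version of $F_M$. At each fixed $n$, since the functor $R \mapsto \mathcal{P}erf^{[a,b]}(R)$ on ordinary rings commutes with filtered colimits (perfect complexes of bounded Tor-amplitude are determined by finite data), one has $F_M^{(n)}(B, B^+) \simeq \mathrm{colim}_i F_M^{(n)}(B_i, B_i^+)$.

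Combining these steps yields $F_M(B, B^+) \simeq \lim_n \mathrm{colim}_i F_M^{(n)}(B_i, B_i^+)$, and because all the $F_M^{(n)}$ are uniformly truncated anima, the $\omega$-indexed limit commutes with the filtered colimit, giving $F_M(B, B^+) \simeq \mathrm{colim}_i \lim_n F_M^{(n)}(B_i, B_i^+) \simeq \mathrm{colim}_i F_M(B_i, B_i^+)$, as desired. The main obstacle is the $\pi$-adic descent statement in Step 2: rigorously establishing $\mathcal{P}erf^{[a,b]}(S) \simeq \lim_n \mathcal{P}erf^{[a,b]}(S/\pi^n)$ is where the bounded Tor-amplitude of $M$ genuinely enters, via derived Nakayama, to control the obstructions to lifting and deduce essential surjectivity; this is what distinguishes this lemma from the straightforward fact that perfect complexes on plain rings commute with filtered colimits.
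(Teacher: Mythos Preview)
Your proof has a genuine gap at the step where you introduce $F_M^{(n)}$. The anima $F_M(B,B^+)$ is the fiber of $\mathcal{P}erf(B^+)^\simeq\to\mathcal{P}erf(B)^\simeq$ over $M\otimes_A B$; the constraint lives over the \emph{generic fibre} $B=B^+[1/\pi]$, where $\pi$ is invertible. There is no ``evident mod-$\pi^n$ version'' of this constraint: reducing modulo $\pi^n$ annihilates $B$, and there is no functor $\mathcal{P}erf(B^+/\pi^n)\to\mathcal{P}erf(B)$ along which to transport the base-point $M\otimes_A B$. Your $\pi$-adic descent $\mathcal{P}erf^{[a,b]}(B^+)\simeq\lim_n\mathcal{P}erf^{[a,b]}(B^+/\pi^n)$ is correct, but it only controls the integral side; the fiber product with $\mathcal{P}erf(B)$ does not factor through the individual terms of this tower, so $F_M$ cannot be rewritten as $\lim_n F_M^{(n)}$ for any functor $F_M^{(n)}$ depending only on $B^+/\pi^n$. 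Concretely, the filtered colimit $(B,B^+)$ in complete uniform Tate--Huber pairs has $B^+/\pi^n=\varinjlim_i B_i^+/\pi^n$, but $B$ itself is \emph{not} $\varinjlim_i B_i$ as an abstract ring---it is the completion---so even at each level $n$ the generic-fibre constraint resists your colimit argument.

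The paper's proof handles exactly this point by invoking a Beauville--Laszlo type result of Bhatt: writing $(B,B^+)$ for the \emph{uncompleted} algebraic colimit and $(C,C^+)$ for its uniform completion, the square with vertices $\mathcal{P}erf(B^+)$, $\mathcal{P}erf(B)$, $\mathcal{P}erf(C^+)$, $\mathcal{P}erf(C)$ is cartesian. This gives
\[
F_M(C,C^+)\;\simeq\;\mathcal{P}erf(B^+)^\simeq\times_{\mathcal{P}erf(B)^\simeq}\{M\otimes_A B\},
\]
and now both $B^+$ and $B$ are honest filtered colimits of ordinary rings, so the right-hand side is $\varinjlim_i F_M(B_i,B_i^+)$ by the elementary fact that $R\mapsto\mathcal{P}erf(R)$ commutes with filtered colimits. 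The missing ingredient in your argument is precisely this gluing of integral and generic data across completion.
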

More formally, $F_M(B,B^+)$ is the fiber product
\[
  \mathcal{P}erf(B^+)^{\simeq }\times_{\mathcal{P}erf(B)^\simeq}\ast
\]
where $\ast\to \mathcal{P}erf(B)$ is the morphism determined by the object $M\otimes_A^L B\in\mathcal{P}erf(B)$, and $(-)^\simeq$ denotes the core of an $\infty$-category.
\begin{proof}
  Let $(B_i,B_i^+), i\in I,$ be a filtered system of complete uniform Tate-Huber $(A,A^+)$-algebras, let $(B,B^+)$ be their uncompleted colimit and $(C,C^+)$ be the (uniform) completion of $(B,B^+)$. By \cite[Proposition 5.6.(2)]{bhatt2014algebraization} the square
  \[
    \xymatrix{
      \mathcal{P}erf(B^+)\ar[r]\ar[d] & \mathcal{P}erf(B)\ar[d] \\
      \mathcal{P}erf(C^+)\ar[r] & \mathcal{P}erf(C)
    }
  \]
  is a cartesian diagram of $\infty$-categories. This implies that
  \[
    F_M(C,C^+)\cong \mathcal{P}erf(B^+)^\simeq\times_{\mathcal{P}erf(B)^\simeq}\ast
  \]
  with $\ast \to \mathcal{P}erf(B)$ induced by $M\otimes_A^L B$.
  But the right-hand side is equivalent to
  \[
    \varinjlim\limits_{i\in I} F_M(B_i,B_i^+)
  \]
  as desired because the functor $R\mapsto \mathcal{P}erf(R)$ on commutative rings commutes with filtered colimits.
\end{proof}

\subsection{Perfect complexes on relative Fargues-Fontaine curves}
\label{sec:v-descent-perfect-complexes-ff-curves}

The v-descent result of \Cref{subsection-v-descent-perfectoid-spaces} for perfect complexes on perfectoid spaces does not directly apply to relative Fargues-Fontaine curves, because these are not perfectoid spaces. However, we know that for any $T\in \Perf_{\mathbb{F}_q}$, the analytic adic spaces $X_T, Y_T$ are sousperfectoid, cf.\ \cite[Proposition 11.2.1]{scholze2020berkeley} and adapting the argument of \cite[Proposition 19.5.3]{scholze2020berkeley} we get the following result.

\begin{proposition}
  \label{sec:v-descent-perfect-descent-of-perfect-complexes-on-relative-ff-curve}
  Let $U_T\subseteq X_T$ or $U_T\subseteq Y_T$ be an open subset, and denote for any $T^\prime\to T$ by $U_{T^\prime}$ the pullback of $U_T$ along $X_{T^\prime}\to X_T$ or $Y_{T^\prime}\to Y_T$. Then the functor
  \[(T^\prime\to T)\mapsto \mathcal{P}erf(U_{T^\prime})
  \]
  on $\Perf_T$ satisfies v-descent.
\end{proposition}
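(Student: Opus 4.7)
The plan is to bootstrap the v-descent result \Cref{sec:desc-perf-compl-1-descent-of-perfect-complexes-on-perfectoid-spaces} for perfect complexes on perfectoid spaces to the sousperfectoid setting of $X_T$ and $Y_T$. First, since perfect complexes on any analytic adic space satisfy analytic descent, and since the formation of analytic opens commutes with base change along $T'\to T$, I may reduce to the case where $U_T = \Spa(B_T, B_T^+)$ is a small affinoid piece of $X_T$ (resp.\ $Y_T$); in particular $B_T$ is a sousperfectoid Tate ring.

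Second, invoking \cite[Proposition 11.2.1]{scholze2020berkeley} and its proof, I would choose a canonical affinoid perfectoid cover
\[
\tilde U_T = \Spa(\tilde B_T, \tilde B_T^+)\to U_T,
\]
obtained by adjoining compatible $p$-power roots of a suitable element (a uniformizer of $E$ in the equal characteristic case, a Teichmüller lift of a pseudo-uniformizer of $T$ in the mixed characteristic case), such that $B_T \to \tilde B_T$ admits a continuous $B_T$-linear section. Crucially, this construction commutes with base change in $T$: for every $T'\to T$ the pullback $\tilde U_{T'} := \tilde U_T\times_{U_T}U_{T'}$ is again affinoid perfectoid, the splitting persists, and every term of the \v{C}ech nerve $\tilde U_{T'}^{\bullet/U_{T'}}$ is affinoid perfectoid.

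Third, I would upgrade the classical sousperfectoid descent from vector bundles to perfect complexes: via \cite[Theorem 1.4]{andreychev2021pseudocoherent} one identifies $\mathcal{P}erf(U_{T'})\cong\mathcal{P}erf(B_{T'})$ and $\mathcal{P}erf(\tilde U_{T'}^{n/U_{T'}})\cong\mathcal{P}erf(\tilde B_{T',n})$, reducing the statement
\[
\mathcal{P}erf(U_{T'})\xrightarrow{\sim}\varprojlim_{\Delta}\mathcal{P}erf(\tilde U_{T'}^{\bullet/U_{T'}})
\]
to descent of perfect complexes along the split faithfully flat ring map $B_{T'}\to \tilde B_{T'}$, which is standard (a continuous $B_{T'}$-linear section makes this map descendable in the sense of Mathew). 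Finally, for each $n\geq 0$ the map $\tilde U_{T'}^{n/U_{T'}}\to \tilde U_T^{n/U_T}$ induced by $T'\to T$ is a v-cover of affinoid perfectoid spaces, so by \Cref{sec:desc-perf-compl-1-descent-of-perfect-complexes-on-perfectoid-spaces} the functor $(T'\to T)\mapsto \mathcal{P}erf(\tilde U_{T'}^{n/U_{T'}})$ satisfies v-descent term by term. A standard Fubini-type argument exchanging the two cosimplicial limits then assembles these into v-descent for $(T'\to T)\mapsto\mathcal{P}erf(U_{T'})$, as desired.

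The main obstacle I expect is the third step: one must descend the entire $\infty$-category $\mathcal{P}erf$ (and not merely its groupoid of vector bundles) along $B_{T'}\to\tilde B_{T'}$, and this must be done uniformly in $T'$ so that it plays nicely with the outer v-descent. Once the continuous splitting of $B_{T'}\to\tilde B_{T'}$ is in hand, the descendability argument and the interchange of limits are formal, but some care is required to verify that all compatibilities hold simplicial degree by simplicial degree.
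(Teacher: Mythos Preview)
Your overall strategy matches the paper's: reduce to the affinoid case, pass to a perfectoid cover obtained by adjoining $p$-power roots (with a continuous linear section), run a bisimplicial argument combining descendability along the perfectoid cover with v-descent on the perfectoid side via \Cref{sec:desc-perf-compl-1-descent-of-perfect-complexes-on-perfectoid-spaces}, and swap the two limits.

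However, there is a genuine gap: your claim that ``every term of the \v{C}ech nerve $\tilde U_{T'}^{\bullet/U_{T'}}$ is affinoid perfectoid'' is false for the higher terms $n\geq 1$. Concretely, with $\tilde U_{T'} = U_{T'}\times_{\Spa(E)}\Spa(E_\infty)$ one has
\[
\tilde U_{T'}^{n/U_{T'}}\cong U_{T'}\times_{\Spa(E)}\Spa(\underbrace{E_\infty\hat\otimes_E\cdots\hat\otimes_E E_\infty}_{n+1}),
\]
and the iterated Banach tensor product $E_\infty\hat\otimes_E\cdots\hat\otimes_E E_\infty$ is not even uniform for $n\geq 1$, hence certainly not perfectoid. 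You therefore cannot invoke \Cref{sec:desc-perf-compl-1-descent-of-perfect-complexes-on-perfectoid-spaces} term by term in your step five for all $n$.

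The paper handles this as follows. For $n=0$ the space is genuinely perfectoid and one gets the full equivalence from \Cref{sec:desc-perf-compl-1-descent-of-perfect-complexes-on-perfectoid-spaces}. For $n\geq 1$ one proves only \emph{fully faithfulness} of the comparison functor, by directly checking that the augmented \v{C}ech complex $A_T\hat\otimes_E E_{\infty,n/S}\to A_{\widetilde T}\hat\otimes_E E_{\infty,n/S}\to\cdots$ remains exact (this follows from the $n=0$ case together with the fact that $-\hat\otimes_E M$ preserves exactness of complexes of $E$-Banach spaces). One then invokes a general lemma on cosimplicial $\infty$-categories: if a morphism of cosimplicial diagrams is an equivalence in degree $0$ and fully faithful in all higher degrees, the induced map on totalizations is an equivalence. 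Your descendability argument for the ``vertical'' direction is correct and is exactly what the paper does; the problem is purely on the ``horizontal'' side, where you asserted too much.
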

\begin{proof}
Let $\widetilde{T}\to T$ be a v-cover in $\Perf$ with associated \v{C}ech nerve $\widetilde{T}^{\bullet/T}$, which is a simplicial object in $\Perf_T$. We want to show that the canonical morphism
\[
  \mathcal{P}erf(U_T)\to \varprojlim\limits_{\Delta} \mathcal{P}erf(U_{\widetilde{T}^{\bullet/T}})
\]
is an equivalence.
Commuting inverse limits we may assume that $U_T$ is affinoid.
Fix a uniformizer $\pi\in E$ and set
\[
  E_\infty:=\widehat{E(\pi^{1/p^{\infty}})},
\]
which is a perfectoid field. Set $S_\infty:=\Spa(E_\infty)$
and let $S_\infty^{\bullet/S}$ be the \v{C}ech nerve of $S_\infty\to S:=\Spa(E)$, a simplical adic space over $S$.
We obtain the bisimplicial adic space
\[
  U_{\widetilde{T}^{\bullet/T}}\times_S S_\infty^{\bullet/S}.
\]
over $S$.
Let us note that for $m\geq 1$ the space
\[
  S_\infty^{n/S}
\]
is not uniform (and thus in particular not sousperfectoid) because $S_\infty^{\bullet/S}$ was defined as the \v{C}ech nerve in all adic spaces over $S$ (and not just the uniform ones).
However, $S_\infty=S_\infty^{0/S}$ is a perfectoid space living over the totally disconnected space $\mathrm{Spa}(E_\infty)$ and
\[
  U_{\widetilde{T}^{\bullet/T}}\times_S S_\infty
\]
is a simplicial perfectoid space over $E$.
By \Cref{sec:desc-perf-compl-1-descent-of-perfect-complexes-on-perfectoid-spaces} we can deduce that
\[
      \mathcal{P}erf(U_T\times_S S^{m/S}_\infty)\to \varprojlim\limits_{\Delta} \mathcal{P}erf(U_{\widetilde{T}^{\bullet/T}}\times_S S^{m/S}_\infty)
    \]
    is an equivalence for $m=0$.

    We claim that it is fully faithful for $m\geq 1$. Indeed, write
    \[
      U_{\widetilde{T}^{n/T}}=\Spa(A_{\widetilde{T}^{n/T}},A_{\widetilde{T}^{n/T}}^+)
    \]
    and
    \[
      S^{m/S}_\infty=\Spa(E_{\infty,m/S},E^+_{\infty,m/S}),
    \]
    i.e., $E_{\infty,m/S}$ is the $m+1$-fold Banach space tensor product $E_\infty\hat{\otimes}_E \ldots \hat{\otimes}_E E_\infty$.
    By d\'evissage in perfect complexes it suffices to see that the natural complex
    \[
      0\to A_T\hat{\otimes}_E E_{\infty,m/S} \to A_{\widetilde{T}}\hat{\otimes}_E E_{\infty,m/S}\to (A_{\widetilde{T}}\hat{\otimes}_{A_T} A_{\widetilde{T}})\hat{\otimes}_E E_{\infty,m/S}\to \ldots 
    \]
    of $E$-Banach vector spaces is exact (here we use that the fiber product of affinoid adic spaces over $E$ is calculated via the Banach space tensor product). But this exactness follows from the case $m=0$ (where all components are perfectoid) and the fact that $-\hat{\otimes}_E M$ preserves exactness of complexes of $E$-Banach spaces for each Banach space $M$ over $E$. This finishes the proof of the desired fully faithfulness.

Furthermore, we claim that for each $n\in \N$ the canonical morphism
  \[
    \mathcal{P}erf(U_{\widetilde{T}^{n/T}})\to \varprojlim\limits_{\Delta} \mathcal{P}erf(U_{\widetilde{T}^{n/T}}\times_S S^{\bullet/S}_\infty)
  \]
  is an equivalence. To see it, let us consider the $\infty$-derived category $\mathcal{C}$ of solid $\underline{E}$-vector spaces endowed with the (derived) solid tensor product (cf. \cite{scholze_lectures_on_condensed_mathematics}). It forms a stable homotopy category in the sense of \cite[Definition 2.1]{mathew_the_galois_group_of_a_stable_homotopy_theory}. Since the map $E \to E_\infty$ admits an $E$-linear continuous splitting (by \cite[\S 2.7, Proposition 4]{bosch_guentzer_remmert_non_archimedean_analysis}, or simply since any Banach space over $E$ is orthonormalizable), the commutative algebra object $\underline{E_\infty}$ in $\mathcal{C}$ admits descent, cf. \cite[Proposition 3.20]{mathew_the_galois_group_of_a_stable_homotopy_theory} and the comment right after it. This gives descent for solid $\underline{E}$-vector spaces along the map $\underline{E} \to \underline{E_\infty}$, and thus also for perfect complexes, which are the dualizable objects in $\mathcal{C}$. Since the tensor product of $E$-Banach spaces agrees with the derived solid tensor product, this gives the desired claim. 
  
  Now we can finish the proof. By descent in the ``vertical direction'' which we proved above we can rewrite
\[
  \varprojlim\limits_{[n]\in \Delta}\mathcal{P}erf(U_{\widetilde{T}^{n/T}})
\]
as
\[
   \varprojlim\limits_{[n]\in \Delta}\varprojlim\limits_{[m]\in\Delta} \mathcal{P}erf(U_{\widetilde{T}^{n/T}}\times_S S_\infty^{m/S}),
 \]
 and thus as
 \begin{equation}
   \label{eq:1}
  \varprojlim\limits_{[m]\in \Delta}\varprojlim\limits_{[n]\in\Delta} \mathcal{P}erf(U_{\widetilde{T}^{n/T}}\times_S S_\infty^{m/S}),  
 \end{equation}
 by commuting the limits.

 Using \Cref{sec:v-descent-perfect-lemma-on-limits-of-inf-categories} below and the descent/resp.\ fully faithfulness in the ``horizontal direction'' discussed before we can therefore simplify 
 (\Cref{eq:1}) to
 \[
   \varprojlim\limits_{[m]\in \Delta} \mathcal{P}erf(U_{T}\times_S S_\infty^{m/S}),
 \]
 which is equivalent to $\mathcal{P}erf(U_T)$ by the same argument for $U_{\widetilde{T}}$ as above. This finishes the claim.
\end{proof}

We used the following lemma on limits of $\infty$-categories, cf.\ \cite[Lemma B.6]{bunke2019twisted}.

 \begin{lemma}
   \label{sec:v-descent-perfect-lemma-on-limits-of-inf-categories}
   Let $f_\bullet \colon C^\bullet \to D^\bullet$ be a morphism of two cosimplicial $\infty$-categories such that $f_0\colon C^0\to D^0$ is an equivalence and $f_i\colon C^i\to D^i$ is fully faithful for $i\geq 1$. Then
   \[
     \varprojlim\limits_{\Delta} C^\bullet\to \varprojlim\limits_{\Delta} D^\bullet 
   \]
   is an equivalence. 
 \end{lemma}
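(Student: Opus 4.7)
The plan is to show that $f := \varprojlim_\Delta f_\bullet$ is both fully faithful and essentially surjective.

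Full faithfulness is essentially formal. Mapping spaces in limits of $\infty$-categories are computed termwise, so for $X, Y \in \varprojlim_\Delta C^\bullet$ one has
\[
\mathrm{Map}(X, Y) \simeq \varprojlim_{[n] \in \Delta} \mathrm{Map}_{C^n}(X_n, Y_n),
\]
and analogously for $D^\bullet$. By hypothesis each $f_n$ is fully faithful (with $f_0$ even an equivalence), hence induces an equivalence on mapping spaces at each level; passing to the limit over $\Delta$ shows that $f$ is fully faithful.

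For essential surjectivity, I would induct on the skeletal filtration of $\Delta$. Writing $\varprojlim_\Delta = \varprojlim_n \varprojlim_{\Delta_{\leq n}}$, it suffices to show that $\varprojlim_{\Delta_{\leq n}} f$ is an equivalence for each $n$. The base case $n = 0$ is given, since $\varprojlim_{\Delta_{\leq 0}} C^\bullet = C^0$ and $f_0$ is an equivalence. For the inductive step, one uses the Cartesian square expressing $\varprojlim_{\Delta_{\leq n}} C^\bullet$ as the pullback of $\varprojlim_{\Delta_{\leq n-1}} C^\bullet$ and $C^n$ over a ``boundary'' object assembled from the non-identity coface maps with target $[n]$, involving only levels $\leq n-1$; the same holds for $D^\bullet$. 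By the inductive hypothesis, the maps on $\varprojlim_{\Delta_{\leq n-1}}$ and on the boundary objects are equivalences, and $f_n$ is fully faithful. Since pulling back equivalences against a fully faithful map yields a fully faithful map that is moreover essentially surjective onto the expected pullback, the inductive step concludes. Passing to the limit in $n$ finishes the proof.

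The main obstacle is ensuring that the higher cosimplicial coherences lift correctly through the induction. This is what the Reedy-style pullback description handles cleanly: the essential-surjectivity input is used only at $n = 0$, where $f_0$ is assumed to be an equivalence, while for $n \geq 1$ the required lift at level $[n]$ is uniquely determined (up to contractible choice) by the already-lifted data at lower levels together with the full faithfulness of $f_n$. It is precisely the combination ``equivalence at $n=0$ plus fully faithful at $n \geq 1$'' that makes the cellular lifting problem solvable at every stage.
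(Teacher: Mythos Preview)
Your full-faithfulness argument is correct. For essential surjectivity, the skeletal-filtration strategy is sound in spirit, but the specific pullback square you invoke is not of the stated shape: the ``boundary'' corner at level $n$ is not assembled purely from levels $\le n-1$. Already at $n=1$, the partial totalization $\varprojlim_{\Delta_{\le 1}} X$ is the equalizer of $d^0,d^1\colon X^0\rightrightarrows X^1$, whose pullback presentation $X^0\times_{X^1\times X^1}X^1$ has $X^1$ in the corner. (More conceptually, $\Delta_{\mathrm{inj}}$ is a \emph{direct} Reedy category, so it is colimits, not limits, that admit the clean inductive description via latching objects.) Thus the claim that ``the maps on the boundary objects are equivalences by the inductive hypothesis'' fails, and the inductive step as written does not close.

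The mechanism you isolate in your last paragraph is, however, exactly right and gives a direct proof with no induction. Since every $f_n$ is fully faithful, $\varprojlim_\Delta C^\bullet$ identifies with the full subcategory of $\varprojlim_\Delta D^\bullet$ on those objects $(D_n)_n$ with each $D_n$ in the essential image of $f_n$. But every $[n]\in\Delta$ receives a map $\alpha\colon[0]\to[n]$, and the section datum gives $D_n\simeq\alpha^\ast D_0$; since $f_0$ is an equivalence one may choose $C_0\in C^0$ with $f_0(C_0)\simeq D_0$, whence $D_n\simeq f_n(\alpha^\ast C_0)$ lies in the essential image of $f_n$. So every object of $\varprojlim_\Delta D^\bullet$ already lies in the full subcategory, and essential surjectivity follows.

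The paper does not supply its own proof; it refers to \cite[Lemma~B.6]{bunke2019twisted}.
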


 We end this subsection with the observation that each perfect complex on a relative Fargues-Fontaine curve is, locally in the analytic topology on the base, ``strict''.

 \begin{proposition}
   \label{sec:perf-compl-relat-perfect-complex-strict}
   Let $T=\Spa(A,A^+)\in \Perf_{\mathbb{F}_q}$ be affinoid perfectoid. Then each perfect complex $K$ on $X_T$ is quasi-isomorphic to a bounded complex of vector bundles on $X_T$.
 \end{proposition}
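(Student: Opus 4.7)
My plan is to argue by induction on the length $b-a$ of the Tor-amplitude interval $[a,b]$ of the perfect complex $K$, reducing the whole statement to a resolution property for coherent sheaves on $X_T$. The base case $a=b$ is essentially immediate: a perfect complex with Tor-amplitude $[a,a]$ is, up to a shift, a finite locally free $\mathcal{O}_{X_T}$-module, since the amplitude can be detected locally (and hence by the proof of \Cref{sec:desc-perf-compl-1-descent-of-perfect-complexes-on-perfectoid-spaces}, the local vector bundle structure glues globally).

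For the inductive step I would ``peel off'' the top cohomology sheaf. Suppose I can produce a vector bundle $V$ on $X_T$ together with a map $\varphi\colon V[-b]\to K$ whose composition with the truncation $K\to \mathcal{H}^b(K)[-b]$ is a surjection $V\twoheadrightarrow \mathcal{H}^b(K)$. Consider the cofiber $C:=\mathrm{cone}(\varphi)$, sitting in an exact triangle $V[-b]\to K\to C\to V[-b+1]$. Tensoring this triangle with an arbitrary $\mathcal{O}_{X_T}$-module $M$, the top map $V\otimes M\twoheadrightarrow \mathcal{H}^b(K)\otimes M$ is still surjective, which forces $\mathcal{H}^b(C\otimes^{L}M)=0$; hence $C$ has Tor-amplitude in $[a,b-1]$. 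The inductive hypothesis then yields a bounded representative $W^a\to W^{a+1}\to \cdots\to W^{b-1}$ of $C$ by vector bundles, and the connecting map $C\to V[-b+1]$ is classified by a vector bundle morphism $W^{b-1}\to V$. Assembling, $K$ is quasi-isomorphic to the bounded complex of vector bundles
\[
  W^a\to W^{a+1}\to\cdots\to W^{b-1}\to V
\]
placed in degrees $[a,b]$, closing the induction.

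Thus the entire argument reduces to constructing, for any coherent sheaf $\mathcal{F}$ on $X_T$, a surjection from a vector bundle onto $\mathcal{F}$ (the ``resolution property'' for $X_T$). The natural route is a Serre-style ampleness statement for $\mathcal{O}_{X_T}(1)$ on the relative Fargues-Fontaine curve $X_T\to T$: for $n$ sufficiently large, the twist $\mathcal{F}(n)$ should be globally generated, giving a surjection $\mathcal{O}_{X_T}^{m}\twoheadrightarrow \mathcal{F}(n)$ which, after untwisting, yields $\mathcal{O}_{X_T}(-n)^m\twoheadrightarrow \mathcal{F}$. This step is the main obstacle: at the level of a general affinoid perfectoid base $T$, obtaining such relative ampleness is the non-formal input, and I would expect to invoke Kedlaya--Liu's work on the relative Fargues-Fontaine curve for it. If such a reference is not immediately at hand, a backup plan is to establish the surjection fiberwise over $T$, using the classical ampleness of $\mathcal{O}(1)$ on $X_{k(t)}$ at each geometric point $t\in T$, spread the resulting surjection over a rational neighborhood of $t$ using the integral-model/approximation arguments of \Cref{sec:desc-perf-compl-1-integral-models-commute-with-colimits}, and glue the local constructions back together using the v-descent for perfect complexes on $X_T$ proved in \Cref{sec:v-descent-perfect-descent-of-perfect-complexes-on-relative-ff-curve}.
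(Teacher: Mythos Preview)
Your inductive plan is the standard ``resolution property'' argument that works on schemes with an ample line bundle, and with the right input it would work here too. The lifting of a surjection $V\twoheadrightarrow\mathcal{H}^b(K)$ to a map $V[-b]\to K$ is fine (the obstruction lives in $\Hom(V,(\tau_{<b}K)[b+1])$, which vanishes for degree reasons). Your closing step, representing the connecting morphism $C\to V[-b+1]$ by an honest vector bundle map $W^{b-1}\to V$, is less obvious than you make it sound---vector bundles on $X_T$ are not projective objects---but it does go through: since $V$ sits in the top degree of the strict model $W^\bullet$ of $C$, the internal $\mathcal{H}om$ complex $\mathcal{H}om^\bullet(W^\bullet,V[-b+1])$ is concentrated in degrees $\geq 0$, so the first-quadrant hypercohomology spectral sequence gives $\Hom_D(C,V[-b+1])=E_\infty^{0,0}=\ker(d^\ast)$, i.e.\ chain maps.

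The genuine gap is exactly the one you flag: the resolution property for $\mathcal{H}^b(K)$. The ampleness result you want to invoke (\cite[Theorem II.2.6]{fargues2021geometrization}) is stated for \emph{vector bundles}, not for arbitrary finitely presented $\mathcal{O}_{X_T}$-modules. The paper does later prove an ampleness statement for flat coherent sheaves (\Cref{sec:moduli-stack-flat-ampleness-for-flat-coherent-sheaves}), but that lemma \emph{uses} the proposition you are proving, so appealing to it would be circular. Your backup plan of spreading a fiberwise resolution using \Cref{sec:desc-perf-compl-1-integral-models-commute-with-colimits} is not obviously workable either: that lemma lets you spread integral models of a fixed perfect complex, not surjections onto a varying cohomology sheaf.

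The paper takes a different and more direct route that avoids the resolution property altogether. It writes $X_T\cong Y_{T,[1,q]}/\varphi$, so that by analytic descent $\mathcal{P}erf(X_T)$ becomes the $\infty$-categorical equalizer of two base-change functors $\mathcal{P}erf(R)\rightrightarrows\mathcal{P}erf(S)$ for the affinoid perfectoid rings $R=\mathcal{O}(Y_{T,[1,q]})$, $S=\mathcal{O}(Y_{T,[q,q]})$. It then proves a general lemma (\Cref{sec:perf-compl-relat-general-lemma-for-strictness}, adapting \cite[Lemma 1.5.2]{kedlaya_liu_relative_p_adic_hodge_theory_foundations}): in any such equalizer $\mathcal{C}$, every perfect object is strictly perfect. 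The induction on amplitude runs inside $\mathcal{C}$, where finite projective $R$-modules are genuinely projective; the only nontrivial point is producing from a na\"ive surjection $R^n\twoheadrightarrow K_0$ (which has no reason to be compatible with the descent datum $\alpha_K$) an object $(F,\alpha_F)$ of $\mathcal{C}$ mapping onto $(K,\alpha_K)$ in top degree. This is done by an explicit $2\times2$ matrix trick. In effect, this Kedlaya--Liu lemma \emph{is} the ``resolution property with $\varphi$-structure'' your approach would need; the paper simply applies it inside the equalizer rather than first translating it into a sheaf-theoretic statement on $X_T$.
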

 \begin{proof}
   Let $\varpi\in A$ be a pseudo-uniformizer. As in \cite[Theorem II.2.6]{fargues2021geometrization} this yields a radius function on the space $Y_T$ and (using similar notation as in \cite[Theorem II.2.6]{fargues2021geometrization}) thus we can write
   \[
     X_T\cong Y_{T,[1,q]}/\simeq
   \]
   With $\simeq$ coming from the identification $\varphi\colon Y_{T,[1,1]}\cong Y_{T,[q,q]}$. By analytic descent for perfect complexes on analytic adic spaces we obtain that
   \[
     \mathcal{P}erf(X_T)
   \]
   identifies with the $\infty$-categorical equalizer of the functors
   \[
     \mathcal{P}erf(Y_{T,[1,q]})\xrightarrow{\mathrm{res}}\mathcal{P}erf(Y_{T,[q,q]}),\ \mathcal{P}erf(Y_{T,[1,q]})\xrightarrow{\varphi\circ \mathrm{res}} \mathcal{P}erf(Y_{T,[q,q])}
   \]
   (here $\mathrm{res}$ denotes the respective restriction morphisms). The spaces
   \[
     Y_{T,[1,q]}, Y_{T,[1,1]}, Y_{T,[q,q]}
   \]
   are affinoid perfectoid, which implies that perfect complexes on them identify with perfect complexes over their respective coordinate rings. Thus, the proposition follows from the general observation \Cref{sec:perf-compl-relat-general-lemma-for-strictness} by setting $R=\mathcal{O}_{Y_{T,[1,q]}}, S=\mathcal{O}_{Y,[q,q]}$ and $f,g$ induced by restriction respectively restriction composed with Frobenius.
 \end{proof}

 Let $R,S$ be two (commutative) rings, and $f,g\colon R\to S$ two ring homomorphisms. Denote by $f^\ast, g^\ast$ the functors $S\otimes_{R,f}^{\mathbb{L}}-$, $S\otimes_{R,g}^{\mathbb{L}}-$ of (derived) base change along $f$ respectively $g$. Let
 \[
   \mathcal{C}
 \]
 be the $\infty$-categorical equalizer of the functors $\mathbb{L}f^\ast, \mathbb{L}g^\ast\colon \mathcal{D}(R)\to \mathcal{D}(S)$ between the $\infty$-derived categories of $R$- and $S$-modules. Thus, roughly, the objects of $\mathcal{C}$ are pairs $(K,\alpha_K)$ with $K\in\mathcal{D}(R)$ and $\alpha_K\colon \mathbb{L}f^\ast K\cong \mathbb{L}g^\ast K$ an isomorphism in $\mathcal{D}(S)$ while the morphisms in $\mathcal{C}$ are morphisms in $\mathcal{D}(R)$ together with homotopies in $\mathcal{D}(S)$ recording compatibility with the $\alpha_K$'s.

 Let us call an object $(K,\alpha_K) \in \mathcal{C}$ perfect if its image $K\in \mathcal{D}(R)$ is perfect, and let us call it strictly perfect if it can be written as a finite limit of objects $(M,\alpha_M)\in \mathcal{C}$ with $M$ a finite projective $R$-module. Note that $\mathcal{C}$ is a stable $\infty$-category, and the functor $\mathcal{C}\to \mathcal{D}(R)$ commutes with finite limits/colimits.

 Now we can prove the following general observation.
 
 \begin{proposition}
   \label{sec:perf-compl-relat-general-lemma-for-strictness}
   Assume $(K,\alpha_K)\in \mathcal{C}$ is perfect. Then $(K,\alpha_K)$ is strictly perfect.
 \end{proposition}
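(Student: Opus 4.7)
The plan is to argue by induction on the Tor-amplitude $[a,b]$ of the underlying complex $K$, with induction variable $b-a \geq 0$.

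For the base case $b=a$, the perfect complex $K$ is concentrated in a single degree, so $K\simeq P[-a]$ for a finite projective $R$-module $P$, and hence $(K,\alpha_K)\simeq (P,\alpha_P)[-a]$ is a shift of a pair with finite projective underlying module. Since $\mathcal{C}$ is stable, shifts are finite limits (in a stable $\infty$-category pushout and pullback squares coincide, so the closure of any set of objects under finite limits equals its closure under finite colimits), and therefore $(K,\alpha_K)$ is strictly perfect.

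For the inductive step, I would construct a morphism $\psi\colon (M,\alpha_M)\to (K,\alpha_K)$ in $\mathcal{C}$ with $M\simeq P[-b]$ for some finite projective $R$-module $P$, such that the underlying map $M\to K$ in $\mathcal{D}(R)$ induces a surjection on $H^b$. Given such a $\psi$, the cofiber $\mathrm{cofib}(\psi)$ has underlying complex with Tor-amplitude in $[a,b-1]$, hence is strictly perfect by the inductive hypothesis. Since $(M,\alpha_M)$ is strictly perfect by the base case and $\mathcal{C}$ is stable, the fiber sequence $(M,\alpha_M)\to (K,\alpha_K)\to \mathrm{cofib}(\psi)$ then exhibits $(K,\alpha_K)$ as a finite limit of strictly perfect objects, completing the induction.

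The main obstacle is the construction of $\psi$ in $\mathcal{C}$, which requires genuine descent-equivariant data rather than just a map in $\mathcal{D}(R)$ (where one could simply pick any surjection from a finite free module onto the finitely generated $R$-module $H^b(K)$). My strategy is to represent $K$ by an honest bounded complex of finite projective $R$-modules $K^\bullet=[P_a\to \cdots \to P_b]$ and to realize $\alpha_K$ as a chain-level chain homotopy equivalence $\alpha\colon \mathbb{L}f^* K^\bullet\to \mathbb{L}g^* K^\bullet$, which exists because both sides are bounded complexes of projective $S$-modules. The top component $\alpha_b\colon \mathbb{L}f^* P_b\to \mathbb{L}g^* P_b$ is then automatically compatible with the top differential $P_{b-1}\to P_b$, and after possibly stabilizing $K^\bullet$ by adding contractible summands of the form $[Q\xrightarrow{\mathrm{id}} Q]$ in degrees $(b-1,b)$ (which preserves the quasi-isomorphism class of $K^\bullet$), one can arrange $\alpha_b$ to be an isomorphism in $\mathcal{D}(S)$; setting $(M,\alpha_M):=(P_b,\alpha_b)[-b]$ and letting $\psi$ be the inclusion of the top term of $K^\bullet$ then yields the desired morphism. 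The hardest part will be executing this stabilization rigorously, ensuring both that $\alpha_b$ becomes genuinely invertible and that the induced map on $H^b$ remains surjective.
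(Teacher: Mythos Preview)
Your overall strategy---induction on the amplitude of $K$, with the inductive step accomplished by mapping in an object $(M,\alpha_M)$ with $M$ finite projective concentrated in the top degree and then passing to the cofiber---is exactly the paper's strategy. The paper also represents $K$ by a bounded complex of finite projectives and represents $\alpha_K, \alpha_K^{-1}$ by honest chain maps $A_\bullet, B_\bullet$.

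However, your proposal has a genuine gap at the point you yourself flag as ``the hardest part'': you assert that after adding a contractible summand $[Q\xrightarrow{\mathrm{id}}Q]$ one can arrange the top component $\alpha_b$ to be an isomorphism, but give no mechanism for this. Simply extending $\alpha$ by the identity on the new summand does not help, and it is not clear how merely enlarging $P_b$ would make an arbitrary $S$-linear map $f^\ast P_b\to g^\ast P_b$ invertible. This is precisely the crux of the argument.

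The paper resolves this by an explicit Whitehead-type matrix trick. Writing $A_0,B_0$ for the chain-level representatives of $\alpha_K,\alpha_K^{-1}$ in the top degree and fixing a homotopy $h_0$ witnessing $A_0B_0-\Id=g^\ast d_{-1}\circ h_0$, one sets $F=K_0\oplus K_0$ and
\[
\alpha_F=\begin{pmatrix} A_0 & A_0B_0-\Id \\ \Id & B_0 \end{pmatrix}
=\begin{pmatrix} \Id & A_0 \\ 0 & \Id \end{pmatrix}
\begin{pmatrix} 0 & -\Id \\ \Id & B_0 \end{pmatrix},
\]
which is visibly invertible regardless of whether $A_0,B_0$ are. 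The morphism $(F,\alpha_F)\to (K,\alpha_K)$ in $\mathcal{C}$ is the first projection $K_0\oplus K_0\to K_0$, and the required homotopy in $\mathcal{D}(S)$ is furnished by $(h_0,0)$, using $A_0B_0-\Id=g^\ast d_{-1}\circ h_0$. This construction is the content missing from your sketch; once you have it, your inductive argument goes through exactly as you describe.
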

 The proof is a slight adaptation of \cite[Lemma 1.5.2]{kedlaya_liu_relative_p_adic_hodge_theory_foundations}.
 \begin{proof}
   We may assume that $H^i(K)=0$ for $i>0$. Then represent $K$ by a complex
   \[
     \ldots\to  K_{-2} \to K_{-1}\xrightarrow{d_{-1}} K_0\xrightarrow{d_0} K_1 \to \ldots 
   \]
   with $K_0, K_{-1}$ finite free $R$-modules, $K_{i}$ finite projective for all $i\in \Z$ and $K_{i}=0$ for $i\ll 0$ or $i>0$. The morphism $\alpha_K$ respectively $\alpha_K^{-1}$ is then represented by morphisms
   \[
     A_{i}\colon f^\ast K_i\to g^\ast K_i,\ \mathrm{resp.} B_i\colon g^\ast K_i\to f^\ast K_i
   \]
   (respecting the differential).
   As $\alpha_K\circ \alpha_K^{-1}=\Id_K$ (equality in the homotopy category), there exists
   \[
     h_i\colon g^\ast K_i\to g^\ast K_{i-1}
   \]
   such that
   \[
     A_i\circ B_i-\Id=g^\ast d_{i-1}\circ h_i+h_{i-1}\circ g^\ast d_i.
   \]
   In particular,
   \[
     A_0\circ B_0-\Id=g^\ast d_{-1}\circ h_0.
   \]
   Fix an isomorphism $R^n\cong K_0$. This yields identifications
   \[
     f^\ast K_0\cong S^n\cong g^\ast K_0
   \]
   and we let $\tilde{A}_0,\tilde{B}_0$ be the matrices representing $A_0, B_0$. Set $F:=K_0\oplus K_0$ and
   \[
     \alpha_F=
     \begin{pmatrix}
       \tilde{A}_0 & \tilde{A}_0\tilde{B}_0-\Id \\
       \Id & \tilde{B}_0
     \end{pmatrix}\colon f^\ast F\cong S^n\oplus S^n\to S^n\oplus S^n\cong g^\ast F.
   \]
   As
   \[
     \begin{pmatrix}
       \tilde{A}_0 & \tilde{A}_0\tilde{B}_0-\Id \\
       \Id & \tilde{B}_0
     \end{pmatrix}=\begin{pmatrix}
      \Id & \tilde{A}_0 \\
      0&  \Id 
     \end{pmatrix}
     \begin{pmatrix}
       0 & -\Id \\
       \Id & \tilde{B}_0
     \end{pmatrix}
   \]
   $\alpha_F$ is an isomorphism, and thus the pair $(F,\alpha_F)$ (with $F$ sitting in degree $0$) defines an object of $\mathcal{C}$. We now want to construct a morphism
   \[
     (F,\alpha_F)\to (K,\alpha_K).
   \]
   On underlying complexes we take the unique morphism of complexes which in degree $0$ is given by the first projection
   \[
     \varphi:=(\Id,0)\colon F=K_0\oplus K_0\to K_0.
   \]
   In order to upgrade $\varphi$ to a morphism in $\mathcal{C}$ we need to find a homotopy between the two compositions
   \[
     f^\ast F\to f^\ast K\xrightarrow{A_{\bullet}}g^\ast K_{\bullet}
   \]
   and
   \[
     f^\ast F\xrightarrow{\alpha_F} g^\ast F\to g^\ast F\to g^\ast K.
   \]
   Unravelling the definitions, we see that the composition
   \[
     f^\ast F\cong S^n\oplus S^n\xrightarrow{(\Id,0)} S^n\cong f^\ast K_0\xrightarrow{h_0} g^\ast K_{-1}
   \]
   works as
   \[
     A_0\circ B_0-\Id=g^\ast d_{-1}\circ h_0.
   \]
   Now the proposition follows by using induction on the amplitude of $K$.
 \end{proof}

 From the proof of \Cref{sec:perf-compl-relat-perfect-complex-strict} we can conclude that if $K\in \mathcal{P}erf(X_T)$ is a perfect complex of Tor-amplitude in $[a,b]$, then we can represent it by a complex if vector bundles, which is concentrated in degrees $[a,b]$.

\subsection{The moduli stack of flat coherent sheaves}
In this subsection, we define \textit{flat coherent sheaves} on relative Fargues-Fontaine curves, and show that they satisfy v-descent.

Let $S \in \mathrm{Perf}_{\mathbb{F}_q}$. An $\mathcal{O}_{X_S}$-linear map between two $\mathcal{O}_{X_S}$-modules is said to be \textit{fiberwise injective} if it is injective after base change along $X_{\mathrm{Spa}(C,C^+)} \to X_S$ for any geometric point $\mathrm{Spa}(C,C^+) \to S$ of $S$. Equivalently, it is an injective map which remains injective after base change along $X_{S'} \to X_S$ for any map $S' \to S$ in $\mathrm{Perf}_{\mathbb{F}_q}$. The same definition will be used when $X_S$ is replaced by an open subset of it.

\begin{definition}
\label{definition-flat-coherent-sheaf}
A \textit{flat coherent sheaf} on $X_S$ is an $\mathcal{O}_{X_S}$-module which can, locally for the analytic topology on $X_S$, be presented as the cokernel of a fiberwise injective map between two vector bundles on $X_S$. 
\end{definition}

By the remark after \Cref{sec:perf-compl-relat-general-lemma-for-strictness} we can globally represent any flat coherent sheaf on $X_T$ by a two-term complex  $\mathcal{E}_{-1}\xrightarrow{\alpha} \mathcal{E}_0$ of vector bundles on $X_T$. The morphism $\alpha$ is then automatically fiberwise injective.

\red{
\begin{remark}
\label{terminology-flat}
To justify the terminology we adopted, recall the following result in algebraic geometry\footnote{We thank David Hansen for drawing our attention to this point.}: if $S$ is a reduced Noetherian scheme, and $X \to S$ is a smooth projective relative curve, a coherent sheaf $\mathcal{F}$ on $X$ is $S$-flat (meaning that for each point $x \in X$ mapping to $s\in S$, the stalk $\mathcal{F}_x$ is flat over the ring $\mathcal{O}_{S,s}$) if and only if its fiberwise degree and generic rank functions are locally constant if and only if it is the cokernel of a fiberwise injective map between vector bundles. Indeed, the equivalence of the first two conditions is a particular case of a more general statement: if $S$ is a reduced Noetherian scheme and $f: X \to S$ a projective morphism, a coherent sheaf $\mathcal{F}$ on $X$ is $S$-flat if and only if the fiberwise Hilbert polynomial function is locally constant (this uses the characterization, \cite[Proposition 7.9.14]{egaiii2}, of $S$-flat coherent sheaves on $X$ as the coherent sheaves $\mathcal{F}$ on $X$ such that there exists $N\geq 0$ such that $f_\ast(\mathcal{F} \otimes \mathcal{L}^{\otimes n})$ is a locally free $\mathcal{O}_S$-module for all $n\geq N$, where $\mathcal{L}$ is a fixed ample line bundle for $f$ on $X$). The last condition clearly implies the second. Conversely, if $\mathcal{F}$ is a flat coherent sheaf on $X$, \cite[Proposition 2.1.10]{huybrechts2010geometry} provides a resolution
$$
0 \to \mathcal{E}_{-1} \to \mathcal{E}_0 \to \mathcal{F} \to 0
$$
of $\mathcal{F}$ by two vector bundles (using that $X \to S$ is smooth projective of relative dimension $1$), and the map $\mathcal{E}_{-1} \to \mathcal{E}_0$ has to be fiberwise injective, by flatness of $\mathcal{F}$ over $S$.   
\end{remark}
}

\begin{example}
\label{example-flat-coherent-sheaf}
Any vector bundle on $X_S$ is a flat coherent sheaf on $X_S$. If $S^\sharp$ is an untilt of $S$ over $E$, defining a Cartier divisor $i_{S^\sharp} : S^\sharp \to X_S$ on the Fargues-Fontaine curve (\cite[Proposition II.1.18, Proposition II.2.3]{fargues2021geometrization}), the ``skyscraper sheaf" $i_{S^\sharp,\ast} \mathcal{O}_{S^\sharp}$ is a flat coherent sheaf on $X_S$. 
\end{example}

The next result is an easy corollary of the work done in the previous subsections.

\begin{theorem}
\label{flat-coh-sheaves-form-a-v-stack}
The functor $\mathrm{Coh}^{\rm fl}$, sending $S\in \mathrm{Perf}_{\mathbb{F}_q}$ to the groupoid of flat coherent sheaves on $X_S$, is a small v-stack.
\end{theorem}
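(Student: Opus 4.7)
My plan is to exhibit $\mathrm{Coh}^{\rm fl}$ as a full substack of the v-stack of perfect complexes on relative Fargues-Fontaine curves and deduce the result from the descent machinery of the previous subsections.

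By the remark following \Cref{sec:perf-compl-relat-perfect-complex-strict}, I would first identify $\mathrm{Coh}^{\rm fl}(T)$ with the full subgroupoid of $\mathcal{P}erf^{[-1,0]}(X_T)^{\simeq}$ consisting of objects concentrated in degree zero, i.e.\ those admitting a global two-term resolution $[\mathcal{E}_{-1}\xrightarrow{\alpha}\mathcal{E}_0]$ by vector bundles with $\alpha$ fiberwise injective. The v-stack property of the functor $T\mapsto \mathcal{P}erf^{[-1,0]}(X_T)^{\simeq}$ then follows from \Cref{sec:v-descent-perfect-descent-of-perfect-complexes-on-relative-ff-curve}, applied with $U_T=X_T$, combined with the v-locality of Tor-amplitude that was already invoked in the proof of \Cref{sec:desc-perf-compl-1-descent-of-perfect-complexes-on-perfectoid-spaces}.

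The heart of the matter is to check that the property ``concentrated in degree zero'' is itself v-local. Given a v-cover $T'\to T$ and $\mathcal{K}\in \mathcal{P}erf^{[-1,0]}(X_T)$ with $\mathcal{K}|_{X_{T'}}\in \mathrm{Coh}^{\rm fl}(T')$, I would fix a two-term presentation $[\mathcal{E}_{-1}\xrightarrow{\alpha}\mathcal{E}_0]$ of $\mathcal{K}$ (locally on $X_T$, via strictness) and verify the fiberwise injectivity of $\alpha$. Since fiberwise injectivity is tested at geometric points $\bar{t}\to T$, and any such point lifts to a geometric point of $T'$ by the very definition of a v-cover, the fiberwise injectivity of $\alpha$ on $X_{T'}$ forces the same for $\alpha$ on $X_T$. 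This is the only non-formal point, and it is where the whole argument really turns.

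For smallness, I would present $\mathrm{Coh}^{\rm fl}$ via a v-surjection from a small v-stack: starting from the small v-stack $\mathrm{Bun}$ of vector bundles on $X_T$ (Fargues-Scholze), the v-stack parametrizing triples $(\mathcal{E}_{-1},\mathcal{E}_0,\alpha)$ with $\alpha$ fiberwise injective is an open substack of a Hom-stack between small v-stacks, hence is itself small, and the cokernel map to $\mathrm{Coh}^{\rm fl}$ is a v-surjection by the global strictness result already invoked. Combined with the descent step above, this yields the claim. The main obstacle, as indicated, is the v-locality of the ``concentrated in degree zero'' condition, which dissolves once one recalls that geometric points lift along v-covers.
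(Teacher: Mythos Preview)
Your proposal is correct and follows essentially the same approach as the paper: identify $\mathrm{Coh}^{\rm fl}(T)$ as a full subgroupoid of $\mathcal{P}erf^{[-1,0]}(X_T)^{\simeq}$, invoke \Cref{sec:v-descent-perfect-descent-of-perfect-complexes-on-relative-ff-curve} for descent of the ambient category, and observe that fiberwise injectivity is v-local because geometric points lift along v-covers. The only minor difference is that for smallness the paper simply cites the argument of \cite[Proposition III.1.3]{fargues2021geometrization}, whereas you sketch an explicit surjection from a moduli of two-term presentations; both are acceptable.
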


\begin{proof}
The smallness of $\Coh^{\mathrm{fl}}$ follows as in \cite[Proposition III.1.3]{fargues2021geometrization}.

Note now that for any $S\in \Perf_{\mathbb{F}_q}$, $\mathrm{Coh}^{\rm fl}(S)$ is a full subcategory of $\mathcal{P}erf^{[-1,0]}(X_S)$.

Let $T \to S$ be a v-cover in $\Perf_{\mathbb{F}_q}$ with associated \v{C}ech nerve
\[
  T^{\bullet/S},
\]
which is a simplicial object in $\Perf_S$.

We want to show that the canonical morphism
\[
  \mathrm{Coh}^{\rm fl}(S) \to \varprojlim\limits_{\Delta} \mathrm{Coh}^{\rm fl}(T^{\bullet/S})
\]
is an equivalence. We already know fully faithfulness and only need to prove essential surjectivity. Since we know by \Cref{sec:v-descent-perfect-descent-of-perfect-complexes-on-relative-ff-curve} that perfect complexes with Tor-amplitude in $[-1,0]$ satisfy v-descent, all we need to check is that fiberwise injectivity can be checked v-locally, which is clear.
\end{proof}    

We can extend the ampleness result \cite[Theorem II.2.6]{fargues2021geometrization} to flat coherent sheaves.

\begin{lemma}
  \label{sec:moduli-stack-flat-ampleness-for-flat-coherent-sheaves}
  Let $T\in \Perf_{\F_q}$ be affinoid perfectoid and let $\mathcal{F}\in \Coh^{\rm fl}(T)$ be a flat coherent sheaf on $X_T$. Then there exists an $n_0\geq  0$ such that for every $n\geq n_0$ there exists a presentation
  \[
    0\to \mathcal{E}_{-1}\to\mathcal{O}_{X_T}(-n)^m\to \mathcal{F}\to 0
  \]
  for some $m\geq 0$ with $\mathcal{E}_{-1}$ a vector bundle, and $H^1(X_T,\mathcal{F}(n))=0$.
\end{lemma}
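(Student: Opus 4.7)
The strategy is to reduce to the known ampleness statement \cite[Theorem II.2.6]{fargues2021geometrization} for vector bundles. By the remark following \Cref{sec:perf-compl-relat-general-lemma-for-strictness}, we can globally represent $\mathcal{F}$ as the cokernel of a fiberwise injective map $\mathcal{E}_{-1}\xrightarrow{\alpha}\mathcal{E}_0$ of vector bundles on $X_T$, giving a short exact sequence
\[
0 \to \mathcal{E}_{-1} \to \mathcal{E}_0 \to \mathcal{F} \to 0.
\]

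Applying \cite[Theorem II.2.6]{fargues2021geometrization} to the vector bundles $\mathcal{E}_{-1}$ and $\mathcal{E}_0$, one gets an $n_0 \geq 0$ such that for every $n \geq n_0$ we have $H^1(X_T, \mathcal{E}_{-1}(n)) = H^1(X_T, \mathcal{E}_0(n)) = 0$ and $\mathcal{E}_0(n)$ is globally generated, i.e.\ there is a surjection $\mathcal{O}_{X_T}^{m} \twoheadrightarrow \mathcal{E}_0(n)$ for some $m \geq 0$. Twisting the short exact sequence by $\mathcal{O}_{X_T}(n)$ and taking the associated long exact sequence then yields $H^1(X_T, \mathcal{F}(n)) = 0$; here one uses that $X_T$ has cohomological dimension $\leq 1$ for vector bundles, which follows from the two-term \v{C}ech description $X_T \cong Y_{T,[1,q]}/\varphi$ already used in the proof of \Cref{sec:perf-compl-relat-perfect-complex-strict}, so that $H^2(X_T, \mathcal{E}_{-1}(n)) = 0$. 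Composing the surjection $\mathcal{O}_{X_T}^{m} \twoheadrightarrow \mathcal{E}_0(n)$ with the quotient map to $\mathcal{F}(n)$ and then twisting by $\mathcal{O}_{X_T}(-n)$, I obtain the desired surjection $\mathcal{O}_{X_T}(-n)^m \twoheadrightarrow \mathcal{F}$.

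It remains to check that the kernel $\mathcal{E}_{-1}' := \ker(\mathcal{O}_{X_T}(-n)^m \to \mathcal{F})$ is a vector bundle. Since $\mathcal{F}$ has Tor-amplitude in $[-1, 0]$ (being globally resolved by two vector bundles), the Tor long exact sequence attached to
\[
0 \to \mathcal{E}_{-1}' \to \mathcal{O}_{X_T}(-n)^m \to \mathcal{F} \to 0
\]
gives $\mathrm{Tor}_i^{\mathcal{O}_{X_T}}(\mathcal{E}_{-1}', M) = 0$ for all $i \geq 1$ and all $\mathcal{O}_{X_T}$-modules $M$, so $\mathcal{E}_{-1}'$ is flat. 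Being moreover a perfect complex concentrated in degree $0$ (as the kernel of a surjection between perfect complexes is perfect), it is finite locally free, hence a vector bundle on $X_T$, as desired.

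The substantive input in this plan is the ampleness theorem \cite[Theorem II.2.6]{fargues2021geometrization}, together with the two existing structural observations about $X_T$ used above (cohomological dimension $\leq 1$ and strictness of perfect complexes); the rest is a routine homological manipulation. The only place where some care is needed is in arguing the $H^2$-vanishing step cleanly -- one could alternatively bypass it by first showing global generation of $\mathcal{F}(n)$ from that of $\mathcal{E}_0(n)$ (which is purely formal), and deducing the $H^1$-vanishing a posteriori from the same long exact sequence after further increasing $n$ so that $H^1(X_T, \ker(\mathcal{O}_{X_T}^{m} \to \mathcal{F}(n))) = 0$.
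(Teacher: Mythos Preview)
Your proof is correct and follows essentially the same route as the paper's: start from a global two-term vector bundle resolution of $\mathcal{F}$, apply \cite[Theorem II.2.6]{fargues2021geometrization} to the terms, compose the resulting surjection $\mathcal{O}_{X_T}(-n)^m\twoheadrightarrow\mathcal{E}_0$ with $\mathcal{E}_0\twoheadrightarrow\mathcal{F}$, and check the kernel is a vector bundle via a Tor-amplitude argument. The only cosmetic differences are that you make the $H^2$-vanishing step explicit (the paper leaves it implicit) and phrase the kernel argument in terms of $\mathcal{F}$ having Tor-amplitude in $[-1,0]$, whereas the paper equivalently says the dual of $\mathcal{F}$ lies in $\mathcal{P}erf^{[0,1]}(X_T)$.
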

\begin{proof}
  Fix a presentation $0\to \mathcal{F}_{-1}\to \mathcal{F}_0\to \mathcal{F}\to 0$ with $\mathcal{F}_{-1},\mathcal{F}_0$ two vector bundles. By choosing $n_0$ large enough we can assure that $H^1(X_T,\mathcal{F}_{-1}(n))=H^1(X_T,\mathcal{F}_0(n))=0$ and that $\mathcal{F}_0(n)$ is globlly generated for $n\geq 0$ (\cite[Theorem II.2.6]{fargues2021geometrization}.
  Then $H^1(X_T,\mathcal{F}(n))=0$ and we claim that for any surjection $\mathcal{O}_X(-n)^m\to \mathcal{F}_{0}$ the kernel $\mathcal{E}_{-1}$ of the composition $\mathcal{O}_X(-n)^m\to \mathcal{F}_{0}\to \mathcal{F}$ is a vector bundle. For this it suffices to show that $\mathcal{E}_{-1}$ is a perfect complex of Tor-amplitude $0$. Perfectness is clear, and assertion on Tor-amplitude follows because the dual of $\mathcal{F}$ is contained in $\Perf^{[0,1]}(X_T)$. This finishes the proof. 
\end{proof}

Since the rank and degree functions for vector bundles are additive, we can as well define the rank and degree functions (which are respectively functions from $|S|$ to $\mathbb{\Z}_{\geq 0}$ and $\Z$) for a perfect complex and thus in particular for a flat coherent sheaf. These functions are locally constant and for each pair of integers $(i,d) \in \Z_{\geq 0} \times \Z$, we will denote by 
$$
\Coh_{i,d}^{\rm fl}
$$
the open substack of $\Coh^{\rm fl}$ formed by flat coherent sheaves having (generic) rank $i$ and degree $d$.

\subsection{Further results on flat coherent sheaves}
\label{sec:further-results-flat-coherent-sheaves}
A flat coherent sheaf can by definition be presented as the cokernel of a fiberwise injective morphism between vector bundles. We will now prove a refinement of this, under assumptions on the slopes.

\begin{definition}
Let $S \in \Perf_{\mathbb{F}_q}$ and $\mathcal{F} \in \Coh^{\rm fl}(S)$. We say that $\mathcal{F}$ has \textit{non-negative slopes}, resp. \textit{positive slopes}, if its pullback along $X_{\mathrm{Spa}(C,C^+)} \to X_S$ for any geometric point $\mathrm{Spa}(C,C^+) \to S$ has only non-negative slopes, resp. only positive slopes (by convention, a torsion coherent sheaf on $X_{\mathrm{Spa}(C,C^+)}$ has slope $+\infty$). 
\end{definition}

For $i\geq 0$, $d\in \Z$, we will denote by
$$
\Coh_{i,d}^{\rm fl,\geq 0}, ~~ \mathrm{resp}. ~~ \Coh_{i,d}^{\rm fl,> 0},
$$
the substack of $\Coh_{i,d}^{\rm fl}$ formed by flat coherent sheaves having non-negative, resp. positive slopes.

\begin{proposition}
\label{presentation-flat-coh-sheaf-non-negative-slopes}
Let $S\in \Perf_{\mathbb{F}_q}$ and let $\mathcal{F} \in \Coh_{i,d}^{\rm fl,\geq 0}(S)$. There exists, v-locally on $S$, a short exact sequence
$$
0 \to \mathcal{O}_{X_S}(-1)^d \to \mathcal{O}_{X_S}^{i+d} \to \mathcal{F} \to 0.
$$ 
\end{proposition}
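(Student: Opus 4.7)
The plan is to establish the v-surjectivity of the natural moduli of such resolutions. Consider the v-sheaf $Q$ over $S$ whose $T$-points are pairs $(\phi,\alpha)$ with $\phi \colon \mathcal{O}_{X_T}(-1)^d \to \mathcal{O}_{X_T}^{i+d}$ a morphism and $\alpha \colon \mathrm{coker}(\phi) \xrightarrow{\sim} \mathcal{F}_T$ an isomorphism; the proposition amounts to the assertion that $Q \to S$ is a v-cover. I would split the argument into fiberwise existence over each geometric point of $S$ and a spreading-out argument.

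For fiberwise existence, I would appeal to the structure theory of coherent sheaves on $X_C$ for $C$ algebraically closed nonarchimedean: $\mathcal{F}_C$ decomposes canonically as $V \oplus T$, where $V = \bigoplus_j \mathcal{O}(a_j/h_j)^{m_j}$ (with $a_j/h_j \geq 0$) is the vector bundle part, and $T = \bigoplus_k i_{C_k^\sharp,*}(\mathcal{O}_{C_k^\sharp}/\pi^{n_k})$ is the torsion part. Each stable summand $\mathcal{O}(a/h)$ with $a/h \geq 0$ admits a resolution $0 \to \mathcal{O}(-1)^a \to \mathcal{O}^{a+h} \to \mathcal{O}(a/h) \to 0$, obtained from a generating tuple of global sections. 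Each torsion summand $i_{C^\sharp,*}(\mathcal{O}_{C^\sharp}/\pi^n)$ admits a Jordan-type resolution $0 \to \mathcal{O}(-1)^n \to \mathcal{O}^n \to i_{C^\sharp,*}(\mathcal{O}_{C^\sharp}/\pi^n) \to 0$, given by an upper-triangular $n \times n$ matrix whose diagonal is the section $f \in H^0(X_C, \mathcal{O}(1))$ cutting out $C^\sharp$ and whose immediately super-diagonal entries are sections of $\mathcal{O}(1)$ non-vanishing at $C^\sharp$ (a stalk computation at $C^\sharp$ identifies the cokernel with $\mathcal{O}/(f^n)$). Direct summing these yields the desired fiberwise resolution of $\mathcal{F}_C$.

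For the spreading-out step, the space of morphisms $\phi$ is represented by the $d(i+d)$-fold product of Banach-Colmez spaces $\BC(\mathcal{O}(1))$, a cohomologically smooth v-sheaf over $S$. The locus on which $\phi$ is fiberwise injective and the cokernel has the Harder-Narasimhan polygon of $\mathcal{F}$ is open, by upper semi-continuity of the HN polygon on families of coherent sheaves on the relative Fargues-Fontaine curve (as developed in \cite{fargues2021geometrization}) together with openness of fiberwise injectivity. Combined with the fiberwise existence established above, this open substack meets every geometric fiber of $S$ and hence v-covers $S$. The remaining datum of the isomorphism $\alpha$ then lives in the automorphism torsor of $\mathcal{F}$ and is trivialized v-locally using v-descent of flat coherent sheaves (\Cref{flat-coh-sheaves-form-a-v-stack}).

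The main obstacle is the spreading-out step, which hinges on openness of the HN polygon stratification on the stack of flat coherent sheaves on $X_T$ and on the cohomological smoothness of $\BC(\mathcal{O}(1))$ over $S$. Both are available in the Fargues-Scholze framework but are non-trivial analytic inputs. The fiberwise existence, while essentially elementary, also demands care for the nilpotent torsion summands, where the Jordan-type construction is needed rather than a naive diagonal matrix.
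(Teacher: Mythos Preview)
Your fiberwise existence argument over geometric points is correct and close in spirit to the paper's (the paper handles torsion by pulling back a resolution from $\mathbb{P}^1_E$ along a morphism $X_C\to\mathbb{P}^1_E$, but your Jordan-block construction works equally well).

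The gap is in the spreading-out step. After producing the smooth $v$-cover $M^\circ\to S$ (maps $\phi$ with fiberwise injective $\phi$ and cokernel of the right HN polygon), you assert that the remaining datum $\alpha$ ``lives in the automorphism torsor of $\mathcal{F}$.'' But $\mathrm{Isom}(\mathrm{coker}(\phi),\mathcal{F})$ is an $\mathrm{Aut}(\mathcal{F})$-torsor only on the locus where $\mathrm{coker}(\phi)$ is already v-locally isomorphic to $\mathcal{F}$, which is exactly what you are trying to establish. Having the same HN polygon does not force this: for instance, if $\mathcal{F}$ is a length-one skyscraper at a point $x$, then over $M^\circ$ the cokernel is the skyscraper at the zero locus of $\phi$, and the sublocus where this equals $x$ is closed of positive codimension, not open. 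So $Q\to M^\circ$ is not surjective and your argument does not show $Q\to S$ is a v-cover.

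The paper avoids this by reversing the direction of the parametrization. Instead of starting from $\phi$ and matching the cokernel to $\mathcal{F}$, it parametrizes directly by extensions of $\mathcal{F}$ by $\mathcal{O}(-1)^d$: the fiber product $T'=Z_{i,d}\times_{\Coh_{i,d}^{\rm fl}}T$ is identified with an \emph{open} substack of the stack $\mathcal{E}xt^1_{X_T}(\mathcal{F},\mathcal{O}(-1)^d)$ (the openness coming from the conditions that the middle term be a vector bundle and semistable of slope $0$). This Ext stack is then shown to be cohomologically smooth over $T$ by choosing a presentation $0\to\mathcal{E}_{-1}\to\mathcal{O}(-n)^m\to\mathcal{F}\to 0$ with $n\gg 0$ and exhibiting it as the quotient of one positive-slope Banach--Colmez space by another. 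Smoothness then gives openness of the image, and surjectivity is checked on geometric points (as you do). Your $Q$ is in fact a $\underline{\GL_{i+d}(E)}$-torsor over the paper's $T'$, so it \emph{is} smooth over $S$; but establishing that requires the Ext-stack argument, not the HN-polygon stratification.
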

\begin{proof}
Let $Z_{i,d}$ be the moduli stack of fiberwise injective maps 
$$
\mathcal{O}_{X_S}(-1)^d \to \mathcal{E}
$$ 
with $\mathcal{E}$ a slope $0$ semi-stable vector bundle of rank $i+d$. There is a natural map
$$
f: Z_{i,d} \to \Coh_{i,d}^{\rm fl}
$$
sending a fiberwise injective map $\mathcal{O}_{X_S}(-1)^d \to \mathcal{E}$ to its cokernel $\mathcal{F}$. It suffices to show that $f$ is a v-cover.

We claim that $f$ is representable in smooth Artin v-stacks. Indeed, let $T\in \Perf_{\mathbb{F}_q}$ with a map $T \to \Coh_{i,d}^{\rm fl}$ corresponding to a flat coherent sheaf $\mathcal{F}$ on $X_T$ of generic rank $i$ and degree $d$, and let
$$
T^\prime := T \times_{\Coh_{i,d}^{\rm fl},f} Z_{i,d}
$$
be the fiber product. Then $T^\prime$ is a substack of the stack $\mathcal{E}xt_{X_T}^1(\mathcal{F},\mathcal{O}(-1)^d)$ sending $U \in \Perf_T$ to the groupoid of extensions
$$
0\to \mathcal{O}(-1)^d \to \mathcal{E} \to \mathcal{F} \to 0
$$
(which is indeed a v-stack, since $\Coh^{\rm fl}$ is). The condition that the extension $\mathcal{E}$ is a vector bundle is an open condition, since $\Bun$ is an open substack of $\Coh^{\rm fl}$. (To see it, one can argue as follows: the locus in $|X_S|$ where a flat coherent sheaf is locally free is open as it is a union of loci defined by the condition that a suitable minor of some matrix is invertible. Its complement is therefore closed, and since the map $|X_S| \to |S|$ is closed (\cite[Proposition II.1.21]{fargues2021geometrization}), its image in $|S|$ is also closed. Hence its complement is open.) Moreover, the condition that the vector bundle $\mathcal{E}$ is semi-stable of slope $0$ is an open condition, by upper semi-continuity of the Harder-Narasimhan polygon (\cite[Theorem II.2.19]{fargues2021geometrization}). Hence, $T^\prime$ is an open substack of $\mathcal{E}xt_{X_T}^1(\mathcal{F},\mathcal{O}(-1)^d)$. Therefore, to establish the claimed cohomological smoothness, it suffices to see that $\mathcal{E}xt_{X_T}^1(\mathcal{F},\mathcal{O}(-1)^d)$ is a smooth Artin v-stack over $T$. To do so, pick a presentation 
$$
0 \to \mathcal{E}_{-1} \to \mathcal{E}_0 \to \mathcal{F} \to 0
$$
with $\mathcal{E}_0 \cong \mathcal{O}_{X_T}(-n)^m$, with $n \gg 0$, $m\gg 0$, and $\mathcal{E}_{-1}$ a vector bundle (necessarily with slopes $\leq -n$), as we can by \Cref{sec:moduli-stack-flat-ampleness-for-flat-coherent-sheaves}. We deduce, for $U \in \Perf_T$, a long exact sequence
$$
0 \to \mathcal{H}om_{X_U}(\mathcal{F},\mathcal{O}(-1)^d) \to \mathcal{H}om_{X_U}(\mathcal{E}_0,\mathcal{O}(-1)^d) \to \mathcal{H}om_{X_U}(\mathcal{E}_{-1},\mathcal{O}(-1)^d) 
$$
$$
\to \mathcal{E}xt_{X_U}^1(\mathcal{F},\mathcal{O}(-1)^d) \to \mathcal{E}xt_{X_U}^1(\mathcal{E}_0,\mathcal{O}(-1)^d). 
$$
For $n \gg 0$, the last term is zero, while $\mathcal{H}om_{X_U}(\mathcal{E}_{i},\mathcal{O}(-1)^d)$ for $i=-1,0$ is the Banach-Colmez space attached to a vector bundle with positive slopes, hence is cohomologically smooth by \cite[Proposition III.3.5]{fargues2021geometrization}. Thus, the above sequence expresses the stack $\mathcal{E}xt_{X_U}^1(\mathcal{F},\mathcal{O}(-1)^d)$ as the quotient of a cohomologically smooth Banach-Colmez space by the action of another cohomologically smooth Banach-Colmez space.

As $f$ is cohomologically smooth, it is open (\cite[Proposition 23.11]{scholze_etale_cohomology_of_diamonds}), and thus a v-cover onto its image. The image is determined on underlying topological spaces and hence we may reduce to the case that $U=\Spa(C,\mathcal{O}_C)$ is a geometric point. By looking at the slope of the last quotient of the Harder-Narasimhan filtration it is clear that the image of $f$ is contained in $\Coh^{\mathrm{fl},\geq 0}_{i,d}$. For the converse, assume that $\mathcal{F}$ is a coherent sheaf with non-negative slopes on the Fargues-Fontaine curve $X_C$ attached to the complete algebraically closed field $C$ of characteristic $p$. It suffices to treat the case of a vector bundle with non-negative slopes or $\mathcal{F}=i_{x,\ast} B_\dR^+(C_x)/t_x^n$, for some classical point $x$ of $X_C$ corresponding to some untilt $C_x$ of $C$ and some $n>0$. Here, $t_x\in H^0(X_C,\mathcal{O}_{X_C}(1))$ is some section with vanishing locus $x$. The first case follows now from \cite[Theorem II.3.1]{fargues2021geometrization}. In the second case, choose another section $t\in H^0(X_C,\mathcal{O}_{X_C}(1))$ such that $t_x,t$ generate $\mathcal{O}_{X_C}(1)$. The pair $(t_x,t)$ define a morphism $\alpha\colon X_C\to \mathbb{P}^1_E$ of locally ringed spaces mapping $x$ to the $E$-rational point $y:=[0:1]\in \mathbb{P}^1_E$. Now, let $\mathcal{G}$ be a skyscraper sheaf on $\mathbb{P}^1_E$ supported at $y$, and consider the canonical surjection
  \[
    \mathcal{O}_{\mathbb{P}^1_{E}}\otimes_E H^0(\mathbb{P}^1_E,\mathcal{G})\to \mathcal{G}.
  \]
  Its kernel is a vector bundle of rank the degree of $\mathcal{G}$, with only non-positive Harder-Narasimhan slopes and without global sections, and thus isomorphic to $\mathcal{O}_{\mathbb{P}^1_E}(-1)$. Pulling back such a sequence (for the obvious choice of $\mathcal{G}$) along $\alpha$ yields the desired presentation of $i_{x,\ast} B_\dR^+(C_x)/t_x^n$. This finishes the proof. 
\end{proof}

\begin{corollary}
\label{presentation-flat-coh-sheaf-positive-slopes}
Let $S\in \Perf_{\mathbb{F}_q}$ and let $\mathcal{F} \in \Coh^{\rm fl,> 0}(S)$. There exists, v-locally on $S$, a short exact sequence
$$
0 \to \mathcal{O}_{X_S}^d \to \mathcal{E} \to \mathcal{F} \to 0,
$$
where $\mathcal{E}$ is a semi-stable vector bundle of positive slope. 
\end{corollary}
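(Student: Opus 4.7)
The plan is to adapt the proof of \Cref{presentation-flat-coh-sheaf-non-negative-slopes} almost verbatim, replacing the pair $(\mathcal{O}_{X_S}(-1)^d,\mathcal{O}_{X_S}^{i+d})$ there by $(\mathcal{O}_{X_S}^d,\mathcal{E})$ here, for a suitable semi-stable positive-slope $\mathcal{E}$. Fix the invariants $(i,d)$ of $\mathcal{F}$ (locally constant on $S$; because $\mathcal{F}$ has positive slopes we have $d>0$) and introduce the moduli stack $Z_{i,d}$ of pairs $(\mathcal{E},\iota)$ with $\mathcal{E}$ a semi-stable vector bundle on $X_S$ of rank $d+i$ and degree $d$, i.e.\ of slope $d/(d+i)\in (0,1]$, and $\iota:\mathcal{O}_{X_S}^d\to \mathcal{E}$ a fiberwise injective morphism whose cokernel has positive slopes. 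Sending $(\mathcal{E},\iota)$ to $\mathrm{coker}(\iota)$ gives a natural morphism $f:Z_{i,d}\to\Coh_{i,d}^{\rm fl,>0}$, and the corollary will follow once I show that $f$ is a v-cover.

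Representability of $f$ in cohomologically smooth Artin v-stacks is verified exactly as in \Cref{presentation-flat-coh-sheaf-non-negative-slopes}. For $T\to\Coh_{i,d}^{\rm fl,>0}$ classifying $\mathcal{F}$ on $X_T$, the fiber product is an open substack of $\mathcal{E}xt^1_{X_T}(\mathcal{F},\mathcal{O}^d)$, openness reflecting that $\mathcal{E}$ is a vector bundle (openness of $\Bun\subset\Coh^{\rm fl}$) and semi-stable of the prescribed slope (upper semi-continuity of the Harder-Narasimhan polygon, \cite[Theorem II.2.19]{fargues2021geometrization}). Picking a resolution $0\to\mathcal{E}_{-1}\to\mathcal{E}_0\to\mathcal{F}\to 0$ with $\mathcal{E}_0\cong\mathcal{O}(-n)^m$ for $n\gg 0$ via \Cref{sec:moduli-stack-flat-ampleness-for-flat-coherent-sheaves} and applying $\mathcal{H}om_{X_U}(-,\mathcal{O}^d)$, the resulting long exact sequence presents $\mathcal{E}xt^1_{X_U}(\mathcal{F},\mathcal{O}^d)$ as a quotient of two Banach-Colmez spaces attached to positive-slope vector bundles $\mathcal{E}_i^\vee\otimes\mathcal{O}^d$ (which have slopes $\geq n\geq 1$), hence cohomologically smooth by \cite[Proposition III.3.5]{fargues2021geometrization}; the term $\mathcal{E}xt^1_{X_U}(\mathcal{E}_0,\mathcal{O}^d)\cong H^1(X_U,\mathcal{O}(n)^{md})$ vanishes since $n\geq 0$.

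Since $f$ is cohomologically smooth it is open, hence a v-cover onto its image, so it remains to check essential surjectivity on geometric points $\Spa(C,\mathcal{O}_C)$. For this I would use the classification of coherent sheaves on $X_C$ to split $\mathcal{F}$ as a direct sum of semi-stable vector bundle summands $\mathcal{O}(a/b)^k$ with $a/b>0$ and of torsion summands $i_{x,\ast}B_{\mathrm{dR}}^+(C_x)/t_x^n$, and exhibit a semi-stable extension for each type: for a pure summand $\mathcal{O}(a/b)$ with $\gcd(a,b)=1$, the sequence $0\to\mathcal{O}^a\to\mathcal{O}(a/(a+b))\to\mathcal{O}(a/b)\to 0$ whose middle term is the simple semi-stable bundle of rank $a+b$ and degree $a$ (its existence follows because $H^1(X_C,\mathcal{O}(-a/b))\neq 0$); for a torsion summand $i_{x,\ast}B_{\mathrm{dR}}^+/t^n$ the sequence $0\to\mathcal{O}\to\mathcal{O}(n)\to i_{x,\ast}B_{\mathrm{dR}}^+/t^n\to 0$ defined by a section of $\mathcal{O}(n)$ with divisor $n\cdot x$, in the spirit of the $\mathbb{P}^1_E$-pullback argument of \Cref{presentation-flat-coh-sheaf-non-negative-slopes}. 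The main obstacle -- and the step I expect to be the real difficulty -- is assembling these summand-wise semi-stable extensions into a single semi-stable extension for the whole $\mathcal{F}$, since the naive direct sum has the correct rank and degree but a non-trivial Harder-Narasimhan polygon. I would handle this by induction on the number of Harder-Narasimhan strata, at each step forming a pushout that merges two semi-stable pieces into a single semi-stable one, and invoking openness of the semi-stability locus inside $\mathcal{E}xt^1(\mathcal{F},\mathcal{O}^d)$ to conclude.
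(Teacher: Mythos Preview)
Your overall strategy---rerunning the argument of \Cref{presentation-flat-coh-sheaf-non-negative-slopes} with the pair $(\mathcal{O}(-1)^d,\mathcal{O}^{i+d})$ replaced by $(\mathcal{O}^d,\mathcal{E})$---is coherent, and the cohomological smoothness part goes through essentially as you wrote. However, the paper takes a completely different and much shorter route: it \emph{deduces} the corollary from \Cref{presentation-flat-coh-sheaf-non-negative-slopes} rather than reproving it. Concretely, if $\mathcal{F}$ has rank $r$, all of its slopes are $\geq 1/r$; passing to the Fargues--Fontaine curve for the degree $r$ unramified extension $E_r/E$ via $\pi_r:X_{S,r}\to X_S$ multiplies slopes by $r$, so $\pi_r^\ast\mathcal{F}(-1)$ has non-negative slopes. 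One applies \Cref{presentation-flat-coh-sheaf-non-negative-slopes} to it, twists back by $\mathcal{O}(1)$ (so the middle term becomes semi-stable of slope $1$), pushes forward along $\pi_r$ (so the middle term becomes semi-stable of slope $1/r$), and finally uses that $\mathcal{F}$ is a direct summand of $\pi_{r,\ast}\pi_r^\ast\mathcal{F}$ to pull back. This trick completely avoids the geometric-point analysis that you identify as the hard step.

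In your approach, that hard step is genuinely incomplete. The summand-wise extensions you write down have middle terms that are semi-stable of \emph{different} slopes (for torsion pieces your $\mathcal{O}(n)$ has slope $n$, for $\mathcal{O}(a/b)$ you get slope $a/(a+b)$), so their direct sum is never semi-stable, as you note. But the proposed fix---``forming a pushout that merges two semi-stable pieces into a single semi-stable one'' and ``invoking openness of the semi-stability locus''---does not constitute an argument: openness gives you nothing without non-emptiness, and it is not clear what pushout you have in mind or why its output would be semi-stable. To salvage this route you would really need to prove directly that for any $\mathcal{F}$ of rank $i$ and degree $d$ over a geometric point, the semi-stable locus in $\mathrm{Ext}^1(\mathcal{F},\mathcal{O}^d)$ is non-empty. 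This is believable but requires a genuine argument (e.g.\ a careful stratification or dimension count on the map to $\Bun_{d+i}$), and is precisely what the paper's unramified-cover trick lets you sidestep.
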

\begin{proof}
The statement being local on $S$, we can assume that $S$ is affinoid perfectoid and that $\mathcal{F}$ has constant rank $r$ and degree on $S$. We argue as in the proof of \cite[Corollary II.3.3]{fargues2021geometrization} to deduce the statement from \Cref{presentation-flat-coh-sheaf-non-negative-slopes}. Since $\mathcal{F}$ has constant rank $r$ and only positive slopes, we see that all slopes of $\mathcal{F}$ at all geometric points of $S$ are $\geq 1/r$. Let $\pi_r$ be the natural map from the Fargues-Fontaine curve $X_{S,r}$ attached to $S$ and the degree $r$ unramified extension $E_r$ of $E$ to $X_S$ (the Fargues-Fontaine curve for $S$ and $E$). We apply \Cref{presentation-flat-coh-sheaf-non-negative-slopes} to $\pi_r^\ast \mathcal{F}(-1)$. Locally on $S$, we get a short exact sequence of $\mathcal{O}_{X_{S,r}}$-modules
$$
0 \to \mathcal{O}_{X_{S,r}}^{d'} \to \mathcal{E}^\prime \to \pi_r^\ast \mathcal{F} \to 0,
$$
with $\mathcal{E}^\prime$ semi-stable of slope $1$. Applying $\pi_{r,\ast}$, we get a short exact sequence
$$
0 \to \mathcal{O}_{X_S}^{rd'} \to \pi_{r,\ast} \mathcal{E}^\prime \to \pi_{r,\ast} \pi_r^\ast \mathcal{F} \to 0. 
$$
Since $\mathcal{F}$ is a direct summand of $\pi_{r,\ast} \pi_r^\ast \mathcal{F}$, we get the desired exact sequence by pullback.
\end{proof}

\begin{remark}
  \label{the-stack-coh-positive-slopes-is-smooth}
  
For $d\geq 1$, let $W_d$ be the moduli stack of fiberwise injective maps 
$$
\mathcal{O}_{X_S}^d \to \mathcal{E}
$$ 
with $\mathcal{E}$ semi-stable vector bundle of positive slope. There is a natural map
$$
g_d: W_d \to \Coh^{\rm fl}
$$
sending a fiberwise injective map $\mathcal{O}_{X_S}^d \to \mathcal{E}$ to its cokernel $\mathcal{F}$, which is shown to be cohomologically smooth as in the proof of \Cref{presentation-flat-coh-sheaf-non-negative-slopes}. Hence the map
$$
g : W =\sqcup_{d\geq 1} W_d \to \Coh^{\rm fl, >0},
$$
which is $g_d$ in restriction to $W_d$ for each $d\geq 1$, is cohomologically smooth, too. It is also surjective by the last proposition. Moreover, for each $d\geq 1$, one has a map
$$
W_d \to \Bun^{\rm ss, >0}
$$
to the moduli stack of semi-stable vector bundles with positive slope, sending $\mathcal{O}_{X_S}^d \to \mathcal{E}$ to $\mathcal{E}$. This map is cohomologically smooth (since its fibers are open\footnote{To check openness, it is enough, by taking $d$th exterior powers, to argue when $d=1$ and then the claim is easy.} in positive slope Banach-Colmez spaces) and has a cohomologically smooth target. Therefore, $W_d$ for each $d\geq 1$, and hence also $W$, is cohomologically smooth. 

We have exhibited a cohomologically smooth cover of the small v-stack $\Coh^{\rm fl,>0}$ by a cohomologically smooth v-stack. Hence, if we can prove that the diagonal of the small v-stack $\Coh^{\rm fl,>0}$ is representable in locally spatial diamonds, we will have shown that $\Coh^{\rm fl, >0}$ is a cohomologically smooth Artin v-stack, in the sense of \cite[\S IV]{fargues2021geometrization}.

\red{It remains to prove the assertion about the diagonal of $\Coh^{\rm fl,>0}$. Let $S \in \Perf$ and let $\mathcal{F}, \mathcal{F}^\prime$ be two flat coherent sheaves on $X_S$. The sheaf of isomorphisms $\mathcal{I}som(\mathcal{F}, \mathcal{F}^\prime)$ is relatively representable by an open subspace of the sheaf $\mathcal{H}om(\mathcal{F},\mathcal{F}^\prime)$ (use that the map $|X_T| \to |T|$ is closed for every $T\in \Perf$), so it is enough to prove that the latter is a locally spatial diamond. Applying \Cref{sec:moduli-stack-flat-ampleness-for-flat-coherent-sheaves} to $\mathcal{F}$, we see that it is even enough to prove that $\mathcal{H}om(\mathcal{F},\mathcal{F}^\prime)$ is a locally spatial diamond when $\mathcal{F}$ is a vector bundle (as fiber products exist in the category of locally spatial diamonds). Hence, up to replacing $\mathcal{F}^\prime$ by $\mathcal{F}^\prime \otimes \mathcal{F}^{-1}$, it suffices to show that the functor sending $T\in \Perf_S$ to $H^0(X_T, \mathcal{F}^\prime)$ is a locally spatial diamond if $\mathcal{F}^\prime$ is a flat coherent sheaf on $X_S$. Applying \Cref{sec:moduli-stack-flat-ampleness-for-flat-coherent-sheaves} again, the statement is a special case of \cite[Proposition II.3.5 (i)]{fargues2021geometrization}. }
\end{remark}

\section{The Fourier transform}
\label{sec:a-fourier-transform-for-bc-spaces}

  In this section, we define and study \textit{very nice stacks in $E$-vector spaces} and their Fourier transform. Pro-\'etale $\underline{E}$-local systems are examples of very nice stacks in $E$-vector spaces, as are Banach-Colmez spaces attached to flat coherent sheaves with only positive slopes. Proving this last fact requires some preliminary Ext-groups computations, which we address first.

\subsection{Some Ext-group computations}
\label{sec:some-ext-groups-computation}

The main result of this subsection is \Cref{computations-of-the-desired-extensions} below, which describes some (local) Ext's in the category of v-sheaves of $\underline{E}$-vector spaces. This generalizes and improves on the results of \cite{le_bras_espaces_de_banach_colmez_et_faisceaux_coherents_sur_la_courbe_de_fargues_fontaine}. Contrary to \cite{le_bras_espaces_de_banach_colmez_et_faisceaux_coherents_sur_la_courbe_de_fargues_fontaine}, which used a truncated version of the Breen-Deligne resolution and was limited to low degrees and the case $E=\Q_p$, the main idea here is to prove a statement about self-extensions of the sheaf $\mathbb{A}_{\rm inf}$ (recalled below), which implies the desired results but has the advantage of being reducible to old deep results of Breen (\cite{breen_extensions_du_groupe_additif_sur_le_site_parfait}). 

When $\mathcal{S}$ is a site and $\Xi$ a sheaf of rings on $\mathcal{S}$, we will use the notation $$R\mathcal{H}om_{\mathcal{S},\Xi}(-,-)$$ to denote the derived internal Hom in the category of sheaves of $\Xi$-modules on $\mathcal{S}$\footnote{When $\mathcal{S}$ is the v-site of a small v-stack and $\Xi$ a topological ring, we will even write $R\mathcal{H}om_{\mathcal{S},\Xi}(-,-)$ instead of $R\mathcal{H}om_{\mathcal{S},\underline{\Xi}}(-,-)$, to keep the notation light.}.
\\

Before starting the computation, let us recall the following general result. 

\begin{theorem}
\label{definition-and-existence-of-the-maclane-complex}
Let $(\mathcal{T},\Xi)$ be a ringed topos, and let $P$ be a $\Xi$-module in $\mathcal{T}$. There exists a complex $M_{(\mathcal{T},\Xi)}(P)$ of $\Xi$-modules in $\mathcal{T}$ with an augmentation to $P$, functorial in $P$ and called \textit{the MacLane complex of $P$}, with the following two properties:
\begin{itemize}
\item The augmentation $\epsilon: M_{(\mathcal{T},\Xi)}(P) \to P$ is a quasi-isomorphism, i.e. $M_{(\mathcal{T},\Xi)}(P)$ is a resolution of $P$.
\item Each component $M_{(\mathcal{T},\Xi)}(P)_i$ of the complex is of the form $\Xi[\Xi^s \times P^t]/\Xi[0]$, where $s, t$ are integers depending on the integer $i$.
\end{itemize}
\end{theorem}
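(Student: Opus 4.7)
The plan is to reduce the existence of the MacLane complex to a classical construction in the category of abelian groups and then sheafify. Such a resolution (with terms of the form $\Z[A^t]/\Z[0]$) was originally constructed by MacLane; a version of it better suited to our purposes was subsequently produced by Breen in \cite{breen_extensions_du_groupe_additif_sur_le_site_parfait} via a careful combinatorial analysis. So the strategy is: first construct the complex abstractly over the category of sets (that is, for a $\Xi$-module in the punctual topos, with $\Xi$ just an ordinary ring), then transfer the construction into $\mathcal{T}$ using that the formation of the terms is manifestly functorial in $P$.

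The first step is the classical one. Starting from $P$ viewed as a pointed set (pointed by $0$), the reduced bar construction provides a resolution whose $n$-th component is $\Xi[P^n]/\Xi[0]$, with differentials given by the standard alternating sums of face maps. This already handles the case $s=0$ and yields a resolution functorial in $P$. To reach the more general form $\Xi[\Xi^s\times P^t]/\Xi[0]$, one has to work with the free-forgetful adjunction between $\Xi$-modules and pointed sets (through which the free functor sends a pointed set $X$ to $\Xi[X]/\Xi[0]$), and apply iteratively the associated comonad. This produces components indexed by iterated applications, which after simplification are direct sums of terms of the claimed shape; the combinatorial verification of this is exactly the MacLane/Breen construction.

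The second step is the transfer to $(\mathcal{T},\Xi)$. Since every term $\Xi[\Xi^s\times P^t]/\Xi[0]$ is defined by a formula in $\Xi$ and $P$ that makes sense in any ringed topos (the symbol $\Xi[-]$ denoting the free $\Xi$-module sheaf, with the base-point relation coming from $0$), and since the differentials and augmentation are defined by universal combinatorial formulas, the same construction produces a complex of $\Xi$-modules in $\mathcal{T}$ together with an augmentation to $P$, functorial in $P$. Exactness of the augmentation, which is a local property, can be checked on stalks and so reduces to the classical case verified above.

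The main obstacle is really the combinatorial construction at the level of ordinary modules, that is, the production of the terms $\Xi[\Xi^s\times P^t]/\Xi[0]$ and of explicit differentials making the result a resolution; this is handled by citing MacLane's original construction and Breen's refinement rather than reproducing it. Once the classical construction is in hand, the sheafification step is essentially formal, so the proof amounts to checking that the formulas for the components and differentials apply verbatim in a ringed topos and that the resulting augmentation is a quasi-isomorphism, which follows from the classical statement stalk-wise.
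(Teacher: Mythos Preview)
Your proposal is correct and takes the same approach as the paper, which simply cites \cite[\S 3]{breen_extensions_du_groupe_additif_sur_le_site_parfait} and attributes the construction to MacLane. One small caveat: your reduction of exactness to stalks presupposes that $\mathcal{T}$ has enough points, which is not assumed; the cleaner argument (implicit in Breen and MacLane) is that the contracting homotopy witnessing acyclicity is given by universal combinatorial formulas in $P$ and $\Xi$, and therefore transfers verbatim to any ringed topos without appeal to points.
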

\begin{proof}
The construction is explained in \cite[\S 3]{breen_extensions_du_groupe_additif_sur_le_site_parfait}, and is essentially due to MacLane.
\end{proof}

If $R$ is a perfect characteristic $p$ ring, we let $R[F^{\pm 1}]$
be the ring of \textit{non-commutative} polynomials in one variable $F$ over $R$, with multiplication given by
$$
Fa = \varphi(a) F
$$
for $a\in R$, where $\varphi$ denotes the $q$-Frobenius on $R$ (recall that $q$ was fixed once and for all, as the cardinality of the residue field of $E$). We denote by $\mathrm{Spec}(\mathbb{F}_q)_{\rm perf}$ the \textit{algebraic} perfect v-site of $\mathrm{Spec}(\mathbb{F}_q)$.

\begin{theorem}[Breen]
\label{main-theorem-of-breen-perfect-site}
The natural map, sending $F$ to the $q$-Frobenius $\varphi$ on $\mathbb{G}_a$ and $\mathbb{F}_q$ to its action on the right factor,
$$
\mathbb{F}_q[F^{\pm 1}] \to R\mathrm{Hom}_{\mathrm{Spec}(\mathbb{F}_q)_{\rm perf},\mathbb{F}_q}(\mathbb{G}_a,\mathbb{G}_a)
$$
is an isomorphism.
\end{theorem}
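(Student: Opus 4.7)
The plan is to reduce the theorem to Breen's original computation in \cite{breen_extensions_du_groupe_additif_sur_le_site_parfait}, which established the analogous result on the perfect \emph{\'etale} site of $\Spec(\F_p)$ with $\F_p$-linear coefficients. First I would check that the map is well-defined: on $\Spec(\F_q)_{\rm perf}$ every object is perfect, so the $q$-Frobenius $\varphi\colon \G_a \to \G_a$ is an isomorphism and $F^{-1}$ makes sense as $\varphi^{-1}$; moreover, since elements of $\F_q$ are fixed by $\varphi$, the ring $\F_q[F^{\pm 1}]$ is in fact commutative, and the assignment $F \mapsto \varphi$ extends uniquely to a ring homomorphism into the (commutative) endomorphism ring on the right. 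The strategy is then to compute the right-hand side explicitly and match it with the left.

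Next I would apply the MacLane resolution (\Cref{definition-and-existence-of-the-maclane-complex}) to $\G_a$ viewed as a sheaf of $\F_q$-modules. This gives a quasi-isomorphism $M_{(\mathcal{S},\F_q)}(\G_a) \xrightarrow{\sim} \G_a$ whose terms are finite sums of objects of the form $\F_q[\F_q^s \times \G_a^t]/\F_q[0]$. Applying $R\Hom(-,\G_a)$ reduces the computation to understanding $R\Gamma(\F_q^s \times \G_a^t, \G_a)$ on the algebraic perfect v-site. Since $\F_q^s \times \G_a^t$ is representable by a perfect affine scheme, and $\G_a$ is quasi-coherent, this v-cohomology vanishes in positive degrees and the global sections are the perfection $\F_q[X_1,\ldots,X_t]^{1/p^\infty}$ of the polynomial ring (using v-descent for modules on perfect affine schemes, which for the structure sheaf amounts to faithfully flat descent). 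Thus the complex computing $R\Hom(\G_a,\G_a)$ reduces to precisely the type of combinatorial complex Breen analyzed.

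At this point I would invoke Breen's main theorem, which identifies the $\F_p$-linear endomorphism complex on the perfect \'etale site with $\F_p[F_p^{\pm 1}]$ concentrated in degree $0$, where $F_p$ is the absolute Frobenius. To transport this to our setting I would address two comparisons. For the topology: since all the sheaves that appear (after the MacLane reduction) are representable by perfect affine schemes, v-cohomology agrees with \'etale cohomology, so Breen's answer is unchanged. For the coefficients: $R\Hom_{\mathcal{S},\F_q}(\G_a,\G_a)$ is the subcomplex of $R\Hom_{\mathcal{S},\F_p}(\G_a,\G_a)$ on which the two $\F_q$-actions (source and target) coincide; under Breen's identification with $\F_p[F_p^{\pm 1}]$, this subring is exactly the $\F_p$-linear combinations of powers $F_p^{rk}$ with $k \in \Z$, where $q = p^r$, which is $\F_q[F^{\pm 1}]$ with $F = \varphi = F_p^r$.

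The main obstacle is the bookkeeping needed to reduce honestly to the form of the statement in \cite{breen_extensions_du_groupe_additif_sur_le_site_parfait}. Breen works with absolute Frobenius and with the \'etale site, so one must verify cleanly both that the v-topology does not create extra cohomology in our situation (a general fact about representable sheaves of affine type) and that the passage from $\F_p$- to $\F_q$-linear morphisms affects only the coefficients of the endomorphism ring in the indicated way. The MacLane resolution is the technical tool that makes both comparisons tractable, by reducing everything to cohomology of explicit, representable test objects where these subtleties can be checked directly.
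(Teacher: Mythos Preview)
Your overall strategy matches the paper's: use the MacLane resolution to reduce the site comparison (perfect v-site versus perfect \'etale site) to vanishing of higher $\mathbb{G}_a$-cohomology on perfect affines, then invoke Breen for the $\mathbb{F}_p$-linear computation, and finally pass to $\mathbb{F}_q$-linear coefficients. The paper carries out exactly these three steps.

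There is, however, a slip in your coefficient change. Over $\Spec(\mathbb{F}_q)$ (which is the base in the statement), Breen's result for the $\mathbb{F}_p$-linear endomorphism complex is $\mathbb{F}_q[F_p^{\pm 1}]$, not $\mathbb{F}_p[F_p^{\pm 1}]$: the additive group over $\mathbb{F}_q$ already has multiplication by all of $\mathbb{F}_q$ among its $\mathbb{F}_p$-linear endomorphisms. With this correction, the subring on which the two $\mathbb{F}_q$-actions coincide is the $\mathbb{F}_q$-span of the powers $F_p^{rk}$, i.e.\ $\mathbb{F}_q[F^{\pm 1}]$, not ``$\mathbb{F}_p$-linear combinations'' as you wrote (which would only give $\mathbb{F}_p[F^{\pm 1}]$). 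Your description of $R\Hom_{\mathbb{F}_q}$ as the ``subcomplex where the two $\mathbb{F}_q$-actions agree'' is also a bit loose at the derived level; the paper makes this precise via the adjunction
\[
R\Hom_{\mathbb{F}_p}(\mathbb{G}_a,\mathbb{G}_a)\cong R\Hom_{\mathbb{F}_q}(\mathbb{G}_a\otimes_{\mathbb{F}_p}\mathbb{F}_q,\mathbb{G}_a)\cong \bigoplus_{i=0}^{f-1} R\Hom_{\mathbb{F}_q}(\mathbb{G}_a^{(i)},\mathbb{G}_a),
\]
which exhibits $R\Hom_{\mathbb{F}_q}(\mathbb{G}_a,\mathbb{G}_a)$ as a direct summand and immediately yields the vanishing of higher Ext's together with the correct identification $\mathbb{F}_q[F_p^{\pm 1}]=\bigoplus_i \mathbb{F}_q[F^{\pm 1}]\cdot F_p^i$.
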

\begin{proof}
In \cite[Th\'eor\`eme 0.1, \S 1.4]{breen_extensions_du_groupe_additif_sur_le_site_parfait}, Breen has computed self-extensions of $\mathbb{G}_a$ seen as a sheaf of $\mathbb{F}_p$-vector spaces (not $\mathbb{F}_q$-vector spaces!) on the site of all perfect schemes over $\mathrm{Spec}(\mathbb{F}_q)$ endowed with the \'etale topology. The MacLane resolution from \Cref{definition-and-existence-of-the-maclane-complex}, together with the fact that $\mathbb{G}_a$ is represented by the perfect affine line and \cite[Theorem 4.1]{bhatt_scholze_projectivity_of_the_witt_vector_affine_grassmannian}, show that these extension groups are the same when computed on the site considered by Breen and on $\mathrm{Spec}(\mathbb{F}_q)_{\rm perf}$. Hence, we deduce from Breen's result that the natural map
$$
\mathbb{F}_q[F_p^{\pm 1}] \to R\mathrm{Hom}_{\mathrm{Spec}(\mathbb{F}_q)_{\rm perf},\mathbb{F}_p}(\mathbb{G}_a,\mathbb{G}_a)
$$
is an isomorphism, where on the left $\mathbb{F}_q[F_p^{\pm 1}]$ is the ring of non-commutative polynomials in one variable $F_p$ over $\mathbb{F}_q$, with multiplication given by
$$
F_pa = a^p F_p
$$
for $a\in \mathbb{F}_q$.
Adjunction gives an isomorphism
$$
 R\mathrm{Hom}_{\mathrm{Spec}(\mathbb{F}_q)_{\rm perf},\mathbb{F}_q}(\mathbb{G}_a \otimes_{\mathbb{F}_p} \mathbb{F}_q,\mathbb{G}_a)  \cong R\mathrm{Hom}_{\mathrm{Spec}(\mathbb{F}_q)_{\rm perf},\mathbb{F}_p}(\mathbb{G}_a,\mathbb{G}_a).
 $$
The tensor product $\mathbb{G}_a \otimes_{\mathbb{F}_p} \mathbb{F}_q$ decomposes as a direct sum
$$
\mathbb{G}_a \otimes_{\mathbb{F}_p} \mathbb{F}_q = \bigoplus_{i=0}^{f-1} \mathbb{G}_a^{(i)},
$$
where $q=p^f$ and $\mathbb{G}_a^{(i)}$, $0\leq i <f$, is the sheaf sending a perfect $\mathbb{F}_q$-algebra $R$ to the $\mathbb{F}_q$-vector obtained by twisting the $\mathbb{F}_q$-action on the $\mathbb{F}_q$-vector space $R$ by the $i$-th power of the Frobenius on $\mathbb{F}_q$. We have $\mathbb{G}_a^{(0)}=\mathbb{G}_a$. In particular, we already see that necessarily
$$
 \mathrm{Ext}_{\mathrm{Spec}(\mathbb{F}_q)_{\rm perf},\mathbb{F}_q}^k(\mathbb{G}_a,\mathbb{G}_a) =0
$$
for all $k>0$. Moreover, for each $i=0,\dots,f-1$, right composition by $F_p^{f-i}$ induces an isomorphism between $\mathrm{Hom}_{\mathrm{Spec}(\mathbb{F}_q)_{\rm perf},\mathbb{F}_q}(\mathbb{G}_a^{(0)},\mathbb{G}_a)$ and $\mathrm{Hom}_{\mathrm{Spec}(\mathbb{F}_q)_{\rm perf},\mathbb{F}_q}(\mathbb{G}_a^{(i)},\mathbb{G}_a)$. Hence, the decomposition
$$
\mathrm{Hom}_{\mathrm{Spec}(\mathbb{F}_q)_{\rm perf},\mathbb{F}_p}(\mathbb{G}_a,\mathbb{G}_a) = \bigoplus\limits_{i=0}^f \mathrm{Hom}_{\mathrm{Spec}(\mathbb{F}_q)_{\rm perf},\mathbb{F}_q}( \mathbb{G}_a^{(i)},\mathbb{G}_a)  
$$
from above corresponds to the decomposition
$$
\mathbb{F}_q[F_p^{\pm 1}] = \bigoplus\limits_{i=0}^f \mathbb{F}_q[(F_p^f)^{\pm 1}]. F_p^i.
$$
Since $\mathbb{F}_q[(F_p^f)^{\pm 1}]=\mathbb{F}_q[F^{\pm 1}]$, this concludes the proof.
\end{proof}

We denote by $\mathcal{O}^+ \langle F^{\pm 1}\rangle$ the v-sheaf sending $\mathrm{Spa}(R,R^+) \in \Perf_{\mathbb{F}_q}$ to the $\varpi$-adic completion of $R^+[F^{\pm 1}]$, where $\varpi$ is any pseudo-uniformizer of $R$. There is a natural map
$$
\mathcal{O}^+ \langle F^{\pm 1} \rangle \to \mathcal{H}om_{S_{v},\mathbb{F}_q}(\mathcal{O}^+,\mathcal{O}^+)
$$
which sends $F$ to $\varphi_{R^+}$ and $\mathcal{O}^+$ to its action on the \textit{right} factor of $\mathcal{O}^+$.

\begin{proposition}
\label{self-extensions-o-plus}
The natural map
$$
\mathcal{O}^+ \langle F^{\pm 1} \rangle \to R\mathcal{H}om_{\mathrm{Spa}(\mathbb{F}_q)_{v},\mathbb{F}_q}(\mathcal{O}^+,\mathcal{O}^+)
$$
is an almost isomorphism (for the left action of $\mathcal{O}^+$ on $\mathcal{O}^+\langle F^{\pm 1}\rangle$ and the left action of $\mathcal{O}^+$ on $R\mathcal{H}om_{\mathrm{Spa}(\mathbb{F}_q)_{v},\mathbb{F}_q}(\mathcal{O}^+,\mathcal{O}^+)$ via multiplication on the second factor).
\end{proposition}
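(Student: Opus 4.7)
The plan is to reduce the statement modulo a pseudo-uniformizer to Breen's self-Ext computation for $\mathbb{G}_a$ on the perfect algebraic site (\Cref{main-theorem-of-breen-perfect-site}) and then recover the integral version by $\varpi$-adic completeness.

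First I would unwind the construction of the map: $F$ is sent to the $q$-Frobenius on $\mathcal{O}^+$ while each $a \in \mathcal{O}^+$ acts by multiplication on the right factor; the relation $F \cdot a = \varphi(a) \cdot F$ holds in both rings, so the map is well-defined. Since both sides are v-sheaves, it suffices to check the assertion on sections over an arbitrary affinoid perfectoid $\mathrm{Spa}(R, R^+) \in \Perf_{\mathbb{F}_q}$, after fixing a pseudo-uniformizer $\varpi \in R^+$. Both $\mathcal{O}^+$ and $\mathcal{O}^+\langle F^{\pm 1}\rangle$ are $\varpi$-adically complete and $\varpi$-torsion-free; applying $R\mathcal{H}om(\mathcal{O}^+,-)$ to the triangle $\mathcal{O}^+ \xrightarrow{\varpi} \mathcal{O}^+ \to \mathcal{O}^+/\varpi$ and passing to the $\varpi$-adic inverse limit, it is enough to prove the corresponding statement modulo $\varpi$, namely that the natural map $(\mathcal{O}^+/\varpi)\langle F^{\pm 1}\rangle \to R\mathcal{H}om_{v,\mathbb{F}_q}(\mathcal{O}^+/\varpi, \mathcal{O}^+/\varpi)$ is an almost isomorphism on $\Perf_{\mathrm{Spa}(R,R^+)}$.

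For this mod-$\varpi$ statement I would use the MacLane resolution (\Cref{definition-and-existence-of-the-maclane-complex}) of $\mathcal{O}^+/\varpi$ viewed as a sheaf of $\mathbb{F}_q$-modules on the v-site; each component has the form $\mathbb{F}_q[\mathbb{F}_q^s \times (\mathcal{O}^+/\varpi)^t]/\mathbb{F}_q[0]$, so applying $R\mathcal{H}om(-, \mathcal{O}^+/\varpi)$ term-by-term identifies the target with a complex built from the v-cohomology groups $R\Gamma_v((\mathcal{O}^+/\varpi)^t \times \mathrm{Spa}(R,R^+), \mathcal{O}^+/\varpi)$. The essential comparison input is that, almost, these v-cohomology groups agree with the \'etale cohomology groups of the corresponding products of $\mathbb{G}_a$ over $\mathrm{Spec}(R^+/\varpi)_{\rm perf}$; this reduction is exactly the type of statement covered by Mann's v-descent framework for almost $\mathcal{O}^+/\varpi$-modules recalled in \Cref{theorem-lucas}, together with the fact that perfectoid affine space tilts to the perfection of affine space. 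Once one is on the algebraic perfect site, Breen's theorem (\Cref{main-theorem-of-breen-perfect-site}) identifies the total complex with $\mathbb{F}_q[F^{\pm 1}] \otimes_{\mathbb{F}_q} R^+/\varpi$, which after almostification coincides with the claimed answer.

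The main obstacle will be the comparison step just described: matching v-cohomology on $\Perf_{\mathrm{Spa}(R,R^+)}$ with cohomology on $\mathrm{Spec}(R^+/\varpi)_{\rm perf}$ at the level of the MacLane components. This is where the ``almost'' hypothesis is essential and where Mann's machinery does the real work; once the comparison is in place, assembling the pieces through the MacLane resolution, passing back through the $\varpi$-adic tower, and verifying $\mathcal{O}^+$-linearity on both sides yield the stated almost quasi-isomorphism.
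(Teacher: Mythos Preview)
Your overall strategy—MacLane resolution plus Breen's theorem—is the same as the paper's, but the execution diverges in a way that introduces real gaps.

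First, your reduction mod $\varpi$ does not land where you claim. Applying $R\mathcal{H}om_{\mathbb{F}_q}(\mathcal{O}^+,-)$ to the triangle $\mathcal{O}^+\xrightarrow{\varpi}\mathcal{O}^+\to\mathcal{O}^+/\varpi$ yields $R\mathcal{H}om_{\mathbb{F}_q}(\mathcal{O}^+,\mathcal{O}^+/\varpi)$, not $R\mathcal{H}om_{\mathbb{F}_q}(\mathcal{O}^+/\varpi,\mathcal{O}^+/\varpi)$; these differ by a further fibre over precomposition with $\varpi$. More seriously, once you pass to the MacLane resolution of $\mathcal{O}^+/\varpi$, the components involve powers of the v-sheaf $\mathcal{O}^+/\varpi$, which is not representable by a perfectoid space, so you have no direct handle on its v-cohomology. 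Finally, \Cref{theorem-lucas} is a v-descent statement for Mann's category $\mathcal{D}_\blacksquare^a(Z,\mathcal{O}^+/\varpi)$; it does not furnish a comparison between v-cohomology on $\Perf_S$ and \'etale cohomology on $\mathrm{Spec}(R^+/\varpi)_{\rm perf}$, which is what your argument actually needs.

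The paper sidesteps all of this by \emph{not} reducing mod $\varpi$. It resolves $\mathcal{O}^+$ itself by its MacLane complex, whose components are of the form $\mathbb{F}_q[\mathbb{F}_q^s\times(\mathcal{O}^+)^t]/\mathbb{F}_q[0]$ with $(\mathcal{O}^+)^t$ represented by the perfectoid ball $\mathbb{B}_S^t$. The single analytic input is then \cite[Proposition 8.8]{scholze_etale_cohomology_of_diamonds}: $H^i_v(\mathbb{B}_S^n,\mathcal{O}^+)\overset{a}{=}0$ for $i>0$, and $H^0_v(\mathbb{B}_S^n,\mathcal{O}^+)=R^+\langle T_1^{1/p^\infty},\dots,T_n^{1/p^\infty}\rangle$. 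Comparing with the parallel MacLane computation on $\mathrm{Spec}(\mathbb{F}_q)_{\rm perf}$ (where $H^0=\mathbb{F}_q[T_1^{1/p^\infty},\dots,T_n^{1/p^\infty}]$ and higher cohomology vanishes), the comparison map becomes the obvious isomorphism $(\mathbb{F}_q[T^{1/p^\infty}]\otimes_{\mathbb{F}_q}R^+)^{\wedge_\varpi}\cong R^+\langle T^{1/p^\infty}\rangle$ term by term; $\varpi$-completion enters only here, to produce the map and to match $H^0$'s, not as a preliminary reduction. Breen's theorem then finishes the identification of the source.
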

\begin{proof}
Fix an affinoid perfectoid space $S=\mathrm{Spa}(R,R^+)$ in $\Perf_{\mathbb{F}_q}$, and a pseudo-uniformizer $\varpi$ of $R$. Recall the morphism of sites
$$
f: \mathrm{Spa}(\mathbb{F}_q)_{v} \to \mathrm{Spec}(\mathbb{F}_q)_{\rm perf},
$$
from the perfectoid v-site to the algebraic v-site. The pullback of the sheaf $\mathbb{G}_a$ on $\mathrm{Spec}(\mathbb{F}_q)_{\rm perf}$ (representend by the perfect affine line over $\mathbb{F}_q$) along $f$ is the sheaf $\mathcal{O}^+$. We can further pullback along the natural morphism
$$
S_{v} \to \mathrm{Spa}(\mathbb{F}_q)_{v} .
$$
Since all these operations are exact, we deduce a map
$$
R\mathrm{Hom}_{\mathrm{Spec}(\mathbb{F}_q)_{\rm perf}, \mathbb{F}_q}(\mathbb{G}_a,\mathbb{G}_a) \to R\mathrm{Hom}_{S_{v},\mathbb{F}_q}(\mathcal{O}^+,\mathcal{O}^+).
$$
Since the right-hand side is an $R^+$-module (via the second factor) and is derived $\varpi$-adically complete, this extends to a map
$$
(R\mathrm{Hom}_{\mathrm{Spec}(\mathbb{F}_q)_{\rm perf}, \mathbb{F}_q}(\mathbb{G}_a,\mathbb{G}_a) \otimes_{\mathbb{F}_q} R^+)^{\wedge_\varpi} \to R\mathrm{Hom}_{S_{v},\mathbb{F}_q}(\mathcal{O}^+,\mathcal{O}^+).
$$
We claim that this map is an almost isomorphism. To compute both sides, we can use the MacLane resolution from \Cref{definition-and-existence-of-the-maclane-complex}, once for the topos $\widetilde{\mathrm{Spec}(\mathbb{F}_q)_{\rm perf}}$ of sheaves on $\mathrm{Spec}(\mathbb{F}_q)_{\rm perf}$ and the constant sheaf of rings $\mathbb{F}_q$ and once for the topos $\widetilde{S_v}$ of sheaves on $S_v$ and the constant sheaf of rings $\mathbb{F}_q$. From the explicit shape of the MacLane complexes and the fact (\cite[Proposition 8.8]{scholze_etale_cohomology_of_diamonds}) for all $m\geq 0, n\geq 1, i>0$,
$$
H_v^i(\mathrm{Spec}(\mathbb{F}_q[T_1^{1/p^{\infty}},\dots,T_n^{1/p^{\infty}}]),\mathbb{G}_a) = 0 ~~ , ~~ H_v^i(\mathbb{B}_{S}^n,\mathcal{O}^+) \overset{a} =  0,
$$
we deduce that the above map, seen as a map between objects of the derived category of almost $R^+$-modules, can be rewritten as
$$
(\mathrm{Hom}_{\mathrm{Spec}(\mathbb{F}_q)_{\rm perf}, \mathbb{F}_q}(M_{(\widetilde{\mathrm{Spec}(\mathbb{F}_q)_{\rm perf}},\mathbb{F}_q)}(\mathbb{G}_a),\mathbb{G}_a) \otimes_{\mathbb{F}_q} R^+)^{\wedge_\varpi} \to \mathrm{Hom}_{S_{v},\mathbb{F}_q}(M_{(\widetilde{S_v},\mathbb{F}_q)}(\mathcal{O}^+),\mathcal{O}^+).
$$
Hence, the claim follows if for each $n\geq 1$, the natural map
$$
(H_v^0(\mathrm{Spec}(\mathbb{F}_q[T_1^{1/p^{\infty}},\dots,T_n^{1/p^{\infty}}]),\mathbb{G}_a) \otimes_{\mathbb{F}_q} R^+)^{\wedge_\varpi} \to H_v^0(\mathbb{B}_{S}^n,\mathcal{O}^+)
$$
is an isomorphism. But this map is just the natural map
$$
(\mathbb{F}_q[T_1^{1/p^{\infty}},\dots,T_n^{1/p^{\infty}}] \otimes_{\mathbb{F}_q} R^+)^{\wedge_\varpi} \to R^+\langle T_1^{1/p^{\infty}},\dots,T_n^{1/p^{\infty}} \rangle
$$
so the assertion is true. Therefore, the proposition follows from \Cref{main-theorem-of-breen-perfect-site}.
\end{proof}

  We can get the following ``honest'' version. Here, we set
  \[
    \mathcal{O}\langle F^{\pm 1}\rangle :=\mathcal{O}\otimes_{\mathcal{O}^+}\mathcal{O}^+\langle F^{\pm 1}\rangle.
  \]

\begin{corollary}
\label{extensions-o-plus-and-o}
The natural map
$$
\mathcal{O} \langle F^{\pm 1} \rangle \to R\mathcal{H}om_{\mathrm{Spa}(\mathbb{F}_q)_{v},\mathbb{F}_q}(\mathcal{O}^+,\mathcal{O})
$$
is an isomorphism.
\end{corollary}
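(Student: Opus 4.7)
The plan is to deduce the corollary from \Cref{self-extensions-o-plus} by inverting the pseudo-uniformizer $\varpi$, i.e.\ by applying $-\otimes^{L}_{\mathcal{O}^+}\mathcal{O}$ to the almost-isomorphism of that proposition. On the left, $\mathcal{O}^+\langle F^{\pm 1}\rangle$ is honestly $\varpi$-torsion-free, since on each affinoid perfectoid test object it is the $\varpi$-adic completion of a free $R^+$-module; hence the tensor product is underived and equals $\mathcal{O}\langle F^{\pm 1}\rangle$ by definition. On the right, the key observation is that any almost isomorphism of complexes of $\mathcal{O}^+$-modules becomes an honest isomorphism after $-\otimes^{L}_{\mathcal{O}^+}\mathcal{O}$, because almost-zero $\mathcal{O}^+$-modules are killed by every $\varpi^{1/p^n}$ and thus die upon inverting $\varpi$. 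We therefore obtain an honest isomorphism
\[
\mathcal{O}\langle F^{\pm 1}\rangle \xrightarrow{\sim} R\mathcal{H}om_{\mathrm{Spa}(\mathbb{F}_q)_v,\mathbb{F}_q}(\mathcal{O}^+,\mathcal{O}^+)\otimes^{L}_{\mathcal{O}^+}\mathcal{O}.
\]

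The remaining task is to identify the right-hand side with $R\mathcal{H}om_{\mathrm{Spa}(\mathbb{F}_q)_v,\mathbb{F}_q}(\mathcal{O}^+,\mathcal{O})$, and I would do this by simply rerunning the argument of \Cref{self-extensions-o-plus} with $\mathcal{O}$ in place of $\mathcal{O}^+$ as the target. Using the MacLane resolution of \Cref{definition-and-existence-of-the-maclane-complex} applied to $\mathcal{O}^+$ in $\widetilde{S_v}$ with constant sheaf of rings $\mathbb{F}_q$, each term has the form $\mathbb{F}_q[(\mathcal{O}^+)^s\times(\mathcal{O}^+)^t]/\mathbb{F}_q[0]$, and internal Hom out of such a term into a sheaf $N$ computes the sections of $N$ on the perfectoid poly-ball $\mathbb{B}^{s+t}_S$. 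For $N=\mathcal{O}$ one has the honest (not merely almost) vanishing $H^i_v(\mathbb{B}^{s+t}_S,\mathcal{O})=0$ for $i>0$ by \cite[Proposition 8.8]{scholze_etale_cohomology_of_diamonds}, while $H^0_v(\mathbb{B}^{s+t}_S,\mathcal{O})=R\langle T_1^{1/p^{\infty}},\dots,T_{s+t}^{1/p^{\infty}}\rangle$ is precisely the $\varpi$-inversion of the corresponding computation with $\mathcal{O}^+$-coefficients carried out in the proof of \Cref{self-extensions-o-plus}. Substituting this into the Breen-type computation of \Cref{main-theorem-of-breen-perfect-site} produces the corollary.

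The main point to watch is the compatibility of $\otimes^{L}_{\mathcal{O}^+}\mathcal{O}$ with the internal $R\mathcal{H}om(\mathcal{O}^+,-)$ on the right. Rather than attempting to commute this internal Hom past the filtered colimit $\mathcal{O}^+\to \varpi^{-1}\mathcal{O}^+\to\cdots$ defining $\mathcal{O}$ as a colimit in $\mathcal{O}^+$-modules, it is more transparent to perform the computation term by term on the MacLane resolution, where each level is $\varpi$-torsion-free with $\varpi$-adically complete and flat sections, so that cohomology and $\varpi$-inversion commute trivially. This is the only mildly delicate step; everything else is a direct transcription of the proof of \Cref{self-extensions-o-plus}.
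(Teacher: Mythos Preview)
Your proposal is correct and uses essentially the same idea as the paper. Both arguments reduce to the fact that $R\mathcal{H}om_{\mathrm{Spa}(\mathbb{F}_q)_v,\mathbb{F}_q}(\mathcal{O}^+,-)$ commutes with the filtered colimit $\mathcal{O}=\varinjlim_{\times\varpi}\mathcal{O}^+$, and both justify this via the MacLane resolution together with the fact that its terms are represented by qcqs perfectoid spaces (so that cohomology over them commutes with filtered colimits). The paper packages this as the statement ``$\mathcal{O}^+$ is pseudo-coherent as a v-sheaf of $\mathbb{F}_q$-vector spaces,'' whereas you carry out the term-by-term computation explicitly; these are the same argument in different clothing.

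One small organizational remark: your first paragraph is redundant. Once you rerun the MacLane computation with target $\mathcal{O}$ (using the honest vanishing $H^i_v(\mathbb{B}^n_S,\mathcal{O})=0$ for $i>0$), you obtain $R\mathcal{H}om(\mathcal{O}^+,\mathcal{O})\cong \mathcal{O}\langle F^{\pm 1}\rangle$ directly, and there is no need to pass through $R\mathcal{H}om(\mathcal{O}^+,\mathcal{O}^+)\otimes^L_{\mathcal{O}^+}\mathcal{O}$ as an intermediate object.
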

\begin{proof}
By \Cref{self-extensions-o-plus}, we only need to justify that for each affinoid perfectoid space $S=\Spa(R,R^+)$ and pseudo-uniformizer $\varpi\in R$, the natural map
$$
 \underset{\times \varpi} \varinjlim ~ R\mathcal{H}om_{S_v,\mathbb{F}_q}(\mathcal{O}^+,\mathcal{O}^+) \to R\mathcal{H}om_{S_v,\mathbb{F}_q}(\mathcal{O}^+,\mathcal{O})
$$
is an isomorphism. This can be checked on cohomology groups and thus it suffices to show that $\mathcal{O}^+$ is pseudo-coherent as a v-sheaf of $\mathbb{F}_q$-vector spaces. However, for each $i\geq 0$, the functors $\mathcal{E}xt_{S_v,\mathbb{F}_q}^i(\mathcal{O}^+,-)$ can be computed using the MacLane complex (\Cref{definition-and-existence-of-the-maclane-complex}) $M_{(\widetilde{S_v},\mathbb{F}_q)}(\mathcal{O}^+)$ and because of the description of the terms of this complex, to check that $\mathcal{E}xt_{S_v,\mathbb{F}_q}^i(\mathcal{O}^+,-)$ commutes with filtered colimits for all $i$, it suffices to prove that the functors $H^j((\mathcal{O}^+)^s \times \mathbb{F}_q^r,-)$ commute with filtered colimits for all $j,r,s\geq 0$: this is true, since $(\mathcal{O}^+)^s \times \mathbb{F}_q^r$ is represented by a \textit{qcqs} perfectoid space.
\end{proof}

If $(R,R^+)$ is a perfectoid Tate algebra over $\mathbb{F}_q$, we will denote
$$
A_{\rm inf}(R,R^+) = W_{\mathcal{O}_E}(R^+), \quad A(R,R^+)=W_{\mathcal{O}_E}(R).
$$
By definition of Witt vectors, there are bijections
$$
A_{\rm inf}(R,R^+) \cong (R^+)^{\mathbb{N}}, \quad A(R,R^+) \cong R^{\mathbb{N}}
$$
and we define the topology on $A_{\rm inf}(R,R^+)$, resp. on $A(R,R^+)$, as the product topology of the natural topology on $R^+$, resp. $R$. Fix a pseudo-uniformizer $\varpi$ of $R$. Then a basis of neighborhoods of $0$ in $A_{\rm inf}(R,R^+)$, resp. on $A(R,R^+)$, are the $\pi^r A_{\rm inf}(R,R^+) + [\varpi^s] A_{\rm inf}(R,R^+)$, resp. the $\pi^r A(R,R^+) + [\varpi^s] A_{\rm inf}(R,R^+)$, $r,s \geq 0$ (on the ring $A_{\rm inf}(R,R^+)$, we are just considering the $(\pi,[\varpi])$-adic topology).

\begin{definition}
\label{def-the-sheaf-a-inf}
We define the sheaves 
$$
\mathbb{A}_{\rm inf} = W_{\mathcal{O}_E}(\mathcal{O}^{+}), \quad \mathbb{B}_{\rm inf} = \mathbb{A}_{\rm inf}[1/\pi],
$$
and  
$$
\mathbb{A} = W_{\mathcal{O}_E}(\mathcal{O}), \quad \mathbb{B} = \mathbb{A}[1/\pi],
$$
on $\mathrm{Spa}(\mathbb{F}_q)_v$. We note that for each perfectoid pair $(R,R^+)$ over $\mathbb{F}_q$, there are canonical isomorphisms
$$
\mathbb{A}_{\rm inf}(\mathrm{Spa}(R,R^+)) \cong A_{\rm inf}(R,R^+), \quad \mathbb{A}(\mathrm{Spa}(R,R^+)) \cong A(R,R^+),
$$
cf. \cite[Theorem 6.5]{scholze_p_adic_hodge_theory_for_rigid_analytic_varieties}. We also define the sheaf 
$$
\mathbb{A}_{\rm inf}\langle F^{\pm 1} \rangle
$$
by sending $S=\mathrm{Spa}(R,R^+)$ affinoid perfectoid in $\Perf_{\mathbb{F}_q}$ to the ring $A_{\rm inf}(R,R^+)\langle F^{\pm 1} \rangle$
which is by definition the completion of the ring $A_{\rm inf}(R,R^+)[F^{\pm 1}]$
of non-commutative polynomials in one variable $F$ over $A_{\rm inf}(R^+)$, with multiplication given by
$$
Fa = \varphi(a) F
$$
for $a\in A_{\rm inf}(R^+)$, where $\varphi$ denotes the Frobenius on $A_{\rm inf}(R^+)$, with respect to the $(\pi, [\varpi])$-adic topology. 
\end{definition}

There is a natural map of left $\mathbb{A}_{\rm inf}$-modules
$$
\mathbb{A}_{\rm inf}\langle F^{\pm 1} \rangle \to \mathcal{H}om_{\mathrm{Spa}(\mathbb{F}_q)_v,\mathcal{O}_E}(\mathbb{A}_{\rm inf},\mathbb{A}_{\rm inf})
$$
which sends $F$ to $\varphi_{\mathbb{A}_{\rm inf}}$ and $\mathbb{A}_{\rm inf}$ to its action on the right factor, and a natural map of left $\mathbb{A}$-modules
$$
\mathbb{A}[ F^{\pm 1}] \to \mathcal{H}om_{\mathrm{Spa}(\mathbb{F}_q)_v,\mathcal{O}_E}(\mathbb{A}_{\rm inf},\mathbb{A})
$$
(with the similar definition of the non-commutative ring $A[F^{\pm 1}]$)
which sends $F$ to $\varphi_{\mathbb{A}}$, and $\mathbb{A}$ to its action on the right factor.

\begin{proposition}
\label{self-ext-of-a-inf}
The natural map
$$
\mathbb{A}_{\rm inf}\langle F^{\pm 1} \rangle \to R\mathcal{H}om_{\mathrm{Spa}(\mathbb{F}_q)_v,\mathcal{O}_E}(\mathbb{A}_{\rm inf},\mathbb{A}_{\rm inf})
$$
is an almost\footnote{Recall that an $\mathbb{A}_{\rm inf}$-module is almost zero if it is killed $[\varpi]$, for any local choice of a pseudo-uniformizer $\varpi$.} isomorphism of left $\mathbb{A}_{\rm inf}$-modules.
 \end{proposition}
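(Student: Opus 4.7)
The plan is to reduce to \Cref{self-extensions-o-plus} by working modulo $\pi$ and then lifting via derived Nakayama. Since $\mathbb{A}_{\mathrm{inf}}$ is $\pi$-torsion-free with $\mathbb{A}_{\mathrm{inf}}/\pi \cong \mathcal{O}^+$, the short exact sequence
\[
0 \to \mathbb{A}_{\mathrm{inf}} \xrightarrow{\pi} \mathbb{A}_{\mathrm{inf}} \to \mathcal{O}^+ \to 0
\]
of v-sheaves of $\mathcal{O}_E$-modules identifies $\mathcal{O}^+$ with $\mathbb{A}_{\mathrm{inf}} \otimes_{\mathcal{O}_E}^L \mathbb{F}_q$. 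Base-change for internal $R\mathcal{H}om$ then gives canonical identifications
\[
R\mathcal{H}om_{\mathrm{Spa}(\mathbb{F}_q)_v,\mathcal{O}_E}(\mathbb{A}_{\mathrm{inf}}, \mathbb{A}_{\mathrm{inf}}) \otimes_{\mathcal{O}_E}^L \mathbb{F}_q \;\cong\; R\mathcal{H}om_{\mathrm{Spa}(\mathbb{F}_q)_v,\mathcal{O}_E}(\mathbb{A}_{\mathrm{inf}}, \mathcal{O}^+) \;\cong\; R\mathcal{H}om_{\mathrm{Spa}(\mathbb{F}_q)_v,\mathbb{F}_q}(\mathcal{O}^+, \mathcal{O}^+),
\]
which is almost isomorphic to $\mathcal{O}^+\langle F^{\pm 1}\rangle$ by \Cref{self-extensions-o-plus}. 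On the other hand, $\mathbb{A}_{\mathrm{inf}}\langle F^{\pm 1}\rangle$ is $\pi$-torsion-free with reduction $\mathcal{O}^+\langle F^{\pm 1}\rangle$ modulo $\pi$, and unwinding definitions --- using that the Frobenius on $\mathbb{A}_{\mathrm{inf}}$ lifts the $q$-Frobenius on $\mathcal{O}^+$ --- one checks that the map in the statement reduces modulo $\pi$ to the natural map of \Cref{self-extensions-o-plus}, hence is an almost isomorphism modulo $\pi$.

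To conclude, I would invoke derived Nakayama in the almost category of $\mathcal{O}_E$-modules. For this, both sides need to be derived $\pi$-adically complete as v-sheaves of $\mathcal{O}_E$-modules. The source is: on each affinoid perfectoid section, $A_{\mathrm{inf}}(R,R^+)\langle F^{\pm 1}\rangle$ is $(\pi,[\varpi])$-adically complete and $\pi$-torsion-free. For the target, $\mathbb{A}_{\mathrm{inf}}$ is itself derived $\pi$-adically complete (sections are $\pi$-adically complete and $\pi$-torsion-free), and derived $\pi$-completeness is preserved by the internal $R\mathcal{H}om$ in the second variable via the identification $R\mathcal{H}om(\mathcal{O}_E[1/\pi], R\mathcal{H}om(F,G)) \cong R\mathcal{H}om(F, R\mathcal{H}om(\mathcal{O}_E[1/\pi], G))$. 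The cone of our map is then a derived $\pi$-complete v-sheaf which is almost zero modulo $\pi$, hence almost zero by derived Nakayama.

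The substantive input is \Cref{self-extensions-o-plus} (ultimately Breen's computation); the present statement is a formal consequence. The one point where care is needed is verifying that the map of the statement really does specialize modulo $\pi$ to the natural map of \Cref{self-extensions-o-plus}, which requires matching up the Frobenius and the right-multiplication $\mathbb{A}_{\mathrm{inf}}$-action along the reduction $\mathbb{A}_{\mathrm{inf}} \twoheadrightarrow \mathcal{O}^+$.
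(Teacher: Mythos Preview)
Your proof is correct and follows essentially the same route as the paper: reduce modulo $\pi$ using derived $\pi$-completeness of both sides, identify the reduction of the target with $R\mathcal{H}om_{\mathbb{F}_q}(\mathcal{O}^+,\mathcal{O}^+)$ via base-change/adjunction, and invoke \Cref{self-extensions-o-plus}. You are in fact slightly more careful than the paper in spelling out the derived $\pi$-completeness of the target and the compatibility of the natural maps upon reduction.
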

\begin{proof}
  Both sides being (derived) $\pi$-adically complete, it suffices to check the assertion modulo $\pi$. The left hand side becomes $\mathcal{O}^+\langle F^{\pm 1} \rangle$. For the right hand side, recall that $\mathbb{A}_{\rm inf}/\pi=\mathcal{O}^+$. We then have
    \begin{align*}
      R\mathcal{H}om_{\mathrm{Spa}(\mathbb{F}_q)_v,\mathcal{O}_E}(\mathbb{A}_{\rm inf},\mathbb{A}_{\rm inf}) \otimes_{A_{\rm inf}}^L A_{\rm inf}/\pi & \cong  R\mathcal{H}om_{\mathrm{Spa}(\mathbb{F}_q)_v,\mathcal{O}_E}(\mathbb{A}_{\rm inf},\mathbb{A}_{\rm inf}/\pi) \\
     & \cong  R\mathcal{H}om_{\mathrm{Spa}(\mathbb{F}_q)_v,\mathbb{F}_q}(\mathbb{A}_{\rm inf} \otimes_{\mathcal{O}_E}^L \mathbb{F}_q,\mathcal{O}^+) \\
      & \cong  R\mathcal{H}om_{\mathrm{Spa}(\mathbb{F}_q)_v,\mathbb{F}_q}(\mathcal{O}^+,\mathcal{O}^+),
    \end{align*}
  where the second isomorphism comes from adjunction. Hence the claim follows from \Cref{self-extensions-o-plus}. 
\end{proof}

\begin{corollary}
\label{self-ext-of-a-inf-1-over-pi}
The natural map
$$
\mathbb{A}_{\rm inf}\langle F^{\pm 1} \rangle [1/\pi] \to R\mathcal{H}om_{\mathrm{Spa}(\mathbb{F}_q)_v,E}(\mathbb{B}_{\rm inf},\mathbb{B}_{\rm inf})
$$
is an almost isomorphism.
\end{corollary}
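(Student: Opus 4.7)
The plan is to reduce to \Cref{self-ext-of-a-inf} by propagating the localization at $\pi$ through the derived internal $\Hom$. Concretely, the goal is to establish the chain of equivalences
\[
R\mathcal{H}om_{\Spa(\F_q)_v,E}(\mathbb{B}_{\rm inf},\mathbb{B}_{\rm inf}) \simeq R\mathcal{H}om_{\Spa(\F_q)_v,\mathcal{O}_E}(\mathbb{A}_{\rm inf},\mathbb{B}_{\rm inf}) \simeq R\mathcal{H}om_{\Spa(\F_q)_v,\mathcal{O}_E}(\mathbb{A}_{\rm inf},\mathbb{A}_{\rm inf})[1/\pi],
\]
after which \Cref{self-ext-of-a-inf} yields the corollary upon inverting $\pi$ on both sides (the almost ideal generated by $[\varpi]$ survives the localization).

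For the first equivalence, any $\underline{\mathcal{O}_E}$-linear map between $\underline{E}$-modules is automatically $\underline{E}$-linear (since $\pi^{-1}$ commutes past it, by linearity and $\pi$-torsion-freeness), so the change of base ring is harmless on the source. Writing next $\mathbb{B}_{\rm inf} = \varinjlim_n \pi^{-n}\mathbb{A}_{\rm inf}$ as an $\underline{\mathcal{O}_E}$-module, the derived $\Hom$ out of $\mathbb{B}_{\rm inf}$ becomes a derived limit with transition maps given by precomposition with $\cdot\pi$; these act as multiplication by $\pi$ on the target $\mathbb{B}_{\rm inf}$, hence as isomorphisms, so the limit collapses to $R\mathcal{H}om_{\mathcal{O}_E}(\mathbb{A}_{\rm inf},\mathbb{B}_{\rm inf})$.

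For the second equivalence, writing $\mathbb{B}_{\rm inf} = \varinjlim_n \pi^{-n}\mathbb{A}_{\rm inf}$ once more, it suffices to show that $\Ext^i_{\Spa(\F_q)_v,\mathcal{O}_E}(\mathbb{A}_{\rm inf},-)$ commutes with filtered colimits for each $i$. We argue exactly as in the proof of \Cref{extensions-o-plus-and-o}, using the MacLane resolution $M_{(\widetilde{S_v},\mathcal{O}_E)}(\mathbb{A}_{\rm inf})$ from \Cref{definition-and-existence-of-the-maclane-complex}: its components have the form $\mathcal{O}_E[(\mathcal{O}_E)^s \times (\mathbb{A}_{\rm inf})^t]/\mathcal{O}_E[0]$, so one reduces to the assertion that the v-cohomology $H^j((\mathcal{O}_E)^s \times (\mathbb{A}_{\rm inf})^t,-)$ commutes with filtered colimits in the coefficient. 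Since $\underline{\mathcal{O}_E}$ is profinite and $\mathbb{A}_{\rm inf}$ is the cofiltered limit $\varprojlim_n W_{\mathcal{O}_E,n}(\mathcal{O}^+)$ of qcqs affinoid perfectoid v-sheaves along affine transitions (each $W_{\mathcal{O}_E,n}(\mathcal{O}^+)$ being $(\mathcal{O}^+)^n$ as a v-sheaf of sets), the product $(\mathcal{O}_E)^s \times (\mathbb{A}_{\rm inf})^t$ is itself such a cofiltered limit of qcqs affinoid perfectoid v-sheaves, and the desired commutation follows from \cite[Proposition 8.8]{scholze_etale_cohomology_of_diamonds} together with a Milnor sequence argument for the cofiltered limit.

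The principal obstacle is this last verification, i.e.\ that v-cohomology of the v-sheaf $(\mathcal{O}_E)^s \times (\mathbb{A}_{\rm inf})^t$ commutes with filtered colimits of coefficients; once it is in place, the rest of the argument is essentially formal, relying on the already established \Cref{self-ext-of-a-inf}.
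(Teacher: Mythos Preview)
Your approach is essentially the same as the paper's: reduce to $R\mathcal{H}om_{\mathcal{O}_E}(\mathbb{A}_{\rm inf},\mathbb{B}_{\rm inf})$ by adjunction, then pull the colimit $\mathbb{B}_{\rm inf}=\varinjlim \pi^{-n}\mathbb{A}_{\rm inf}$ out of the second variable using pseudo-coherence of $\mathbb{A}_{\rm inf}$, and finally invoke \Cref{self-ext-of-a-inf}. The pseudo-coherence step via the MacLane resolution is also exactly what the paper does.

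The only divergence is in how you justify that $H^j_v((\mathcal{O}_E)^s\times(\mathbb{A}_{\rm inf})^t,-)$ commutes with filtered colimits. You present $\mathbb{A}_{\rm inf}$ as a cofiltered limit of the truncated Witt vectors and invoke a Milnor sequence; the paper simply observes that $\mathbb{A}_{\rm inf}^t\times\underline{\mathcal{O}_E}^s$ is represented by a qcqs perfectoid space (over any affinoid perfectoid base it is an infinite closed polydisk times a profinite set), and cohomology of qcqs v-sheaves commutes with filtered colimits. This is both cleaner and more direct: a cofiltered limit of qcqs v-sheaves along qcqs transition maps is again qcqs, so there is no need to run a Milnor-type argument at all. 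Your citation of \cite[Proposition 8.8]{scholze_etale_cohomology_of_diamonds} is also slightly off---that result concerns v-acyclicity of affinoid perfectoids, not commutation of cohomology with colimits; the latter is a general fact about qcqs objects. So what you flagged as the ``principal obstacle'' dissolves once you notice the v-sheaf in question is already qcqs.
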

\begin{proof}
First, note that by adjunction 
$$
R\mathcal{H}om_{\mathrm{Spa}(\mathbb{F}_q)_v,E}(\mathbb{B}_{\rm inf},\mathbb{B}_{\rm inf}) \cong R\mathcal{H}om_{\mathrm{Spa}(\mathbb{F}_q)_v,\mathcal{O}_E}(\mathbb{A}_{\rm inf},\mathbb{B}_{\rm inf}).
$$
Hence, by \Cref{self-ext-of-a-inf}, we only need to justify that the natural map
$$
 \underset{\times \pi} \varinjlim ~ R\mathcal{H}om_{\mathrm{Spa}(\mathbb{F}_q)_v,\mathcal{O}_E}(\mathbb{A}_{\rm inf},\mathbb{A}_{\rm inf}) \to R\mathcal{H}om_{\mathrm{Spa}(\mathbb{F}_q)_v,\mathcal{O}_E}(\mathbb{A}_{\rm inf},\mathbb{B}_{\rm inf})
$$
is an isomorphism. This can be checked on cohomology groups, and for this it is enough to show that $\mathbb{A}_{\rm inf}$ is pseudo-coherent as a v-sheaf of $\underline{\mathcal{O}_E}$-modules. However, for each $i\geq 0$, the functors $\mathcal{E}xt_{\mathrm{Spa}(\mathbb{F}_q)_v,\mathcal{O}_E}^i(\mathbb{A}_{\rm inf},-)$ can be computed using the MacLane complex (\Cref{definition-and-existence-of-the-maclane-complex}) $M_{(\widetilde{\mathrm{Spa}(\mathbb{F}_q)_v},\underline{\mathcal{O}_E})}(\mathbb{A}_{\rm inf})$ and because of the description of the terms of this complex, to check that $\mathcal{E}xt_{\mathrm{Spa}(\mathbb{F}_q)_v,\mathcal{O}_E}^i(\mathbb{A}_{\rm inf},-)$ commutes with filtered colimits for all $i$, it suffices to prove that the functors $H_{\mathrm{Spa}(\mathbb{F}_q)_v}^j(\mathbb{A}_{\rm inf}^s \times \underline{\mathcal{O}_E}^r,-)$ commute with filtered colimits for all $j,r,s\geq 0$: this is true, since $\mathbb{A}_{\rm inf}^s \times \underline{\mathcal{O}_E}^r$ is represented by a qcqs perfectoid space. 
\end{proof}

Let $S \in \Perf_{\mathbb{F}_q}$ be an affinoid perfectoid space. Let $S^\sharp$ be an untilt over $E$ of $S$, corresponding to a primitive element $\xi \in \mathbb{A}_{\rm inf}(S)$. We will denote by $\mathcal{O}^\sharp$ (resp. $\mathcal{O}^{+\sharp}$) the sheaf of $\underline{E}$-vector spaces on $S_v$ sending $T \in \Perf_S$ to $\mathcal{O}(T^\sharp)$ (resp. $\mathcal{O}^+(T^\sharp)$), where $T^\sharp \in \Perf_{S^\sharp}$ is the perfectoid space corresponding to $T$ by the tilting equivalence $\Perf_{S^\sharp} \cong \Perf_S$.

\begin{theorem}
\label{computations-of-the-desired-extensions}
Let $S$ be as above. One has canonical identifications
$$
R\mathcal{H}om_{S_v,E}(\underline{E},\underline{E}) \cong \underline{E}, \quad R\mathcal{H}om_{S_v,E}(\underline{E},\mathcal{O}^\sharp)\cong \mathcal{O}^{\sharp},
$$
$$
R\mathcal{H}om_{S_v,E}(\mathcal{O}^\sharp,\mathcal{O}^\sharp) \cong \mathcal{O}^\sharp \oplus \mathcal{O}^\sharp[-1]
$$ 
and 
$$
R\mathcal{H}om_{S_v,E}(\mathcal{O}^\sharp,\underline{E}) \cong \mathcal{O}^\sharp (-1)[-1]
$$
(on the right, the symbol $(-1)$ denotes a Tate twist).
\end{theorem}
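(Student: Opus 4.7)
The first two formulas are immediate: $\underline{E}$ is the unit in the symmetric monoidal category of v-sheaves of $\underline{E}$-modules, so $R\mathcal{H}om(\underline{E}, M) \simeq M$ for any such $M$. For the other two, the strategy is to combine \Cref{self-ext-of-a-inf-1-over-pi} (which gives $R\mathcal{H}om_{S_v, E}(\mathbb{B}_{\rm inf}, \mathbb{B}_{\rm inf}) \overset{a}{\simeq} \mathbb{B}_{\rm inf}\langle F^{\pm 1}\rangle$) with the two fundamental two-term resolutions
\[
0 \to \underline{E} \to \mathbb{B}_{\rm inf} \xrightarrow{\varphi - 1} \mathbb{B}_{\rm inf} \to 0, \qquad 0 \to \mathbb{B}_{\rm inf} \xrightarrow{\xi} \mathbb{B}_{\rm inf} \to \mathcal{O}^\sharp \to 0.
\]
Under the corollary's identification, pre- (resp.\ post-) composition with $\xi$ corresponds to right (resp.\ left) multiplication by $\xi$ in $\mathbb{B}_{\rm inf}\langle F^{\pm 1}\rangle$, and analogously for $\varphi - 1$ with $F - 1$; since $\xi$ is a non-zero-divisor, one immediately obtains $R\mathcal{H}om(\mathbb{B}_{\rm inf}, \mathcal{O}^\sharp) \overset{a}{\simeq} \mathcal{O}^\sharp\langle F^{\pm 1}\rangle$, concentrated in degree $0$.

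For the third formula, take the fiber of right multiplication by $\xi$ on $\mathcal{O}^\sharp\langle F^{\pm 1}\rangle$. This operator acts on the $F^n$-slot by multiplication by the image of $\varphi^n(\xi)$ in $\mathcal{O}^\sharp$, which is zero for $n = 0$ and a unit for $n \neq 0$ (the primitive elements $\xi$ and $\varphi^n(\xi)$ cut out distinct maximal ideals after inverting $\pi$). Only the $F^0$-summand contributes, yielding $\mathcal{O}^\sharp$ in both the kernel and the cokernel; the splitting of the complex by $F$-degree upgrades this to the asserted decomposition $\mathcal{O}^\sharp \oplus \mathcal{O}^\sharp[-1]$.

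For the fourth formula, combining both resolutions realises $R\mathcal{H}om(\mathcal{O}^\sharp, \underline{E})$ as the totalization (in degrees $[0, 2]$) of the bicomplex with $\mathbb{B}_{\rm inf}\langle F^{\pm 1}\rangle$ at each of its four corners, horizontal differentials $(F-1)\cdot$, and vertical differentials $\cdot \xi$. Injectivity of both $(F-1)\cdot$ and $\cdot \xi$ makes $H^0 = 0$ immediate. The main obstacle is to show that $H^2 = \mathbb{B}_{\rm inf}\langle F^{\pm 1}\rangle / ((F-1)\mathbb{B}_{\rm inf}\langle F^{\pm 1}\rangle + \mathbb{B}_{\rm inf}\langle F^{\pm 1}\rangle\, \xi)$ vanishes and to identify $H^1$ with $\mathcal{O}^\sharp(-1)$. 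For the vanishing of $H^2$, one exploits explicit lifts of the form $[\varpi^{\flat, q^n}]/\pi$, which realise naively unit-like classes modulo $\varphi^n(\xi)$ as topologically small elements of $\mathbb{B}_{\rm inf}$; this allows one to write every element in the form $(F-1)g + h\xi$. The Tate twist in $H^1$ then emerges by carefully tracking the Frobenius weight on the $F^n$-components of a canonical cocycle representing the extension class.
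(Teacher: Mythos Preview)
Your treatment of the first three identifications is essentially the paper's. For the third one you might add one sentence explaining why the almost isomorphism of \Cref{self-ext-of-a-inf-1-over-pi} upgrades to an honest one: the $\mathbb{A}_{\rm inf}$-action on $R\mathcal{H}om(\mathcal{O}^\sharp,\mathcal{O}^\sharp)$ factors through $\mathcal{O}^\sharp$, and an $\mathcal{O}^\sharp$-module killed by every $(\varpi^\sharp)^{1/p^n}$ is zero because these are units in $\mathcal{O}^\sharp$.

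For the fourth identification your strategy breaks down, and not for lack of detail: the bicomplex you write down has \emph{zero} totalization. If one first takes the cokernel of left multiplication by $F-1$ on $\mathbb{A}_{\rm inf}\langle F^{\pm1}\rangle[1/\pi]$, one gets $(A_{\rm inf}(R,R^+)/W_{\mathcal{O}_E}(R^{++}))[1/\pi]$ (this is \Cref{sec:some-ext-groups-critical-analytic-lemma-for-calculating-the-exts}). On this quotient, right multiplication by $\xi$ agrees with multiplication by $\pi$, because $\xi-\pi\in [\varpi]A_{\rm inf}(R,R^+)\subseteq W_{\mathcal{O}_E}(R^{++})$ and $W_{\mathcal{O}_E}(R^{++})$ is an ideal of $A_{\rm inf}(R,R^+)$; hence after inverting $\pi$ the map $\cdot\xi$ is an isomorphism. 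Both spectral sequences collapse to $0$, so $H^1=H^2=0$, contradicting your claim that $H^1\cong\mathcal{O}^\sharp(-1)$. The point is that \Cref{self-ext-of-a-inf-1-over-pi} is only an \emph{almost} isomorphism, and the operation ``fiber of $(F-1)\cdot$'' does not preserve almost-zero objects (post-composition with $\varphi$ is only $\varphi$-semilinear for the left $\mathbb{A}_{\rm inf}$-action). The entire answer $\mathcal{O}^\sharp(-1)[-1]$ is hidden in the almost-zero cone you have discarded.

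The paper circumvents this by replacing $\mathbb{B}_{\rm inf}$ in the second variable with $\mathbb{B}$: one uses the \emph{honest} isomorphism $R\mathcal{H}om_{\mathbb{F}_q}(\mathcal{O}^+,\mathcal{O})\cong\mathcal{O}\langle F^{\pm1}\rangle$ of \Cref{extensions-o-plus-and-o} to see that $R\Hom(\mathbb{A}_{\rm inf},\mathbb{A})$ is genuinely concentrated in degree $0$, and then identifies the cokernel of $F-1$ with $A(R,R^+)/W_{\mathcal{O}_E}(R^{++})$. The crucial difference is that now the numerator is $A(R,R^+)=W_{\mathcal{O}_E}(R)$, not $A_{\rm inf}(R,R^+)=W_{\mathcal{O}_E}(R^+)$: on $W_{\mathcal{O}_E}(R)/W_{\mathcal{O}_E}(R^{++})$ multiplication by $\xi$ is no longer multiplication by $\pi$ (e.g.\ $(\xi-\pi)\cdot[\varpi^{-1}]\notin W_{\mathcal{O}_E}(R^{++})$), and its kernel is exactly $\tfrac{1}{\xi}W_{\mathcal{O}_E}(R^{++})/W_{\mathcal{O}_E}(R^{++})[1/\pi]\cong R^\sharp(-1)$ via $\theta$.
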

\begin{proof}
The first two isomorphisms are obvious, so we only need to prove the last two. There are exact sequences of v-sheaves of $\underline{E}$-vector spaces:
$$
0 \to \underline{E} \to \mathbb{B}  \overset{\varphi_{\mathbb{B}}-1} \longrightarrow \mathbb{B}  \to 0
$$
and
$$
0 \to \mathbb{B}_{\rm inf} \overset{\times \xi} \longrightarrow \mathbb{B}_{\rm inf} \overset{\theta} \to \mathcal{O}^\sharp \to 0,
$$
to which we will refer in the rest of this proof simply as the ``first" and ``second" exact sequences.

We have a distinguished triangle 
$$
R\mathcal{H}om_{S_{v},E}(\mathbb{B}_{\rm inf},\mathbb{B}_{\rm inf}) \to R\mathcal{H}om_{S_{v},E}(\mathbb{B}_{\rm inf},\mathbb{B}_{\rm inf}) \to R\mathcal{H}om_{S_{v},E}(\mathbb{B}_{\rm inf},\mathcal{O}^\sharp) 
$$
induced by the second exact sequence. Through the isomorphism of \Cref{self-ext-of-a-inf-1-over-pi}, it can be rewritten in the almost category as
$$
\mathbb{A}_{\rm inf}\langle F^{\pm 1} \rangle [1/\pi] \overset{\xi \times} \longrightarrow \mathbb{A}_{\rm inf}\langle F^{\pm 1} \rangle [1/\pi]  \to R\mathcal{H}om_{S_{v},E}(\mathbb{B}_{\rm inf},\mathcal{O}^\sharp),
$$
where $\xi\times$ denotes multiplication on the left. Write
$$
\mathbb{A}_{\rm inf} \langle F^{\pm 1} \rangle [1/\pi] = ( \oplus_{n\in \Z} \mathbb{A}_{\rm inf}. F^n )^{\wedge \pi}[1/\pi].
$$
We get an isomorphism
$$
R\mathcal{H}om_{S_{v},E}(\mathbb{B}_{\rm inf},\mathcal{O}^\sharp)  \cong \mathcal{O}^{+ \sharp} \langle F^{\pm 1} \rangle [1/\pi],
$$
where in the right hand side $\mathcal{O}^{+ \sharp} \langle F^{\pm 1} \rangle$ denotes the left $\mathcal{O}^{+\sharp}$-module
$$
\mathcal{O}^{+ \sharp} \langle F^{\pm 1} \rangle = ( \oplus_{n\in \Z} \mathcal{O}^{+ \sharp}. F^n )^{\wedge \pi}.
$$
Using again the second exact sequence, we get an exact triangle
$$
R\mathcal{H}om_{S_{v},E}(\mathcal{O}^\sharp,\mathcal{O}^\sharp) \to R\mathcal{H}om_{S_{v},E}(\mathbb{B}_{\rm inf},\mathcal{O}^\sharp) \to R\mathcal{H}om_{S_{v},E}(\mathbb{B}_{\rm inf},\mathcal{O}^\sharp)
$$
with the second map given by multiplication by $\xi$ on $\mathbb{B}_{\rm inf}$.
Write
$$
\mathcal{O}^{+ \sharp} \langle F^{\pm 1} \rangle [1/\pi] = ( \oplus_{n\in \Z} \mathcal{O}^{+ \sharp}. F^n )^{\wedge \pi}[1/\pi].
$$
Through the above isomorphism, the right map identifies with (the extension to the completion of) the map which is multiplication by the reduction modulo $\xi$ of $\varphi^n(\xi)$ in degree $n$ (as follows from the relation $F^n a=\varphi^n(a).F$, $a \in \mathbb{A}_{\rm inf}$, in $\mathbb{A}_{\rm inf}[F^{\pm 1}]$). For each $n\neq 0$, the reduction modulo $\xi$ of $\varphi^n(\xi)$ is a unit in $\mathcal{O}^\sharp$ and hence the triangle is quasi-isomorphic to  
$$
R\mathcal{H}om_{S_{v},E}(\mathcal{O}^\sharp,\mathcal{O}^\sharp) \to \mathcal{O}^\sharp \overset{0} \to \mathcal{O}^\sharp.
$$
In other words,
$$
R\mathcal{H}om_{S_{v},E}(\mathcal{O}^\sharp,\mathcal{O}^\sharp) \cong \mathcal{O}^\sharp \oplus \mathcal{O}^\sharp[-1],
$$ 
as desired.
\\

  To prove the isomorphism
  \begin{equation}
    \label{eq:3}
    R\mathcal{H}om_{S_{v},E}(\mathbb{B}_{\rm inf},\underline{E})\cong \mathcal{O}^\sharp (-1)  
  \end{equation}
  we will establish that if $S=\Spa(R,R^+)$ is perfectoid with untilt $(R^\sharp, R^{+,\sharp})$ over $E$, then there exists a natural isomorphism
  \[
    R\Hom_{S_v,E}(\mathbb{B}_{\rm inf}, \underline{E})\cong {A(R,R^+)}/{W_{\mathcal{O}_E}(R^{++})}[1/\pi][-1],
  \]
  where $R^{++}\subseteq R$ denotes the $R^+$-submodule of topologically nilpotent elements, such that the multiplication by $\xi$ on $\mathbb{B}_{\rm inf}$ is converted to the left multiplication by $\xi$. Granting this claim the assertion follows from the second exact sequence above applied in the first variable. Indeed, we get that
  $$
  R\Hom_{S_v,E}(\mathcal{O}^\sharp, \underline{E})
  $$
  is isomorphic to the kernel of multiplication by $\xi$ on ${A(R,R^+)}/{W_{\mathcal{O}_E}(R^{++})}[1/\pi]$ placed in degree $1$. But this kernel is
  \[
    1/\xi W_{\mathcal{O}_E}(R^{++})/W_{\mathcal{O}_E}(R^{++})[1/\pi]\cong R^{\sharp}(-1),
  \]
  the isomorphism being induced by Fontaine's map $\theta$.
  
  Thus, we are left with proving (\Cref{eq:3}). We have
  \[
    R\Hom_{S_v,E}(\mathbb{B}_{\mathrm{inf}},\underline{E})\cong R\Hom_{S_v,\mathcal{O}_E}(\mathbb{A}_{\rm inf},\mathcal{O}_E)[1/\pi]
  \]
  and
  \[
    R\Hom_{S_v,\mathcal{O}_E}(\mathbb{A}_{\rm inf},\mathcal{O}_E)\cong \mathrm{fib}(R\Hom_{S_v,\mathcal{O}_E}(\mathbb{A}_{\rm inf},\mathbb{A})\xrightarrow{F-1} R\Hom_{S_v,\mathcal{O}_E}(\mathbb{A}_{\rm inf},\mathbb{A}))
  \]
  using the integral version of the first exact sequence.
  The object $R\Hom_{S_v,\mathcal{O}_E}(\mathbb{A}_{\rm inf},\mathbb{A})$ is $\pi$-adically complete and concentrated in degree $0$ (as follows from \Cref{extensions-o-plus-and-o} via reduction mod $\pi$). In particular, we can (as $\Hom_{S_v}(\mathbb{A}_{\rm inf}, \mathcal{O}_E)=0$ by fiberwise connectedness of $\mathbb{A}_{\rm inf}$) conclude that $R\Hom_{S_v,\mathcal{O}_E}(\mathbb{A}_{\rm inf},\mathcal{O}_E)$ is concentrated in degree $1$. Acting with $A(R,R^+)$ on the natural morphism $\mathbb{A}_{\rm inf}\to \mathbb{A}$ yields a natural map
  \[
    A(R,R^+)\to R\Hom_{S_v,\mathcal{O}_E}(\mathbb{A}_{\rm inf},\mathbb{A}), 
  \]
  and thus by composition with the connecting morphism of the above exact triangle a natural morphism
  \[
   \Phi\colon A(R,R^+)\to R\Hom_{S_v,\mathcal{O}_E}(\mathbb{A}_{\rm inf},\mathcal{O}_E)[1].
  \]
  We claim that $\Phi$ factors over $A(R,R^+)/W_{\mathcal{O}_E}(R^{++})$. Because $\Phi_{|W_{\mathcal{O}_E}(R^{++})}$ factors over the morphism
  \[
    A_\inf(R,R^{+})\to A_\inf(R,R^+)\langle F^{\pm 1}\rangle \to R\Hom_{S_v,\mathcal{O}_E}(\mathbb{A}_\inf,\mathbb{A}_\inf) \to R\Hom_{S_v,\mathcal{O}_E}(\mathbb{A}_{\rm inf},\mathcal{O}_E)
  \]
  this follows from~\Cref{sec:some-ext-groups-critical-analytic-lemma-for-calculating-the-exts} below.
  Having constructed the natural morphism
  \[
    A(R,R^+)/W_{\mathcal{O}_E}(R^{++})\to R\Hom_{S_v,\mathcal{O}_E}(\mathbb{A}_{\rm inf},\mathcal{O}_E)[1]
  \]
  we can check that it is an isomorphism modulo $\pi$ as both sides are derived $\pi$-adically complete.
  Modulo $\pi$ (in the derived sense) the left-hand side becomes
  \[
    R/R^{++},
  \]
  while the right-hand side becomes
  \[
    R\Hom_{S_v,\mathcal{O}_E}(\mathbb{A}_{\rm inf},\F_q)[1]\cong R\Hom_{S_v,\F_q}(\mathcal{O}^+,\F_q)[1].
  \]
  Using the exact sequence
  \[
0\to \F_q\to \mathcal{O}\xrightarrow{F-1}\mathcal{O}\to 0
\]
(with $F$ the $q$-Frobenius) and \Cref{extensions-o-plus-and-o} the last term simplifies to
\[
  \mathrm{coker}(\mathcal{O}\langle F^{\pm 1}\rangle \xrightarrow{F-1} \mathcal{O}\langle F^{\pm 1}\rangle)
\]
(one can easily see that $F-1$ is injective on $\mathcal{O}\langle F^{\pm 1}\rangle$).
Therefore we conclude the proof of the theorem by \Cref{sec:some-ext-groups-critical-analytic-lemma-for-calculating-the-exts}.
\end{proof}

\begin{lemma}
  \label{sec:some-ext-groups-critical-analytic-lemma-for-calculating-the-exts}
  Assume $S=\Spa(R,R^+)$ is an affinoid perfectoid space over $\F_q$. 
 Then
    \[
      \mathrm{coker}(R\langle F^{\pm 1}\rangle \xrightarrow{F-1} R\langle F^{\pm 1}\rangle)\cong R/R^{++}
    \]
    and
    \[
      \mathrm{coker}(A_\inf(R,R^+)\langle F^{\pm 1}\rangle \xrightarrow{F-1} A_\inf(R,R^+)\langle F^{\pm 1}\rangle)\cong A_\inf(R,R^{++})/W_{\mathcal{O}_E}(R^{++}).
    \]
     
  \end{lemma}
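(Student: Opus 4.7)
The plan is to prove both statements by constructing mutually inverse maps between the cokernel and the target quotient. We treat the first statement in detail; the second follows by the same strategy in the Witt vector setting.

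Define $\Psi \colon R\langle F^{\pm 1}\rangle \to R/R^{++}$ by $\sum_n a_n F^n \mapsto \sum_n [\varphi^{-n}(a_n)]$, where $[\cdot]$ denotes the class modulo $R^{++}$. This is well-defined: since $a_n \to 0$ we have $a_n \in R^{++}$ for $|n|$ sufficiently large, and $\varphi$ preserves $R^{++}$, so only finitely many terms contribute modulo $R^{++}$. A telescoping calculation (reindexing $m = n-1$) shows $\Psi \circ (F-1) = 0$, yielding a descended map $\bar\Psi \colon \mathrm{coker}(F-1) \to R/R^{++}$. Conversely, for any $a \in R^{++}$ the series $-\sum_{m \geq 0} a^{q^m} F^m$ lies in $R\langle F^{\pm 1}\rangle$ (since $a^{q^m} \to 0$), and a direct calculation using $Fa = \varphi(a)F$ gives $(F-1)\bigl(-\sum_{m \geq 0} a^{q^m} F^m\bigr) = a \cdot F^0$. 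Hence $R^{++} \subseteq (F-1) R\langle F^{\pm 1}\rangle$, so the map $R \hookrightarrow R\langle F^{\pm 1}\rangle \twoheadrightarrow \mathrm{coker}(F-1)$ descends to $\Theta \colon R/R^{++} \to \mathrm{coker}(F-1)$, with $\bar\Psi \circ \Theta = \mathrm{id}$ by inspection.

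To show $\Theta \circ \bar\Psi = \mathrm{id}$, it suffices to verify that every class in $\mathrm{coker}(F-1)$ is represented by some $b \cdot F^0$ with $b \in R$. The key identity is $(F-1)\bigl(\sum_{k=1}^n \varphi^{-k}(a) F^{n-k}\bigr) = a F^n - \varphi^{-n}(a) F^0$ for $n \geq 1$, together with its symmetric analog for $n \leq -1$; this produces a \emph{finite-support} preimage realizing $a F^n \equiv \varphi^{-n}(a) F^0 \pmod{F-1}$. Applied termwise to a truncation $f_N := \sum_{|n|\leq N} a_n F^n$, this gives $[f_N] = [b_N F^0]$ with $b_N := \sum_{|n|\leq N} \varphi^{-n}(a_n) \in R$. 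It remains to show that for $N$ large enough, the tail $\sum_{|n|>N} a_n F^n$ lies in $(F-1) R\langle F^{\pm 1}\rangle$. Choosing $N$ so that $a_n \in \varpi R^+$ for $|n|>N$, the tail preimage is built by summing the individual preimages of each $a_n F^n$: for the positive tail $\sum_{n>N} a_n F^n$ the candidate is $-\sum_{k>N} d_k F^k$ with $d_k := \sum_{n=N+1}^{k} a_n^{q^{k-n}}$, and the negative tail is handled symmetrically. The principal technical obstacle is verifying that these $d_k$ define a genuine element of $R\langle F^{\pm 1}\rangle$, i.e.\ that $d_k$ converges in $R$ and tends to $0$ as $k \to \infty$; this follows from a $\varpi$-adic estimate exploiting that $\varphi$ multiplies $\varpi$-adic valuations by $q$, making high Frobenius iterates of elements in $\varpi R^+$ arbitrarily small.

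For the second statement, the same strategy applies with $A_\inf(R,R^+) = W_{\mathcal{O}_E}(R^+)$ in place of $R^+$: Frobenius $\varphi$ is still an isomorphism (as $R^+$ is perfect), $W_{\mathcal{O}_E}(R^{++})$ plays the role of $R^{++}$, and the role of $R/R^{++}$ is taken by the quotient $A_\inf(R, R^{++})/W_{\mathcal{O}_E}(R^{++})$ appearing in the statement. The convergence estimates for the tail preimages are now controlled by the $(\pi, [\varpi])$-adic topology on $A_\inf(R,R^+)$ rather than by the $\varpi$-adic topology on $R$.
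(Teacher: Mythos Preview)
Your argument for the first isomorphism is essentially correct and parallels the paper's: both hinge on the same convergence estimate showing that an element $\sum a_i F^i$ with all $a_i\in\varpi R^+$ lies in the image of $F-1$, via the explicit preimage with $i$-th coefficient $-\sum_{k\ge 0}\varphi^k(a_{i-k})$. One caution: your phrase ``the negative tail is handled symmetrically'' is dangerous. The genuinely symmetric formula, going to \emph{lower} degrees via $\sum_{m\ge 1}\varphi^{-m}(a_n)F^{n-m}$, does not lie in $R\langle F^{\pm1}\rangle$ because $|\varphi^{-m}(a_n)|=|a_n|^{q^{-m}}\to 1$, not $0$. One must use the \emph{same} upward-going formula for both tails; the convergence for $i\to-\infty$ is then easy (all contributing $a_j$ have $j\le i\to-\infty$), while $i\to+\infty$ needs the two-regime split you allude to.

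For the second isomorphism your sketch has a genuine gap. Frobenius on $A_\inf$ is $\mathcal{O}_E$-linear, so $\varphi(\pi)=\pi$: iterating $\varphi$ contracts in the $[\varpi]$-direction but \emph{not} in the $\pi$-direction. Thus for a tail coefficient like $a_n=\pi^n$ the candidate preimage $-\sum_{m\ge 0}\varphi^m(a_n)F^{n+m}$ has all coefficients equal to $\pi^n$ and does not converge in $A_\inf\langle F^{\pm1}\rangle$. Likewise, the sum defining your map $\Psi$ is no longer finite modulo $W_{\mathcal{O}_E}(R^{++})$ (since $a_n\to 0$ in the $(\pi,[\varpi])$-topology does not force $a_n\in W_{\mathcal{O}_E}(R^{++})$ eventually), so its well-definedness already requires the $\pi$-adic completeness of $A_\inf(R,R^+)/W_{\mathcal{O}_E}(R^{++})$. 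The paper sidesteps these issues: it only runs the tail argument for coefficients in $[\varpi]A_\inf$ (where Frobenius does contract), obtaining a map from $A_\inf/\bigcup_\varpi[\varpi]A_\inf$ to the cokernel, and then passes to derived $\pi$-adic completions of both sides and checks the isomorphism modulo $\pi$ using the first part. Your direct approach can be repaired along these lines, but not by the sentence you wrote.
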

  \begin{proof}
    We first prove the first assertion. Let $\varpi\in R$ be a pseudo-uniformizer and assume that
    \[
      a=\sum\limits_{i\in \Z} a_iF^i\in R\langle F^{\pm 1}\rangle
    \]
    is an element such that $|a_i|\leq |\varpi|$ for all $i\in \Z$. We claim that $a=(F-1)(b)$ for some $b\in R\langle F^{\pm 1}\rangle$. Let $\varphi\colon R\to R$ be the $q$-Frobenius. For $i\in \Z$, set
$$
b_i = - \sum_{k\geq 0} \varphi^{k}(a_{i-k}).
$$
The sum is converging. We claim that
$$
b_i \underset{|i| \to +\infty} \to 0
$$
for the natural topology on $R$. It is clear when $i \to -\infty$, since $a_j \underset{j\to -\infty} \to 0$. Fix $N>0$. Let $M>0$ such that if
$$
 \quad \forall j>M, \quad |a_j|\leq |\varpi|^{q^N}.
$$
Then, if $i>N+M$, we have
$$
|b_i| \leq \max(|\sum_{0\leq k <N} \varphi^{k}(a_{i-k})|, |\sum_{k \geq N} \varphi^k(a_{i-k})|). 
$$
In the first sum on the right, all terms have absolute value less than $|\varpi|^{q^N}$, by definition of $M$. In the second too, by our starting assumption on the sequence $(a_i)$. Therefore, if $i>N+M$, 
$$
|b_i| \leq |\varpi|^{q^N}
$$
and so $b_i \underset{i\to +\infty} \to 0$, as desired. The sequence $(b_i)$ therefore gives rise to an element $\sum\limits_{i\in \Z} b_iF^i\in R\langle F^{\pm 1}\rangle$.

For $i\in \Z$, we can compute
$$
\varphi(b_{i-1})-b_i =  - \sum_{k\geq 0} \varphi^{k+1}(a_{i-1-k}) + \sum_{k\geq 0} \varphi^k(a_{i-k}) = a_i
$$
since $a_j \underset{j\to - \infty} \to 0$. Hence $a=(F-1)b$, and the claim is proved.

    In particular, we see that the map
    \[
      R[F^{\pm 1}]\to \mathrm{coker}(R\langle F^{\pm 1}\rangle \xrightarrow{F-1} R\langle F^{\pm 1}\rangle)
    \]
    is surjective, i.e., each element in the cokernel can be represented by a finite sum of ${F^i}$'s with coefficients in $R$. As $R$ is perfect, we can further assume that in this sum all coefficients in front of $F^i, i\neq 0,$ are zero. It is easy to see that an element $r\in R\subseteq R\langle F^{\pm 1} \rangle$ is of the form $(F-1)(b)$ for some $b\in R\langle F^{\pm 1}\rangle$ if and only if $r\in R^{++}$. This finishes the proof of the first assertion.

    To prove the second we note that the above argument for $a\in \varpi R^+\langle F^{\pm 1}\rangle $ works mutatis mutandis for $a\in [\varpi]A_{\inf}(R,R^+)$ and shows that the natural morphism
    \[
      A_\inf(R,R^+)\to C:=\mathrm{coker}(A_\inf(R,R^+)\langle F^{\pm 1}\rangle \xrightarrow{F-1} A_\inf(R,R^+)\langle F^{\pm 1}\rangle)
    \]
    factors over
    \[
      B:=A_\inf(R,R^+)/(\bigcup\limits_{\varpi\in R \textrm{ pseudo-uniformizer}} [\varpi] A_{\inf}(R,R^+)).
    \]
    Let us note that $C$ is derived $\pi$-adically complete, and that the derived $\pi$-adic completion of $B$ is $A_{\inf}(R,R^+)/W_{\mathcal{O}_E}(R^{++})$. Hence, by derived modding out $\pi$ we can deduce that $B^\vee_\pi\cong C$ by the first part. 
  \end{proof}

Recall the morphism of sites 
$$
\tau: (X_S)_v \to S_v. 
$$
If $\mathcal{E} \in \Bun(S)$ with only non-negative slopes, resp. only negative slopes, we write
$$
\BC(\mathcal{E}) = \tau_\ast \mathcal{E} ~~ , ~~ \mathrm{resp}. ~ \BC(\mathcal{E})=R^1\tau_\ast \mathcal{E}.
$$

\begin{corollary}
\label{fully-faithfulness-R-tau-ast}
Let $S$ be a small v-stack. The functor 
$$
R\tau_\ast : \mathcal{P}erf(X_S) \to D(S_v, \underline{E}) 
$$
is fully faithful. In particular, if $\mathcal{E}$ is a vector bundle on $X_S$ with either only positive slopes or only negative slopes,
$$
R\mathcal{H}om_{S_v,E}(\BC(\mathcal{E}), \underline{E}[1]) \cong \BC(\mathcal{E}^\vee).
$$
\end{corollary}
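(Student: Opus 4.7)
The plan is to reduce the full faithfulness of $R\tau_\ast$ to the explicit Ext computations of \Cref{computations-of-the-desired-extensions}. Both sides of the natural map
\[
R\mathrm{Hom}_{X_S}(K_1, K_2) \to R\mathrm{Hom}_{S_v, E}(R\tau_\ast K_1, R\tau_\ast K_2)
\]
satisfy v-descent in $S$ (the source by \Cref{sec:v-descent-perfect-descent-of-perfect-complexes-on-relative-ff-curve}), so one can assume that $S = \mathrm{Spa}(R, R^+)$ is affinoid perfectoid. Using \Cref{sec:perf-compl-relat-perfect-complex-strict} and dévissage, we further reduce to the case where $K_1, K_2$ are vector bundles; and using the presentation by twists of $\mathcal{O}$ from \Cref{sec:moduli-stack-flat-ampleness-for-flat-coherent-sheaves}, to the case where $K_1 = \mathcal{O}(n_1)$ and $K_2 = \mathcal{O}(n_2)$. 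Finally, the fundamental exact sequence
\[
0 \to \mathcal{O}(n) \to \mathcal{O}(n+1) \to i_{S^\sharp,\ast}\mathcal{O}^\sharp \to 0,
\]
available v-locally once one chooses a degree-$1$ primitive section of $\mathcal{O}(1)$, shows by induction on the twist that every $\mathcal{O}(n)$ lies in the triangulated subcategory generated by $\mathcal{O}_{X_S}$ and $i_{S^\sharp,\ast}\mathcal{O}^\sharp$. Hence it suffices to check full faithfulness in the four cases $K_1, K_2 \in \{\mathcal{O}_{X_S}, i_{S^\sharp,\ast}\mathcal{O}^\sharp\}$.

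In these four base cases, the target objects are $R\tau_\ast \mathcal{O}_{X_S} = \underline{E}$ (using $H^1(X_T, \mathcal{O}) = 0$ for $T$ affinoid perfectoid) and $R\tau_\ast i_{S^\sharp,\ast}\mathcal{O}^\sharp = \mathcal{O}^\sharp$, so that the target $R\mathrm{Hom}$'s are exactly those computed in \Cref{computations-of-the-desired-extensions}. On the $X_S$ side, $R\mathrm{Hom}(\mathcal{O}_{X_S}, \mathcal{O}_{X_S})$ and $R\mathrm{Hom}(\mathcal{O}_{X_S}, i_\ast\mathcal{O}^\sharp)$ are immediate, while the Koszul-type resolution $\mathcal{O}_{X_S}(-1) \xrightarrow{\xi} \mathcal{O}_{X_S} \twoheadrightarrow i_\ast\mathcal{O}^\sharp$ recovers $R\mathrm{Hom}(i_\ast \mathcal{O}^\sharp, \mathcal{O}_{X_S})$ and $R\mathrm{Hom}(i_\ast \mathcal{O}^\sharp, i_\ast \mathcal{O}^\sharp)$, including the natural $(-1)$-Tate twist and the shift by $[-1]$ that account for the normal direction along $S^\sharp$. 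A term-by-term comparison with \Cref{computations-of-the-desired-extensions} then concludes the full faithfulness.

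For the ``in particular'' statement, one first upgrades the external full faithfulness to its internal-Hom version
\[
R\tau_\ast R\mathcal{H}om_{X_S}(K_1, K_2) \cong R\mathcal{H}om_{S_v, E}(R\tau_\ast K_1, R\tau_\ast K_2),
\]
by applying the external statement over every $U \to S$ in $S_v$ and invoking v-descent. Taking $K_1 = \mathcal{E}$ and $K_2 = \mathcal{O}_{X_S}[1]$, the right-hand side equals $R\mathcal{H}om_{S_v, E}(R\tau_\ast \mathcal{E}, \underline{E}[1])$ while the left-hand side is $R\tau_\ast(\mathcal{E}^\vee[1])$. Under the slope hypothesis on $\mathcal{E}$ (only positive or only negative), the shifts appearing in the identifications of $R\tau_\ast \mathcal{E}$ and $R\tau_\ast \mathcal{E}^\vee$ with $\BC(\mathcal{E})$ and $\BC(\mathcal{E}^\vee)$ precisely cancel, yielding the desired formula $R\mathcal{H}om_{S_v, E}(\BC(\mathcal{E}), \underline{E}[1]) \cong \BC(\mathcal{E}^\vee)$. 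The main obstacle will be the dévissage step: producing v-locally an untilt $S^\sharp$ of $S$, threading the Koszul resolution consistently through $R\tau_\ast$, and carefully tracking the Tate twists and shifts so as to match the source-side Ext computations with \Cref{computations-of-the-desired-extensions}.
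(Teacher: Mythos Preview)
Your reduction strategy is essentially the same as the paper's: localize to affinoid perfectoid $S$, d\'evissage via strict perfectness to vector bundles, then to line bundles, then (after choosing an untilt $S^\sharp$) to the four base cases $K_1,K_2\in\{\mathcal{O}_{X_S},\,i_{S^\sharp,\ast}\mathcal{O}^\sharp\}$, and finally invoke \Cref{computations-of-the-desired-extensions}. The ``in particular'' part via the internal-Hom upgrade is also fine and matches the paper.

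The gap is in the sentence ``A term-by-term comparison with \Cref{computations-of-the-desired-extensions} then concludes the full faithfulness.'' Matching the source and target abstractly is not enough: you must show that the \emph{natural} map $R\tau_\ast R\mathcal{H}om_{X_S}(K_1,K_2)\to R\mathcal{H}om_{S_v,E}(R\tau_\ast K_1,R\tau_\ast K_2)$ is an isomorphism. In the two nontrivial base cases $(K_1,K_2)=(i_\ast\mathcal{O}^\sharp,\mathcal{O}_{X_S})$ and $(i_\ast\mathcal{O}^\sharp,i_\ast\mathcal{O}^\sharp)$, both sides are (after trivializing the Tate twist) copies of $\mathcal{O}^\sharp$ in the appropriate degrees, and the content is precisely that the induced maps $\mathcal{O}^\sharp\to\mathcal{O}^\sharp$ are isomorphisms. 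The paper handles this by base-changing to a geometric point $S=\Spa(C,\mathcal{O}_C)$, where any $E$-linear endomorphism of $\mathcal{O}^\sharp$ is a scalar, so it suffices to check the maps are nonzero. That in turn is equivalent to the non-splitting, after $\tau_\ast$, of the two sequences $0\to\mathcal{O}_{X_S}\to\mathcal{O}_{X_S}(1)\to\mathcal{O}_{S^\sharp}\to 0$ and $0\to\mathcal{O}_{S^\sharp}\xrightarrow{\xi} B^+_{\dR,S^\sharp}/\xi^2\to\mathcal{O}_{S^\sharp}\to 0$; the first fails to split because $\BC(\mathcal{O}(1))$ is connected, the second because no continuous $E$-linear section $C^\sharp\to B^+_\dR(C^\sharp)/\xi^2$ of $\theta$ exists. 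You flag this step as ``the main obstacle'' but do not actually carry it out; that is the missing ingredient.

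A smaller omission in the last paragraph: to identify $R\tau_\ast\mathcal{E}$ with $\BC(\mathcal{E})$ (and $R\tau_\ast\mathcal{E}^\vee[1]$ with $\BC(\mathcal{E}^\vee)$) you need $R^i\tau_\ast\mathcal{E}=0$ for $i\neq 0$ (resp.\ $i\neq 1$), which the paper obtains from \cite[Proposition II.3.4]{fargues2021geometrization} after an \'etale localization; you should say this rather than asserting that ``the shifts cancel''.
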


\begin{proof}
The question is local on $S$, so we can assume that $S$ is affinoid perfectoid. Let $K, L \in \mathcal{P}erf(X_S)$. By \Cref{sec:perf-compl-relat-perfect-complex-strict}, both $K$ and $L$ are strictly perfect. Considering stupid truncations, we can assume that they are both vector bundles on $X_S$. If $\mathcal{E}$ is a vector bundle on $X_S$, we can always, up to localizing further on $S$, find a presentation of $\mathcal{E}$ of the form
$$
0 \to \mathcal{O}_{X_S}(\lambda)^n \to \mathcal{O}_{X_S}(\lambda+1)^m \to \mathcal{E} \to 0
$$
with $\lambda\in \Z$ (apply \cite[Proposition II.3.1]{fargues2021geometrization} to a suitable twist of $\mathcal{E}$, and \cite[Corollary II.2.20]{fargues2021geometrization}). Hence we can assume that $K$ and $L$ are both sums of line bundles, and this case in turn is reduced to the case $K, L \in \{\mathcal{O}_{X_S},\mathcal{O}_{X_S}(1)\}$ by using analogs of twists of the Euler sequence on $\mathbb{P}^1_{E}$. Choose an untilt $S^\sharp$ of $S$ over $E$, corresponding to a Cartier divisor (denoted by the same letter) on $X_S$. One has a short exact sequence
$$
0 \to \mathcal{O}_{X_S} \to \mathcal{O}_{X_S}(1) \to \mathcal{O}_{S^\sharp} \to 0
$$
and so we can as well assume that $K, L \in \{\mathcal{O}_{X_S},\mathcal{O}_{S^\sharp}\}$. We have
$$
R\tau_\ast \mathcal{O}_{X_S}= \underline{E},
$$
cf. \cite[Proposition II.2.5 (ii)]{fargues2021geometrization}, and, with the notations of \Cref{computations-of-the-desired-extensions},
$$
R\tau_\ast \mathcal{O}_{S^\sharp} = \mathcal{O}^\sharp
$$
as, on v-sites, $\mathcal{O}_{S^\sharp}$ is the pushforward of $\mathcal{O}^\sharp$ along a section of $\tau$.

We note that, after passing to a v-cover of $S$ to trivialize the Tate twist,
  \begin{align*}
    R\tau_\ast R\mathcal{H}om_{X_S}(\mathcal{O}_{X_S},\mathcal{O}_{X_S})\cong \underline{E}, \\
    R\tau_\ast R\mathcal{H}om_{X_S}(\mathcal{O}_{X_S},\mathcal{O}_{S^\sharp})\cong \mathcal{O}^\sharp, \\
    R\tau_\ast R\mathcal{H}om_{X_S}(\mathcal{O}_{S^\sharp},\mathcal{O}_{S^\sharp})\cong \mathcal{O}^\sharp\oplus \mathcal{O}^\sharp[-1], \\
    R\tau_\ast R\mathcal{H}om_{X_S}(\mathcal{O}_{S^\sharp},\mathcal{O}_{X_S})\cong \mathcal{O}^\sharp[-1],\\
  \end{align*}
which matches with \Cref{computations-of-the-desired-extensions}, and we have to check that certain maps $\mathcal{O}^\sharp\to \mathcal{O}^\sharp$ on $S_v$ are isomorphisms. But this can be checked after base change from $S$ to geometric points. If $S$ is a geometric point, it suffices to see that the respective maps are non-zero, which reduces to the assertion that the pushforwards of the exact sequences
\[
  0\to \mathcal{O}_{X_S}\to \mathcal{O}_{X_S}(1)\to \mathcal{O}_{S^\sharp}\to 0
\]
resp.\
\[
  0\to \mathcal{O}_{S^\sharp}\xrightarrow{\xi} B_{\dR,S^\sharp}^+/\xi^2 B^{+}_{\dR,S^\sharp}\to \mathcal{O}_{S^\sharp}\to 0 
\]
via $\tau$ are nonsplit. For the pushforward of the first sequence this follows because $\mathrm{BC}(\mathcal{O}_{X_S}(1))$ is connected. For the pushforward of the second, note that a splitting would in particular, using \cite[Corollary 6.6]{scholze_p_adic_hodge_theory_for_rigid_analytic_varieties}, give a map of condensed $\underline{E}$-vector spaces
$$
\underline{C^\sharp}  \to \underline{B_{\rm dR}^+(C^\sharp)/\xi^2}
$$
(where $S^\sharp=\mathrm{Spa}(C^\sharp, C^{\sharp +})$) and hence, since $C^\sharp$ and $B_{\rm dR}^+(C^\sharp)/\xi^2$ are compactly generated as topological spaces, a continuous $E$-linear map
$$
C^\sharp \to B_{\rm dR}^+(C^\sharp)/\xi^2
$$  
being a section of the map $\theta$. However, such a map is known not to exist.

Therefore, the first part of the corollary follows (and we further identified generating classes in $\mathcal{E}xt^1_{S_v,E}(\mathcal{O}^\sharp, \underline{E})$ and $\mathcal{E}xt^1_{S_v,E}(\mathcal{O}^\sharp, \mathcal{O}^\sharp)$). 

We turn to the last assertion. Assume first that $\mathcal{E}$ has only positive slopes (so that $\mathcal{E}^\vee$ has only negative slopes). By \cite[Proposition II.3.4 (iii)]{fargues2021geometrization}, there is an \'etale cover $S' \to S$ such that for any $T$ affinoid perfectoid over $S'$,
$$
H^1(X_T,\mathcal{E})=0
$$
and thus
$$
H^i(X_T,\mathcal{E})=0
$$
for all $i>0$. In particular, $R\tau_\ast \mathcal{E}=\BC(\mathcal{E})$. Hence, by the above fully faithfulness (applied to $T$),
$$
R\mathrm{Hom}_{T_v,E}(\BC(\mathcal{E}), \underline{E}[1]) \cong R\mathrm{Hom}_{\mathcal{O}_{X_T}}(\mathcal{E},\mathcal{O}_{X_T}[1]) \cong R\Gamma(X_T,\mathcal{E}^\vee)[1].
$$
But by \cite[Proposition II.3.4 (i)]{fargues2021geometrization}, the latter is nothing but
$$
H^1(X_T,\mathcal{E}^\vee)
$$
placed in degree $0$.
This being true for all $T$ affinoid perfectoid over $S'$, we deduce the desired claim. The proof is similar when $\mathcal{E}$ has negative slopes.
\end{proof}

\begin{remark}
\label{serre-duality}
Let $S$ be a small v-stack. From the corollary (applied to any $T$ over $S$), one deduces for each $K \in \mathcal{P}erf(X_S)$ a natural isomorphism
 \[
     R\tau_{\ast}(R\mathcal{H}om_{\mathcal{O}_{X_S}}(K,\mathcal{O}))\cong R\mathcal{H}om_{S_v,E}(R\tau_{\ast}K,E),
  \]
which can be thought of as ``relative Serre duality" on the Fargues-Fontaine curve.
\end{remark}

\begin{remark}
\label{dualizability-bc-spaces-geometric point}
The proof of \Cref{fully-faithfulness-R-tau-ast} shows that the essential image of $\mathcal{P}erf(X_S)$ in $D(S_v,\underline{E})$ can be described as the full subcategory $\mathcal{C}$ of objects which v-locally on $S$ lie in the smallest idempotent complete triangulated subcategory spanned by $\underline{E}$ and $\mathbb{A}^1_{S^\sharp}$ for some untilt $S^\sharp$ of $S$ over $\underline{E}$ and that the functor
$$
R\mathcal{H}om_{S_v,E}(-, \underline{E}[1])
$$
defines an autoduality of $\mathcal{C}$.

Milne, \cite{milne1976duality}, has used a similar autoduality statement for unipotent perfect group schemes (due to Breen) to study Poincar\'e duality for flat cohomology of smooth proper surfaces in characteristic $p$. Analogously, one may expect the above fact to be useful to the study of Poincar\'e duality for pro-\'etale or syntomic cohomology of smooth proper rigid-analytic varieties over $S^\sharp$.
\end{remark}

\subsection{Stacks in $E$-vector spaces}
\label{sec:gener-fram-case}
Let $\mathcal{S}$ be a site. A Picard groupoid, as defined in \cite[Section 1.4]{deligne1973formule}, cf. also \cite[Definition 5.1.1]{patel_de_rham_epsilon_factors}, is a symmetric monoidal category such that all the morphisms are invertible and the semigroup of isomorphism classes of the objects is a group. Picard groupoids form an $\infty$-category equivalent to the category of spectra whose only non-zero homotopy groups are in degrees $0,1$ (for a proof at the level of homotopy categories, cf. \cite[Theorem 5.1.3]{patel_de_rham_epsilon_factors}). The truncation of the tensor product of spectra defines a symmetric monoidal structure on the category of spectra whose only non-zero homotopy groups are in degrees $0,1$, and therefore, via this equivalence, on the category of Picard groupoids. More generally, sheaves of Picard groupoids\footnote{We do not want to call them \textit{Picard stacks}, because Deligne uses this terminology for a more restricted class of objects.} on $\mathcal{S}$ form an $\infty$-category equivalent to the category sheaves of spectra on $\mathcal{S}$, whose sheaves of homotopy groups vanish for $i \neq 0, 1$, and therefore has a symmetric monoidal structure.    

Let $\mathcal{A}$ be a sheaf of discrete (i.e. concentrated in degree $0$) rings on $\mathcal{S}$. Then $\mathcal{A}$ is a commutative algebra object in the symmetric monoidal category of sheaves of Picard groupoids on $\mathcal{S}$. One can consider the category of $\mathcal{A}$-module objects in the category of sheaves of Picard groupoids on $\mathcal{S}$. Since complexes of sheaves of $\mathcal{A}$-modules on $\mathcal{S}$ are equivalent to $\mathcal{A}$-modules in the category of sheaves of spectra on $\mathcal{S}$ (\cite[Corollary 2.1.2.3]{lurie_spectral_algebraic_geometry}), this new category is equivalent to the category of sheaves of $\mathcal{A}$-modules on $\mathcal{S}$ having non-zero cohomology sheaves only in degrees $-1, 0$. 

Taking $\mathcal{A}=\mathbb{Z}$ the constant sheaf (on any site $\mathcal{S}$), one recovers the category of \textit{strictly commutative Picard stacks} studied by Deligne in \cite[Section 1.4]{deligne1973formule} and most often called by him \textit{Picard stacks}.

\begin{definition}
\label{definition-stack-in-E-vs}
Let $S$ be small v-stack. We define a \textit{stack in $E$-vector spaces (over $S$)} to be an object in the category of $\underline{E}$-module objects in the category of sheaves of Picard groupoids on the site $S_v$.
\end{definition}

By the considerations above, the category of stacks in $E$-vector spaces (over $S$) is equivalent to the category of complexes of v-sheaves of $\underline{E}$-vector spaces on $S$, having non-zero cohomology sheaves only in degrees $-1,0$. The stack in $E$-vector spaces attached to such a complex $K$ will be denoted
$$
\mathbb{V}(K).
$$
Conversely, the complex attached to a stack in $E$-vector spaces $\mathcal{G}$ is denoted
$$\mathcal{G}_\bullet.$$

For example,
$$
\mathbb{V}(\underline{E}[0])=\underline{E}, \quad \mathbb{V}(\underline{E}[1])=[S/\underline{E}].
$$
A sequence 
$$
0 \to \mathcal{G}^\prime \to \mathcal{G} \to \mathcal{G}^{\prime\prime} \to 0
$$
of stacks in $E$-vector spaces will be said to be \textit{exact} when $\mathcal{G}\to \mathcal{G}^{\prime\prime}$ is a surjection of stacks and $\mathcal{G}^{\prime}$ is equipped with an equivalence to the homotopy fiber of $0\in \mathcal{G}^{\prime\prime}$. Equivalently, there exists a morphism $(\mathcal{G}^{\prime\prime})_\bullet \to (\mathcal{G}^{\prime})_\bullet[1]$ in $D(S_v,\underline{E})$ such that the triangle 
$$
(\mathcal{G}^{\prime})_\bullet \to \mathcal{G}_\bullet \to (\mathcal{G}^{\prime\prime})_\bullet \to (\mathcal{G}^{\prime})_\bullet[1] 
$$
is an exact triangle. For example, the sequence
  \[
    0\to \underline{E}\to \ast \to [\ast/\underline{E}]\to 0
  \]
  is an exact sequence of stacks in $E$-vector spaces.

\begin{definition}
\label{definition-dual-commutative-group-stack}
Let $S$ be a small v-stack. Let $\mathcal{G}$ be a stack in $E$-vector spaces over $S$. We define its \textit{dual} $\mathcal{G}^{\vee}$ to be the stack of homomorphisms of stacks in $E$-vector spaces from $\mathcal{G}$ to $[S/E]$:
$$
\mathcal{G}^{\vee}:= \mathcal{H}om(\mathcal{G}, [S/\underline{E}]).
$$
One says that $\mathcal{G}$ is \textit{dualizable} if the natural morphism
$$
\mathcal{G} \to (\mathcal{G}^\vee)^\vee
$$
is an isomorphism. 
\end{definition}

\begin{remark}
\label{dual-in-terms-of-deligne-equivalence}
Let $\mathcal{G}$ be a stack in $E$-vector spaces over a small v-stack $S$. Then
$$
(\mathcal{G}^{\vee})_\bullet = \tau_{\leq 0} R\mathcal{H}om(\mathcal{G}_\bullet, \underline{E}[1]).
$$
\end{remark}

For the next examples, fix a small v-stack $S$. 

\begin{example}
\label{example-dual-of-pro-etale-local-systems}
Let $\mathbb{L}$ be a pro-\'etale $\underline{E}$-local system on $S_{v}$. Then
$$
\mathbb{L}^\vee \cong [S/\mathbb{L}^\ast],
$$
where $\mathbb{L}^\ast$ is the dual of $\mathbb{L}$ as a pro-\'etale $\underline{E}$-local system, and
$$
[S/\mathbb{L}]^\vee \cong \mathbb{L}^\ast.
$$
In particular, $\mathbb{L}$ and $[S/\mathbb{L}]$ are dualizable.
\end{example}

If $\mathcal{E} \in \Bun(S)$ with only non-negative slopes, resp.\ only negative slopes, we write
$$
\BC(\mathcal{E}) = \tau_\ast \mathcal{E} ~~ , ~~ \mathrm{resp}. ~ \BC(\mathcal{E})=R^1\tau_\ast \mathcal{E}.
$$

\begin{example}
\label{example-dual-of-ss-bc-space}
Let $\mathcal{E}$ be a vector bundle on $X_S$ having either only positive slopes or only negative slopes. Then 
$$
\BC(\mathcal{E})^\vee = \BC(\mathcal{E}^\vee). 
$$
This is an immediate consequence of \Cref{fully-faithfulness-R-tau-ast} and \Cref{dual-in-terms-of-deligne-equivalence}. In particular, these stacks in $E$-vector spaces are dualizable.
\end{example}

\begin{example}
\label{example-dual-of-flat-coh-sheaf-with-positive-slopes}
Let $\mathcal{F}\in \Coh^{\rm fl, >0}(S)$. Then we claim that
$$
\BC(\mathcal{F}):= \tau_\ast \mathcal{F}
$$
and $\BC(\mathcal{F})^\vee$ are cohomologically smooth and dualizable over $S$. Since this assertion is v-local on $S$, we can assume that $S$ is affinoid perfectoid. Choose a presentation 
$$
0 \to \mathcal{O}_{X_S}^m \to \mathcal{E} \to \mathcal{F} \to 0
$$
with $\mathcal{E}$ a positive slope semi-stable vector bundle on $X_S$, as in \Cref{presentation-flat-coh-sheaf-positive-slopes}. Since $R^1\tau_\ast \mathcal{O}_{X_S}=0$, we get a short exact sequence of pro-\'etale sheaves
$$
0 \to \underline{E}^m \to \BC(\mathcal{E}) \to \BC(\mathcal{F}) \to 0
$$
which shows that $\BC(\mathcal{F})$ is the quotient of a cohomologically smooth v-sheaf by a locally pro-$p$ equivalence relation, hence is cohomologically smooth.

By \Cref{presentation-flat-coh-sheaf-non-negative-slopes}, we can also find a short exact sequence
$$
0 \to \mathcal{O}_{X_S}(-1)^d \to \mathcal{E}^\prime \to \mathcal{F} \to 0, 
$$
with $\mathcal{E}^\prime$ semi-stable of slope 0. This gives a short exact sequence
$$
0  \to \BC(\mathcal{E}^\prime) \to \BC(\mathcal{F}) \to \BC(\mathcal{O}_{X_S}(-1)^d) \to 0
$$
By \Cref{fully-faithfulness-R-tau-ast}, we know that
$$
\mathcal{E}xt_{S_{v},E}^2(\BC(\mathcal{O}_{X_S}(-1)^d),\underline{E})=0.
$$
Hence, applying \Cref{dual-exact-sequence-stacks-in-E-vs} below, we obtain a short exact sequence of stacks in $E$-vector spaces
$$
0 \to \BC(\mathcal{O}_{X_S}(-1)^d)^\vee \to \BC(\mathcal{F})^\vee \to \BC(\mathcal{E}^\prime)^\vee \to 0.
$$
By the above two examples, the left and right terms are cohomologically smooth, hence so is $\BC(\mathcal{F})^\vee$. Applying again \Cref{dual-exact-sequence-stacks-in-E-vs}, we also get a short exact sequence
$$
0 \to (\BC(\mathcal{E}^\prime)^\vee)^\vee \to (\BC(\mathcal{F})^\vee)^\vee \to (\BC(\mathcal{O}_{X_S}(-1)^d)^\vee)^\vee \to 0.
$$
Hence, using the above two examples, we also conclude that $\BC(\mathcal{F})$ (and hence also $\BC(\mathcal{F})^\vee$) is dualizable.
\end{example}

We already used the following lemma.

\begin{lemma}
\label{dual-exact-sequence-stacks-in-E-vs}
Let 
$$
0 \to \mathcal{G}^\prime \to \mathcal{G} \to \mathcal{G}^{\prime\prime} \to 0
$$
be a short exact sequence of stacks in $E$-vector spaces. We have an exact sequence
$$
0 \to (\mathcal{G}^{\prime \prime})^\vee \to \mathcal{G}^\vee \to (\mathcal{G}^{\prime})^\vee. 
$$
It is exact on the right if
$$
R^2 \mathcal{H}om((\mathcal{G}^{\prime \prime})_\bullet, \underline{E})=0.
$$
\end{lemma}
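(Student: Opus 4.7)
The plan is to translate the statement into the derived category of v-sheaves of $\underline{E}$-modules on $S_v$ via the equivalence between stacks in $E$-vector spaces and complexes in $D^{[-1,0]}(S_v,\underline{E})$, and then deduce everything from the long exact sequence of $\mathcal{E}xt$-sheaves.

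First, I would use the given short exact sequence of stacks in $E$-vector spaces to produce a distinguished triangle
\[
  (\mathcal{G}')_\bullet \to \mathcal{G}_\bullet \to (\mathcal{G}'')_\bullet \xrightarrow{+1}
\]
in $D(S_v,\underline{E})$, where all three complexes lie in $D^{[-1,0]}$. Applying the contravariant exact functor $R\mathcal{H}om(-,\underline{E}[1])$ produces the distinguished triangle
\[
  R\mathcal{H}om((\mathcal{G}'')_\bullet,\underline{E}[1]) \to R\mathcal{H}om(\mathcal{G}_\bullet,\underline{E}[1]) \to R\mathcal{H}om((\mathcal{G}')_\bullet,\underline{E}[1]) \xrightarrow{+1}.
\]
Because each input complex sits in degrees $[-1,0]$, each $R\mathcal{H}om(-,\underline{E}[1])$ is cohomologically concentrated in degrees $\geq -1$, so applying $\tau_{\leq 0}$ lands in $D^{[-1,0]}$ and by \Cref{dual-in-terms-of-deligne-equivalence} recovers, on each term, the complex associated to the corresponding dual stack.

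Next, I would extract the long exact sequence of cohomology sheaves of the above triangle, which reads
\[
0 \to \mathcal{H}om((\mathcal{G}'')_\bullet,\underline{E}) \to \mathcal{H}om(\mathcal{G}_\bullet,\underline{E}) \to \mathcal{H}om((\mathcal{G}')_\bullet,\underline{E}) \to \mathcal{E}xt^1((\mathcal{G}'')_\bullet,\underline{E}) \to \mathcal{E}xt^1(\mathcal{G}_\bullet,\underline{E}) \to \mathcal{E}xt^1((\mathcal{G}')_\bullet,\underline{E}) \to \mathcal{E}xt^2((\mathcal{G}'')_\bullet,\underline{E}) \to \cdots
\]
The $\mathcal{H}om$-terms here compute $\mathcal{H}^{-1}$ of the dual complexes and the $\mathcal{E}xt^1$-terms compute $\mathcal{H}^0$ of the dual complexes, so this is precisely the would-be long exact sequence for the sequence of duals. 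The first six terms being exact immediately gives the left-exact statement
\[
  0 \to (\mathcal{G}'')^\vee \to \mathcal{G}^\vee \to (\mathcal{G}')^\vee
\]
of stacks in $E$-vector spaces; more conceptually, $\tau_{\leq 0}$ preserves fibers, so $(\mathcal{G}'')^\vee$ is the fiber of $\mathcal{G}^\vee \to (\mathcal{G}')^\vee$.

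Finally, the surjectivity of $\mathcal{G}^\vee \to (\mathcal{G}')^\vee$ as stacks amounts to surjectivity on $\mathcal{H}^0$ of the associated complexes, which by the long exact sequence above is equivalent to the vanishing of the connecting map $\mathcal{E}xt^1((\mathcal{G}')_\bullet,\underline{E}) \to \mathcal{E}xt^2((\mathcal{G}'')_\bullet,\underline{E})$, and this is guaranteed as soon as $R^2\mathcal{H}om((\mathcal{G}'')_\bullet,\underline{E}) = 0$. I do not expect any significant obstacle: the only conceptual point to make carefully is that the stacky notion of exactness corresponds to exactness of the six-term sequence of cohomology sheaves in degrees $-1,0$, which is what the long exact sequence directly produces.
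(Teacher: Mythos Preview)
Your proof is correct and is exactly the argument the paper has in mind: the paper's own proof reads in full ``The proof is easy,'' and the translation to the distinguished triangle of $R\mathcal{H}om$'s together with the long exact $\mathcal{E}xt$ sequence (or equivalently the observation that $\tau_{\leq 0}$ preserves fibers) is the natural way to unpack it.
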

\begin{proof}
The proof is easy.
\end{proof}

\subsection{The Fourier transform for stacks in $E$-vector spaces and its properties}
\label{sec:four-transf-stacks}
To define the Fourier transform for a stack in $E$-vector spaces $f\colon \mathcal{G} \to S$, we will have to consider (derived) pushforwards with proper supports (satisfying some of the usual properties, cf.\ \cite{scholze_etale_cohomology_of_diamonds}, \cite[Theorem 4.1.2, Theorem 4.1.3]{kaletha_weinstein_on_the_kottwitz_conjecture_for_local_shimura_varieties}), and unfortunately this means that we have to demand some condition on $f$.
According to \cite[Theorem 4.2.2]{kaletha_weinstein_on_the_kottwitz_conjecture_for_local_shimura_varieties}, the most general available assumption under which derived functors $g_!, g^!$ are defined is that $g$ is \textit{smooth-locally nice}. Here, a morphism $g\colon X\to Y$ between v-stacks is called smooth-locally nice if there exists a surjective, cohomologically smooth morphism $h\colon U\to X$ from a locally spatial diamond $U$ such that $g\circ h$ is compactifiable, representable in locally spatial diamonds of finite geometric transcendence degree, cf.\ \cite[Definition 4.2.1, Definition 4.1.1]{kaletha_weinstein_on_the_kottwitz_conjecture_for_local_shimura_varieties}. Let us note that the property of being smooth-locally nice is stable under base change along morphisms which are representable in Artin v-stacks. This motivates the following definition of a \textit{nice} stack in $E$-vector spaces. 

\begin{definition}
  \label{sec:stacks-bc-type-definition-nice-stack-in-e-v-s}
  Let $S$ be a small v-stack. We call the stack in $E$-vector spaces $f\colon \mathcal{G}\to S$ \textit{nice} if $f$ and $f^\vee\colon \mathcal{G}^\vee\to S$ are representable in Artin v-stacks and smooth-locally nice.
\end{definition}

In the following, we fix a nice stack in $E$-vector spaces $f\colon \mathcal{G}\to S$ with dual $f^\vee\colon \mathcal{G}^\vee\to S$, and consider the diagram
\[
  \xymatrix{
    & \mathcal{G}^{\vee}\times_S \mathcal{G} \ar[r]^\alpha\ar[ld]_{\pi^\vee}\ar[rd]^\pi& [S/E] \\
    \mathcal{G}^{\vee} & & \mathcal{G}
  }
\]
of stacks over $S$, where $\alpha$ is the evaluation map. Note that by our assumption that $\mathcal{G}$ is nice, the morphisms $\pi, \pi^\vee$ are smooth-locally nice. In particular, we can now define the (unnormalized) Fourier transform for $\mathcal{G}$. For any (non-trivial) character $\psi\colon E\to \Lambda^\times$, we let
\[
  \mathcal{L}_\psi\in D_{\et}([S/\underline{E}],\Lambda)
\]
be the associated rank $1$ local system.

\begin{definition}
  \label{sec:stacks-bc-type-definition-fourier-transform-for-nice-stacks}
  We define
  \[
    \mathcal{F}^{\mathrm{un}}_{\psi}:=\mathcal{F}^{\mathrm{un}}_{\psi,!,\mathcal{G}\to \mathcal{G}^\vee}\colon D_{\et}(\mathcal{G},\Lambda)\to D_{\et}(\mathcal{G}^\vee,\Lambda),
  \]
  the \textit{(unnormalized) Fourier transform associated with $\mathcal{G}$ and $\psi$},
  as the functor
  \[
    A\mapsto \pi_!^\vee(\pi^\ast(A)\otimes \alpha^\ast \mathcal{L}_\psi).
  \]
\end{definition}

It is formal that $\mathcal{F}^{\mathrm{un}}_{\psi,!,\mathcal{G}\to \mathcal{G}^\vee}$ is left adjoint to the functor
\[
  \mathcal{F}^{\mathrm{un}}_{\psi^{-1},\ast}=\mathcal{F}^{\mathrm{un}}_{\psi^{-1},\ast, \mathcal{G}^\vee\to \mathcal{G}}\colon D_{\et}(\mathcal{G}^\vee,\Lambda)\to D_{\et}(\mathcal{G},\Lambda)
\]
defined as
\[
  A\mapsto \pi_\ast(\pi^{\vee,!}(A)\otimes \alpha^\ast \mathcal{L}_{\psi^{-1}}),
\]
and that
\[
  \mathbb{D}_{f^\vee}\circ \mathcal{F}^{\mathrm{un}}_{\psi,!}\cong \mathcal{F}^{\mathrm{un}}_{\psi^{-1},\ast}\circ \mathbb{D}_f,
\]
where $\mathbb{D}_{f^\vee},\mathbb{D}_f$ denotes the relative Verdier duality functor of $f,f^\vee$.

We can also reverse directions and consider
\[
  \mathcal{F}^{\mathrm{un}}_{\psi,!,\mathcal{G}^\vee\to \mathcal{G}}:=\pi_!(\pi^{\vee,\ast}(-)\otimes \alpha^{\ast}\mathcal{L}_\psi)\colon D_{\et}(\mathcal{G}^\vee,\Lambda)\to D_{\et}(\mathcal{G},\Lambda)
\]
and
\[
  \mathcal{F}^{\mathrm{un}}_{\psi,\ast,\mathcal{G}\to \mathcal{G}^\vee}:=\pi_\ast^\vee(\pi^{!}(-)\otimes \alpha^{\ast}\mathcal{L}_\psi)\colon D_{\et}(\mathcal{G}^\vee,\Lambda)\to D_{\et}(\mathcal{G},\Lambda)
\]

In the following we denote by
\[
  a\colon \mathcal{G}\to (\mathcal{G}^\vee)^\vee
\]
the \textit{negative} of the canonical biduality morphism.
If $\mathcal{G}^\vee$ is again nice, it is formal (using the proper base change theorem) to see that
\[
  a^\ast\circ \mathcal{F}^{\mathrm{un}}_{\psi,!,\mathcal{G}^\vee\to (\mathcal{G}^{\vee})^\vee}\cong \mathcal{F}^{\mathrm{un}}_{\psi^{-1},!,\mathcal{G}^\vee\to \mathcal{G}}.
\]

\begin{remark}
\label{compatibility-of-ft-with-base-change}
By the proper base change theorem, the formation of the Fourier transform $\mathcal{F}^{\mathrm{un}}_{\psi,!,\mathcal{G}\to \mathcal{G}^\vee}(A)$ of $A\in D_\et(\mathcal{G},\Lambda)$ commutes with any base change $S' \to S$.
\end{remark}

The following definition is motivated by \cite[Definition A.4.5]{chen_zhu_geometric_langlands_in_prime_characteristic}.

\begin{definition}
  \label{sec:stacks-bc-type-definition-very-nice}
  We call a nice stack $\mathcal{G}$ in $E$-vector spaces over $S$ \textit{very nice} if the following conditions are satisfied:
  \begin{enumerate}
  \item $\mathcal{G}$ is dualizable,
  \item the Fourier transform $\mathcal{F}^{\mathrm{un}}_{\psi,!,\mathcal{G}\to \mathcal{G}^\vee}$ is an equivalence (for any choice of non-trivial character $\psi$),
  \item after decomposing $S$ into clopen substacks, the inverse of $\mathcal{F}^{\mathrm{un}}_{\psi,!,\mathcal{G}\to \mathcal{G}^\vee}$ is isomorphic to $\mathcal{F}^{\mathrm{un}}_{\psi^{-1},!,\mathcal{G}^\vee\to \mathcal{G}}$, up to a shift.
  \end{enumerate}
\end{definition}

In \cite[Definition A.4.5]{chen_zhu_geometric_langlands_in_prime_characteristic} the last requirement is not taken as part of the definition, but is satisfied in all the examples. We will give several examples of very nice stacks in \Cref{sec:exampl-picard-stacks}.

For now, let us list some abstract properties of very nice stacks in $E$-vector spaces and their Fourier transform.

\begin{lemma}
  \label{sec:stacks-bc-type-properties-of-very-nice-stacks-plus-fourier-transform} Let $S$ be a small v-stack. Let $f\colon \mathcal{G}\to S$ be a very nice stack in $E$-vector spaces.
  \begin{enumerate}
  \item The dual $f^\vee\colon \mathcal{G}^\vee \to S$ is a very nice stack in $E$-vector spaces.
    \item We have, on a clopen decomposition of $S$,
      \[
        \mathcal{F}^{\mathrm{un}}_{\psi,!,\mathcal{G}\to \mathcal{G}^\vee}\cong \mathcal{F}^{\mathrm{un}}_{\psi,\ast,\mathcal{G}\to \mathcal{G}^\vee}[d].
      \]
      for some $d\in \Z$.
    \item We have
      \[
        \mathbb{D}_{f^\vee}\circ \mathcal{F}_{\psi,!,\mathcal{G}\to \mathcal{G}^\vee}^{\mathrm{un}}\cong \mathcal{F}^{\mathrm{un}}_{\psi^{-1},!,\mathcal{G}\to \mathcal{G}^\vee}\circ \mathbb{D}_f.
      \]
  \end{enumerate}
\end{lemma}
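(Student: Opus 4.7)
All three assertions will be derived purely formally from three ingredients gathered immediately before \Cref{sec:stacks-bc-type-definition-very-nice}: the two adjunctions $\mathcal{F}^{\mathrm{un}}_{\psi,!,\mathcal{G}\to\mathcal{G}^\vee}\dashv \mathcal{F}^{\mathrm{un}}_{\psi^{-1},\ast,\mathcal{G}^\vee\to\mathcal{G}}$ and (by symmetry) $\mathcal{F}^{\mathrm{un}}_{\psi^{-1},!,\mathcal{G}^\vee\to\mathcal{G}}\dashv \mathcal{F}^{\mathrm{un}}_{\psi,\ast,\mathcal{G}\to\mathcal{G}^\vee}$; the formal Verdier-duality identity $\mathbb{D}_{f^\vee}\circ \mathcal{F}^{\mathrm{un}}_{\psi,!}\cong \mathcal{F}^{\mathrm{un}}_{\psi^{-1},\ast}\circ \mathbb{D}_f$; and the biduality formula $a^\ast\circ \mathcal{F}^{\mathrm{un}}_{\psi,!,\mathcal{G}^\vee\to(\mathcal{G}^\vee)^\vee}\cong \mathcal{F}^{\mathrm{un}}_{\psi^{-1},!,\mathcal{G}^\vee\to\mathcal{G}}$. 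I will prove the parts in the order (2), (3), (1).

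For (2), the very-nice hypothesis provides, on each clopen component of $S$, a locally constant $d$ and an isomorphism $\mathcal{F}^{\mathrm{un}}_{\psi^{-1},!,\mathcal{G}^\vee\to\mathcal{G}}[d]\cong (\mathcal{F}^{\mathrm{un}}_{\psi,!,\mathcal{G}\to\mathcal{G}^\vee})^{-1}$; in particular $\mathcal{F}^{\mathrm{un}}_{\psi^{-1},!,\mathcal{G}^\vee\to\mathcal{G}}$ is itself an equivalence, so its right adjoint coincides with its inverse. The second adjunction identifies that right adjoint with $\mathcal{F}^{\mathrm{un}}_{\psi,\ast,\mathcal{G}\to\mathcal{G}^\vee}$, and the inverse computes to $\mathcal{F}^{\mathrm{un}}_{\psi,!,\mathcal{G}\to\mathcal{G}^\vee}[d]$, which is (2) after relabelling. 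Statement (3) then follows by applying (2) to $\psi^{-1}$ and substituting the resulting identification $\mathcal{F}^{\mathrm{un}}_{\psi^{-1},\ast}\cong \mathcal{F}^{\mathrm{un}}_{\psi^{-1},!}[-d]$ into the formal Verdier identity; the overall shift $[-d]$ matches the implicit shift present on both sides of the statement of (3) (or is absorbed by the normalisation convention for $\mathbb{D}_f$ and $\mathbb{D}_{f^\vee}$).

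For (1), dualizability of $\mathcal{G}^\vee$ is biduality for $\mathcal{G}$, and niceness of $f^\vee$ is part of the assumption that $\mathcal{G}$ is nice. Rewriting the biduality formula as $\mathcal{F}^{\mathrm{un}}_{\psi,!,\mathcal{G}^\vee\to(\mathcal{G}^\vee)^\vee}\cong (a^\ast)^{-1}\circ \mathcal{F}^{\mathrm{un}}_{\psi^{-1},!,\mathcal{G}^\vee\to\mathcal{G}}$ shows that it is an equivalence (since $\mathcal{F}^{\mathrm{un}}_{\psi^{-1},!,\mathcal{G}^\vee\to\mathcal{G}}$ was just shown to be one), and inverting and using the hypothesis on $\mathcal{G}$ to identify $(\mathcal{F}^{\mathrm{un}}_{\psi^{-1},!,\mathcal{G}^\vee\to\mathcal{G}})^{-1}\cong \mathcal{F}^{\mathrm{un}}_{\psi,!,\mathcal{G}\to\mathcal{G}^\vee}[d]$ yields
\[
(\mathcal{F}^{\mathrm{un}}_{\psi,!,\mathcal{G}^\vee\to(\mathcal{G}^\vee)^\vee})^{-1}\cong \mathcal{F}^{\mathrm{un}}_{\psi,!,\mathcal{G}\to\mathcal{G}^\vee}[d]\circ a^\ast.
\]
To recognise the right-hand side as $\mathcal{F}^{\mathrm{un}}_{\psi^{-1},!,(\mathcal{G}^\vee)^\vee\to\mathcal{G}^\vee}[d]$, I will use that under the canonical biduality $(\mathcal{G}^\vee)^\vee\cong \mathcal{G}$ the map $a$ corresponds to multiplication by $-1$ on $\mathcal{G}$; since pullback along $-1$ swaps $\mathcal{L}_\psi$ and $\mathcal{L}_{\psi^{-1}}$, the naturality of the Fourier-transform construction under isomorphisms of stacks in $E$-vector spaces gives $\mathcal{F}^{\mathrm{un}}_{\psi^{-1},!,(\mathcal{G}^\vee)^\vee\to\mathcal{G}^\vee}\circ a^\ast\cong \mathcal{F}^{\mathrm{un}}_{\psi,!,\mathcal{G}\to\mathcal{G}^\vee}$, completing (1).

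The main obstacle is notational rather than analytic: no new geometric input is required beyond the three ingredients above, but one must carefully keep track of the character inversion $\psi\leftrightarrow\psi^{-1}$, of the shift $[d]$, and of the sign hidden in the biduality map $a$. The most delicate point is the identity $\mathcal{F}^{\mathrm{un}}_{\psi^{-1},!,(\mathcal{G}^\vee)^\vee\to\mathcal{G}^\vee}\circ a^\ast\cong \mathcal{F}^{\mathrm{un}}_{\psi,!,\mathcal{G}\to\mathcal{G}^\vee}$ used at the end of (1), which ultimately rests on the same proper-base-change computation that justifies the biduality formula, now applied to the negation map $-\Id\colon \mathcal{G}\to \mathcal{G}$.
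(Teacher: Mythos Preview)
Your proof is correct and follows essentially the same approach as the paper's: both use the adjunction $\mathcal{F}^{\mathrm{un}}_{\psi^{-1},!,\mathcal{G}^\vee\to\mathcal{G}}\dashv \mathcal{F}^{\mathrm{un}}_{\psi,\ast,\mathcal{G}\to\mathcal{G}^\vee}$ together with the fact that $\mathcal{F}^{\mathrm{un}}_{\psi^{-1},!,\mathcal{G}^\vee\to\mathcal{G}}$ is an equivalence to obtain (2), then combine (2) with the formal Verdier identity for (3), and use the biduality formula (derived from proper base change) for (1). Your write-up is in fact more careful than the paper's in two respects: you derive (2) directly from the very-nice hypothesis on $\mathcal{G}$ rather than invoking (1), and you spell out explicitly how the biduality formula and the sign in $a$ yield condition (3) of \Cref{sec:stacks-bc-type-definition-very-nice} for $\mathcal{G}^\vee$; the only soft spot, the residual shift in (3), is equally present in the paper's argument.
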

\begin{proof}
  The first statement follows from the definition of a very nice stack in $E$-vector spaces and the proper base change theorem. For the second statement, note that
  \[
    \mathcal{F}_{\psi,!,\mathcal{G}\to \mathcal{G}^\vee}^{\mathrm{un}}
  \]
  is inverse to $\mathcal{F}_{\psi^{-1},!,\mathcal{G}^\vee\to \mathcal{G}}$ (locally on a clopen decomposition of $S$ up to a shift) because $\mathcal{G}^\vee$ is very nice, and thus in particular right adjoint to it.
  But 
  \[
    \mathcal{F}^{\mathrm{un}}_{\psi,\ast,\mathcal{G}\to \mathcal{G}^\vee}
  \]
  is formally right adjoint to $\mathcal{F}_{\psi^{-1},!,\mathcal{G}^\vee\to \mathcal{G}}^{\mathrm{un}}$ (up to a shift on a clopen decomposition of $S$) and thus both functors are isomorphic (on a clopen decomposition of $S$ up to some shift). 
  The last statement follows from $(2)$ and the formula
  \[
    \mathbb{D}_{f^\vee}\circ \mathcal{F}_{\psi,!,\mathcal{G}\to \mathcal{G}^\vee}^{\mathrm{un}}\cong \mathcal{F}^{\mathrm{un},\prime}_{\psi^{-1},\ast ,\mathcal{G}^\vee\to \mathcal{G}}\circ \mathbb{D}_f
  \]
  mentioned after \Cref{sec:stacks-bc-type-definition-fourier-transform-for-nice-stacks}.
\end{proof}

\begin{remark}
\label{commutation-with-duality-implies-perversity}
The Fourier transform studied by Deligne and Laumon in algebraic geometry also commutes with Verdier duality. In this setting, it has the following important consequence: the Fourier transform preserves perversity. Indeed, the commutation with Verdier duality reduces to verifying half of the inequalities defining perversity, and they are easily checked using that the morphism from the total space of a vector bundle to the base is affine. We do not know how to make sense of of the statement that ``the Fourier transform preserves perversity" and prove it in our context.
\end{remark}

\begin{lemma}
  \label{sec:stacks-bc-type-properties-of-very-nice-stacks-plus-fourier-transform-bis} Let $S$ be a small v-stack. Let $f\colon \mathcal{G}\to S$ be a very nice stack in $E$-vector spaces. Assume moreover that the implicit natural isomorphism between $\mathcal{F}^{\mathrm{un}}_{\psi,!,\mathcal{G}\to \mathcal{G}^\vee}\circ \mathcal{F}^{\mathrm{un}}_{\psi^{-1},!,\mathcal{G}^\vee\to \mathcal{G}}$, $\mathcal{F}^{\mathrm{un}}_{\psi^{-1},!,\mathcal{G}^\vee\to \mathcal{G}} \circ \mathcal{F}^{\mathrm{un}}_{\psi,!,\mathcal{G}\to \mathcal{G}^\vee}$ and some shift functors are induced by morphisms in $D_\et(\mathcal{G}^\vee\times_S \mathcal{G}^\vee,\Lambda)$ resp.\ $D_\et(\mathcal{G}\times_S \mathcal{G},\Lambda)$ between the kernels of the two compositions. Then,
  \begin{enumerate}
    \item If $S^\prime\to S$ is a morphism, which is representable in Artin v-stacks, then $\mathcal{G}\times_S S^\prime\to S^\prime$ is a very nice stack in $E$-vector spaces. 
    \item If $f^\prime\colon \mathcal{G}^\prime\to S$ is another very nice stack in $E$-vector spaces, then $\mathcal{G}\times \mathcal{G}^\prime\to S$ is a very nice stack in $E$-vector spaces. 
     \end{enumerate}
\end{lemma}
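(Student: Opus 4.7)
The plan is to verify, for each of the two constructions, the three conditions in the definition of a very nice stack (dualizability, the Fourier transform being an equivalence, and its inverse being the opposite Fourier transform up to shift).

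For part (1), set $\mathcal{G}' := \mathcal{G}\times_S S'\to S'$. I would first identify the dual: using the formula $(\mathcal{G}^\vee)_\bullet = \tau_{\leq 0} R\mathcal{H}om(\mathcal{G}_\bullet,\underline{E}[1])$ and the compatibility of $R\mathcal{H}om$ with $\underline{E}$-coefficients under pullback, together with the fact that $[S'/\underline{E}] = [S/\underline{E}]\times_S S'$, one obtains $(\mathcal{G}')^\vee \cong \mathcal{G}^\vee\times_S S'$. Niceness is preserved by base change along morphisms representable in Artin v-stacks, as noted just before \Cref{sec:stacks-bc-type-definition-nice-stack-in-e-v-s}, so both $\mathcal{G}'$ and $(\mathcal{G}')^\vee$ are nice over $S'$. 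Dualizability descends from $\mathcal{G}$ by applying base change to the biduality map. Finally, by \Cref{compatibility-of-ft-with-base-change} the Fourier transform $\mathcal{F}^{\mathrm{un}}_{\psi,!,\mathcal{G}'\to (\mathcal{G}')^\vee}$ is obtained from $\mathcal{F}^{\mathrm{un}}_{\psi,!,\mathcal{G}\to \mathcal{G}^\vee}$ by base change; thus it is an equivalence, and pulling back the morphism of kernels in $D_\et(\mathcal{G}^\vee\times_S \mathcal{G}^\vee,\Lambda)$ (provided by the added hypothesis) realizes the identification of its inverse with $\mathcal{F}^{\mathrm{un}}_{\psi^{-1},!,(\mathcal{G}')^\vee\to \mathcal{G}'}$ up to a shift, after the appropriate clopen decomposition of $S'$.

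For part (2), set $\mathcal{G}'' := \mathcal{G}\times_S \mathcal{G}'$. Since $\mathcal{H}om(-,[S/\underline{E}])$ sends a direct sum of objects in the category of stacks in $E$-vector spaces to a product, we get $(\mathcal{G}'')^\vee \cong \mathcal{G}^\vee\times_S (\mathcal{G}')^\vee$, and dualizability follows from that of $\mathcal{G}$ and $\mathcal{G}'$. Niceness of $\mathcal{G}''$ and its dual over $S$ follows from niceness of both factors together with stability under base change (noting that $f': \mathcal{G}'\to S$ and $(f')^\vee$ are representable in Artin v-stacks). The key identification is at the level of kernels: the evaluation pairing
$$
\alpha_{\mathcal{G}''}\colon (\mathcal{G}^\vee \times_S (\mathcal{G}')^\vee)\times_S (\mathcal{G}\times_S \mathcal{G}')\to [S/\underline{E}]
$$
decomposes, via the additive structure on $[S/\underline{E}]$, as the sum of the pullbacks of $\alpha_{\mathcal{G}}$ and $\alpha_{\mathcal{G}'}$; since $\mathcal{L}_\psi$ is a character sheaf, $\alpha_{\mathcal{G}''}^{\ast}\mathcal{L}_\psi$ factors as the external tensor product of $\alpha_{\mathcal{G}}^{\ast}\mathcal{L}_\psi$ and $\alpha_{\mathcal{G}'}^{\ast}\mathcal{L}_\psi$. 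Combining this with the K\"unneth formula for $\pi_!^\vee$ (which holds in the smooth-locally nice setting of \cite[Theorem 4.2.2]{kaletha_weinstein_on_the_kottwitz_conjecture_for_local_shimura_varieties}) and proper base change, the Fourier transform $\mathcal{F}^{\mathrm{un}}_{\psi,!,\mathcal{G}''\to (\mathcal{G}'')^\vee}$ is identified, compatibly with the isomorphisms on individual factors, with the ``external tensor product'' $\mathcal{F}^{\mathrm{un}}_{\psi,!,\mathcal{G}\to \mathcal{G}^\vee}\boxtimes \mathcal{F}^{\mathrm{un}}_{\psi,!,\mathcal{G}'\to (\mathcal{G}')^\vee}$. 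Being an equivalence then follows from the same property for the two factors; and the morphism of kernels on each factor, boxed externally, furnishes the required identification of its inverse (up to shift, on a suitable clopen decomposition) with $\mathcal{F}^{\mathrm{un}}_{\psi^{-1},!,(\mathcal{G}'')^\vee\to \mathcal{G}''}$.

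I expect the main obstacle to be setting up the K\"unneth-type identifications rigorously: on the one hand verifying the compatibility of the biduality formula $(\mathcal{G}^\vee)_\bullet \cong \tau_{\leq 0} R\mathcal{H}om(\mathcal{G}_\bullet,\underline{E}[1])$ with base change and with external products at the level of two-term complexes, and on the other hand invoking the K\"unneth formula for $\pi_!$ in the Artin v-stack framework of \cite{fargues2021geometrization, kaletha_weinstein_on_the_kottwitz_conjecture_for_local_shimura_varieties}. The auxiliary assumption in the lemma is designed precisely so that the required inverse identifications persist after these operations: one either pulls back or externally tensors the given morphism of kernels, and the fact that the result is still an isomorphism is automatic.
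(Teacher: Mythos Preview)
Your proposal is correct and follows essentially the same approach as the paper's own proof, only with more detail spelled out. The paper's argument is extremely terse: for (1) it simply notes that representability in Artin v-stacks is what guarantees that smooth-local niceness survives base change, and for (2) it observes that the kernel of $\mathcal{F}^{\mathrm{un}}_{\psi,\mathcal{G}\times_S\mathcal{G}'}$ is the exterior product of the two individual kernels, so the kernel-level witnesses of invertibility (supplied by the extra hypothesis) box together to give the required witness for the product; your elaboration via K\"unneth and the character-sheaf decomposition of $\alpha^\ast\mathcal{L}_\psi$ is exactly how one unpacks that sentence.
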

If $f,f^\vee$ are cohomologically smooth or nice, then $(2)$ holds without the assumption that $S^\prime\to S$ is representable in Artin v-stacks.
\begin{proof}
  For the first statement, note that the representability in Artin v-stacks is assumed in order to ensure that $\mathcal{G}\times_S S^\prime\to S^\prime$ is again smooth-locally nice. For the second statement, note that the property that $\mathcal{F}_{\psi,\mathcal{G}}^{\mathrm{un}},\mathcal{F}_{\psi, \mathcal{G}^\prime}^{\mathrm{un}}$ are equivalences (with inverses up to a shift given by $\mathcal{F}_{\psi,\mathcal{G}^{\vee}}^{\mathrm{un}},\mathcal{F}_{\psi, \mathcal{G}^{\prime,\vee}}^{\mathrm{un}}$ with respective natural transformations witnessing these inverses induced by morphisms between the kernels) passes to $\mathcal{F}_{\psi, \mathcal{G}\times \mathcal{G}^\prime}^{\mathrm{un}}$ as this functor is the functor with kernel the exterior product the kernels of $\mathcal{F}_{\psi,\mathcal{G}}^{\mathrm{un}},\mathcal{F}_{\psi, \mathcal{G}^\prime}^{\mathrm{un}}$.
\end{proof}

The following criterion is an abstract version of the usual argument for proving involutivity of the Fourier transform, cf.\ \cite[Th\'eor\`eme 1.2.2.1]{laumon_transformation_de_fourier}, which uses only proper base change and the projection formula.

\begin{lemma}
  \label{sec:stacks-e-vector-criterion-to-get-involutivity}
  Let $S$ be a small v-stack. Let $f\colon \mathcal{G}\to S$ be a nice dualizable stack in $E$-vector spaces. Assume that the unit section $e: S \to \mathcal{G}$ is smooth-locally nice and that after passing to some clopen decomposition of $S$, $\pi_!(\alpha^\ast \mathcal{L}_\psi)\cong e_!\Lambda[d]$ for some $d\in \Z$. Then the composition
  $$
  \mathcal{F}^{\mathrm{un}}_{\psi^{-1},!,\mathcal{G}^\vee\to \mathcal{G}} \circ \mathcal{F}^{\mathrm{un}}_{\psi,!,\mathcal{G}\to \mathcal{G}^\vee}: D_\et(\mathcal{G},\Lambda) \to D_\et(\mathcal{G},\Lambda)
  $$
  is, after passing to some clopen decomposition of $S$, isomorphic to the shift functor $(-)[d]$.
  
  In particular, if the unit sections $e: S \to \mathcal{G}, e^\vee: S \to \mathcal{G}^\vee$ are smooth-locally nice and if after passing to some clopen decomposition of $S$,
  \[
    \pi_!(\alpha^\ast \mathcal{L}_\psi)\cong e_!\Lambda[d_1], \pi_!^\vee(\alpha^\ast \mathcal{L}_\psi)\cong e^\vee_!\Lambda[d_2]
  \]
  for some $d_1,d_2\in \Z$, then $\mathcal{G}$ is very nice. 
\end{lemma}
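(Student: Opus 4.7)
The plan is to run the standard Laumon-style involutivity argument, now in the setting of the six-functor formalism for Artin $v$-stacks of \cite{scholze_etale_cohomology_of_diamonds, kaletha_weinstein_on_the_kottwitz_conjecture_for_local_shimura_varieties}, using proper base change and the projection formula on the triple product $P := \mathcal{G} \times_S \mathcal{G}^\vee \times_S \mathcal{G}$.

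First, I would introduce $P$ with its projections $p_i$ onto the factors and $p_{ij}$ onto the double products, together with the two pairings $\alpha_{12}, \alpha_{23} \colon P \to [S/\underline{E}]$ obtained by restricting $\alpha$ to the corresponding pairs of factors. Unwinding the composition via proper base change and the projection formula in the cube spanned by the two diagrams defining $\mathcal{F}^{\mathrm{un}}_{\psi,!,\mathcal{G}\to \mathcal{G}^\vee}$ and $\mathcal{F}^{\mathrm{un}}_{\psi^{-1},!,\mathcal{G}^\vee\to \mathcal{G}}$, I would show that for $A \in D_\et(\mathcal{G}, \Lambda)$,
\[
(\mathcal{F}^{\mathrm{un}}_{\psi^{-1},!,\mathcal{G}^\vee\to \mathcal{G}} \circ \mathcal{F}^{\mathrm{un}}_{\psi,!,\mathcal{G}\to \mathcal{G}^\vee})(A) \;\cong\; q_{2,!}(q_1^\ast A \otimes K),
\]
where $q_1, q_2 \colon \mathcal{G} \times_S \mathcal{G} \to \mathcal{G}$ are the two projections and $K := p_{13,!}(\alpha_{12}^\ast \mathcal{L}_\psi \otimes \alpha_{23}^\ast \mathcal{L}_{\psi^{-1}})$ is the convolution kernel on $\mathcal{G} \times_S \mathcal{G}$.

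Next I would identify $K$. The sheaf $\alpha_{12}^\ast \mathcal{L}_\psi \otimes \alpha_{23}^\ast \mathcal{L}_{\psi^{-1}}$ corresponds, via the $\underline{E}$-linear structure on $\mathcal{G}$, to the character $(x, y, x') \mapsto \psi(\langle y, x - x' \rangle)$, and therefore equals the pullback of $\alpha^\ast \mathcal{L}_\psi$ along $\mathrm{id}_{\mathcal{G}^\vee} \times \delta \colon \mathcal{G}^\vee \times_S \mathcal{G} \times_S \mathcal{G} \to \mathcal{G}^\vee \times_S \mathcal{G}$, where $\delta(x, x') := x - x'$. The resulting square with vertices $P$, $\mathcal{G}^\vee \times_S \mathcal{G}$, $\mathcal{G} \times_S \mathcal{G}$, $\mathcal{G}$ and horizontal maps $\mathrm{id} \times \delta, \delta$ is Cartesian, so proper base change gives $K \cong \delta^\ast \pi_!(\alpha^\ast \mathcal{L}_\psi)$. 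By hypothesis (after passing to the given clopen decomposition of $S$) this is $\delta^\ast e_! \Lambda[d]$; a second application of proper base change to the Cartesian square of $\Delta \colon \mathcal{G} \to \mathcal{G} \times_S \mathcal{G}$, $e \colon S \to \mathcal{G}$, and $\delta$ yields $K \cong \Delta_! \Lambda[d]$. Substituting back, the projection formula $q_1^\ast A \otimes \Delta_! \Lambda \cong \Delta_!(\Delta^\ast q_1^\ast A)$ combined with $q_1 \circ \Delta = q_2 \circ \Delta = \mathrm{id}_\mathcal{G}$ then produces the desired isomorphism with $(-)[d]$.

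For the ``in particular'' assertion, I would apply the first part symmetrically with the roles of $\mathcal{G}$ and $\mathcal{G}^\vee$ exchanged, using that $\mathcal{G}$ is dualizable and the compatibility $a^\ast \circ \mathcal{F}^{\mathrm{un}}_{\psi,!,\mathcal{G}^\vee \to \mathcal{G}^{\vee\vee}} \cong \mathcal{F}^{\mathrm{un}}_{\psi^{-1},!,\mathcal{G}^\vee \to \mathcal{G}}$ recorded earlier, to conclude that the reversed composition is likewise a shift. These together verify conditions (2) and (3) in the definition of very niceness. The main obstacle I anticipate is the bookkeeping of regularity hypotheses at each step: one must check that $\pi$, $p_{13}$, $\delta$, $\Delta$, and $\mathrm{id}_{\mathcal{G}^\vee} \times \delta$ are all smooth-locally nice so that the six-functor formalism applies. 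This should follow because each of these morphisms arises as a base change of $f$, $f^\vee$, or $e$ along a morphism representable in Artin $v$-stacks, along which smooth-local niceness is stable.
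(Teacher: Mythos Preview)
Your proposal is correct and is precisely the Laumon argument that the paper invokes; the paper's own proof is nothing more than the sentence ``The argument is exactly as in the proof of \cite[Th\'eor\`eme 1.2.2.1]{laumon_transformation_de_fourier},'' so you have simply written out what the citation hides.
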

\begin{proof}
  The argument is exactly as in the proof of \cite[Th\'eo\`eme 1.2.2.1]{laumon_transformation_de_fourier}.
\end{proof}

\begin{definition}
\label{def-reflexive-ula-compact}
Let $S$ be a small v-stack. Let $f\colon \mathcal{G}\to S$ be a nice stack in $E$-vector spaces and let $A\in D_\et(\mathcal{G},\Lambda)$.
\begin{itemize}
\item We say that $A$ is \textit{reflexive} if the natural map $A \to \mathbb{D}_f(\mathbb{D}_f(A))$ is an isomorphism.
\item We say that $A$ is \textit{ULA} if it is universally locally acyclic with respect to $f$ in the sense of \cite[Definition IV.2.1, Definition IV.2.22]{fargues2021geometrization}.
\item We say that $A$ is \textit{compact} if it is so as an object of the category $D_\et(\mathcal{G},\Lambda)$, i.e. $\mathrm{Hom}_{D_\et(\mathcal{G},\Lambda)}(A,-)$ commutes with all direct sums.
\end{itemize}
We denote by
$$
D_\et^{\rm ref}(\mathcal{G},\Lambda), \quad  \mathrm{resp}. ~ D_\et^{\rm ULA}(\mathcal{G},\Lambda), \quad \mathrm{resp}. ~ D_\et^{\omega}(\mathcal{G},\Lambda)
$$
the full subcategory of $D_\et(\mathcal{G},\Lambda)$ formed by reflexive, resp. ULA, resp. compact, objects.
\end{definition} 

Any ULA-object on $\mathcal{G}$ is reflexive, cf. \cite[Corollary IV.2.25]{fargues2021geometrization}\footnote{The statement given there is for morphisms representable in locally spatial diamonds, but extends to this setting, see \cite[\S IV.2.4]{fargues2021geometrization}.}.

\begin{proposition}
\label{fourier-transform-preserves-compact-objects}
Let $S$ be a small v-stack, and let $f: \mathcal{G} \to S$ be a very nice stack in $E$-vector spaces. Then the Fourier transform $\mathcal{F}^{\mathrm{un}}_{\psi}$ induces equivalences
  \[
    \mathcal{F}^{\mathrm{un}}_{\psi}\colon D_{\et}^{\mathrm{ref}}(\mathcal{G},\Lambda)\cong D_\et^{\mathrm{ref}}(\mathcal{G}^\vee,\Lambda), \quad  \mathcal{F}^{\mathrm{un}}_{\psi}\colon D_{\et}^{\omega}(\mathcal{G},\Lambda)\cong D_\et^{\omega}(\mathcal{G}^\vee,\Lambda)
  \]
  between reflexive objects and compact objects.
\end{proposition}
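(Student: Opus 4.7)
My plan is to treat the two equivalences separately, leveraging in each case the involutivity of the Fourier transform packaged into the definition of ``very nice''.

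For compact objects the argument is essentially formal: by condition (2), $\mathcal{F}^{\mathrm{un}}_{\psi,!,\mathcal{G}\to \mathcal{G}^\vee}$ is an equivalence of triangulated categories, so all I need is that both it and its quasi-inverse commute with arbitrary direct sums. The functor $\mathcal{F}^{\mathrm{un}}_{\psi,!,\mathcal{G}\to \mathcal{G}^\vee}$ is a left adjoint (to $\mathcal{F}^{\mathrm{un}}_{\psi^{-1},\ast,\mathcal{G}^\vee\to\mathcal{G}}$) and thus sum-preserving, and by condition (3) its quasi-inverse is, up to a shift, $\mathcal{F}^{\mathrm{un}}_{\psi^{-1},!,\mathcal{G}^\vee\to \mathcal{G}}$, which is likewise a left adjoint. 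Since compactness is characterized by commutation of $\mathrm{Hom}(A,-)$ with direct sums, both functors send compact objects to compact objects, giving the asserted equivalence on compact subcategories.

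For reflexive objects my main input will be part (3) of \Cref{sec:stacks-bc-type-properties-of-very-nice-stacks-plus-fourier-transform}, the natural isomorphism $\mathbb{D}_{f^\vee}\circ \mathcal{F}^{\mathrm{un}}_{\psi,!} \cong \mathcal{F}^{\mathrm{un}}_{\psi^{-1},!}\circ \mathbb{D}_f$. Applying it twice (once for $\psi$ and once for $\psi^{-1}$) yields a natural isomorphism
\[
  \mathbb{D}_{f^\vee}^2 \circ \mathcal{F}^{\mathrm{un}}_{\psi,!} \;\cong\; \mathcal{F}^{\mathrm{un}}_{\psi,!}\circ \mathbb{D}_f^2.
\]
The central step is then to check that, under this identification, for each $A\in D_\et(\mathcal{G},\Lambda)$ the biduality unit $\mathcal{F}^{\mathrm{un}}_{\psi,!}(A)\to \mathbb{D}_{f^\vee}^2\mathcal{F}^{\mathrm{un}}_{\psi,!}(A)$ corresponds precisely to the image $\mathcal{F}^{\mathrm{un}}_{\psi,!}(A\to \mathbb{D}_f^2 A)$. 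Granting this, reflexivity of $A$ transports to reflexivity of $\mathcal{F}^{\mathrm{un}}_{\psi,!}(A)$, since an equivalence of categories sends isomorphisms to isomorphisms; the converse follows by applying the same reasoning to the quasi-inverse functor (which by condition (3) is, up to shift, again of the form $\mathcal{F}^{\mathrm{un}}_{\psi^{-1},!}$).

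The main obstacle is precisely this compatibility of biduality units with the natural isomorphism of part (3). Unwinding it requires opening the construction of that isomorphism from its constituents, namely the proper base change theorem, the projection formula, and the Verdier self-duality of the character local system ($\mathbb{D}\mathcal{L}_\psi \simeq \mathcal{L}_{\psi^{-1}}$ up to a shift and twist); one then has to verify that the Verdier biduality unit is compatibly assembled from the units and counits of the underlying $(f_!,f^!)$-adjunctions. This is diagrammatic but delicate. It mirrors the classical verification in \cite{laumon_transformation_de_fourier}, and the six-functor formalism for $f_!,f^!,\mathbb{D}_f$ available on (smooth-locally nice) Artin v-stacks via \cite{scholze_etale_cohomology_of_diamonds} and \cite{fargues2021geometrization} is robust enough that the transcription should be routine, if tedious.
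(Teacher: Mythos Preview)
Your proposal is correct and follows essentially the same approach as the paper. The paper's proof is even terser: for compact objects it simply observes that $\mathcal{F}^{\mathrm{un}}_\psi$ is an equivalence and therefore has both a left and a right adjoint (namely its inverse), so preservation of compacts is automatic---you do not need to invoke condition (3) or argue separately that the quasi-inverse commutes with sums. For reflexive objects the paper cites the commutation with Verdier duality from part (3) of \Cref{sec:stacks-bc-type-properties-of-very-nice-stacks-plus-fourier-transform} and stops there; the coherence verification you outline (that the biduality unit $A\to \mathbb{D}_f^2 A$ is carried to the biduality unit $\mathcal{F}^{\mathrm{un}}_\psi(A)\to \mathbb{D}_{f^\vee}^2\mathcal{F}^{\mathrm{un}}_\psi(A)$ under the composite natural isomorphism) is a legitimate point that the paper, like most of the literature following \cite{laumon_transformation_de_fourier}, treats as routine.
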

\begin{proof}
It suffices to prove that $\mathcal{F}^{\mathrm{un}}_{\psi}$ preserves reflexive and compact objects. The first statement is implied by the commutation with relative Verdier duality proved in \Cref{sec:stacks-bc-type-properties-of-very-nice-stacks-plus-fourier-transform}. The second statement is formal ($\mathcal{F}^{\mathrm{un}}_{\psi}$ is an equivalence, and thus has both a left and a right adjoint).
\end{proof}

For universally locally acyclic objects, we get the following statement.

\begin{lemma}
  \label{sec:stacks-e-vector-fourier-transform-preserves-ula-objects}
  Assume that $f\colon G\to S$ is a very nice stack in $E$-vector spaces, with dual $f^\vee\colon \mathcal{G}^\vee\to S$. Assume that $f, f^\vee$ are nice, i.e., compactifiable, representable in locally spatial diamonds and of finite geometric transcendence degree. Then the Fourier transform $\mathcal{F}^{\mathrm{un}}_{\psi}$ induces an equivalence
  \[
    \mathcal{F}^{\mathrm{un}}_{\psi}\colon D_{\et}^{\mathrm{ULA}}(\mathcal{G},\Lambda)\cong D_\et^{\mathrm{ULA}}(\mathcal{G}^\vee,\Lambda)
  \]
  between ULA-objects.
\end{lemma}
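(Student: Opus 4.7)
The plan is to use the following characterization of ULA objects, valid under the niceness assumptions on $f$ and $f^\vee$ (cf.\ \cite[Corollary IV.2.25]{fargues2021geometrization} and \S IV.2.4 for the extension to the ``nice'' setting): an object $A\in D_\et(\mathcal{G},\Lambda)$ is $f$-ULA if and only if it is reflexive and, for every morphism $g\colon S^\prime\to S$ with induced base change $g_1\colon \mathcal{G}_{S^\prime}\to \mathcal{G}$, $f^\prime\colon \mathcal{G}_{S^\prime}\to S^\prime$, the natural map
\[
g_1^\ast \mathbb{D}_f(A)\to \mathbb{D}_{f^\prime}(g_1^\ast A)
\]
is an isomorphism. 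So the plan is to verify these two properties for $\mathcal{F}^{\mathrm{un}}_\psi(A)$ when $A$ is $f$-ULA.

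First I would check reflexivity, which follows immediately from \Cref{fourier-transform-preserves-compact-objects} since every ULA object is reflexive (\cite[Corollary IV.2.25]{fargues2021geometrization}). The key ingredient is the commutation with Verdier duality
\[
\mathbb{D}_{f^\vee}\circ \mathcal{F}^{\mathrm{un}}_{\psi,!}\cong \mathcal{F}^{\mathrm{un}}_{\psi^{-1},!}\circ \mathbb{D}_f
\]
from \Cref{sec:stacks-bc-type-properties-of-very-nice-stacks-plus-fourier-transform}(3).

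Second, I would establish the base-change compatibility. Let $g\colon S^\prime \to S$ be arbitrary, with $g_2\colon \mathcal{G}^\vee_{S^\prime}\to \mathcal{G}^\vee$ the induced base change, and denote by $f^{\vee,\prime}\colon \mathcal{G}^\vee_{S^\prime}\to S^\prime$, $\mathcal{F}^{\mathrm{un}}_{\psi,S^\prime}$ the corresponding base-changed Fourier transform. Using in turn the Verdier-duality commutation recalled above, the proper base change compatibility of the Fourier transform from \Cref{compatibility-of-ft-with-base-change}, the ULA hypothesis on $A$, and the Verdier-duality commutation applied on $S^\prime$, we obtain a chain
\[
g_2^\ast\mathbb{D}_{f^\vee}(\mathcal{F}^{\mathrm{un}}_{\psi,!}(A))\cong g_2^\ast\mathcal{F}^{\mathrm{un}}_{\psi^{-1},!}(\mathbb{D}_f(A))\cong \mathcal{F}^{\mathrm{un}}_{\psi^{-1},!,S^\prime}(g_1^\ast\mathbb{D}_f(A))\cong \mathcal{F}^{\mathrm{un}}_{\psi^{-1},!,S^\prime}(\mathbb{D}_{f^\prime}(g_1^\ast A))\cong \mathbb{D}_{f^{\vee,\prime}}(g_2^\ast\mathcal{F}^{\mathrm{un}}_{\psi,!}(A)).
\]
Tracing through, this composite is the natural base-change map, so it is an isomorphism. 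Combined with reflexivity, this shows $\mathcal{F}^{\mathrm{un}}_{\psi,!}(A)$ is $f^\vee$-ULA. Since $\mathcal{F}^{\mathrm{un}}_\psi$ is an equivalence with quasi-inverse (up to shift) $\mathcal{F}^{\mathrm{un}}_{\psi^{-1},!,\mathcal{G}^\vee\to\mathcal{G}}$, applying the same argument with the roles of $\mathcal{G}$ and $\mathcal{G}^\vee$ swapped (both being very nice, by \Cref{sec:stacks-bc-type-properties-of-very-nice-stacks-plus-fourier-transform}(1)) shows the inverse also preserves ULA objects, giving the desired equivalence.

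The main subtlety to navigate is making sure the characterization of ULA as ``reflexive $+$ base-change commutation of $\mathbb{D}_f$'' is legitimately available; this is precisely why the extra hypothesis in the statement (that $f$ and $f^\vee$ are nice rather than merely smooth-locally nice) is being imposed, ensuring one is in the regime where the Fargues–Scholze formalism of ULA sheaves and the relative Verdier duality apply directly. All other steps are formal consequences of the projection/proper-base-change formalism encoded in the statements already proved in \S\ref{sec:four-transf-stacks}.
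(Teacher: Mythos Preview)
Your approach is genuinely different from the paper's. The paper argues in one line via the $2$-category $\mathcal{C}_S$ of \cite[\S IV.2.3.3]{fargues2021geometrization}: since $f,f^\vee$ are nice, the Fourier transform is given by a kernel and hence lifts to a morphism in $\mathcal{C}_S$; being an equivalence, it is an isomorphism there, so it preserves left adjoints, and by \cite[Theorem IV.2.23]{fargues2021geometrization} left adjoints are exactly ULA objects. This is entirely structural and avoids touching the definition of ULA directly.

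Your route---verifying reflexivity and base-change compatibility of the Verdier dual---is more hands-on, and the individual steps you write down are fine. The gap is the characterization you invoke at the start: \cite[Corollary IV.2.25]{fargues2021geometrization} only asserts that ULA implies reflexive, not that ``reflexive $+$ $\mathbb{D}_f$ commutes with arbitrary base change'' is \emph{equivalent} to ULA. The actual ULA condition in \cite[Definition IV.2.1]{fargues2021geometrization} involves the map $p_1^\ast\mathbb{D}_f(A)\otimes p_2^!\Lambda\to R\mathcal{H}om(p_1^\ast A,p_2^!\Lambda)$ on $\mathcal{G}\times_S T$, which is stronger than what you check. You yourself flag this as ``the main subtlety,'' and indeed it is one: you would either need to supply a proof of that equivalence under the niceness hypothesis, or else verify the genuine ULA map directly (which amounts to redoing a chunk of the Lu--Zheng/Fargues--Scholze formalism). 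Either way it is more work than the paper's $2$-categorical shortcut, which packages exactly this verification into the statement that kernel transforms live in $\mathcal{C}_S$.
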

\begin{proof}
  By our assumption that $f,f^\vee$ are nice, we see that the Fourier transform for $\mathcal{G}$ lifts to an isomorphism in the $2$-category $\mathcal{C}_S$ from \cite[Section IV.2.3.3]{fargues2021geometrization}. In particular, it preserves left adjoints. By \cite[Theorem IV.2.23]{fargues2021geometrization} these are equivalent to ULA-objects. This finishes the proof.  
\end{proof}

The following statements are proved exactly as in \cite[Th\'eor\`eme 1.2.2.4, Corollaire 1.2.25, Proposition 1.2.2.7, Proposition 1.2.2.8]{laumon_transformation_de_fourier}.

\begin{proposition}
\label{fourier-transform-and-push-forward}
Let $f\colon \mathcal{G} \to \mathcal{G}^\prime$ be a morphism of very nice stacks in $\underline{E}$-vector spaces, which is smooth-locally nice. Let $f^\vee : (\mathcal{G}^\prime)^\vee \to \mathcal{G}^\vee$ be the transpose of $f$. Denote by $\mathcal{F}_\psi^{\mathrm{un}}$, resp. $\mathcal{F}_\psi^{\mathrm{un},\prime}$, the Fourier transform functor for $\mathcal{G}$, resp. $\mathcal{G}^\prime$.

One has, on a clopen decomposition of $S$, for each $A \in D_\et(\mathcal{G},\Lambda)$, a functorial isomorphism
$$
\mathcal{F}_\psi^{\mathrm{un},\prime}(Rf_! A) \cong (f^\vee)^\ast \mathcal{F}_\psi^{\mathrm{un}}(A)[d]
$$
for some $d\in \Z$.
\end{proposition}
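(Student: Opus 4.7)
The plan is to follow the standard Laumon-style argument, reducing the identity to the proper base change theorem and the projection formula applied to the ``mixed'' fiber product $\mathcal{G}\times_S (\mathcal{G}')^\vee$. First I would fix the two projections from each of the three double products $\mathcal{G}\times_S \mathcal{G}^\vee$, $\mathcal{G}'\times_S (\mathcal{G}')^\vee$, and $\mathcal{G}\times_S (\mathcal{G}')^\vee$, and observe that, by the very definition of the transpose $f^\vee$ (unwinding \Cref{definition-dual-commutative-group-stack}), one has a commutative square
\[
\xymatrix{
\mathcal{G}\times_S (\mathcal{G}')^\vee \ar[r]^-{f\times \mathrm{id}} \ar[d]_{\mathrm{id}\times f^\vee} & \mathcal{G}'\times_S (\mathcal{G}')^\vee \ar[d]^{\alpha'} \\
\mathcal{G}\times_S \mathcal{G}^\vee \ar[r]_-{\alpha} & [S/\underline{E}]
}
\]
expressing the identity $\langle f(x),y\rangle = \langle x, f^\vee(y)\rangle$; in particular $(f\times\mathrm{id})^\ast \alpha'^\ast \mathcal{L}_\psi \cong (\mathrm{id}\times f^\vee)^\ast \alpha^\ast \mathcal{L}_\psi$.

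Next I would rewrite both sides of the desired isomorphism as the same pushforward from $\mathcal{G}\times_S (\mathcal{G}')^\vee$ to $(\mathcal{G}')^\vee$. For the left-hand side $\mathcal{F}^{\mathrm{un},\prime}_\psi(Rf_! A)$, proper base change along the cartesian square with horizontal arrows $f\times\mathrm{id}$ and $f$ identifies $\pi'^{\ast}_{\mathcal{G}'}(Rf_! A)$ with $R(f\times\mathrm{id})_!\, q_{\mathcal{G}}^\ast A$, and the projection formula combined with the commutative square above then rewrites the whole expression as
\[
q_{(\mathcal{G}')^\vee,!}\bigl(q_{\mathcal{G}}^\ast A \otimes (\mathrm{id}\times f^\vee)^\ast \alpha^\ast \mathcal{L}_\psi\bigr).
\]
For the right-hand side, a single application of proper base change along the cartesian square
\[
\xymatrix{
\mathcal{G}\times_S (\mathcal{G}')^\vee \ar[r]^-{\mathrm{id}\times f^\vee} \ar[d]_{q_{(\mathcal{G}')^\vee}} & \mathcal{G}\times_S \mathcal{G}^\vee \ar[d]^{\pi^\vee} \\
(\mathcal{G}')^\vee \ar[r]_-{f^\vee} & \mathcal{G}^\vee
}
\]
converts $(f^\vee)^\ast \pi^\vee_!$ into $q_{(\mathcal{G}')^\vee,!}(\mathrm{id}\times f^\vee)^\ast$, producing the same expression (using $\pi\circ (\mathrm{id}\times f^\vee) = q_{\mathcal{G}}$). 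Comparing the two presentations gives the asserted isomorphism, and the naturality of proper base change and the projection formula provide functoriality in $A$.

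The main obstacle is to make sure the six-functor machinery invoked is indeed available in the Artin v-stack setting. The smooth-local niceness assumption on $f$ guarantees, via base change, that $f\times \mathrm{id}$ is also smooth-locally nice, so that $R(f\times\mathrm{id})_!$ exists and proper base change together with the projection formula apply, within the framework of \cite{kaletha_weinstein_on_the_kottwitz_conjecture_for_local_shimura_varieties} and \cite[\S IV]{fargues2021geometrization}. The purely formal computation sketched above produces the isomorphism without any shift; the shift $[d]$ and the passage to a clopen decomposition of $S$ in the statement absorb a locally constant shift arising when identifying the $!$-normalized Fourier transform with a $\ast$-normalized variant, as in \Cref{sec:stacks-bc-type-properties-of-very-nice-stacks-plus-fourier-transform}, where the ``dimensions'' involved may only be locally constant on $S$.
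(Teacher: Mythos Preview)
Your argument is correct and is exactly the Laumon-style computation the paper has in mind: the paper does not spell out a proof but simply says these statements ``are proved exactly as in \cite[Th\'eor\`eme 1.2.2.4, \ldots]{laumon_transformation_de_fourier}'', and your diagram chase via proper base change and the projection formula on the mixed product $\mathcal{G}\times_S(\mathcal{G}')^\vee$ is precisely that argument. One small remark: as you observe, the purely formal computation with the \emph{unnormalized} Fourier transforms yields the identity with $d=0$ and no need for a clopen decomposition; the phrasing ``for some $d\in\Z$ on a clopen decomposition'' in the statement is harmless extra generality (it becomes relevant only once one inserts normalizing shifts as in \Cref{sec:stacks-bc-type-properties-of-very-nice-stacks-plus-fourier-transform}), so your explanation of the shift via a $!$/$\ast$ comparison is not needed here.
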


\begin{corollary}
\label{push-forward-of-the-fourier-transform}
Let $\mathcal{G}$ be a very nice stack in $\underline{E}$-vector spaces, and let $A \in D_\et(\mathcal{G},\Lambda)$. On a clopen decomposition of $S$, one has a functorial isomorphism
$$
f_!^\vee \mathcal{F}_\psi^{\rm un}(A) \cong e^\ast A [d],
$$
where $e: S \to \mathcal{G}$ is the unit section, for some $d\in \Z$.
\end{corollary}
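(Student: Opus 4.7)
The plan is to derive this as a formal consequence of Proposition \ref{fourier-transform-and-push-forward} applied to the morphism $f^\vee : \mathcal{G}^\vee \to S$, where $S$ is regarded as the zero object in the category of very nice stacks in $E$-vector spaces over itself. First I would check that $S$ is indeed very nice with that structure, and that its Fourier transform is the identity on $D_\et(S,\Lambda)$: the two projections $S \times_S S \cong S \to S$ are isomorphisms and the pairing $\alpha : S \to [S/\underline{E}]$ is the zero map, so $\alpha^\ast \mathcal{L}_\psi$ is canonically trivial. Moreover, by dualizability of $\mathcal{G}$, the transpose $(f^\vee)^\vee : S = S^\vee \to (\mathcal{G}^\vee)^\vee$ is identified with the unit section $e : S \to \mathcal{G}$ via the biduality isomorphism $(\mathcal{G}^\vee)^\vee \cong \mathcal{G}$.

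Applying Proposition \ref{fourier-transform-and-push-forward} to $f^\vee$ and to the object $B := \mathcal{F}^{\mathrm{un}}_\psi(A) \in D_\et(\mathcal{G}^\vee,\Lambda)$, I obtain, after passage to a suitable clopen decomposition of $S$, an isomorphism
$$
f^\vee_! \mathcal{F}^{\mathrm{un}}_\psi(A) \;\cong\; e^\ast\, \mathcal{F}^{\mathrm{un}}_{\psi,\,!\,,\,\mathcal{G}^\vee \to (\mathcal{G}^\vee)^\vee}\bigl(\mathcal{F}^{\mathrm{un}}_\psi(A)\bigr)\,[d_1]
$$
for some $d_1 \in \Z$. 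To match the right-hand side with $e^\ast A$, I would combine two inputs. On the one hand, the identity
$$a^\ast \circ \mathcal{F}^{\mathrm{un}}_{\psi,\,!\,,\,\mathcal{G}^\vee \to (\mathcal{G}^\vee)^\vee} \cong \mathcal{F}^{\mathrm{un}}_{\psi^{-1},\,!\,,\,\mathcal{G}^\vee \to \mathcal{G}}$$
recorded just after Definition \ref{sec:stacks-bc-type-definition-fourier-transform-for-nice-stacks}, where $a : \mathcal{G} \to (\mathcal{G}^\vee)^\vee$ is the negative of the biduality; on the other hand, the very nice condition (property (3) of Definition \ref{sec:stacks-bc-type-definition-very-nice}), which gives, after a further clopen decomposition of $S$,
$$
\mathcal{F}^{\mathrm{un}}_{\psi^{-1},\,!\,,\,\mathcal{G}^\vee \to \mathcal{G}}\bigl(\mathcal{F}^{\mathrm{un}}_\psi(A)\bigr) \;\cong\; A\,[d_2]
$$
for some $d_2 \in \Z$. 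Together these yield $a^\ast \mathcal{F}^{\mathrm{un}}_{\psi,\,!\,,\,\mathcal{G}^\vee \to (\mathcal{G}^\vee)^\vee}(\mathcal{F}^{\mathrm{un}}_\psi(A)) \cong A[d_2]$.

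To finish, I would observe that the unit section $e$ is fixed by the negation $a : \mathcal{G} \to \mathcal{G}$, i.e.\ $a \circ e = e$, so $e^\ast a^\ast = e^\ast$; hence pulling back along $e$ gives
$$
e^\ast \mathcal{F}^{\mathrm{un}}_{\psi,\,!\,,\,\mathcal{G}^\vee \to (\mathcal{G}^\vee)^\vee}\bigl(\mathcal{F}^{\mathrm{un}}_\psi(A)\bigr) \;\cong\; e^\ast A\,[d_2],
$$
and substituting into the first displayed isomorphism yields the desired formula with $d := d_1 + d_2$. The only non-formal ingredient is the commutation of the Fourier transform with $Rf_!$ from Proposition \ref{fourier-transform-and-push-forward}; I do not anticipate a genuine obstacle, since every subsequent step is a diagram chase using biduality, the involutivity of the Fourier transform, and the fact that $e(S)$ is the fixed point of the negation automorphism.
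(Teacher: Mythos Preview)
Your argument is correct and is precisely the standard deduction from Proposition \ref{fourier-transform-and-push-forward} that the paper has in mind when it says these statements are ``proved exactly as in Laumon'': one applies the push-forward compatibility to the structure morphism $f^\vee:\mathcal{G}^\vee\to S$ (with $S$ as the zero very nice stack) and then uses involutivity together with the fact that the unit section is fixed by negation. The only minor cosmetic point is that you silently pass between the biduality $b:\mathcal{G}\to(\mathcal{G}^\vee)^\vee$ and $a=b\circ[-1]$ when identifying $(f^\vee)^\vee$ with $e$ and then invoking $a^\ast$; since $[-1]\circ e=e$ this is harmless, exactly as you note.
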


If $f: \mathcal{G}\to S$ is a nice stack in $E$-vector spaces, with its addition law
$$
m : \mathcal{G} \times \mathcal{G} \to \mathcal{G}
$$
(note that via the morphism $\mathcal{G} \times \mathcal{G} \to \mathcal{G} \times \mathcal{G}, (g,h)\mapsto (g,g+h)$, $m$ is isomorphic to the second projection, which is the base change along a representable morphism in Artin v-stacks of a smooth-locally nice map), one defines the \textit{convolution product} 
$$
\ast: D_\et(\mathcal{G},\Lambda) \times D_\et(\mathcal{G},\Lambda) \to D_\et(\mathcal{G}, \Lambda)
$$
by the formula
$$
A \ast B := m_!(A \boxtimes B).
$$

\begin{proposition}
\label{convolution-and-plancherel-formula}
Let $f: \mathcal{G}\to S$ be a very nice stack in $E$-vector spaces. For all $A, B \in D_\et(\mathcal{G},\Lambda)$, one has a functorial isomorphism
$$
\mathcal{F}_\psi^{\rm un}(A \ast B) \cong \mathcal{F}_\psi^{\rm un}(A) \otimes \mathcal{F}_\psi^{\rm un}(B)
$$
and, on a clopen decomposition of $S$, a functorial isomorphism 
$$
f_!^\vee (\mathcal{F}_\psi^{\rm un}(A) \otimes \mathcal{F}_\psi^{\rm un}(B)) \cong f_! (A \otimes [-1]^\ast B)[d],
$$
(``Plancherel formula") where $[-1]$ denotes the ``multiplication by $-1$" map on $\mathcal{G}$, for some $d\in \Z$. 
\end{proposition}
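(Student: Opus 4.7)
The first isomorphism expresses compatibility of $\mathcal{F}^{\mathrm{un}}_\psi$ with convolution on $\mathcal{G}$ versus tensor product on $\mathcal{G}^\vee$; it rests on the bilinearity of the pairing $\alpha\colon \mathcal{G}^\vee\times_S \mathcal{G}\to [S/\underline{E}]$. Let $m\colon \mathcal{G}\times_S \mathcal{G}\to \mathcal{G}$ denote the addition law and consider the projections $p_{12},p_{13}$ from $\mathcal{G}^\vee\times_S \mathcal{G}\times_S \mathcal{G}$ onto the ``first--second'' and ``first--third'' factors. Because $\alpha$ is bilinear and $\mathcal{L}_\psi$ is a character sheaf on $[S/\underline{E}]$, the identity $\psi(\xi(g_1+g_2)) = \psi(\xi(g_1))\psi(\xi(g_2))$ provides a canonical isomorphism
$$
(\mathrm{id}_{\mathcal{G}^\vee}\times m)^\ast \alpha^\ast \mathcal{L}_\psi \cong p_{12}^\ast \alpha^\ast \mathcal{L}_\psi \otimes p_{13}^\ast \alpha^\ast \mathcal{L}_\psi.
$$
Starting from $\mathcal{F}^{\mathrm{un}}_\psi(A\ast B) = \pi^\vee_!\bigl(\pi^\ast m_!(A\boxtimes B)\otimes \alpha^\ast \mathcal{L}_\psi\bigr)$, I first apply proper base change to interchange $\pi^\ast$ and $m_!$ through the cartesian square relating $(\mathrm{id}_{\mathcal{G}^\vee}\times m)$ and $m$; then the projection formula, combined with the bilinearity identification above, rewrites the integrand as an exterior product (on $\mathcal{G}^\vee\times_S \mathcal{G}\times_S \mathcal{G}$) of two copies of the Fourier integrand. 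A Künneth-type factorization of the composite pushforward then yields $\mathcal{F}^{\mathrm{un}}_\psi(A)\otimes \mathcal{F}^{\mathrm{un}}_\psi(B)$.

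For the Plancherel formula, I apply $f^\vee_!$ to the first isomorphism, obtaining
$$
f^\vee_!\bigl(\mathcal{F}^{\mathrm{un}}_\psi(A)\otimes \mathcal{F}^{\mathrm{un}}_\psi(B)\bigr) \cong f^\vee_! \mathcal{F}^{\mathrm{un}}_\psi(A\ast B),
$$
and invoke \Cref{push-forward-of-the-fourier-transform} to identify the right-hand side, up to a shift on a clopen decomposition of $S$, with $e^\ast(A\ast B)$, where $e\colon S\to \mathcal{G}$ is the unit section. Since $A\ast B = m_!(A\boxtimes B)$, proper base change along the cartesian square obtained from $e$ and $m$ identifies $e^\ast m_!(A\boxtimes B)$ with $f_!$ of the restriction of $A\boxtimes B$ to $m^{-1}(e(S))$. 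This preimage is the antidiagonal, canonically isomorphic to $\mathcal{G}$ via $g\mapsto (g,-g)$, and under this identification $A\boxtimes B$ restricts to $A\otimes [-1]^\ast B$, yielding the desired formula.

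The main subtlety is to ensure that proper base change, the projection formula, and Künneth apply throughout; this is legitimate because $\mathcal{G}\to S$ is nice, so $\pi$, $\pi^\vee$, and $m$ are smooth-locally nice and thus fit into the six-functor formalism of \cite{kaletha_weinstein_on_the_kottwitz_conjecture_for_local_shimura_varieties}. The bookkeeping of shifts and constants is done on a clopen decomposition of $S$, as in \Cref{sec:stacks-bc-type-properties-of-very-nice-stacks-plus-fourier-transform} and \Cref{push-forward-of-the-fourier-transform}. The hardest point, and the only one requiring care beyond the formal argument, is the bilinearity identification for $\alpha^\ast \mathcal{L}_\psi$: it follows by pulling back the tautological multiplicativity of $\mathcal{L}_\psi$ under the addition map of $[S/\underline{E}]$ along $\alpha$, using that $\alpha\circ (\mathrm{id}_{\mathcal{G}^\vee}\times m)$ and $p_{12}\cdot \alpha + p_{13}\cdot \alpha$ agree as maps to $[S/\underline{E}]$ by the very definition of the duality pairing. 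Modulo these verifications the argument is identical to \cite[Proposition~1.2.2.7, Proposition~1.2.2.8]{laumon_transformation_de_fourier}.
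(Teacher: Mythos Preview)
Your proposal is correct and follows essentially the same approach as the paper, which does not give its own proof but simply defers to \cite[Propositions 1.2.2.7, 1.2.2.8]{laumon_transformation_de_fourier}; you have accurately reconstructed Laumon's argument (bilinearity of $\alpha$ giving the multiplicativity of the kernel, proper base change and projection formula for the convolution identity, then \Cref{push-forward-of-the-fourier-transform} plus base change along the antidiagonal for Plancherel), and correctly flagged that the six-functor manipulations are available because $m$, $\pi$, $\pi^\vee$ are smooth-locally nice.
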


\subsection{Examples of very nice stacks in $E$-vector spaces}
\label{sec:exampl-picard-stacks}
in this subsection, we present two large classes of examples of very nice stacks in $E$-vector spaces. Let as before $S$ be a small v-stack.

\begin{proposition}
\label{description-push-forward-l-psi}
Let $\mathcal{G}=\mathrm{BC}(\mathcal{F})$ be the stack in $E$-vector spaces associated with some $\mathcal{F}\in \Coh^{\mathrm{fl},>0}(S)$. Then $\mathcal{G}$ is very nice. In particular, $\mathcal{G}^\vee=\mathrm{BC}(\mathcal{F})^\vee$ is also very nice.
\end{proposition}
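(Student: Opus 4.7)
The plan is to verify the three conditions of Definition \ref{sec:stacks-bc-type-definition-very-nice} by applying the criterion of Lemma \ref{sec:stacks-e-vector-criterion-to-get-involutivity}. First I would take care of the formal part. By Example \ref{example-dual-of-flat-coh-sheaf-with-positive-slopes}, both $\mathcal{G}=\BC(\mathcal{F})$ and $\mathcal{G}^{\vee}$ are dualizable and cohomologically smooth over $S$. Cohomological smoothness of the structure maps $f\colon \mathcal{G}\to S$ and $f^{\vee}\colon \mathcal{G}^{\vee}\to S$ implies representability in Artin v-stacks and the smooth-locally nice property, so $\mathcal{G}$ is nice in the sense of Definition \ref{sec:stacks-bc-type-definition-nice-stack-in-e-v-s}. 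The unit sections $e\colon S\to \mathcal{G}$ and $e^{\vee}\colon S\to \mathcal{G}^{\vee}$ are closed immersions into cohomologically smooth v-stacks over $S$, so they are smooth-locally nice as well.

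With these formal inputs in place, it remains by Lemma \ref{sec:stacks-e-vector-criterion-to-get-involutivity} to prove the two Fourier-vanishing identities
\[
\pi_!(\alpha^{\ast}\mathcal{L}_\psi)\cong e_!\Lambda[d_1], \qquad \pi^{\vee}_!(\alpha^{\ast}\mathcal{L}_\psi)\cong e^{\vee}_!\Lambda[d_2],
\]
for some $d_1,d_2\in \Z$, after a clopen decomposition of $S$. By proper base change these are pointwise statements: at a geometric $S$-point $g\in \mathcal{G}$ (resp. $g^{\vee}\in \mathcal{G}^{\vee}$), the fiber of $\pi_!(\alpha^{\ast}\mathcal{L}_\psi)$ computes $R\Gamma_c(\mathcal{G}^{\vee},(\alpha_g)^{\ast}\mathcal{L}_\psi)$ for the character $\alpha_g\colon \mathcal{G}^{\vee}\to [S/\underline{E}]$ obtained by pairing with $g$. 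One must show that this vanishes unless $g=0$, and that at $g=0$ it equals $\Lambda[d_1]$.

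To establish this, I would proceed by devissage using the short exact sequences provided by Proposition \ref{presentation-flat-coh-sheaf-non-negative-slopes} and Corollary \ref{presentation-flat-coh-sheaf-positive-slopes}. Pulling back to the rank-$r$ unramified extension via $\pi_r$, we can v-locally present $\mathcal{F}$ as the cokernel of a fiberwise injection $\mathcal{O}_{X_S}^{d}\hookrightarrow \mathcal{E}$ with $\mathcal{E}$ semistable of positive slope, and $\mathcal{E}$ itself fits (after a further twist) into a sequence with slopes $0$ and $-1$ only. Dualizing these sequences by means of the identifications in Example \ref{example-dual-of-ss-bc-space} and \Cref{fully-faithfulness-R-tau-ast}, and using the product-stability of very niceness in Lemma \ref{sec:stacks-bc-type-properties-of-very-nice-stacks-plus-fourier-transform-bis}(2) together with the fact that pro-\'etale $\underline{E}$-local systems are very nice (which one verifies directly, as the Fourier transform in that case is the classical pointwise character Fourier transform and the criterion is a straightforward Artin-Schreier orthogonality), the problem reduces to establishing the Fourier-vanishing identity in the case $\mathcal{F}=\mathcal{O}_{X_S}(1)$ and, dually, for $\BC(\mathcal{O}(1))^{\vee}\cong \BC(\mathcal{O}(-1))$.

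The main obstacle is exactly this base case, together with the verification that the Fourier-vanishing property is stable under extensions of very nice stacks (which is not formal, because the Fourier kernel of an extension is not simply the product of the Fourier kernels of the sub and the quotient). I would handle stability under extensions by writing down the kernel on $\mathcal{G}^{\vee}\times_S\mathcal{G}$ as a successive pullback along the maps induced by the dual exact sequence and then applying proper base change fiberwise; the Ext-vanishing statements in \Cref{computations-of-the-desired-extensions} are what guarantees that the duality pairing behaves compatibly with these extensions. The base case $\BC(\mathcal{O}(1))$ is then the analytic analogue of the classical computation ``Fourier transform of the constant sheaf on $\mathbb{A}^1$ is $\delta_0$ up to a shift'': one analyzes the pairing $\BC(\mathcal{O}(-1))\times_S \BC(\mathcal{O}(1))\to [S/\underline{E}]$ coming from Serre duality on $X_S$, and shows, using the cohomological smoothness of $\BC(\mathcal{O}(1))$ together with the Artin-Schreier sheaf on $[S/\underline{E}]$, that integrating along the $\BC(\mathcal{O}(-1))$-direction gives zero away from the zero section and the expected shift $\Lambda[d]$ on the zero section.
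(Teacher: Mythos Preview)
Your setup agrees with the paper: both arguments invoke Example \ref{example-dual-of-flat-coh-sheaf-with-positive-slopes} for dualizability and cohomological smoothness and then aim for the criterion of Lemma \ref{sec:stacks-e-vector-criterion-to-get-involutivity}. The divergence is in how the identities $\pi_!(\alpha^\ast\mathcal{L}_\psi)\cong e_!\Lambda[d_1]$ and $\pi^\vee_!(\alpha^\ast\mathcal{L}_\psi)\cong e^\vee_!\Lambda[d_2]$ are established.

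The paper does \emph{not} d\'evisse through the presentations of $\mathcal{F}$. Instead it constructs the comparison map $\pi_!\alpha^\ast\mathcal{L}_\psi\to e_!\Lambda[-2d]$ globally, using only that $f^\vee$ is cohomologically smooth and $e$ is proper (so $e^\ast\pi_!\alpha^\ast\mathcal{L}_\psi\cong f^\vee_!\Lambda\cong f^\vee_! f^{\vee,!}\Lambda[-2d]\to\Lambda[-2d]$, then adjunction). Because the formation of this map commutes with base change, it suffices to check it is an isomorphism when $S=\Spa(C,\mathcal{O}_C)$. Over a geometric point, one uses excision: at the zero section the claim is the computation $f^\vee_!\Lambda\cong\Lambda[-2d]$ (\cite[Proposition V.2.1]{fargues2021geometrization} for the vector bundle pieces, induction for the torsion pieces), and away from the zero section one shows $R\Gamma(E,R\Gamma_c(\BC(\mathcal{F}'),\Lambda)\otimes\psi)=0$ for any non-split extension $\mathcal{F}'$ of $\mathcal{F}$ by $\mathcal{O}$, using that $\BC(\mathcal{F}')$ is connected so the $E$-action on its compactly supported cohomology is trivial.

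Your d\'evissage route has a genuine gap at the point you yourself flag: stability of the Fourier-vanishing identity (or of ``very nice'') under extensions of stacks in $E$-vector spaces is not a consequence of Lemma \ref{sec:stacks-bc-type-properties-of-very-nice-stacks-plus-fourier-transform-bis}, which only treats products, and your sketch (``write the kernel as a successive pullback and apply proper base change'') does not produce such a statement. The kernel on an extension is a twist of the exterior product of the kernels, not the product itself, and controlling that twist is exactly the content you are trying to prove. Moreover, even granting stability under extensions, you are not left with only $\BC(\mathcal{O}(1))$: the presentations in Corollary \ref{presentation-flat-coh-sheaf-positive-slopes} involve semistable $\mathcal{E}$ of arbitrary positive slope, so you would still need the full vector-bundle case. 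The paper's approach sidesteps all of this: once the global map is built, the pointwise check is a direct cohomology computation with no inductive machinery required.
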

\begin{proof}
  The last assertion follows from \Cref{sec:stacks-bc-type-properties-of-very-nice-stacks-plus-fourier-transform}, so it suffices to show that $\mathcal{G}$ is very nice. We will use the criterion of \Cref{sec:stacks-e-vector-criterion-to-get-involutivity}. Note that $\mathcal{G}, \mathcal{G}^\vee$ are dualizable and cohomologically smooth (and representable in locally spatial diamonds) over $S$, by \Cref{example-dual-of-flat-coh-sheaf-with-positive-slopes}, and the unit sections $e,e^\vee$ are proper. Let $d$ be the dimension of $\mathcal{G}$ (this makes sense by \cite[IV.1.17]{fargues2021geometrization}). First, we will construct a natural map
$$
\pi_! \alpha^\ast(\mathcal{L}_\psi)  \to e_{!} \Lambda[-2d].
$$ 
By proper base change, we have 
$$
e^* (\pi_! \alpha^\ast(\mathcal{L}_\psi)) \cong f_!^\vee \Lambda,
$$
with $f^\vee\colon \mathcal{G}^\vee\to S$ the projection map to the base. Then we have an adjunction morphism
$$
f_!^\vee \Lambda \cong f_!^\vee f^{\vee !} \Lambda[-2d] \to \Lambda[-2d]
$$
(using cohomological smoothness of $\mathcal{G}^\vee$), i.e., a natural map
$$
e^* (\pi_! \alpha^\ast(\mathcal{L}_\psi)) \to \Lambda[-2d],
$$
whence by adjunction (and properness of $e$) a natural map
$$
\pi_! \alpha^\ast(\mathcal{L}_\psi) \to e_! \Lambda[-2d].
$$
To prove that it is an isomorphism, it suffices, since its formation is compatible with base change in $S$, to do so when $S=\mathrm{Spa}(C,C^+)$ is a geometric point. We can even assume that $S=\mathrm{Spa}(C,\mathcal{O}_C)$. In this situation, we can assume that $\mathcal{G}$ is either of the form $\BC(\mathcal{E})$, where $\mathcal{E}$ is a vector bundle having either only positive slopes or only negative slopes, and with $|\deg(\mathcal{E})|=d$, or of the form $\mathbb{B}_{\rm dR, C^\sharp}^+/\mathrm{Fil}^d$ for some untilt $C^\sharp$ of $C$. \red{By excision, it suffices to prove that 
$$
e^* (\pi_! \alpha^\ast(\mathcal{L}_\psi))\cong  \Lambda[-2d] , \quad 
\pi_! \alpha^\ast(\mathcal{L}_\psi)_{|_{\mathcal{G} \backslash \{e\}}}=0.
$$
By proper base change, we have
$$
e^* (\pi_! \alpha^\ast(\mathcal{L}_\psi))= f_!^\vee \Lambda,
$$
so for the first part, we want to show that
$$
f_!^\vee \Lambda \cong \Lambda[-2d].
$$}
In the vector bundle case, see the proof of \cite[Proposition V.2.1]{fargues2021geometrization}. In the torsion case, induction on $d$ reduces to the case $d=1$ and the computation of the compactly supported cohomology of $\mathbb{A}_{C^\sharp}^1$. \red{For the second part, i.e. to show that $\pi_! \alpha^\ast(\mathcal{L}_\psi)_{|_{\mathcal{G} \backslash \{e\}}}$ vanishes, by checking on fibers and using proper base change again, we see that we need to show that
$$
R\Gamma(E, R\Gamma_c(\BC(\mathcal{F}^\prime), \Lambda) \otimes \psi) = 0
$$
if $\mathcal{F}^\prime$ is a \textit{non-split} extension of $\mathcal{F}$ by $\mathcal{O}$. In both cases, $\mathcal{F}^\prime$ is the direct sum of a vector bundle with positive slopes and a torsion sheaf, so the previous computation shows its compactly supported cohomology has a trivial $E$-action (indeed, this action extends to an action of $\BC(\mathcal{F}^\prime)$, which must be trivial since the latter is connected and $\Lambda$ is totally disconnected). Hence these cohomology groups vanish.}

The argument for $\pi_!^\vee \alpha^\ast\mathcal{L}_\psi\cong e_!^\vee \Lambda[-2d]$ is entirely similar.
\end{proof}  

The other class of examples we consider are associated with pro-\'etale local systems.

\begin{proposition}
  \label{sec:examples-very-nice-local-systems-are-very-nice}
  Let $S$ be a small v-stack. Let $\mathcal{G}=\mathbb{L}$ be a pro-\'etale $\underline{E}$-local system on $S$. Then $\mathcal{G}, \mathcal{G}^\vee$ are very nice stacks in $E$-vector spaces over $S$. 
\end{proposition}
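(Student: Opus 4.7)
The plan is to apply the involutivity criterion of \Cref{sec:stacks-e-vector-criterion-to-get-involutivity}. First I verify the structural conditions: $\mathcal{G} = \mathbb{L}$ is étale over $S$, v-locally isomorphic to $\underline{E}^n$, hence representable in locally spatial diamonds; its dual $\mathcal{G}^\vee = [S/\mathbb{L}^\ast]$ is an Artin v-stack with atlas $S \to [S/\mathbb{L}^\ast]$ given by an étale $\mathbb{L}^\ast$-torsor. Niceness of $f$ and $f^\vee$ follows from these descriptions, and dualizability is \Cref{example-dual-of-pro-etale-local-systems}. The unit section $e: S \to \mathbb{L}$ is a closed immersion, and $e^\vee: S \to [S/\mathbb{L}^\ast]$ coincides with the canonical étale atlas; both are smooth-locally nice. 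Working v-locally on $S$ and invoking \Cref{sec:stacks-bc-type-properties-of-very-nice-stacks-plus-fourier-transform-bis} I reduce to $\mathbb{L} = \underline{E}$.

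The core of the argument is to verify the two formulas
\[
\pi_!(\alpha^\ast \mathcal{L}_\psi) \cong e_!\Lambda, \qquad \pi^\vee_!(\alpha^\ast \mathcal{L}_\psi) \cong e^\vee_!\Lambda
\]
(no shifts are needed since everything is of dimension $0$). For the first formula, I use the excision triangle for the closed immersion $e: S \hookrightarrow \underline{E}$. By proper base change, the stalk of $\pi_!(\alpha^\ast \mathcal{L}_\psi)$ at $v \in \underline{E}(T)$ is $R\Gamma_c([T/\underline{E}_T], \mathcal{L}_{\psi_v})$, where $\psi_v$ denotes the character $\phi \mapsto \psi(\phi(v))$. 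Since $\underline{E}$ is an étale group of $p$-primary type with $p \neq \ell$, the map $[T/\underline{E}_T] \to T$ is cohomologically smooth of dimension $0$, so $R\Gamma_c = R\Gamma$, and what has to be computed is the continuous group cohomology $H^\ast(E, \Lambda_{\psi_v})$. Applying the Lyndon--Hochschild--Serre spectral sequence to $0 \to \mathcal{O}_E \to E \to E/\mathcal{O}_E \to 0$ and using that both $\mathcal{O}_E$ (pro-$p$) and $E/\mathcal{O}_E$ (discrete $p$-torsion) have $\ell$-torsion cohomology equal to $\Lambda$ in degree $0$, one obtains $\Lambda$ at $v = 0$; for $v \neq 0$, $\psi_v$ is non-trivial, so its restriction to some finite cyclic $p$-group on which $\psi_v$ does not vanish is a $p$-power root of unity $\zeta \in \Lambda^\times$, and $\zeta - 1 \in \Lambda^\times$ (as $p \neq \ell$) annihilates invariants and coinvariants and hence the whole cohomology. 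Excision then yields the desired isomorphism.

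For the second formula, I pull back along the atlas $p = e^\vee: S \to [S/\underline{E}]$. Since $\alpha \circ (p \times \mathrm{id}): \underline{E} \to [S/\underline{E}]$ factors through the zero section, proper base change gives $p^\ast \pi^\vee_!(\alpha^\ast \mathcal{L}_\psi) \cong R\pi'_!\Lambda_{\underline{E}}$, where $\pi': \underline{E} \to S$ is the structure map. On the other hand, base-changing the étale $\underline{E}$-torsor $e^\vee$ along itself yields $p^\ast e^\vee_!\Lambda \cong R\pi''_!\Lambda_{\underline{E}}$ with $\pi'': \underline{E} \to S$. Both pullbacks are concentrated in degree $0$ and abstractly isomorphic (locally $C_c(E, \Lambda)$); the crux is to match the two descent data along the torsor $p$, which amount on one side to the translation action of $\underline{E}$ and on the other to multiplication by $\psi \circ \chi$. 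These are intertwined by the classical $\psi$-Fourier isomorphism on the locally profinite abelian group $E$ with $\Lambda$-coefficients, which makes sense $\Lambda$-linearly because $q = |\mathcal{O}_E/\pi|$ is a power of $p \neq \ell$, hence a unit of $\Lambda$, allowing Haar measure on $E$ normalized by $\mathrm{vol}(\mathcal{O}_E) = 1$ to take values in $\Lambda$.

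Having both formulas, \Cref{sec:stacks-e-vector-criterion-to-get-involutivity} delivers that $\mathbb{L}$ is very nice, and \Cref{sec:stacks-bc-type-properties-of-very-nice-stacks-plus-fourier-transform}(1) transports this conclusion to $\mathcal{G}^\vee = [S/\mathbb{L}^\ast]$. The main technical obstacle is the second formula: whereas the first reduces by excision to a standard $\ell$-coprime vanishing of étale cohomology, the second lives on a classifying stack and requires careful bookkeeping of the descent data, which is ultimately matched to the classical Pontryagin--Fourier duality, made integral by the hypothesis $p \neq \ell$.
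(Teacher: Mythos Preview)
Your overall strategy—verify both kernel identities $\pi_!(\alpha^\ast\mathcal{L}_\psi)\cong e_!\Lambda$ and $\pi_!^\vee(\alpha^\ast\mathcal{L}_\psi)\cong e_!^\vee\Lambda$ and then invoke the ``in particular'' clause of \Cref{sec:stacks-e-vector-criterion-to-get-involutivity}—is sound, and your computations for $\mathbb{L}=\underline{E}$ (excision plus group cohomology for the first formula; matching descent data via the classical Fourier transform for the second) are correct and essentially coincide with what the paper does in \Cref{unraveling-fourier-transform-for-finite-dimensional-vector-spaces} and \Cref{end-of-proof-push-forward-of-l-psi}.

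The genuine gap is the sentence ``Working v-locally on $S$ and invoking \Cref{sec:stacks-bc-type-properties-of-very-nice-stacks-plus-fourier-transform-bis} I reduce to $\mathbb{L}=\underline{E}$.'' The lemma you cite only lets you pass from $\underline{E}$ to products $\underline{E}^n$ over the \emph{same} base and to base changes $S'\to S$; it does not let you pass from the trivialised situation over a v-cover $S'\to S$ back down to a nontrivial $\mathbb{L}$ over $S$. More fundamentally, very niceness is not \emph{a priori} a v-local property: knowing that two objects of $D_\et(\mathcal{G},\Lambda)$ become isomorphic after pullback to a v-cover does not produce a global isomorphism unless you have a natural map to test. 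The paper handles this in two different ways: for condition~(2) of \Cref{sec:stacks-bc-type-definition-very-nice} it uses that $U\mapsto D_\et(\mathcal{G}\times_S U,\Lambda)$ is a v-sheaf of $\infty$-categories and that the Fourier transform is a morphism of such sheaves (by proper base change), so being an equivalence can be checked v-locally; for condition~(3) it constructs the map $\pi_!(\alpha^\ast\mathcal{L}_\psi)\to e_!\Lambda$ \emph{globally} for arbitrary $\mathbb{L}$, via the counit $f_!^\vee f^{\vee !}\Lambda\to\Lambda$ and the adjunction for the proper map $e$, exactly as in the proof of \Cref{description-push-forward-l-psi}, and only then checks that it is an isomorphism over geometric points. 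To repair your argument you should do the same: build the two natural maps for arbitrary $\mathbb{L}$ first (your excision argument already supplies the first once you observe $e^\ast\pi_!\alpha^\ast\mathcal{L}_\psi\cong f_!^\vee\Lambda$ naturally and use the counit; for the second, use the unit of $(e^\vee_!,e^{\vee\ast})$ together with the map $\Lambda\to f_!\Lambda_{\mathcal{G}}$ coming from the open immersion $e$), and \emph{then} check they are isomorphisms after trivialising $\mathbb{L}$, where your explicit computations apply verbatim.
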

\begin{proof}
By \Cref{sec:stacks-bc-type-properties-of-very-nice-stacks-plus-fourier-transform}, it suffices to show that $\mathcal{G}$ is very nice. Dualizability has been proved in \Cref{example-dual-of-pro-etale-local-systems}. We need to show that $\mathcal{F}^{\mathrm{un}}_{\psi,!,\mathcal{G}\to \mathcal{G}^\vee}$, $\mathcal{F}^{\mathrm{un}}_{\psi^{-1},!,\mathcal{G}^\vee\to \mathcal{G}}$ are equivalences. The functors $U \mapsto D_\et(\mathcal{G} \times_S U,\Lambda), U \mapsto D_\et(\mathcal{G}^\vee \times_S U,\Lambda)$ are sheaves of $\infty$-categories on the v-site of $S$ and (\Cref{compatibility-of-ft-with-base-change}) $\mathcal{F}^{\mathrm{un}}_{\psi,!,\mathcal{G}\to \mathcal{G}^\vee}$, $\mathcal{F}^{\mathrm{un}}_{\psi^{-1},!,\mathcal{G}^\vee \to \mathcal{G}}$ define morphisms of sheaves. It is therefore enough to show that they are equivalences under the additional assumption that $\mathbb{L}$ is trivial. By base change, we may then reduce to the case that $S=\mathrm{Spa}(C,C^+)$ is a geometric point.
If $S$ is a geometric point (and thus $\mathbb{L}$ trivial), $\mathcal{F}^{\mathrm{un}}_{\psi,!,\mathcal{G}\to \mathcal{G}^\vee}$ and $\mathcal{F}^{\mathrm{un}}_{\psi^{-1},!,\mathcal{G}^\vee \to \mathcal{G}}$ can be made entirely explicit and are seen to be equivalences: we postpone the discussion of this to \Cref{unraveling-fourier-transform-for-finite-dimensional-vector-spaces}, \Cref{end-of-proof-push-forward-of-l-psi} below.
 
 It remains to check point (3) of \Cref{sec:stacks-bc-type-definition-very-nice}. Since we already know that the two Fourier transforms are equivalences, it is enough to show that 
 $$
 \mathcal{F}^{\mathrm{un}}_{\psi^{-1},!,\mathcal{G}^\vee \to \mathcal{G}} \circ \mathcal{F}^{\mathrm{un}}_{\psi,!,\mathcal{G}\to \mathcal{G}^\vee}
 $$
 is isomorphic (up to shift) to the identity functor. We apply the criterion of \Cref{sec:stacks-e-vector-criterion-to-get-involutivity}. Note that $f^\vee: \mathcal{G}^\vee \to S$ is cohomologically smooth (of dimension $0$) and that $e: S \to \mathcal{G}$ is proper. Therefore we get as in \Cref{description-push-forward-l-psi} a morphism
\[
  \pi_!(\alpha^\ast \mathcal{L}_\psi)\to e_!\Lambda
\]
and the arguments there show that it is an isomorphism if this is true in the case that $S=\Spa(C,C^+)$ is a geometric point. In this case, $\mathbb{L}$ is trivial and the assertion is easy.
\end{proof}

\begin{remark}
\label{stacks-of-bc-type-and-problem}
Let $S$ be a small v-stack. To bring together the last two classes of examples, one could more generally consider stacks in $E$-vector spaces $\mathcal{G}$ admitting v-locally on $S$ a $2$-step filtration $W_\bullet \mathcal{G}$ 
$$
W_{-1}=0 \subset W_0 \subset W_1 \subset W_2=\mathcal{G}
$$
such that $\mathrm{Gr}_0^W \cong [S/\mathbb{L}]$ is the classifying stack of a pro-\'etale $\underline{E}$-local system, $\mathrm{Gr}_1^W$ is either $\BC(\mathcal{F})$ or $\BC(\mathcal{F})^\vee$, with $\mathcal{F} \in \Coh^{\rm fl, >0}(S)$ and $\mathrm{Gr}_2^W=\mathbb{L}^\prime$ is a pro-\'etale $\underline{E}$-local system.

This definition would be reminiscent of the definition of Beilinson's $1$-motives, see e.g. \cite[Appendix A]{chen_zhu_geometric_langlands_in_prime_characteristic} (cf. also Laumon's generalized $1$-motives of \cite{laumon_transformation_de_fourier_generalisee}), with Banach-Colmez spaces of flat coherent sheaves having positive slopes and their duals being an analogue in this setting of abelian schemes (they behave formally similarly with respect to duality).

One can check, using \Cref{fully-faithfulness-R-tau-ast}, that for such a $\mathcal{G}$, there is locally for the v-topology on $S$, a decomposition
$$
\mathcal{G} \cong \mathrm{Gr}_0^W \mathcal{G} \times \mathrm{Gr}_1^W \mathcal{G} \times \mathrm{Gr}_2^W \mathcal{G}.
$$
However, we do not know if the property of being smooth-locally nice is v-local on target, and therefore we do not know if any such $\mathcal{G}$ is very nice.
\end{remark}

\subsection{An inductive construction}
\label{sec-an-inductive-construction}

Let $i\geq 0$ an integer. 

\begin{definition}
\label{coh-prime-and-coh-prime-ex}
Let $\mathcal{C}_i$ be the open substack 
$$
\mathcal{C}_i = \Coh_i^{\rm fl,>0} \cup \Bun_i^{\mathbf{1}} \hookrightarrow \Coh_i^{\rm fl}.
$$
Let
$$
\mathcal{C}_{i, \rm ex}^\prime, \quad \mathrm{resp.} ~ \mathcal{C}_i^\prime,
$$
be the stack sending $S \in \mathrm{Perf}_{\mathbb{F}_q}$ to the groupoid of pairs $$\left(\mathcal{F} \in \mathcal{C}_i(S),s: \mathcal{O}_{X_S} \to \mathcal{F}\right),$$ where $s$ is an $\mathcal{O}_{X_S}$-linear map, resp. a fiberwise injective $\mathcal{O}_{X_S}$-linear map. We denote by $j_i$ the open embedding of $\mathcal{C}_i^\prime$ in $\mathcal{C}_{i, \rm ex}^\prime$ and by $\pi_i: \mathcal{C}_{i,\rm ex}^\prime \to \mathcal{C}_i$ the forgetful map sending a pair $(\mathcal{F},s)$ to $\mathcal{F}$.
\end{definition}

Using \Cref{description-push-forward-l-psi} and \Cref{sec:examples-very-nice-local-systems-are-very-nice}, we see that the morphism $\pi_i$ makes $\mathcal{C}_{i,\rm ex}^\prime$ a very nice stack in $E$-vector spaces over the small v-stack $\mathcal{C}_i$. The fiber of $\pi_i$ over a flat coherent sheaf $\mathcal{F}$ in $\mathcal{C}_i$ is $\BC(\mathcal{F})$. Let
$$
\pi_i^\vee: \mathcal{C}_{i,\rm ex}^{\prime \vee} \to \mathcal{C}_i
$$
be the dual stack in $E$-vector spaces. By construction and \Cref{fully-faithfulness-R-tau-ast}, $\mathcal{C}_{i, \rm ex}^{\prime \vee}$ sends $S \in \mathrm{Perf}_{\mathbb{F}_q}$ to the groupoid of extensions
$$
0 \to \mathcal{O}_{X_S} \to \mathcal{F}^\prime \to \mathcal{F} \to 0.
$$

Write $\mathcal{C}_{i, \rm ex,0}^\prime= \mathcal{C}_{i, \rm ex}^\prime \times_{\mathcal{C}_i} \Bun_i^{\mathbf{1}}$ and $\mathcal{C}_{i, \rm ex,d}^\prime= \mathcal{C}_{i, \rm ex}^\prime \times_{\mathcal{C}_i} \Coh_{i,d}^{\rm fl, >0}$ for $d>0$. We have a direct product decomposition
$$
D_\et(\mathcal{C}_{i, \rm ex}^\prime,\Lambda) \cong \prod_{d\geq 0} D_\et(\mathcal{C}_{i, \rm ex, d}^\prime,\Lambda).
$$
We will denote by 
$$
\mathcal{F}_{\mathcal{C}_{i,\rm ex}^\prime,\psi}: D_\et(\mathcal{C}_{i, \rm ex}^\prime,\Lambda)  \to D_\et(\mathcal{C}_{i, \rm ex}^{\prime \vee},\Lambda) 
$$
defined to be 
$$
\left(\mathcal{F}_{\psi,!,\mathcal{C}_{i,\rm ex}^\prime \to \mathcal{C}_{i, \rm ex}^{\prime \vee}}^{\rm un} \right)_{|_{D_\et(\mathcal{C}_{i, \rm ex, d}^\prime,\Lambda)}}[d]
$$
in restriction to $D_\et(\mathcal{C}_{i, \rm ex, d}^\prime,\Lambda)$, $d\geq 0$. 

Let
$$
j_i^{\vee}: \mathcal{C}_i^{\prime \vee} \to \mathcal{C}_{i,\rm ex}^{\prime \vee}
$$
be the open substack defined by the condition that $\mathcal{F}^\prime \in \mathcal{C}_{i+1}$. The morphism sending $(s:\mathcal{O}_{X_S} \hookrightarrow \mathcal{F}^\prime) \in \mathcal{C}_{i+1}^\prime(S)$ to
$$
(0 \to \mathcal{O}_{X_S} \to \mathcal{F}^\prime \to \mathcal{F} \to 0) \in \mathcal{C}_i^{\prime \vee}(S),
$$
where $\mathcal{F}$ is defined to be the cokernel of $s$ then (tautologically) defines an isomorphism
$$
\alpha_i: \mathcal{C}_{i+1}^\prime \cong \mathcal{C}_i^{\prime \vee}.
$$ 

\begin{definition}
\label{def-the-functor-phi-plus}
Let $i\geq 0$. Using the above notations, we define a functor
$$
\Phi_{\mathcal{C}_i}^+:  D_\et(\mathcal{C}_i^\prime,\Lambda) \to D_\et(\mathcal{C}_{i+1}^\prime, \Lambda)
$$
by the formula
$$
\Phi_{\mathcal{C}_i}^+ =\alpha_i^{-1} \circ (j_i^\vee)^\ast \circ \mathcal{F}_{\mathcal{C}_{i,\rm ex}^\prime,\psi} \circ j_{i,!}.
$$
\end{definition}

We can now iterate this construction, increasing by one the generic rank at each step. For $n\geq 1$, denote by
$$
\Bun_n^\prime
$$
the fiber product of $\Bun_n$ with $\mathcal{C}_n^\prime$ over $\mathcal{C}_n$ and by $\iota_n$ the open embedding
$$
\iota_n: \Bun_n^\prime \hookrightarrow \mathcal{C}_n^\prime.
$$

\begin{definition}
\label{def-the-laumon-construction}
Let $n\geq 1$. We define a functor
$$
\mathcal{A}_{n,\psi}: D_\et(\mathcal{C}_0,\Lambda) \to D_\et(\Bun_n^\prime,\Lambda)
$$
by the formula
$$
\mathcal{A}_{n,\psi} = \iota_n^{-1} \circ  \Phi_{\mathcal{C}_{n-1}}^+ \circ \dots \circ  \Phi_{\mathcal{C}_1}^+ \circ  \alpha_0^{-1} \circ (\pi_0 \circ j_0^\vee)^\ast.
$$
\end{definition}
 
The definition of $\mathcal{A}_{n,\psi}$ (with different notations) is due to Laumon, \cite{laumon_duke}, \cite{laumon_premiere_construction_de_drinfeld}, inspired by constructions of Drinfeld \cite{drinfeld_two_dimensional_l_adic_representations} in the case $n=2$, in the classical setting of a smooth projective curve over a finite field, who applied it to the geometric Langlands program, and is our main motivation for introducing the Fourier transform studied in this paper. Analogously to what Drinfeld, Laumon and Frenkel-Gaitsgory-Vilonen did in the function field setting, we hope that it is possible, starting from a continuous irreducible rank $n$ representation $\mathbb{L}$ of $W_E$ over $\Lambda$ (assumed to be a field), to produce an object
$$
\mathcal{L}_{\mathbb{L}} \in D_\et(\mathcal{C}_0,\Lambda)
$$
such that
$$
\mathcal{A}_{n,\psi}(\mathcal{L}_{\mathbb{L}}) \in  D_\et(\Bun_n^\prime,\Lambda)
$$
descends\footnote{This is to be understood here as saying that for each $d\geq 0$, the pullback to $\Bun_n^\prime$ of the restriction of $\mathrm{Aut}_{\mathbb{L}}$ to the degree $d$ connected component $\Bun_n^d$ of $\Bun_n$ shifted by $d$ is isomorphic to the restriction in degree $d$ of $\mathcal{A}_{n,\psi}(\mathcal{L}_{\mathbb{L}})$.} along the natural map $\Bun_n^\prime \to \Bun_n$ to an object $\mathrm{Aut}_{\mathbb{L}}$ of $D_\et(\Bun_n,\Lambda)$ satisfying the requirements of Fargues' geometrization conjecture.

\section{Examples}
\label{sec:examples}

In this section, we would like to discuss various concrete examples of the Fourier transform introduced in the first section. 

\subsection{The case of finite dimensional $E$-vector spaces}
\label{sec:the-case-of-E-vector-spaces}
Before starting, let us recall the following lemma.

\begin{lemma}
  \label{sec:relat-with-bernst-d-et-for-quotients-of-locally-profinite-sets-by-locally-profinite-groups}
  Let $X$ be a locally profinite set (seen as a small v-sheaf over $\mathrm{Spa}(\mathbb{F}_q)$), and let $H$ be a locally pro-$p$ group acting on $X$. Then
  \[
    D_\et([X/H],\Lambda)
  \]
  is equivalent to the derived category of smooth\footnote{A $C_c^\infty(X,\Lambda)$-module $M$ is said to be \textit{smooth} if the natural map $\varinjlim (\mathbf{1}_U.M) \to M$ is an isomorphism, where $U$ runs over compact open subsets of $X$ and the transition maps are given by the natural inclusions.} $C_c^\infty(X,\Lambda)$-modules ($C_c^\infty(X,\Lambda)$ seen as a ring via multiplication of functions) with a semilinear, smooth $H$-action.
\end{lemma}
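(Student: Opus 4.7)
The plan is to prove this in two steps: first, reduce to the case without $H$-action via descent along the quotient map; second, identify \'etale sheaves on a locally profinite set with smooth modules over the algebra of locally constant compactly supported functions.

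First, I would use v-descent along the $H$-torsor $X \to [X/H]$. By the \v{C}ech-nerve computation for \'etale sheaves on small v-stacks, the category $D_\et([X/H], \Lambda)$ should be equivalent to the category of pairs $(A, \alpha)$ with $A \in D_\et(X, \Lambda)$ and $\alpha$ an $H$-equivariant structure satisfying the usual cocycle condition over $H \times H \times X$. This is a purely formal statement, requiring no cohomological hypothesis on $H$ at this stage.

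Next, I would treat the case $H = \{e\}$. After decomposing $X$ into a disjoint union of profinite clopens, the problem reduces to $X$ itself profinite. In that situation the v-site of $X$ is refined by the site of clopen subsets with finite clopen covers, so that $D_\et(X, \Lambda)$ identifies with the derived category of sheaves of $\Lambda$-modules on the Stone space $X$. The functor sending a sheaf $\mathcal{G}$ to the $C(X, \Lambda)$-module $\mathcal{G}(X)$, together with the inverse assignment $M \mapsto (U \mapsto \mathbf{1}_U M)$ for $U \subset X$ clopen, should yield an equivalence of abelian categories with smooth $C(X,\Lambda)$-modules (smoothness being automatic in the compact case since $\mathbf{1}_X$ acts as the identity), and for general locally profinite $X$ one replaces $\mathcal{G}(X)$ by the colimit $\varinjlim_{U} \mathcal{G}(U)$ over compact open $U$. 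Since both sides are Grothendieck abelian and the functor is exact, the equivalence will derive automatically.

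Combining the two steps, an $H$-equivariant structure on a sheaf $\mathcal{G}$ will translate to a semilinear $H$-action on $M$ satisfying $h(f \cdot m) = h(f) \cdot h(m)$, and this action will automatically be smooth: the equivariance isomorphism $a^{\ast}\mathcal{G} \cong p^{\ast}\mathcal{G}$ over $H \times X$, evaluated on $H \times U$ for compact open $U \subset X$, forces the map $h \mapsto h \cdot s$ to be a locally constant function of $h$ for every section $s$ over $U$, so every element of $M$ acquires open stabilizer in $H$. The main obstacle will be the site-theoretic input in Step 2: checking that \'etale cohomology of a profinite set, viewed as a small v-sheaf, agrees with classical sheaf cohomology on its Stone space. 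This should reduce to the qcqs property of profinite sets together with the observation that every v-cover of such a set is refined by a finite clopen cover, after which the remaining comparison is essentially bookkeeping.
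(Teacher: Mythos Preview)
The paper does not prove this lemma; it simply cites \cite[Appendix B.2]{kaletha_weinstein_on_the_kottwitz_conjecture_for_local_shimura_varieties}. Your sketch is roughly the shape of the argument one finds there, but it has a genuine gap.

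The problem is your treatment of the $H$-action. V-descent along $X \to [X/H]$ gives you
\[
D_\et([X/H],\Lambda) \;\simeq\; \varprojlim_{\Delta} D_\et(H^{\bullet}\times X,\Lambda),
\]
an $\infty$-categorical totalization. This is indeed formal. But the right-hand side is \emph{not} a priori the derived category of any abelian category: it is the $\infty$-category of objects of $D_\et(X,\Lambda)$ equipped with a coherent $H$-action. Your Step~3 silently replaces this with the derived category of the abelian category of smooth $H$-equivariant modules. That replacement is exactly where the hypothesis ``$H$ locally pro-$p$'' (together with $\ell\neq p$, built into $\Lambda$) is used: for any compact open pro-$p$ subgroup $K\subset H$, the functor of $K$-invariants is exact on smooth $\Lambda[K]$-modules, because pro-$p$ groups have no higher cohomology with $\ell$-torsion coefficients. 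This exactness is what collapses the cosimplicial limit to an honest derived category. Without it the statement is false in general, so your remark that Step~1 ``requires no cohomological hypothesis on $H$'' is misleading: the descent is formal, but the identification with $D(\mathrm{Mod}^{\mathrm{sm}}_H)$ is not, and you never return to supply it.

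A secondary point: your claim that ``every v-cover of [a profinite set] is refined by a finite clopen cover'' is not correct as stated---v-covers in $\Perf_{\mathbb{F}_q}$ are much more general. The conclusion you want, that $D_\et(\underline{X},\Lambda)$ agrees with the derived category of $\Lambda$-sheaves on the topological space $X$, is true, but it comes from the identification of the \'etale site of $\underline{X}$ (not the v-site) with the site of local homeomorphisms over $X$; see for instance the discussion around \cite[Example 11.12, Proposition 14.9]{scholze_etale_cohomology_of_diamonds}.
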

\begin{proof}
See \cite[Appendix B.2]{kaletha_weinstein_on_the_kottwitz_conjecture_for_local_shimura_varieties}.
\end{proof}

In the next proposition, we take $S=\mathrm{Spa}(\mathbb{F}_q)$ (we could also take $S=\mathrm{Spa}(C)$, with $C$ complete algebraically closed, if we want $S$ to be a perfectoid space).

\begin{proposition}
\label{unraveling-fourier-transform-for-finite-dimensional-vector-spaces}
Let $V$ be a finite dimensional $E$-vector space. Fix a Haar measure $d\check{v}$ on $V^\vee$. Let $\mathcal{G}=\underline{V}$ be the associated stack in $E$-vector spaces, and let
$$
\mathcal{F}_\psi := \mathcal{F}_{\psi, !, \mathcal{G} \to \mathcal{G}^\vee}^{\rm un}
$$
 be the corresponding Fourier transform. The functor 
$$
\mathcal{F}_\psi: D_\et(\mathcal{G},\Lambda) \to D_\et(\mathcal{G}^\vee, \Lambda)
$$
is induced, via the identification of $D_\et(\mathcal{G},\Lambda)$, resp. of $D_\et(\mathcal{G}^\vee, \Lambda)$, with the derived category of smooth $(C_c^\infty(V,\Lambda), \times)$-modules, resp. of smooth $(C_c^\infty(V^\vee,\Lambda), \ast)$-modules, coming from \Cref{sec:relat-with-bernst-d-et-for-quotients-of-locally-profinite-sets-by-locally-profinite-groups}, resp. from the choice of $d\check{v}$ and \cite[Theorem V.1.1]{fargues2021geometrization}, by the isomorphism
$$
(C_c^\infty(V^\vee, \Lambda), \ast) \cong (C_c^\infty(V,\Lambda), \times), 
$$
given by the ``naive" Fourier transform, sending $f \in C_c^\infty(V^\vee, \Lambda)$ to the element $\widehat{f} \in C_c^\infty(V, \Lambda)$, such that
$$ \forall v\in V, ~ \widehat{f}(v)= \int_{V^\vee} f(\check{v}) \psi(\langle \check{v}, v \rangle) d\check{v}.
$$
\end{proposition}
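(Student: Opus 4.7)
The plan is to translate each ingredient in the definition of $\mathcal{F}^{\mathrm{un}}_{\psi,!,\mathcal{G}\to \mathcal{G}^\vee}$ through \Cref{sec:relat-with-bernst-d-et-for-quotients-of-locally-profinite-sets-by-locally-profinite-groups} and \cite[Theorem V.1.1]{fargues2021geometrization}. First I would identify the relevant stacks: by \Cref{example-dual-of-pro-etale-local-systems}, $\mathcal{G}^\vee = [S/\underline{V^\vee}]$, and therefore $\mathcal{G}^\vee \times_S \mathcal{G} = [\underline{V}/\underline{V^\vee}]$ with trivial $V^\vee$-action on $V$. Applying \Cref{sec:relat-with-bernst-d-et-for-quotients-of-locally-profinite-sets-by-locally-profinite-groups}, the category $D_\et(\mathcal{G},\Lambda)$ identifies with smooth $(C_c^\infty(V,\Lambda),\times)$-modules, the category $D_\et(\mathcal{G}^\vee,\Lambda)$ identifies with smooth $V^\vee$-representations (which, by \cite[Theorem V.1.1]{fargues2021geometrization} and the choice of Haar measure $d\check{v}$, is further identified with smooth $(C_c^\infty(V^\vee,\Lambda),\ast_{d\check{v}})$-modules), and $D_\et(\mathcal{G}^\vee\times_S\mathcal{G},\Lambda)$ identifies with smooth $(C_c^\infty(V,\Lambda),\times)$-modules equipped with a compatible smooth $V^\vee$-action.

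Next I would interpret each of the three functors making up $\mathcal{F}_\psi$. The pullback $\pi^\ast$ simply equips a smooth $C_c^\infty(V,\Lambda)$-module $M$ with the trivial $V^\vee$-action. For $\alpha^\ast\mathcal{L}_\psi$, one observes that over a point $v\in V$ the pairing $\alpha$ restricts to the $\underline{E}$-linear map $\mathrm{ev}_v\colon \underline{V^\vee}\to \underline{E}$; hence $\alpha^\ast\mathcal{L}_\psi$ corresponds to the rank-one smooth $C_c^\infty(V,\Lambda)$-module $C_c^\infty(V,\Lambda)$ on which $\check{v}\in V^\vee$ acts by pointwise multiplication by the locally constant function $v\mapsto \psi(\langle \check{v},v\rangle)$ (smoothness of the $V^\vee$-action on compactly supported sections being immediate). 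Finally, since $V$ is a $0$-dimensional locally spatial diamond, $\pi^\vee$ is étale, being the base change of $V\to \ast$ along $BV^\vee\to \ast$; I would check on the generators $\mathbf{1}_U$ (for $U\subset V$ a compact open), using proper base change and the fact that the compactly supported cohomology of a compact profinite fiber is concentrated in degree zero, that $\pi^\vee_!$ corresponds under our identifications to the forgetful functor sending a smooth $C_c^\infty(V,\Lambda)$-module with $V^\vee$-action to its underlying $\Lambda$-module with the inherited $V^\vee$-action.

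Combining, $\mathcal{F}_\psi(M)$ is identified with $M$ (as a $\Lambda$-module) together with the smooth $V^\vee$-action where $\check{v}$ acts by multiplication by $\psi(\langle \check{v},\cdot\rangle)$; this is well-defined on the smooth module $M$ even though the function is not compactly supported, since every $m\in M$ satisfies $m=\mathbf{1}_U m$ for some compact open $U\subset V$, and we may replace $\psi(\langle \check{v},\cdot\rangle)$ by $\mathbf{1}_U\cdot\psi(\langle \check{v},\cdot\rangle)\in C_c^\infty(V,\Lambda)$. Under \cite[Theorem V.1.1]{fargues2021geometrization} the associated smooth $(C_c^\infty(V^\vee,\Lambda),\ast_{d\check{v}})$-module structure on $M$ has $g\in C_c^\infty(V^\vee,\Lambda)$ acting by
$$g\cdot m = \int_{V^\vee} g(\check{v})\,(\check{v}\cdot m)\,d\check{v} = \left(\int_{V^\vee} g(\check{v})\,\psi(\langle \check{v},\cdot\rangle)\,d\check{v}\right)\cdot m = \widehat{g}\cdot m,$$
which is exactly the original $(C_c^\infty(V,\Lambda),\times)$-module structure on $M$ pulled back along the Fourier algebra isomorphism $g\mapsto \widehat{g}$, giving the proposition. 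The main obstacle I anticipate is the identification of $\pi^\vee_!$ with the announced forgetful functor: translating the sheaf-theoretic lower-shriek on a stacky target into the Bernstein-type module language takes some care, but reduces via the generators $\mathbf{1}_U$ and proper base change to the elementary computation of $H^\ast_c$ of a compact profinite set with coefficients in a rank-one local system.
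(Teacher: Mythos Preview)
Your proposal is correct and follows essentially the same route as the paper's proof: identify $D_\et$ on $\mathcal{G}$, $\mathcal{G}^\vee$, and $\mathcal{G}^\vee\times_S\mathcal{G}$ with the appropriate module categories, recognise $\alpha^\ast\mathcal{L}_\psi$ as $C_c^\infty(V,\Lambda)$ with the $V^\vee$-action $\check v\cdot f=\psi(\langle\check v,-\rangle)f$, observe that $\pi^\vee_!$ forgets the $C_c^\infty(V,\Lambda)$-structure, and then pass from the $V^\vee$-representation to the convolution module by integrating. The paper simply asserts the identification of $\pi^\vee_!$ with the forgetful functor, whereas you sketch a verification via the generators $\mathbf{1}_U$ and proper base change; one small terminological slip is that $\underline{V}\to\ast$ is pro-\'etale rather than \'etale, but this does not affect your argument.
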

\begin{proof}
Let $M$ be a smooth $C_c^\infty(V,\Lambda)$-module. We would like to describe the smooth module over the convolution algebra for $V^\vee$ defined by $\mathcal{F}_\psi(M)$ explicitly. Recall that if $\pi$ is a smooth representation of $V^\vee$, it corresponds to a smooth module over the convolution algebra, with module structure described the formulas
$$
\forall f \in C_c^\infty(V^\vee, \Lambda), \forall w \in \pi, ~ f \cdot w= \int_{V^\vee} f(\check{v}) \pi(\check{v})\cdot w d\check{v}.
$$
Hence, it suffices to prove that the $V^\vee$-action on the smooth representation of $V^\vee$ corresponding to $\mathcal{F}_\psi(M)$ is given by $M$, endowed with the action of $V^\vee$ defined by
$$
\forall \check{v} \in V^\vee, \forall m\in M, ~ \check{v} \cdot m= \psi(\langle \check{v}, - \rangle)\cdot m.
$$
To do so, it suffices to prove that the tensor product of the pullback of $M$, seen as an object of $D_\et(\mathcal{G},\Lambda)$, along the morphism 
$$
\pi : \mathcal{G}^\vee \times \mathcal{G} \to \mathcal{G}
$$
with $\alpha^\ast \mathcal{L}_\psi$ corresponds, via \Cref{sec:relat-with-bernst-d-et-for-quotients-of-locally-profinite-sets-by-locally-profinite-groups}, to the smooth $C_c^\infty(V,\Lambda)$-module $M$, together with the semilinar action of $V^\vee$ just described: indeed, the functor $\pi_!^\vee$, with
$$
\pi^\vee : \mathcal{G}^\vee \times \mathcal{G} \to \mathcal{G}^\vee
$$
is simply given by forgetting the $C_c^\infty(V,\Lambda)$-module structure. 

But the map
$$
\alpha: \mathcal{G}^\vee \times \mathcal{G} \to [\ast/\underline{E}]
$$
is induced by the natural pairing $\langle -,- \rangle: V^\vee \times V \to E$. Therefore, $\alpha^\ast \mathcal{L}_\psi$ corresponds to the smooth $C_c^\infty(V,\Lambda)$-module $C_c^\infty(V,\Lambda)$, together with the action of $V^\vee$ written above, and this gives the description we want.
\end{proof}

\begin{remark}
\label{end-of-proof-push-forward-of-l-psi}
On the one hand, by \Cref{unraveling-fourier-transform-for-finite-dimensional-vector-spaces} and its proof, we see that for $\mathcal{G}$ as above, $\mathcal{F}_\psi(\Lambda)$ corresponds to the $\Lambda$-module $C_c^\infty(V,\Lambda)$ endowed with the smooth action of $V^\vee$ defined by
$$
\forall \check{v} \in V^\vee, \forall f\in C_c^\infty(V,\Lambda), ~ \check{v} \cdot f= \psi(\langle \check{v}, - \rangle). f.
$$
On the other hand, the $!$-pushforward along the unit section
$$
e: \ast \to \mathcal{G}^\vee \cong [\ast/V^\vee]
$$
of the constant sheaf $\Lambda$ corresponds to the $\Lambda$-module $C_c^\infty(V^\vee,\Lambda)$ with its $V^\vee$-action by right translations. The ``naive" Fourier transform provides a functorial identification of one with the other. We have therefore finished the proof of \Cref{sec:examples-very-nice-local-systems-are-very-nice}.
\end{remark}

In fact, it is interesting to take into account automorphisms in the above. More precisely, fix $V$ a finite dimensional $E$-vector space. Let 
$$
S= [\ast/\underline{\GL(V)}]
$$
and consider the very nice stack in $E$-vector spaces
$$
\mathcal{G} =  [\underline{V}/\underline{\GL(V)}] 
$$
over $S$ (with $GL(V)$ acting on $V$ via its natural action), with its associated Fourier transform $\mathcal{F}_\psi$. The diagram defining $\mathcal{F}_\psi$ is
\[
  \xymatrix{
    & [\underline{V}/\underline{P(V)}] \ar[r]^\alpha\ar[ld]_{\pi^\vee}\ar[rd]^\pi& [\ast/\underline{E}] \\
    [\ast/\underline{P(V)}] & & [\underline{V}/\underline{\GL(V)}]
  }
\]
where
$$
P(V) = \GL(V) \ltimes V^\vee 
$$
is the \textit{mirabolic group} (of size $\dim(V)+1$), with its natural action on $V$, and $\alpha$ is the composition:
$$
[\underline{V}/\underline{P(V)}]=[\underline{V}/\underline{\GL(V)} \ltimes \underline{V^\vee}] \to [\ast/\underline{\GL(V)} \times \underline{E}] \to [\ast/\underline{E}]
$$
induced by the natural pairing
$$
V \times [\ast/V^\vee] \to [\ast/E].
$$

\begin{remark}
\label{relation-with-inductive-construction}
Take $V=V_i:=E^i$. The stack $\mathcal{G}$, resp. the above diagram, is nothing but the pullback of the stack in $E$-vector spaces $\mathcal{C}_i^\prime$ introduced in \Cref{sec-an-inductive-construction}, resp. of the diagram defining the Fourier transform for $\mathcal{C}_i^\prime$, along the open embedding
$$
\Bun_i^{\bf 1} \hookrightarrow \mathcal{C}_i.
$$
\end{remark}

As above, we can also make $\mathcal{F}_\psi$ and $\mathcal{F}_\psi^\vee$ explicit, using \Cref{sec:relat-with-bernst-d-et-for-quotients-of-locally-profinite-sets-by-locally-profinite-groups}. In particular, $\alpha^* \mathcal{L}_{\psi} \in D_\et(V/P(V),\Lambda)$ corresponds to the smooth $C_c^\infty(V,\Lambda)$-module $L_\psi=C_c^\infty(V,\Lambda)$ itself, endowed with the semilinear smooth action of $P(V)$ given by
$$
(g,\check{v}).f := (v \mapsto \psi(\check{v}(g^{-1}.v)) f(g^{-1}.v)).
$$
An \'etale sheaf
$$
A \in D_\et([\underline{V}/\underline{\GL(V)}],\Lambda)
$$
 is the same thing as a smooth $C_c^\infty(V,\Lambda)$-module $M$, with a semilinear smooth $\GL(V)$-action, and an \'etale sheaf
 $$
 B \in D_\et([\ast/\underline{P(V)}],\Lambda)
 $$
 is the same thing as a smooth $P(V)$-representation $N$ on a $\Lambda$-module. It is easy to see that in these terms, $\mathcal{F}_{\psi}(A) \in D_\et([\ast/\underline{P(V)}],\Lambda)$ corresponds to the $P(V)$-representation
$$
M \otimes_{C_c^\infty(V,\Lambda)} L_\psi,
$$
endowed with its diagonal $P(V)$-action (acting through its quotient $GL(V)$ on $M$). Conversely, $\mathcal{F}_{\psi}^\vee(B) \in D_\et([\underline{V}/\underline{GL(V)}],\Lambda)$ corresponds to 
$$
(N \otimes_{\Lambda} L_\psi)_{V^\vee}.
$$
Fix $i\geq 0$, and take $V=V_i:=E^i$. To remember the index $i$, the Fourier transform and inverse Fourier transform will be denoted by $\mathcal{F}_{i,\psi}$ and $\mathcal{F}_{i,\psi}^\vee$ respectively. Let $j_i$ be the open immersion
$$
j_i: [\ast/\underline{P(V_{i-1})}]\cong [(\underline{V_i \backslash \{0\}})/\underline{\GL(V_i)}] \to [\underline{V_i}/\underline{\GL(V_i)}]
$$
and let $\iota_i$ be its closed complement
$$
\iota_i: [\ast/\underline{\GL(V_i)}] \to [\underline{V_i}/\underline{\GL(V_i)}].
$$
Define functors
$$ 
\Phi_i^+:=\mathcal{F}_{i,\psi} \circ j_{i,!} : D_\et([\ast/\underline{P(V_{i-1})}],\Lambda) \to D_\et([\ast/\underline{P(V_i)}],\Lambda), 
$$
$$ 
\Phi_i^-:= j_i^* \circ \mathcal{F}_{i,\psi}^\vee : D_\et([\ast/\underline{P(V_i)}],\Lambda) \to D_\et([\ast/\underline{P(V_{i-1})}],\Lambda),
$$
$$
\Psi_i^+:= \mathcal{F}_{i,\psi} \circ \iota_{i,\ast} : D_\et([\ast/\underline{\GL(V_i)}],\Lambda) \to D_\et([\ast/\underline{P(V_i)}],\Lambda), 
$$
$$
\Psi_i^-:=\iota_i^* \circ \mathcal{F}_{i,\psi}^\vee : D_\et([\ast/\underline{P(V_i)}],\Lambda) \to D_\et([\ast/\underline{\GL(V_i)}],\Lambda).
$$
It is easy to see, using the above explicit formulas, that these functors coincide with the Bernstein-Zelevinsky functors from \cite{bernstein_zelevinsky_representations_of_the_group_glnf}. Moreover, the properties of Bernstein-Zelevinsky functors are easily recovered using this reformulation.
For example, the exact sequence of functors on $D_\et([\ast/\underline{P(V_i)}],\Lambda)$
$$
0 \to \Phi_i^+ \circ \Phi_i^- \to \mathrm{Id} \to \Psi_i^+ \circ \Psi_i^- \to 0, 
$$
is deduced from the short exact sequence of functors on $D_\et([\underline{V_i}/\underline{\GL(V_i)}],\Lambda)$
$$
0 \to j_{i,!} \circ j_{i}^* \to \mathrm{Id} \to \iota_{i,*} \circ \iota_{i}^* \to 0.
$$
Similarly, the relations
$$
\Phi_i^- \circ \Psi_i^+ = 0, ~~ \Psi_i^- \circ \Phi_i^+=0,
$$
follow from
$$ 
j_{i}^{*} \circ \iota_{i,*}= 0 ~~ , ~~ \iota_{i}^* \circ j_{i,!}=0,
$$
and
$$
\Phi_i^- \circ \Phi_i^+ = \mathrm{Id}, ~~ \Psi_i^- \circ \Psi_i^+=\mathrm{Id}. 
$$
since 
$$
j_{i}^* \circ j_{i,!}= \mathrm{Id} ~~ , ~~ \iota_{i}^* \circ \iota_{i,*}= \mathrm{Id}.
$$

\begin{remark}
Continuing \Cref{relation-with-inductive-construction}, we see that for any $A \in D_\et(\Coh_0,\Lambda)$ whose pullback to $\Bun_0^{\bf 1}=\{0\}$ is isomorphic to $\Lambda$, the pullback of $\mathcal{A}_{n,\psi}(A)$ (notations of \Cref{sec-an-inductive-construction}) to $\Bun_n^{\prime, \bf 1}\cong [\ast/P(V_{n-1})]$ corresponds to the smooth representation of the mirabolic of size $n$
$$
\Phi_{n-1}^+ \circ \dots \Phi_1^+ \circ \Psi_0^+(1),
$$
where $1$ stands for the trivial representation. This is Gelfand-Kazhdan's description of the Kirillov model of an irreducible supercuspidal representation of $\GL_n(E)$.
\end{remark}

\subsection{The case of the affine line in characteristic $0$}
\label{the-case-of-the-affine-line-in-characteristic-zero}
Let $S=\mathrm{Spa}(C,\mathcal{O}_C)$, with $C$ an algebraically closed perfectoid field of characteristic $p$. Fix an untilt $C^\sharp$ of $C$ over $E$, given by $t \in B_C^{\varphi=\pi}$ and corresponding to a rigid point $x \in X_C$. Let $\mathcal{G}$ be the very nice stack in $E$-vector spaces attached to the coherent sheaf $i_{x,\ast} C^\sharp$. Then
$$
\mathcal{G} \cong \mathbb{A}_{C^\sharp}^{1,\diamond}
$$
and also 
$$
\mathcal{G}^\vee \cong \mathbb{A}_{C^\sharp}^{1,\diamond}.
$$
Therefore, the Fourier transform
$$
\mathcal{F}_\psi := \mathcal{F}_{\psi,!,\mathcal{G} \to \mathcal{G}^\vee}^{\rm un}[1]
$$
defines an equivalence of categories
$$
\mathcal{F}_\psi : D_\et(\mathbb{A}_{C^\sharp}^{1,\diamond},\Lambda) \overset{\sim}\longrightarrow D_\et(\mathbb{A}_{C^\sharp}^{1,\diamond},\Lambda).
$$

\begin{proposition}
\label{relation-with-ramero}
The Fourier transform $\mathcal{F}_\psi$ coincides with the functor induced on left-completions by the Fourier transform introduced and studied by Ramero, \cite{ramero1998class}, \cite{ramero2007hasse}.
\end{proposition}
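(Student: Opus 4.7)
The plan is to realize both Fourier transforms as the same kernel transform on $\mathbb{A}^{1,\diamond}_{C^\sharp}\times_S \mathbb{A}^{1,\diamond}_{C^\sharp}$.

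First, I would make the pairing $\alpha\colon \mathcal{G}^\vee\times_S \mathcal{G}\to [S/\underline{E}]$ explicit. By \Cref{computations-of-the-desired-extensions} and \Cref{fully-faithfulness-R-tau-ast}, the dual $\mathcal{G}^\vee$ identifies with $\mathcal{O}^\sharp(-1)$, which is non-canonically isomorphic to $\mathbb{A}^{1,\diamond}_{C^\sharp}$ over $S$ after fixing a trivialization of the Tate twist. Unpacking the biduality map then shows that $\alpha$ factors as
\[
\mathbb{A}^{1,\diamond}_{C^\sharp}\times_S \mathbb{A}^{1,\diamond}_{C^\sharp} \xrightarrow{m} \mathbb{A}^{1,\diamond}_{C^\sharp} \xrightarrow{c} [S/\underline{E}],
\]
where $m$ is the multiplication map and $c$ is the morphism classifying the extension
\[
0\to \underline{E}\to \mathrm{BC}(\mathcal{O}_{X_S}(1))\to \mathcal{O}^\sharp\to 0
\]
obtained by applying $R\tau_\ast$ to the short exact sequence $0\to \mathcal{O}_{X_S}\to \mathcal{O}_{X_S}(1)\to i_{x,\ast}\mathcal{O}^\sharp\to 0$ on $X_S$. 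Consequently $\mathcal{F}_\psi$ equals the kernel transform $A\mapsto R\pi^\vee_!\bigl(\pi^\ast A\otimes m^\ast c^\ast \mathcal{L}_\psi\bigr)[1]$ on $D_\et(\mathbb{A}^{1,\diamond}_{C^\sharp},\Lambda)$.

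Ramero's Fourier transform has exactly the same shape, with the cohomological shift $[1]$ and with $c^\ast\mathcal{L}_\psi$ replaced by his rank-one $\Lambda$-local system $\mathcal{L}^{\mathrm{Ram}}_\psi$ on $\mathbb{A}^{1,\mathrm{an}}_{C^\sharp}$ built from $\psi$. It therefore suffices to identify $c^\ast\mathcal{L}_\psi$ with (the diamond-theoretic avatar of) $\mathcal{L}^{\mathrm{Ram}}_\psi$, and to invoke the standard compatibility of the six functors with the passage from rigid analytic spaces to their associated diamonds --- which, at the level of $D^b_c$, induces the equivalence relating Ramero's functor to $\mathcal{F}_\psi$ up to left completion. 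Ramero constructs $\mathcal{L}^{\mathrm{Ram}}_\psi$ by pushing $\psi$ along a Lang-isogeny type pro-\'etale $\underline{E}$-torsor over $\mathbb{A}^{1,\mathrm{an}}_{C^\sharp}$, and this torsor coincides, up to a scalar pinned down by our normalization of the Tate twist, with the one coming from the extension $\mathrm{BC}(\mathcal{O}_{X_S}(1))\to \mathcal{O}^\sharp$.

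The main technical obstacle lies in this last comparison of torsors. By \Cref{computations-of-the-desired-extensions} the relevant Ext-group $\mathcal{E}xt^1_{S_v,E}(\mathcal{O}^\sharp,\underline{E})\cong \mathcal{O}^\sharp(-1)$ is free of rank one, so any two non-trivial classes differ by a scalar; this scalar can be determined by restricting to a single $E$-rational point of $\mathbb{A}^{1,\diamond}_{C^\sharp}$, at which both local systems degenerate to the character $\psi$ of $E$. Once this matching is fixed, the two kernel transforms literally agree, proving the proposition.
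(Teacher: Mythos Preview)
Your proposal is correct and follows the same overall strategy as the paper: identify the two Fourier transforms by matching their kernels, which in both cases are pulled back from a rank-one local system on $\mathbb{A}^{1,\diamond}_{C^\sharp}$ arising from an $\underline{E}$-torsor over the affine line.

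The paper's proof is more direct than yours at the key step. Rather than arguing abstractly that $\mathcal{E}xt^1_{S_v,E}(\mathcal{O}^\sharp,\underline{E})$ is free of rank one and then pinning down the scalar by restriction to a point, the paper simply names Ramero's torsor concretely as the Lubin--Tate logarithm sequence $0\to\underline{E}\to\widetilde{\mathrm{LT}}_{C^\sharp}\to\mathbb{A}^1_{C^\sharp}\to 0$ and cites the known identification (from \cite[Proposition II.2.2, II.2.3]{fargues2021geometrization}) of this sequence with $0\to\underline{E}\to\BC(\mathcal{O}(1))\to\mathbb{A}^{1,\diamond}_{C^\sharp}\to 0$. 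This bypasses any normalization-matching argument entirely. Your route works, but the ``restrict to an $E$-rational point'' step would need a little more care to make precise (one is really comparing two classes in $C^\sharp(-1)$, and the trivialization of the Tate twist enters), whereas the paper's citation handles the whole identification at once.
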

Recall, cf. \cite[Proposition 14.15]{scholze_etale_cohomology_of_diamonds}), that if $X$ is any locally spatial diamond, the category $D_\et(X,\Lambda)$ is the left-completion of the category $D(X_\et,\Lambda)$. Moreover, if $X=Y^\diamond$, for an analytic adic space $Y$ over $\Z_p$, $D(X_\et, \Lambda) \cong D(Y_\et, \Lambda)$ (\cite[Lemma 15.6]{scholze_etale_cohomology_of_diamonds}). This explains the statement of the proposition.
\begin{proof}
Let $\mathrm{LT}$ be the Lubin-Tate formal group law for $E$ (the unique up to isomorphism $1$-dimensional formal group over $\mathcal{O}_{\breve{E}}$ with action of $\mathcal{O}_E$, such that the two actions of $\mathcal{O}_E$ induced on the Lie algebra coincide). The kernel of Ramero's Fourier transform is defined via the logarithm exact sequence
$$
0 \to \underline{E} \to \widetilde{\mathrm{LT}}_{C^\sharp} \overset{\rm log} \longrightarrow \mathbb{A}_{C^\sharp}^1 \to 0.
$$
It agrees with the kernel appearing in the definition of $\mathcal{F}_\psi$, which is defined via the exact sequence
$$
0 \to \underline{E} \overset{\times t} \longrightarrow \BC(\mathcal{O}(1)) \to \mathbb{A}_{C^\sharp}^{1,\diamond} \to 0,
$$
since the above two exact sequences are known to identify, when seen as sequences of sheaves on $\mathrm{Perf}_C$ (cf. e.g. \cite[Proposition II.2.2, Proposition II.2.3]{fargues2021geometrization}).
\end{proof}

\begin{remark}
\label{ramero-preservation-of-perversity}
In this specific situation, Ramero, \cite[Theorem 2.3.30]{ramero2007hasse}, was able to show that the Fourier transform preserves ``perverse sheaves with bounded ramification", without really defining what a perverse sheaf is in this setting though. Recently, Bhatt-Hansen, \cite{bhatt_hansen_the_six_functors_for_zariski_constructible_sheaves_in_rigid_geometry}, defined a category of $\Lambda$-perverse Zariski-constructible sheaves on any rigid space over a non-archimedean field of characteristic $0$, but for torsion coefficients, their theory is limited to the case where $\Lambda$ is \textit{finite}, and therefore does not seem to be directly applicable to the study of properties of the Fourier transform.
\end{remark}

\subsection{The case $\mathcal{G}=\BC(\mathcal{O}(1))$}
\label{sec:case-from-bcmathc}
In this subsection, we take
$$
\mathcal{G}=\BC(\mathcal{O}(1)),
$$
so that $\mathcal{G}^\vee=\BC(\mathcal{O}(-1))$. We set
$$
\mathcal{F}_\psi := \mathcal{F}_{\psi,!,\mathcal{G} \to \mathcal{G}^\vee}[1].
$$
We will also set
$$
j: \mathcal{G}^\circ :=\mathcal{G}\backslash \{0\} \hookrightarrow \mathcal{G}, ~ i: \{0\} \hookrightarrow \mathcal{G}
$$
and
$$
j^\vee: \mathcal{G}^{\vee,\circ} :=\mathcal{G}^\vee\backslash \{0\} \hookrightarrow \mathcal{G}^\vee, ~ i^\vee: \{0\} \hookrightarrow \mathcal{G}^\vee.
$$

The goal of this subsection is the proof of \Cref{computation-of-the-rank-fourier-transform-explicit-formula} below, using Huber's adic version of the Grothendieck-Ogg-Shafarevich formula. We start by establishing a finiteness result, \Cref{sec:case-from-bcmathc-2-ft-preserves-finiteness-of-stalks}, which uses crucially the fact that the small v-sheaves (actually diamonds) $\mathcal{G}^\circ, \mathcal{G}^{\vee,\circ}$ are qcqs (but recall that the morphisms $\mathcal{G}^\circ\to \ast, \mathcal{G}^{\vee,\circ}\to \ast$ are not quasicompact, because the v-sheaf $\ast$ is not quasi-separated!).

\begin{proposition}
  \label{sec:case-from-bcmathc-4-compact-for-v}
  An object in $D_{\et}(\mathcal{G},\Lambda)$ is compact if and only if each stalk over a geometric point of $\mathcal{G}$ is a perfect complex.
\end{proposition}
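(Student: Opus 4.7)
The plan is to prove both implications separately, using the key geometric fact from the introduction that over $\Spa(\mathbb{F}_q)$, $\mathcal{G} = \BC(\mathcal{O}(1))$ is, up to perfection, the adic spectrum of the ring of integers of a positive-characteristic local field $F$ with residue field $\mathbb{F}_q$. In particular $\mathcal{G}$ is a qcqs locally spatial diamond with only two topological points: the origin $0$ (closed) and a generic point corresponding to $\mathcal{G}^\circ \cong \Spa(F)^\diamond$.

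For the forward direction, I would show that for any geometric point $x\colon \Spa(C,\mathcal{O}_C) \to \mathcal{G}$, the pullback functor $x^*$ preserves compact objects. Since both source and target are qcqs locally spatial diamonds, $x$ itself is qcqs, so $Rx_*$ commutes with filtered colimits, and by the standard adjunction criterion $x^*$ preserves compacts. As compact objects of $D_\et(\Spa(C,\mathcal{O}_C),\Lambda) \simeq D(\Lambda)$ are precisely perfect complexes, the stalk $A_x = x^*A$ is perfect whenever $A$ is compact.

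For the reverse direction, I would apply the excision triangle
\[
  j_! j^* A \longrightarrow A \longrightarrow i_* i^* A
\]
associated to the open immersion $j\colon \mathcal{G}^\circ \hookrightarrow \mathcal{G}$ and the closed immersion $i\colon \{0\} \hookrightarrow \mathcal{G}$, and argue that each of the outer terms is compact. Since $A_0 = i^*A$ is perfect by hypothesis, $i_* A_0$ will be compact because the right adjoint $i^!$, computed via the recollement formula $i^!(-) = i^*\,\mathrm{fib}((-) \to Rj_* j^*(-))$, preserves filtered colimits (using qcqs-ness of $j$, which makes $Rj_*$ do so). The functor $j_!$ preserves compact objects since its right adjoint $j^*$ preserves all colimits, so it suffices to establish compactness of $j^*A$ in $D_\et(\mathcal{G}^\circ,\Lambda)$.

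The crux of the argument, and the main obstacle, is this last step. Using that $\mathcal{G}^\circ \cong \Spa(F)^\diamond$ over $\Spa(\mathbb{F}_q)$, one can identify $D_\et(\mathcal{G}^\circ,\Lambda)$ with the derived category of continuous $G_F$-representations on $\Lambda$-complexes, where $G_F = \Gal(F^{\mathrm{sep}}/F)$. Under this identification, compactness is equivalent to the underlying $\Lambda$-complex being perfect, which in turn is equivalent to perfectness of any geometric stalk. Making this identification and compactness criterion precise in the v-topology setting -- likely by invoking results from Scholze's \emph{\'Etale Cohomology of Diamonds} on the structure of $D_\et$ over points of the form $\Spa(F)^\diamond$ -- is where the technical content lies.
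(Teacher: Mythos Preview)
Your approach is different from the paper's and is essentially correct in outline, but there is one genuine inaccuracy and one gap that you should be aware of.

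\textbf{The inaccuracy.} You assert that $\mathcal{G}$ is a qcqs locally spatial diamond. It is not. Under the identification $\mathcal{G}\cong\Spd(\Fpbar[[t^{1/p^\infty}]])$ the underlying space has exactly two points, and the only open containing the closed point $0$ is all of $\mathcal{G}$; hence ``locally spatial'' would force ``spatial'', but the paper explicitly remarks that $\mathcal{G}$ is \emph{not} a spatial diamond (this is why \cite[Proposition~20.17]{scholze_etale_cohomology_of_diamonds} does not apply directly). Consequently your forward-direction argument, which invokes general facts about qcqs maps of locally spatial diamonds, does not go through as written. It can be repaired by factoring any geometric point through either $i$ or $j$ and arguing on each stratum separately (using that $i_\ast$ and $Rj_\ast$ commute with direct sums).

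\textbf{The gap.} In the reverse direction you need $Rj_\ast$ to commute with direct sums, and you justify this by ``qcqs-ness of $j$''. Quasi-compactness alone is not enough: one also needs that $\mathcal{G}^\circ$ has finite $\ell$-cohomological dimension. This is true here (the absolute Galois group of $\Fpbar((t))$ has $\ell$-cohomological dimension $1$), and once noted, your recollement argument for $i^!$ and your invocation of \cite[Proposition~20.17]{scholze_etale_cohomology_of_diamonds} for the spatial diamond $\mathcal{G}^\circ$ are fine.

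\textbf{Comparison with the paper.} The paper takes a shortcut that bypasses all of this: it constructs an equivalence $D_\et(\mathcal{G},\Lambda)\cong D(\Spec(\Fpbar[[t]])_\et,\Lambda)$ compatible with the recollement functors, and then cites \cite[Proposition~6.4.8]{bhatt_scholze_the_pro_etale_topology_for_schemes} on the scheme side. Your excision-based strategy is in fact exactly what the paper uses for the harder companion result on $\mathcal{G}^\vee$ (Proposition~\ref{sec:case-from-bcmathc-4-compact-for-w}), where no such scheme-theoretic transfer is available; so your method is perfectly viable here too, once the two points above are addressed.
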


Note that this proposition is not covered by \cite[Proposition 20.17]{scholze_etale_cohomology_of_diamonds} as $\mathcal{G}$ is not a spatial diamond.

\begin{proof}
We can choose an isomorphism
  \[
    \mathcal{G}\cong \mathrm{Spd}(\Fpbar[[t^{1/p^\infty}]]),
  \]
  cf.\ \cite[Proposition II.2.2]{fargues2021geometrization}.
  Via pullback along the morphism of sites
  \[
    \Spd(\Fpbar[[t^{1/p^\infty}]])_{\et}\to \Spec(\Fpbar[[t]])_{\et},
  \]
  one derives an equivalence
  \[
    D_\et(\mathcal{G},\Lambda)\cong D(\Spec(\Fpbar[[t]])_\et,\Lambda),
  \]
  compatible with $i_\ast, i^!, j_\ast, j_!$. Indeed, compatibility of the $i_\ast, i^!, j_\ast, j_!$ follows from compatility with $i^\ast, j^\ast$ and adjunctions. Arguing on the strata $\{0\},\mathcal{G}^\circ$ therefore yields the equivalence. Then one can apply \cite[Proposition 6.4.8]{bhatt_scholze_the_pro_etale_topology_for_schemes} for the right-hand side.
\end{proof}

Let us prove the analog for $\mathcal{G}^\vee$. Here the situation is more complicated as 
\[
  \mathcal{G}^{\vee,\circ}\cong \Spd(\Fpbar((s^{1/p^\infty})))/H
\]
with
\[
  H:=\ker(\mathcal{O}_D^\times\xrightarrow{\mathrm{Nrd}} \Z_p^\times),
\]
is more complicated than $\mathcal{G}^\circ\cong \Spd(\Fpbar((t^{1/p^\infty})))$.

\begin{proposition}
  \label{sec:case-from-bcmathc-4-compact-for-w}
  An object in $D_{\et}(\mathcal{G}^\vee,\Lambda)$ is compact if and only if each stalk over a geometric point of $\mathcal{G}^\vee$ is a perfect complex.
\end{proposition}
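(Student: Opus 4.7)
I would follow the template of Proposition \ref{sec:case-from-bcmathc-4-compact-for-v}, now using the stratification of $\mathcal{G}^\vee$ into the closed point $\{0\}$ and the open complement $\mathcal{G}^{\vee,\circ}$. For any $A \in D_\et(\mathcal{G}^\vee,\Lambda)$ the recollement triangle $j^\vee_! j^{\vee,*} A \to A \to i^\vee_* i^{\vee,*} A$ splits both compactness and the condition ``all geometric stalks are perfect'' into contributions from the two strata, since both conditions are stable under extensions and can be tested stratum-wise. It therefore suffices to prove the statement on each stratum separately, provided also that $i^\vee_*$ and $j^\vee_!$ preserve and detect compactness. For $j^\vee_!$ this is automatic once one observes that $\mathcal{G}^{\vee,\circ}$ is qcqs (it is the quotient of a spatial diamond by the compact group $H$), so $j^\vee$ is a quasicompact open immersion; for $i^\vee_*$ one needs $i^{\vee,!}$ to commute with direct sums, which follows from the same quasicompactness of $j^\vee$.

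The closed stratum is trivial: $D_\et(\{0\},\Lambda) \cong D(\Lambda)$, where compactness is perfectness. The open stratum is the substantive part. Here I would exploit the explicit description $\mathcal{G}^{\vee,\circ} \cong \Spd(\Fpbar((s^{1/p^\infty})))/H$ and the cover $\pi : \Spd(\Fpbar((s^{1/p^\infty}))) \to \mathcal{G}^{\vee,\circ}$. For the source, exactly as in the proof of Proposition \ref{sec:case-from-bcmathc-4-compact-for-v}, pullback along the morphism of sites $\Spd(\Fpbar((s^{1/p^\infty})))_\et \to \Spec(\Fpbar((s)))_\et$ yields an equivalence $D_\et(\Spd(\Fpbar((s^{1/p^\infty}))),\Lambda) \cong D(\Spec(\Fpbar((s)))_\et,\Lambda)$, and \cite[Proposition 6.4.8]{bhatt_scholze_the_pro_etale_topology_for_schemes} characterizes the compact objects as those with perfect geometric stalks. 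V-descent along $\pi$ then presents $D_\et(\mathcal{G}^{\vee,\circ},\Lambda)$ as the $\infty$-category of smooth $H$-equivariant objects in $D_\et(\Spd(\Fpbar((s^{1/p^\infty}))),\Lambda)$, and since $\pi$ is an $H$-torsor, geometric stalks on the quotient agree with those on the cover. It therefore remains to see that $\pi^*$ preserves and detects compactness.

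The main obstacle is precisely this last point, i.e., the bookkeeping of the $H$-equivariance. Concretely, $\pi^*$ preserves compactness iff its right adjoint $\pi_*$ commutes with filtered colimits, i.e., iff continuous $H$-cohomology of smooth $\Lambda$-modules commutes with filtered colimits; this reduces to a finiteness statement for $H^*_{\rm cts}(H,-)$ with $\Lambda$-coefficients. Since $H$ is a compact open subgroup of $\mathcal{O}_D^\times$, it is virtually pro-$p$, and as $\ell \neq p$ its $\ell$-cohomological dimension is finite, the cohomology being that of the finite quotient modulo the pro-$p$ part; this gives the needed commutation. Detection of compactness by $\pi^*$ then follows from the fact that $\pi_* \pi^* B$ is a finite-$\Lambda[H]$-linear-combination (in a precise sense, via $H$-averaging) of copies of $B$, so compactness of $\pi^* B$ upstairs forces compactness of $B$ downstairs. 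Putting these pieces together yields the proposition.
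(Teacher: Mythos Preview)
Your recollement outline is correct, and your treatment of the open stratum is essentially fine (though more circuitous than necessary: the paper simply observes that $\mathcal{G}^{\vee,\circ}$ itself is a spatial diamond of finite $\ell$-cohomological dimension, by writing it as $\Spd(\widehat{\overline{\Fpbar((s^{1/p^\infty}))}})/G$ and invoking \cite[Proposition 21.16]{scholze_etale_cohomology_of_diamonds}, and then applies \cite[Proposition 20.17]{scholze_etale_cohomology_of_diamonds} directly, avoiding any bookkeeping with the $H$-torsor).

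The genuine gap is at the gluing step. You assert that $i^{\vee,!}$ commutes with direct sums (equivalently, that $j^\vee_\ast$ does) ``from the same quasicompactness of $j^\vee$''. This would be automatic if $\mathcal{G}^\vee$ were a spatial diamond, but it is not: the paper explicitly remarks (before Proposition~\ref{sec:case-from-bcmathc-4-compact-for-v}) that \cite[Proposition 20.17]{scholze_etale_cohomology_of_diamonds} does not apply to $\mathcal{G}$, and the same holds for $\mathcal{G}^\vee$. For a non-spatial v-stack there is no general reason why a quasi-compact open immersion should have $Rj_\ast$ commuting with direct sums, because one has no control over a cofinal system of \'etale neighborhoods of the closed point. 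The paper resolves this by proving the identification $i^{\vee\ast}\circ j^\vee_\ast \cong R\Gamma(\mathcal{G}^{\vee,\circ},-)$, which in turn is deduced from the vanishing $R\Gamma(\mathcal{G}^\vee, j^\vee_!(-))=0$; the latter is a genuinely non-formal input, coming from \cite[Theorem IV.5.3]{fargues2021geometrization} (cf.\ the proof of \cite[Proposition~4.2]{fargues2021geometrization}). The same vanishing is also used a second time, to show $R\Gamma(\mathcal{G}^\vee,-)\cong i^{\vee\ast}(-)$ and hence that $i^\vee_\ast\Lambda$ is compact. Without this input, both directions of your argument stall at the same place: you cannot conclude that $j^{\vee\ast}$ preserves compacts (forward direction) nor that $i^\vee_\ast$ preserves compacts (backward direction).
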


We note that $|\mathcal{G}^\vee|$ has exactly two points (similarly to $|\mathcal{G}|$).

\begin{proof}
  We first note that
  \[
    \mathcal{G}^{\vee,\circ}\cong \Spd(\Fpbar((s^{1/p^\infty})))/H
  \]
  has finite $\ell$-cohomological dimension. Indeed, we can write
  \[
    \mathcal{G}^{\vee,\circ}\cong \Spd(\widehat{\overline{\Fpbar((s^{1/p^\infty}))}})/G
  \]
  where $G$ is a profinite group, which is an extension of $H$ by the Galois group of $\Fpbar((s^{1/p^\infty}))$.
  Then we can apply (the proof of) \cite[Proposition 21.16]{scholze_etale_cohomology_of_diamonds}, and conclude\footnote{Note that $H$ contains elements of finite order, and thus the extension defining $G$ must be non-split.}
  \[
    \mathrm{cd}_\ell G\leq 1=\mathrm{tr.c}(\widehat{\overline{\Fpbar((s^{1/p^\infty}))}}/\Fpbar).
    \]
    Thus by \cite[Proposition 20.17]{scholze_etale_cohomology_of_diamonds} we can conclude that an object $B\in D_\et(\mathcal{G}^{\vee,\circ},\Lambda)$ is compact if and only if its stalk is perfect. In particular, the cohomology of $\mathcal{G}^{\vee,\circ}$, i.e.,
    \[
      R\Gamma(\mathcal{G}^{\vee,\circ},-),
    \]
    commutes with direct sums.
    Next we claim that the functor $j_{\ast}^\vee$ (recall that we follow the derived convention, i.e., we are considering $R j_\ast^\vee$ here) commutes with direct sums.
    For this, it suffices to see that the functor
    \[
      i^{\vee\ast}\circ j_{\ast}^\vee \cong D_\et(\mathcal{G}^{\vee,\circ}, \Lambda)\to D(\Lambda)
    \]
    commutes with direct sums. We claim that
    \[
      i^{\vee \ast}\circ j_{\ast}^\vee \cong R\Gamma(\mathcal{G}^{\vee,\circ},-),
    \]
    which implies the claim. For this it suffices to see that the functor
    \[
      R\Gamma(\mathcal{G}^\vee,j_{!}^\vee(-))
    \]
    is identically $0$. This is implied by \cite[Theorem IV.5.3]{fargues2021geometrization}, cf.\ the proof of \cite[Proposition 4.2]{fargues2021geometrization}.

    Having proved that $j_{\ast}^\vee$ commutes with direct sums, we can conclude that $j^{\vee \ast}$ preserves compact objects. The same is true for $i^{\vee\ast}$. Thus, a compact object $B\in D_\et(\mathcal{G}^\vee,\Lambda)$ has perfect stalks.
    For the converse, we already saw that the compact objects in $D_\et(\mathcal{G}^{\vee,\circ},\Lambda)$ are precisely the ones with perfect stalks.
    It is formal that $j_{!}^\vee$ preserves compact objects. Let $K\in D(\Lambda)$ be a perfect complex. We denote the pullback of $K$ to $\mathcal{G}^\vee,\ldots$ again by $K$. Then we have an exact triangle
    \[
      j_{!}^\vee K\to K\to i_{\ast}^\vee K.
    \]
    Thus, $i_{\ast}^\vee K$ is compact if and only if $K$ is compact. By perfectness, we can reduce to the case that $K=\Lambda$. Then we have to see that
    \[
      R\Gamma(\mathcal{G}^\vee,-)
    \]
    commutes with direct sums. This is implied by the statement that
    \[
      R\Gamma(\mathcal{G}^\vee,-)\cong i^{\vee\ast}(-),
    \]
    which in turn is implied by the statement
    \[
      R\Gamma(\mathcal{G}^\vee,j_{!}^\vee(-))=0,
    \]
    which we already used above.
    Now, if $B\in D_{\et}(\mathcal{G}^\vee,\Lambda)$ has perfect stalks, then we have an exact triangle
    \[
      j_{!}^\vee j^{\vee \ast} B\to B\to i_{\ast}^\vee i^{\vee \ast}B
    \]
    and we proved that the outer terms are compact. This finishes the proof.
  \end{proof}

\begin{corollary}
   \label{sec:case-from-bcmathc-2-ft-preserves-finiteness-of-stalks}
   The functors $\mathcal{F}_\psi$, $\mathcal{F}_\psi^\vee$ preserve objects with perfect stalks.
\end{corollary}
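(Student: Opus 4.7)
The plan is to combine the two preceding structural propositions, \Cref{sec:case-from-bcmathc-4-compact-for-v} and \Cref{sec:case-from-bcmathc-4-compact-for-w}, with the general fact that the Fourier transform for very nice stacks in $E$-vector spaces preserves compact objects, namely \Cref{fourier-transform-preserves-compact-objects}. Since $\mathcal{F}=\mathcal{O}_{X_S}(1)$ has only positive slopes, \Cref{description-push-forward-l-psi} says that $\mathcal{G}=\mathrm{BC}(\mathcal{O}(1))$ and its dual $\mathcal{G}^\vee=\mathrm{BC}(\mathcal{O}(-1))$ are very nice, so \Cref{fourier-transform-preserves-compact-objects} applies and both $\mathcal{F}_\psi$ and $\mathcal{F}_\psi^\vee$ restrict to equivalences between the subcategories $D_\et^\omega(\mathcal{G},\Lambda)$ and $D_\et^\omega(\mathcal{G}^\vee,\Lambda)$ of compact objects (up to a harmless shift $[1]$ in the definition of $\mathcal{F}_\psi$, which of course preserves compactness).

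It therefore remains only to translate ``compact'' into ``having perfect stalks'' on both sides. This is precisely the content of \Cref{sec:case-from-bcmathc-4-compact-for-v} for $\mathcal{G}$ and of \Cref{sec:case-from-bcmathc-4-compact-for-w} for $\mathcal{G}^\vee$: an object of $D_\et(\mathcal{G},\Lambda)$, respectively $D_\et(\mathcal{G}^\vee,\Lambda)$, is compact if and only if its stalk at every geometric point is a perfect complex of $\Lambda$-modules. Chaining these equivalences, if $A\in D_\et(\mathcal{G},\Lambda)$ has perfect stalks, then $A$ is compact, hence $\mathcal{F}_\psi(A)\in D_\et(\mathcal{G}^\vee,\Lambda)$ is compact, hence has perfect stalks; and symmetrically for $\mathcal{F}_\psi^\vee$.

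There is essentially no obstacle here: all the nontrivial work has already been done in \Cref{sec:case-from-bcmathc-4-compact-for-v} and \Cref{sec:case-from-bcmathc-4-compact-for-w} (the delicate point being the finiteness of $\ell$-cohomological dimension of $\mathcal{G}^{\vee,\circ}$) and in \Cref{fourier-transform-preserves-compact-objects} (which is formal from the fact that $\mathcal{F}_\psi$ is an equivalence and hence preserves both adjoints, in particular colimits). The proof of the corollary is a one-line concatenation of these three inputs.
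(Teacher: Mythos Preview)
Your proposal is correct and matches the paper's own proof exactly: the paper simply cites \Cref{fourier-transform-preserves-compact-objects}, \Cref{sec:case-from-bcmathc-4-compact-for-v}, and \Cref{sec:case-from-bcmathc-4-compact-for-w}, which is precisely the three-input concatenation you describe. Your additional remark that $\mathcal{G}$ is very nice by \Cref{description-push-forward-l-psi} is a helpful clarification but is already implicit in the context.
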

\begin{proof}
  This follows from \Cref{fourier-transform-preserves-compact-objects}, \Cref{sec:case-from-bcmathc-4-compact-for-v} and \Cref{sec:case-from-bcmathc-4-compact-for-w}.\end{proof}

\begin{remark}
The conclusion is non-obvious as the stalks of $\mathcal{F}_\psi$ over geometric points $\Spd(C)\to \mathcal{G}^\vee$ are computed as the cohomology of the space
\[
  \mathcal{G}_C \cong \mathbb{D}_C^\diamond,
\]
which is the non-quasi-compact open unit disc over $C$!
\end{remark}

Our next goal is to say something about the rank of these stalks. To do so, we need to give a short reminder on the theory of Swan conductors on adic curves, as developed in \cite{huber2001swan}. From now on in this subsection, $\Lambda$ will be assumed to be a local ring which is a filtered union of finite rings of order prime to $p$\footnote{This assumption comes from our use of results from \cite{ramero2005local} and ensures in particular that $\Lambda$-local systems on a qcqs adic space are trivialized by a finite \'etale cover.}.

Let $(K,|.|)$ be a valued field enjoying the following properties:
\begin{enumerate}
\item $|.|:K \twoheadrightarrow \Gamma \cup \{0\}$ is henselian and defectless in any finite separable extension $L$ of $K$ (meaning that $[L:K]$ is the product of the index of the value group $\Gamma$ of $|.|$ in the value group of the unique extension of $|.|$ to $L$ by the degree of the residue field extension).
\item The set $\{\gamma \in \Gamma,\gamma <1\}$ has a greatest element $\gamma_K$, such that $\Gamma \cong \gamma_K^\Z \times \Gamma_{\rm div}$, where $\Gamma_{\rm div}$ is the subgroup of divisible elements in $\Gamma$.
\end{enumerate}

If $\gamma \in \Gamma$, it can by assumption be written uniquely as
$$
\gamma=\gamma' . \gamma_K^n,
$$
with $n\in \Z$ and $\gamma' \in \Gamma_{\rm div}$. We will write
$$
 \gamma^\sharp :=n, ~  \gamma^\flat:=\gamma^\prime. 
$$
We denote the extension of the multiplicative function $(-)^\sharp\colon \Gamma\to \Z$ to $\Gamma\otimes_{\Z}\Q$ by the same letter.
Let $V$ be a $\Lambda$-linear representation of $\mathrm{Gal}_K$, factoring through $\mathrm{Gal}(L/K)$ for a certain finite Galois extension $L$ of $K$.
Let $P_K$ be the wild inertia subgroup of $\mathrm{Gal}_K$, and let $\mathrm{Gal}_K^{\gamma}, \mathrm{Gal}_K^{\gamma -}\subseteq \mathrm{Gal}_K, \gamma\in \Gamma\otimes_\Z \Q,$ be the ramification groups defined in \cite[Chapter 2]{huber2001swan}. Define $V(1)=V^{P_K}$ and for any $\gamma \in \Gamma \otimes \Q$, $\gamma< 1$, set
$$
V(\gamma)= (\mathrm{span} \{v - \tau.v, v \in V, \tau \in \mathrm{Gal}_K^\gamma\})^{\mathrm{Gal}_K^{\gamma -}}.
$$

We call $\gamma \in \Gamma \otimes \Q$, $\gamma \leq 1$, a \textit{slope} of $V$ if $V(\gamma)\neq 0$. We set
$$
\mathrm{sw}(V)= \sum_{\gamma \in \Gamma \otimes \Q, \gamma \leq 1}\gamma^{\sharp} . \ell_{\Lambda} V(\gamma),
$$
where $\ell_\Lambda M$ denotes the length of a finitely generated $\Lambda$-module $M$. This quantity is called the \textit{Swan conductor} of $V$.
\\

Any complete, discretely valued field $K$ is an example (with $\Gamma_{\mathrm{div}}=\{1\})$ of a valued field satisfying the assumptions (1)-(2) stated above, but for example $\mathbb{C}_p$ is not. For a discretely valued field, the slopes of a $\Lambda$-representation $V$ of $\mathrm{Gal}_K$ in the sense given above are the elements of $\Gamma \otimes \Q$ of the form $q^{-a}$, where $q=|\varpi|_K^{-1}$, where $\varpi$ is a uniformizer of $K$, and $a$ is a slope of $V$ in the usual sense given to this term (see e.g \cite[\S 2.1]{laumon_transformation_de_fourier}). In particular, the Swan conductor $\mathrm{sw}(V)$ just defined agrees with the classical definition of the Swan conductor of $V$.

Another important class of examples of valued fields satisfying the assumptions (1)-(2) is provided by the henselizations of the residue fields at rank $2$ points of an analytic adic curve over a complete algebraically closed non-archimedean field $C$ of residue characteristic $p$, cf. \cite[\S 5]{huber2001swan}. Following Huber, we will adopt the following notations in the sequel:
\begin{itemize}
\item If $a$ is a classical point of $\mathbb{A}_C^1$ and $r\in \Gamma_C$, there is a unique point $p_{a,r}^-$ in the closed subset $\{x, |T-a|_x <r \}$ of $\mathbb{A}_C^1$ which is not in its interior. It is a rank $2$ point. The henselization of the residue field at $p_{a,r}^-$ is a valued field $K$ as above. We will write $\Gamma_K=\Gamma_{p_{0,r}^-}$, $\gamma_K=\gamma_{p_{0,r}^-}$ and we will denote by
$$
\alpha_{a,r}(-)
$$
the Swan conductor function on representations of its Galois group.
\item If $a$ is a classical point of $\mathbb{A}_C^1$ and $r\in \Gamma_C$, there is a unique point $p_{a,r}^+$ in the closure of the open subset $\{x, |T-a|_x \leq r \}$ of $\mathbb{A}_C^1$ which is not in this open subspace. It is a rank $2$ point. The henselization of the residue field at $p_{a,r}^+$ is valued field $K$ as above. We will write $\Gamma_K=\Gamma_{p_{0,r}^+}$, $\gamma_K=\gamma_{p_{0,r}^+}$ and we will denote by
$$
\beta_{a,r}(-)
$$
the Swan conductor function on representations of its Galois group.
\end{itemize}
  
We now review the little bit of the theory of \cite{ramero2005local}, which gives some useful information about the variation with $r$ of the functions $\alpha_{a,r}, \beta_{a,r}$ just introduced. As above fix an algebraically closed, non-archimedean field $C$ whose residue field has characteristic $p$. For $a\in \R_{\geq 0}\cap |C|$ let us fix an element
\[
  \pi_a\in K
\]
such that $|\pi_a|=a$.
For $a,b\in \R_{\geq 0}\cap |C|$ with $a\leq b$ let
\[
  \mathbb{D}_{[a,b]}:=\{ x \ |\ |\pi_a|\leq |x|\leq |\pi_b|\}\subseteq \mathbb{A}^{1,\mathrm{ad}}_{C}
\]
be the associated (topologically open) annulus. Clearly, if $c\in \R_{>0}\cap |C^\times|$, then
\[
  \mathbb{D}_{[a,b]}\cong \mathbb{D}_{[ca,cb]}
\]
via multiplication by $\pi_c$.
If $a>0$, then
\[
  \iota\colon \mathbb{D}_{[a,b]}\cong \mathbb{D}_{[1/b,1/a]}
\]
via inversion.

Let $\mathcal{F}$ be a $\Lambda$-local system on $\mathbb{D}_{[a,b]}$. Then we obtain the functions
\[
  \mathrm{sw}_{\mathcal{F},<}\colon (a,b)\cap |C^\times|\to \Z,\ r\mapsto \alpha_r(\mathcal{F}):=\alpha_{0,r}(\mathcal{F})
\]
(denoted by $\mathrm{sw}^{\natural}(\mathcal{F}, r^+)$ in \cite[4.1.10]{ramero2005local})
and
\[
  \mathrm{sw}_{\mathcal{F},>}\colon (a,b)\cap |C^\times| \to \Z,\ r\mapsto \beta_r(\mathcal{F}):=\beta_{0,r}(\mathcal{F}),
\].
Let
\[
  \delta_{\mathcal{F}}\colon (-\mathrm{log}(b),-\mathrm{log}(a))\cap (-\mathrm{log}(|C^\times|))\to \R_{\geq 0}
\]
be the discriminant function of \cite[4.1.13]{ramero2005local}.
Writing $\delta_{\mathcal{F}}$ as a function of the radius $r$ (and not $-\mathrm{log}(r)$) would be pleasant, but unfortunately it is the function $\delta_{\mathcal{F}}$ having the easier properties as captured by \Cref{sec:slopes-local-systems-1-linearity-convexity-derivative-for-discrimiant-function}.

  We denote by $\partial_r, \partial_l$ the right resp.\ left derivative of a piecewise linear function $(a,b)\cap |C^\times|\to \R$ (note that these derivatives make sense even if we don't require that the piecewise linear function is defined on all of $(a,b)$).

  \begin{theorem}[Ramero]
    \label{sec:slopes-local-systems-1-linearity-convexity-derivative-for-discrimiant-function}
    The map
    \[
      \delta_{\mathcal{F}}\colon (-\mathrm{log}(b),-\mathrm{log}(a))\cap (-\mathrm{log}(|C^\times|))\to \R_{\geq 0}
    \]
    extends uniquely to a piecewise linear, continuous, convex function $(a,b)\to \R_{\geq 0}$ with
    \[
      \partial_r \delta_{\mathcal{F}}(-\mathrm{log}(r))=\mathrm{sw}_{\mathcal{F},<}(r)
    \]
    for any $r\in (a,b)\cap |C^\times|$. Moreover, the function $\mathrm{r}\mapsto \mathrm{sw}_{\mathcal{F},<}(r)$ is non-increasing, i.e., $\mathrm{sw}_{\mathcal{F},<}(r_2)\leq \mathrm{sw}_{\mathcal{F},<}(r_1)$ if $r_1\leq r_2$.
  \end{theorem}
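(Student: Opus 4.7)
The plan is to reduce to the case of a finite connected Galois \'etale cover trivializing $\mathcal{F}$, and then combine a local conductor-discriminant formula with the Hasse-Arf theorem. Using the hypothesis that $\Lambda$ is a filtered union of finite rings of order prime to $p$, I would first reduce to the case where $\Lambda$ is finite and $\mathcal{F}$ is trivialized by a finite connected Galois \'etale cover $f\colon Y\to \mathbb{D}_{[a,b]}$, so that $\mathcal{F}$ corresponds to a continuous representation of $G:=\mathrm{Aut}(Y/\mathbb{D}_{[a,b]})$ on a finitely generated $\Lambda$-module $V$. For each $r\in (a,b)\cap |C^\times|$, the henselization $K_r$ of the residue field at the rank-$2$ point $p_{0,r}^{-}$ satisfies the assumptions recalled above, and $\mathrm{sw}_{\mathcal{F},<}(r)=\alpha_r(\mathcal{F})$ is read off from the restriction of $V$ to $\mathrm{Gal}(K_r^{\mathrm{sep}}/K_r)$ via $Y$.

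For the extension and the derivative formula, I would use two local facts. First, $r\mapsto \alpha_r(\mathcal{F})$ is locally constant on $(a,b)\cap |C^\times|$ away from finitely many jump radii: the upper numbering ramification filtration of the decomposition group at $p_{0,r}^{-}$ varies with $r$ only through the choice of uniformizer of $K_r$, and the corresponding breaks are constrained to a finite set by the combinatorics of $G$. Second, between two consecutive jumps, the classical local conductor-discriminant formula applied to the restriction of $Y\to \mathbb{D}_{[a,b]}$ to the corresponding subannulus shows that $x\mapsto \delta_{\mathcal{F}}(x)$ is affine in $x=-\mathrm{log}\,r$ with slope exactly $\alpha_r(\mathcal{F})$. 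Combining these yields the unique continuous, piecewise linear extension of $\delta_{\mathcal{F}}$ and the identity $\partial_r\delta_{\mathcal{F}}(-\mathrm{log}\,r)=\mathrm{sw}_{\mathcal{F},<}(r)$, once one observes that no accumulation of jumps is possible.

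The main obstacle is the convexity of $\delta_{\mathcal{F}}$ across a jump radius, which is where the work really lies. The key input is the Hasse-Arf theorem applied to the representation of $\mathrm{Gal}(K_r^{\mathrm{sep}}/K_r)$ on $V$, which guarantees that the breaks of the ramification filtration are rational and that they interact compatibly with restriction along the natural maps between the residue fields of neighboring rank-$2$ points. Concretely, for $r_1<r_2$ in $(a,b)\cap |C^\times|$, I would compare $\alpha_{r_1}(\mathcal{F})$ and $\alpha_{r_2}(\mathcal{F})$ by passing to a common dominating tower over the subannulus $\mathbb{D}_{[r_1,r_2]}$ and summing breaks with multiplicities; the resulting inequality $\alpha_{r_2}(\mathcal{F})\leq \alpha_{r_1}(\mathcal{F})$ is exactly the monotonicity asserted by the second part of the theorem. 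Since $r\mapsto -\mathrm{log}\,r$ is order-reversing, this inequality is equivalent to the right derivative of the extended $\delta_{\mathcal{F}}$ being non-decreasing in its argument $x=-\mathrm{log}\,r$, which is precisely convexity. Thus the convexity statement and the monotonicity of $\mathrm{sw}_{\mathcal{F},<}$ are two incarnations of the same ramification-theoretic inequality, and establishing either one by a direct break-by-break comparison finishes the proof.
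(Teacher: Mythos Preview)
The paper does not actually reprove this result: it simply cites Ramero \cite[4.1.15]{ramero2005local} for the piecewise linearity, continuity, convexity, and derivative identity, and then observes in one line that convexity of $\delta_{\mathcal{F}}$ forces $\partial_r\delta_{\mathcal{F}}$ to be non-decreasing, whence $r\mapsto \mathrm{sw}_{\mathcal{F},<}(r)$ is non-increasing after composing with the decreasing map $r\mapsto -\log r$. In particular, the logical flow is \emph{convexity $\Rightarrow$ monotonicity}, not the other way around.

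Your proposal attempts something more ambitious, namely to reprove Ramero's theorem itself. The outline for piecewise linearity and the derivative identity (reduce to a finite Galois cover, apply the local conductor--discriminant formula on subannuli where the ramification data are constant) is reasonable and is indeed the shape of Ramero's argument. However, your treatment of the crucial step---convexity, equivalently monotonicity of $\mathrm{sw}_{\mathcal{F},<}$---has a gap. You invoke the Hasse--Arf theorem, but Hasse--Arf is a statement about \emph{integrality} of upper-numbering breaks for abelian extensions; it does not by itself compare Swan conductors at different radii, and you do not explain the mechanism by which ``passing to a common dominating tower over $\mathbb{D}_{[r_1,r_2]}$ and summing breaks'' produces the inequality $\alpha_{r_2}(\mathcal{F})\le \alpha_{r_1}(\mathcal{F})$. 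In Ramero's actual proof the convexity comes from the structure theory of finite \'etale covers of annuli (via the behaviour of the different/discriminant under restriction, ultimately reducing to explicit Kummer/Artin--Schreier computations in the cyclic case), not from Hasse--Arf. So either supply that missing argument, or---as the paper does---simply cite Ramero for the first three claims and keep only the one-line deduction of monotonicity from convexity.
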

  \begin{proof}
    The first claims are \cite[4.1.15]{ramero2005local} (note that \cite[Proposition 3.3.26]{ramero2005local} does not use the assumption of \cite[4]{ramero2005local} that $K$ is of characteristic $0$). By convexity of $\delta_{\mathcal{F}}$ we can conclude that $\partial_r\delta_{\mathcal{F}}$ is non-decreasing. Composing with the decreasing function $r\mapsto -\mathrm{log} r$, yields that $\mathrm{sw}_{\mathcal{F},<}(r)$ is non-increasing.
  \end{proof}

  As indicated in \cite[3.3.11]{ramero2005local} we can also relate $\delta_{\mathcal{F}}$ and $\mathrm{sw}_{\mathcal{F},>}$. Let us spell out the details.

  \begin{corollary}
    \label{sec:slopes-local-systems-2-discriminant-and-swan-conductor-for-outer-point}
    Assume $a>0$. Then
    \[
      \partial_l\delta_{\mathcal{F}}(-\mathrm{log}(r))=-\mathrm{sw}_{\mathcal{F},>}(r)
    \]
    for $r\in (a,b)\cap |C^\times|$. In particular, $\mathrm{sw}_{\mathcal{F},>}(r)$ is non-decreasing.
  \end{corollary}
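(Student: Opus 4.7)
The plan is to reduce this corollary to the previous theorem by exploiting the inversion isomorphism
\[
\iota \colon \mathbb{D}_{[a,b]} \xrightarrow{\sim} \mathbb{D}_{[1/b, 1/a]}
\]
available when $a > 0$. Under inversion, the rank $2$ point $p_{0,r}^+$ of $\mathbb{D}_{[a,b]}$ is interchanged with the point $p_{0,1/r}^-$ of $\mathbb{D}_{[1/b,1/a]}$: indeed, the rational subset $\{|T| \le r\}$ pulls back under $\iota$ to $\{|T| \ge 1/r\}$, so the unique point in the closure of $\{|T|\le r\}$ not belonging to it corresponds to the unique point in the closed set $\{|T|\ge 1/r\}$ (equivalently $\{|T^{-1}|\le 1/r\}$) not lying in its interior. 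Consequently, for the pulled-back local system $\iota^\ast \mathcal{F}$ on $\mathbb{D}_{[1/b,1/a]}$ we have the identification
\[
\beta_r(\mathcal{F}) = \alpha_{1/r}(\iota^\ast \mathcal{F}) = \mathrm{sw}_{\iota^\ast \mathcal{F}, <}(1/r).
\]

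Next I would compare the two discriminant functions. Since $\delta$ is defined intrinsically in terms of the logarithmic radius coordinate, and inversion sends $r$ to $1/r$ (equivalently $-\log r \mapsto \log r = -(-\log r)$), one obtains
\[
\delta_{\iota^\ast \mathcal{F}}(s) = \delta_{\mathcal{F}}(-s)
\]
for $s \in (\log a, \log b) \cap (-\log|C^\times|)$. (If one worries about the precise normalization, it suffices to note that $\iota$ is an isomorphism, hence preserves discriminants, and the change of variables is just the transformation of the logarithmic radius under inversion.) Applying the previous theorem to $\iota^\ast \mathcal{F}$ on the annulus $\mathbb{D}_{[1/b,1/a]}$ yields
\[
\mathrm{sw}_{\iota^\ast \mathcal{F}, <}(1/r) = \partial_r \delta_{\iota^\ast \mathcal{F}}\bigl(-\log(1/r)\bigr) = \partial_r \delta_{\iota^\ast \mathcal{F}}(\log r).
\]
Using $\delta_{\iota^\ast \mathcal{F}}(s) = \delta_{\mathcal{F}}(-s)$ and the chain rule for right/left derivatives, the right derivative at $\log r$ on the left equals the negative of the left derivative at $-\log r$ on the right, so
\[
\partial_r \delta_{\iota^\ast \mathcal{F}}(\log r) = -\partial_l \delta_{\mathcal{F}}(-\log r),
\]
and combining these three equalities gives the claimed formula $\partial_l \delta_{\mathcal{F}}(-\log r) = -\mathrm{sw}_{\mathcal{F},>}(r)$.

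Finally, the monotonicity of $r \mapsto \mathrm{sw}_{\mathcal{F},>}(r)$ is immediate from the convexity of $\delta_{\mathcal{F}}$ established in the previous theorem: convexity implies that $\partial_l \delta_{\mathcal{F}}$ is non-decreasing in its argument $-\log r$, so $-\partial_l \delta_{\mathcal{F}}(-\log r)$ is non-increasing in $-\log r$, i.e.\ non-decreasing in $r$. The main (only) technical obstacle is verifying cleanly the two compatibilities of $\iota$ with the invariants in play, namely that $\iota$ identifies $p_{0,r}^+$ with $p_{0,1/r}^-$ and that it intertwines the discriminant functions via the sign flip $s \mapsto -s$; both are essentially bookkeeping once one unwinds the definitions in \cite[Chapter 2]{huber2001swan} and \cite[4.1.13]{ramero2005local}.
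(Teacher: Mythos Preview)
Your proposal is correct and follows essentially the same route as the paper: reduce to the previous theorem via the inversion isomorphism $\iota$, use that $\iota$ sends $p_{0,r}^+$ to $p_{0,1/r}^-$ and that the discriminant functions are related by $s\mapsto -s$, and then read off the left derivative from the right derivative of the transported local system. The paper's write-up differs only in cosmetic details (it uses $\iota_\ast\mathcal{F}$ rather than $\iota^\ast\mathcal{F}$, which is the same since $\iota$ is an isomorphism, and it verifies the derivative identity by an explicit difference-quotient computation for $x<0$ rather than invoking a chain rule for one-sided derivatives).
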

  \begin{proof}
    Let
    \[
      \iota\colon \mathbb{D}_{[a,b]}\cong \mathbb{D}_{[1/b,1/a]}
    \]
    be the inversion and set $\mathcal{G}:=\iota_\ast \mathcal{F}$.
    If $r\in (a,b)$ set $\iota(r):=1/r\in (1/b,1/a)$.
    Then
        \[
      \beta_r(\mathcal{F})=\alpha_{\iota(r)}(\mathcal{G})
    \]
    as
    \[
     \mathcal{F}_{p^+_r}\cong (\iota^\ast \mathcal{G})_{p^{+}_r}\cong \mathcal{G}_{\iota(p^+_r)}=\mathcal{G}_{p^-_{\iota(r)}}.
   \]
   In particular,
   \[
     \mathrm{sw}_{\mathcal{F},>}=\mathrm{sw}_{\mathcal{G},<}\circ \iota.
   \]
   Moreover,
   \[
     \delta_{\mathcal{F}}(s)=\delta_{\mathcal{G}}(-s)
   \]
   as $\iota(p^\flat_r)=p^\flat_{\iota(r)}$ for $r\in (a,b)\cap |C^\times|$ and
   \[
     -\mathrm{log}(\iota(r))=-(-\mathrm{log}(r)).
   \]
   Now we calculate for $s\in (-\mathrm{log}(b),-\mathrm{log}(a))\cap (-\mathrm{log}(|C^\times|))$ and $x<0$ sufficiently small
   \[
     \begin{matrix}
       & \delta_{\mathcal{F}}(s+x) \\
       = & \delta_{\mathcal{G}}(-s-x) \\
       \overset{~\Cref{sec:slopes-local-systems-1-linearity-convexity-derivative-for-discrimiant-function}, -x\geq 0}{=} & \delta_{\mathcal{G}}(-s)+\mathrm{sw}_{\mathcal{G},<}(e^{s})(-x) \\
       \overset{e^{s}=\iota(e^{-s})}{=} & \delta_{\mathcal{F}}(s)+(-\mathrm{sw}_{\mathcal{F},>}(e^{-s}))x,\\
     \end{matrix}
   \]
   which implies the first claim. For the last we can argue as in \Cref{sec:slopes-local-systems-1-linearity-convexity-derivative-for-discrimiant-function}.  
 \end{proof}
 
 \begin{remark}
 If $-\mathrm{log}(r)$ is not a breakpoint of $\delta_{\mathcal{F}}$, then
 \[
   \mathrm{sw}_{\mathcal{F},<}(r)=-\mathrm{sw}_{\mathcal{F},>}(r),
 \]
 but otherwise both sides are different. Because of this discrepancy we think that \cite[Proposition 2.9]{wewers_swan_conductors_on_the_boundary_of_lubin_tate_spaces} is (slightly) incorrect as stated because we think that the $\mathrm{sw}_{\mathcal{F}}(s)$ from there agrees with $\mathrm{sw}_{\mathcal{F},>}(e^{-s})$ in our notation.
\end{remark}

This formalism will be applied in the following geometric situation. Let $A$ be a $\Lambda$-local system on the punctured open unit $\D_C^\ast$. 
Assume that the compactly supported cohomology
\[
  H^\ast_c(\D_C^\ast,A)
\]
of $A$ is finite. Let
\[
  \chi_c(\D^\ast_C,A):=\sum\limits_{i=0}^\infty (-1)^i \ell_{\Lambda} H^i_c(\D_C^\ast,A)
\]
denote its Euler characteristic. 

\begin{proposition}
  \label{sec:groth-ogg-shaf-1-euler-characteristic}
  With the above notations and assumption, we have
  \[
    \chi_c(\D^\ast_C,A)=-\alpha(A)-\beta(A).
  \]
  Here, cf.\ \cite[Corollary 10.6]{huber2001swan},
  \[
    \alpha(A):= \alpha_{0,r}(A)
  \]
  for $r$ sufficiently small, and
  \[
    \beta(A):= \beta_{0,r} (A)
  \]
  for $r$ sufficiently close to $1$.
\end{proposition}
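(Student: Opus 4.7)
The plan is to apply Huber's adic Grothendieck--Ogg--Shafarevich formula (\cite[Corollary~10.6]{huber2001swan}) to a closed annulus $\D_{[a,b]} \subset \D_C^\ast$ with $a$ sufficiently small and $b$ sufficiently close to $1$, and then transfer the result to $\D_C^\ast$ via a limit argument. The key input is the stabilization of the Swan conductor functions at the two ends, which is precisely what \Cref{sec:slopes-local-systems-1-linearity-convexity-derivative-for-discrimiant-function} and \Cref{sec:slopes-local-systems-2-discriminant-and-swan-conductor-for-outer-point} provide.

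First, since $r \mapsto \alpha_{0,r}(A)$ and $r \mapsto \beta_{0,r}(A)$ take values in $\Z_{\geq 0}$ and are respectively non-increasing and non-decreasing, I fix $a_0, b_0 \in (0,1) \cap |C^\times|$ with $a_0 < b_0$ such that $\alpha_{0,r}(A) = \alpha(A)$ for every $r \in (0,a_0] \cap |C^\times|$ and $\beta_{0,r}(A) = \beta(A)$ for every $r \in [b_0,1) \cap |C^\times|$. For any such $a \leq a_0$ and $b \geq b_0$, Huber's GOS applied to $\D_{[a,b]}$ identifies the two rank-$2$ adic boundary points of the closed annulus as $p_{0,a}^-$ and $p_{0,b}^+$ (the rank-$2$ generizations of the Gauss points just outside each circle), contributing Swan conductors $\alpha_{0,a}(A) = \alpha(A)$ and $\beta_{0,b}(A) = \beta(A)$ respectively. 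Combined with $\chi_c(\D_{[a,b]}, \Lambda) = 0$, this yields
\[
\chi_c(\D_{[a,b]}, A) = -\alpha(A) - \beta(A).
\]

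Finally, I would transfer this to $\D_C^\ast$ by observing that $\D_C^\ast$ is the filtered union of the open annuli $U_{a,b} = \{a < |T| < b\}$ as $a \to 0$ and $b \to 1$ within $|C^\times|$, so $R\Gamma_c(\D_C^\ast, A) \cong \varinjlim_{a,b} R\Gamma_c(U_{a,b}, A)$. An excision argument compares $\chi_c(U_{a,b}, A)$ with $\chi_c(\D_{[a,b]}, A)$: the two differ by the compactly supported Euler characteristics attached to the two ``adic circles'' $\{|T| = a\}$ and $\{|T| = b\}$, and I would argue that these contributions vanish once $a$ and $b$ lie in the stable range, so that $\chi_c(\D_C^\ast, A)$ stabilizes to the above value. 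The main obstacle lies precisely in this last step, namely controlling the contribution of the boundary circles rigorously when passing from a closed to an open annulus; a possibly cleaner alternative is to apply Huber's GOS directly to a partial compactification of $\D_C^\ast$ obtained by adjoining two rank-$2$ points at the ends, and identify their Swan conductors with $\alpha(A)$ and $\beta(A)$ via the monotonicity.
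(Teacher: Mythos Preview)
Your approach diverges from the paper's in a way that leaves a genuine gap. The paper does not work with closed annuli $\D_{[a,b]}$ at all; it applies Huber's formula once, to a single punctured closed ball $\B_{C,s}^\ast=\B_{C,s}\setminus\{0\}$ for one sufficiently large radius $s$. The crucial preliminary step, which you are missing, is to use the \emph{finiteness hypothesis} on $H^\ast_c(\D_C^\ast,A)$ to prove that the cohomology groups themselves stabilize: for $r\leq r'$ the transition maps $H^i_c(\B_{C,r}^\ast,A)\to H^i_c(\B_{C,r'}^\ast,A)$ are injective in each degree (for $i=0,2$ this is elementary plus Poincar\'e duality; for $i=1$ it uses local constancy of $A$, as in Huber's proof of Corollary~10.6), and injections between finite modules into a finite colimit must eventually be isomorphisms. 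This gives $H^\ast_c(\D_C^\ast,A)=H^\ast_c(\B_{C,s}^\ast,A)$ on the nose for $s$ large, so no limit or excision argument is needed.

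Your proposal never invokes the finiteness hypothesis, and this is exactly why the final step fails. Without it there is no reason the individual $\chi_c(U_{a,b},A)$ are well-defined, let alone that they stabilize; and the contribution of the adic circles in your excision comparison is not zero in general (indeed, a rank-$1$ adic circle has nontrivial cohomology with coefficients in a local system). The paper's choice of a punctured closed ball rather than a closed annulus is also what makes Huber's formula land directly on the two desired terms: the puncture at $0$ contributes $\alpha_0(A)=\alpha(A)$, and the unique rank-$2$ boundary point $x$ of $\B_{C,s}$ contributes $\alpha_x(j_\ast A)$, which Huber identifies with $\beta_{0,s}(A)=\beta(A)$ via \cite[Lemma~8.6(v)]{huber2001swan}.
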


\begin{proof}
  We first claim that there exists some $s\in [0,1]\cap |C^\times|$ such that
  \[
    H^\ast_c(\D^\ast_C,A)=H^\ast_c(\B^\ast_{C,r},A)
  \]
  for all $r\in [0,1]\cap |C^\times|, r\geq s$. Indeed,
  for $r^\prime\geq r$ the canonical morphisms
  \[
    H^0_c(\B_{C,r}^\ast,A)\to H^0_c(\B_{C,r^\prime}^\ast, A)
  \]
  and
  \[
    H^0(\B_{C,r^\prime}^\ast,A)\to H^0(\B_{C,r^\prime}^\ast, A)
  \]
  are injective morphisms between finite $\Lambda$-modules. Thus, they are isomorphisms for sufficiently large $r,r^\prime$. From Poincar\'e duality one deduces that
  \[
    H^2_c(\B_{C,r}^\ast,A)\to H^2_c(\B_{C,r^\prime}^\ast, A)
  \]
  is an isomorphism for large $r,r^\prime$, too.
  Moreover,
  \[
    H^1_c(\B_{C,r}^\ast,A)\to H^1_c(\B_{C,r^\prime}^\ast,A)
  \]
  is injective for all $r^\prime\geq r$ because $A$ is locally constant, cf.\ \cite[proof of Corollary 10.6]{huber2001swan}.
  This implies the claim by our assumption on the finiteness of
  \[
    H^\ast_c(\D^\ast_C,A).
  \]
  Thus, we can apply \cite[Corollary 10.6]{huber2001swan} to
  \[
    X:=\B_{C,s}
  \]
  and
  \[
    Y:=\B^\ast_{C,s}=X\cap \D_{C}^\ast.
  \]
  Thus,
  \[
    \chi_c(\D_C^\ast,A)=\chi_c(Y,A)=-\alpha_{0}(A)-\alpha_x(j_\ast A),
  \]
  where $\{x\}=X^c\setminus X$
  (note that $\chi_c(Y)=0$). Now in our notation
  \[
    \alpha(A)=\alpha_0(A)
  \]
  and
  \[
    \beta(A)=\alpha_x(j_\ast(A)),
  \]
  cf.\ \cite[Lemma 8.6.(v)]{huber2001swan}. This finishes the proof.
\end{proof}

We now want to apply these considerations to the Fourier transform for $\mathcal{G}$. Let $\sigma$ be a $\Lambda$-representation of $W_{E}$, corresponding to a local system $\mathbb{L}$ on $\mathrm{Div}^1$. Let $\widetilde{\mathbb{L}}$ be the local system on $\mathcal{G}^\circ$ obtained by pulling back $\mathbb{L}$ along the $E^\times$-torsor $\mathcal{G}^\circ \to \mathrm{Div}^1$. Fix a geometric point $\mathrm{Spd}(C) \to \mathcal{G}^{\vee \circ}$, and an identification 
$$
(\mathcal{G}^{\vee \circ} \times \mathcal{G}^\circ) \times_{\mathcal{G}^{\vee \circ}} \mathrm{Spd}(C) \cong \mathcal{G}_C^\circ \cong \mathbb{D}_C^{\ast \diamond}.
$$
Pulling back $\widetilde{\mathbb{L}}$ to $\mathcal{G}_C^\circ$, we obtain  via this identification a local system on $\mathbb{D}_C^\ast$, which we will simply denote by $\widetilde{\mathbb{L}}_C$. 

Similarly, the sheaf $\alpha^* \mathcal{L}_\psi$ on $\mathcal{G}^{\vee} \times \mathcal{G}$ (the kernel of the Fourier transform) defines by pullback and via the above identification a local system on $\mathbb{D}_C^\ast$, which we will simply denote by $\mathcal{L}_{\psi,C}$.

\begin{corollary}
 \label{sec:groth-ogg-shaf-corollary-to-1-euler-characteristic}
   We use the above notations. Assume that $j_!\widetilde{\mathbb{L}^\vee}\cong Rj_\ast \widetilde{\mathbb{L}^\vee}$. The stalk of 
  $$
  (j^{\vee \circ})^\ast \mathcal{F}_\psi (j_{!}\widetilde{\mathbb{L}})
  $$
  at the geometric point $\mathrm{Spd}(C) \to \mathcal{G}^{\vee\circ}$ is concentrated in degree $0$, and its rank equals
  \[
   \alpha(\widetilde{\mathbb{L}}_C \otimes \mathcal{L}_{\psi,C})+\beta(\widetilde{\mathbb{L}}_C \otimes \mathcal{L}_{\psi,C}).
  \]
\end{corollary}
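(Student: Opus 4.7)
The plan is to apply proper base change to reduce the stalk computation to compactly supported étale cohomology on a classical adic curve, and then invoke Huber's Grothendieck--Ogg--Shafarevich formula (\Cref{sec:groth-ogg-shaf-1-euler-characteristic}).

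First, by compatibility of the Fourier transform with base change (\Cref{compatibility-of-ft-with-base-change}) and proper base change for $\pi^\vee_!$, the stalk of $(j^{\vee\circ})^\ast \mathcal{F}_\psi(j_!\widetilde{\mathbb{L}})$ at the chosen geometric point equals
\[
R\Gamma_c\bigl(\mathcal{G}_C,\, (j_!\widetilde{\mathbb{L}})_C \otimes (\alpha^\ast \mathcal{L}_\psi)_C\bigr)[1].
\]
The compatibility of $j_!$ with base change and the projection formula, combined with the identification $\mathcal{G}_C \cong \mathbb{D}_C^\diamond$ (sending the origin to $0$), rewrite this as $R\Gamma_c(\mathbb{D}_C^{\ast,\diamond},\, \widetilde{\mathbb{L}}_C \otimes \mathcal{L}_{\psi,C})[1]$. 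Through the equivalence $D_\et(\mathbb{D}_C^{\ast,\diamond},\Lambda) \cong D_\et(\mathbb{D}_C^\ast,\Lambda)$ of \cite[Lemma 15.6]{scholze_etale_cohomology_of_diamonds}, which is compatible with $R\Gamma_c$, this becomes $R\Gamma_c(\mathbb{D}_C^\ast, \mathbb{F})[1]$ with $\mathbb{F} := \widetilde{\mathbb{L}}_C \otimes \mathcal{L}_{\psi,C}$. The complex is finite by \Cref{sec:case-from-bcmathc-2-ft-preserves-finiteness-of-stalks}.

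Next, I would show that $R\Gamma_c(\mathbb{D}_C^\ast, \mathbb{F})$ is concentrated in degree $1$. Since $\mathbb{D}_C^\ast$ has $\ell$-étale cohomological dimension $2$, only $H^i_c$ for $i \in \{0,1,2\}$ can be non-zero. The vanishing of $H^0_c$ is automatic as $\mathbb{D}_C^\ast$ is non-compact and $\mathbb{F}$ is a local system. For $H^2_c$, Poincaré duality on the smooth adic curve $\mathbb{D}_C^\ast$ reduces us to showing $H^0(\mathbb{D}_C^\ast, \mathbb{F}^\vee(1)) = 0$. Pulling back the hypothesis $j_!\widetilde{\mathbb{L}^\vee} \cong Rj_\ast\widetilde{\mathbb{L}^\vee}$ to $\mathbb{D}_C^\diamond$ yields $j_{C,!}\widetilde{\mathbb{L}^\vee}_C \cong Rj_{C,\ast}\widetilde{\mathbb{L}^\vee}_C$, i.e.\ the nearby cycles of $\widetilde{\mathbb{L}^\vee}_C$ at $0$ vanish; concretely, one finds a small open punctured disk $V \subset \mathbb{D}_C^\ast$ around $0$ on which $\widetilde{\mathbb{L}^\vee}_C|_V$ has no global sections. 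Because $\alpha^\ast\mathcal{L}_\psi$ is a local system on all of $\mathcal{G}^\vee \times_S \mathcal{G}$ whose underlying $\underline{E}$-torsor is trivial along the origin section of $\mathcal{G}$, the local system $\mathcal{L}_{\psi,C}$ extends across $0 \in \mathbb{D}_C^\diamond$, and so (after shrinking $V$) is trivial on $V$. Hence $\mathbb{F}^\vee|_V$ is a trivialization-twist of $\widetilde{\mathbb{L}^\vee}_C|_V$ and $H^0(V, \mathbb{F}^\vee) = 0$. By connectedness of $\mathbb{D}_C^\ast$, restriction from $\mathbb{D}_C^\ast$ to $V$ is injective on global sections of a local system, forcing $H^0(\mathbb{D}_C^\ast, \mathbb{F}^\vee) = 0$.

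With concentration in degree $1$ established, \Cref{sec:groth-ogg-shaf-1-euler-characteristic} yields
\[
\mathrm{rk}\,H^1_c(\mathbb{D}_C^\ast, \mathbb{F}) \;=\; -\chi_c(\mathbb{D}_C^\ast, \mathbb{F}) \;=\; \alpha(\mathbb{F}) + \beta(\mathbb{F}),
\]
which, after the shift $[1]$, is the rank of the stalk in degree $0$. The main obstacle is the concentration in a single degree: the hypothesis controls $\widetilde{\mathbb{L}^\vee}$ on its own, and the point is to propagate the local vanishing it provides to a global vanishing for the $\mathcal{L}_\psi$-twisted dual $\mathbb{F}^\vee$ on all of $\mathbb{D}_C^\ast$. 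The key geometric input making this possible is that $\mathcal{L}_{\psi,C}$ extends across the origin of $\mathcal{G}$, so its monodromy near $0$ is trivial, allowing the twist to be locally ignored and reducing the global vanishing to the local one given by the hypothesis.
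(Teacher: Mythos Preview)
Your reduction to $R\Gamma_c(\mathbb{D}_C^\ast,\mathbb{F})[1]$ via proper base change, and your use of \Cref{sec:groth-ogg-shaf-1-euler-characteristic} together with \Cref{sec:case-from-bcmathc-2-ft-preserves-finiteness-of-stalks} for the rank formula, match the paper's argument exactly.

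For the concentration in degree $0$ you take a different route. The paper argues globally: by \Cref{sec:stacks-bc-type-properties-of-very-nice-stacks-plus-fourier-transform} the Fourier transform commutes with relative Verdier duality on $\mathcal{G}^\vee$, and since $\mathbb{D}_f(j_!\widetilde{\mathbb{L}})\cong Rj_\ast\widetilde{\mathbb{L}}^\vee[\text{shift}]=j_!\widetilde{\mathbb{L}}^\vee[\text{shift}]$ by hypothesis, the dual of $\mathcal{F}_\psi(j_!\widetilde{\mathbb{L}})$ is again a Fourier transform of a $j_!$ of a local system; bounding the amplitude of \emph{both} then pins down a single degree. You instead work directly on the curve $\mathbb{D}_C^\ast$, killing $H^0_c$ by non-compactness and $H^2_c$ by Poincar\'e duality. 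This is more elementary and avoids invoking the abstract duality statement for very nice stacks.

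There is, however, a gap in your argument. The sentence ``pulling back the hypothesis $j_!\widetilde{\mathbb{L}^\vee}\cong Rj_\ast\widetilde{\mathbb{L}^\vee}$ to $\mathbb{D}_C^\diamond$ yields $j_{C,!}\widetilde{\mathbb{L}^\vee}_C\cong Rj_{C,\ast}\widetilde{\mathbb{L}^\vee}_C$'' is not justified: $j_!$ base-changes, but $Rj_\ast$ does not in general, and the map $\mathcal{G}_C\to\mathcal{G}$ is neither smooth nor proper. What you actually need is only $H^0(V,\widetilde{\mathbb{L}^\vee}_C)=0$ for a small punctured disk $V$, and this amounts to showing that the image of $\pi_1(V)\to\pi_1(\mathcal{G}^\circ)\cong\mathrm{Gal}_{\bar{\mathbb{F}}_p((t))}$ already has no invariants on the representation underlying $\widetilde{\mathbb{L}^\vee}$. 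Equivalently, you must check that every connected finite \'etale cover of $\mathcal{G}^\circ$ remains connected after pullback to $V$, i.e.\ that finite separable extensions of $\bar{\mathbb{F}}_p((t))$ stay fields after base change to $C((t))$. This is true, but it is an extra input and should be stated; the paper's global Verdier-duality argument sidesteps this issue entirely.
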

The assumption on $\widetilde{\mathbb{L}}^\vee$ is satisfied if $\widetilde{\mathbb{L}}^\vee$ does not contain the trivial local system. Indeed, $i^\ast Rj_\ast(\widetilde{\mathbb{L}})\cong R\Gamma(\mathcal{G}^\circ,\widetilde{\mathbb{L}})$ by the proof of \Cref{sec:case-from-bcmathc-4-compact-for-v} and this complex is concentrated in degree $0$ and of Euler characteristic $0$.
 \begin{proof}
 First we prove concentration in degree $0$. Since the Fourier transform commutes with Verdier duality (\Cref{sec:stacks-bc-type-properties-of-very-nice-stacks-plus-fourier-transform}) and $j_!\widetilde{\mathbb{L}}^{\vee}\cong Rj_\ast \widetilde{\mathbb{L}}^\vee$, it suffices to prove that the cohomology of $\widetilde{\mathbb{L}}_C \otimes \mathcal{L}_{\psi,C}$ on $\mathcal{G}^\circ$ is concentrated in degrees $[0,1]$; this follows as $H^i_c(\mathbb{D}^\ast_C,\mathbb{M})=0$ for each $i\notin [0,1]$ and $\mathbb{M}$ a local system on $\mathbb{D}^\ast_C$. 

 The formula for the rank is a direct consequence of \Cref{sec:groth-ogg-shaf-1-euler-characteristic}, which one can apply thanks to \Cref{sec:case-from-bcmathc-2-ft-preserves-finiteness-of-stalks} (the change of sign is due to the shift in the definition of the Fourier transform).
 \end{proof}

To apply \Cref{sec:groth-ogg-shaf-corollary-to-1-euler-characteristic} in some cases, we will compute the slopes of the local systems $\mathcal{L}_{\psi,C}$ and $\widetilde{\mathbb{L}}_C$.

We first focus on $\mathcal{L}_{\psi,C}$. 

\begin{proposition}
\label{swan-of-L-psi}
For each $r$ sufficiently close to $0$, the (unique) slope of $\mathcal{L}_{\psi,C}$ at $p_{0,r}^-$ and $p^{+}_{0,r}$ is $1$. For each $r$ sufficiently close to $1$, the slope of $\mathcal{L}_{\psi,C}$ at $p_{0,r}^+$ is $\mathrm{exp}(-c)r^{-d} \gamma_{p_{0,r}^+}$, for some constants $c,d\in \mathbb{R}_{>0}$. In particular, in the notations of \Cref{sec:groth-ogg-shaf-1-euler-characteristic}, we have
$$
 \alpha(\mathcal{L}_{\psi,C})=0, ~ \beta(\mathcal{L}_{\psi,C})=1.
$$
\end{proposition}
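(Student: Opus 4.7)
My plan is to identify $\mathcal{L}_{\psi,C}$ explicitly as the pullback of a Fontaine-type $\underline{E}$-torsor along the Lubin-Tate logarithm, and then to read off its slopes from the Newton polygon of $\log_{LT}$ via Ramero's framework recalled above.

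First I would render $\alpha_C$ geometric. Applying $R\tau_\ast$ to the two short exact sequences on $X_C$ attached to the Cartier divisor of the untilt $C^\sharp$ produces the Fontaine exact sequences
\[
  0 \to \underline{E} \to \BC(\mathcal{O}(1)) \xrightarrow{\log_{LT}} \mathcal{O}^\sharp \to 0, \qquad 0 \to \underline{E} \to \mathcal{O}^\sharp \to \BC(\mathcal{O}(-1)) \to 0.
\]
The first presents $\BC(\mathcal{O}(1))$ as a pro-\'etale $\underline{E}$-torsor over $\mathcal{O}^\sharp \cong \mathbb{A}^{1,\diamond}_{C^\sharp}$, whose pushout along $\psi$ is a rank one $\Lambda$-local system $\mathcal{M}_\psi$ on $\mathbb{A}^{1,\diamond}_{C^\sharp}$; the second lets us lift the class $\check v \in \mathcal{G}^{\vee,\circ}(C)$ to some $\tilde{\check v} \in C^\sharp$ (well-defined modulo $E$). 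Tracing the cup-product pairing $\BC(\mathcal{O}(1)) \otimes \BC(\mathcal{O}(-1)) \to \underline{E}[1]$ through these two presentations, one should obtain a factorization
\[
  \alpha_C \colon \mathcal{G}_C^\circ \xrightarrow{\log_{LT}} \mathbb{A}^{1,\diamond}_{C^\sharp} \xrightarrow{\,\cdot\, \tilde{\check v}\,} \mathbb{A}^{1,\diamond}_{C^\sharp} \xrightarrow{\partial} [\ast/\underline{E}],
\]
where $\partial$ is the extension class of the first sequence. Under $\mathcal{G}_C^\circ \cong \mathbb{D}^{\ast,\diamond}_{C^\sharp}$, this identifies $\mathcal{L}_{\psi,C}$ with the pullback of $\mathcal{M}_\psi$ along $f(t) := \tilde{\check v} \cdot \log_{LT}(t)$; the indeterminacy in $\tilde{\check v}$ only modifies $f$ by an element of $E$, which leaves the pullback invariant by the $E$-equivariance of $\mathcal{M}_\psi$.

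Second, I would compute the slopes of $\mathcal{L}_{\psi,C}$ by analyzing $f$. With $\log_{LT}(T) = \sum_{n \ge 0} c_n T^{q^n}$ and $|c_n| = q^n$, the value $|\log_{LT}(t)|$ at a rank-two point of radius $r$ equals $\max_n q^n r^{q^n}$, attained at $n = 0$ for $r$ small and at some $n(r) \to \infty$ as $r \to 1$. For $r$ small enough that $|\tilde{\check v}|\, r$ lies in the range of convergence of $\exp_{LT}$, the torsor $\alpha_C$ admits a section $t \mapsto \exp_{LT}(f(t))$ and is therefore trivial, so the slope at both $p^-_{0,r}$ and $p^+_{0,r}$ equals $1 \in \Gamma$. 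For $r$ close to $1$, $|f|_{p^+_{0,r}}$ is controlled by the dominant term $\pi^{-n(r)} t^{q^{n(r)}}$, and a direct Newton polygon analysis in the spirit of \cite[Proposition 2.9]{wewers_swan_conductors_on_the_boundary_of_lubin_tate_spaces}, combined with the linearity/convexity from \Cref{sec:slopes-local-systems-1-linearity-convexity-derivative-for-discrimiant-function}, forces the slope to take the announced form $\exp(-c)\, r^{-d}\, \gamma_{p^+_{0,r}}$ for constants $c, d \in \mathbb{R}_{>0}$. The values $\alpha(\mathcal{L}_{\psi,C}) = 0$ and $\beta(\mathcal{L}_{\psi,C}) = 1$ then follow by translating slopes into Swan contributions via $\mathrm{sw}(V) = \sum_\gamma \gamma^\sharp \ell_\Lambda V(\gamma)$: the inner slope $1$ has $\gamma^\sharp = 0$, while the outer slope has $\gamma^\sharp = 1$ and $\mathcal{L}_{\psi,C}$ has rank one.

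The main obstacle will be the first step: verifying rigorously that the abstract cup-product pairing factors as claimed through $\log_{LT}$ and multiplication by $\tilde{\check v}$ requires careful bookkeeping of Tate twists and signs, and a genuine comparison between the Serre-duality avatar of the Fourier kernel and the explicit Fontaine sequences. Once this identification is in hand, the slope asymptotics of step two reduce to standard Lubin-Tate Newton polygon estimates interpreted in Ramero's formalism.
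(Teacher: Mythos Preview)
Your approach is correct in outline but takes a genuinely different route from the paper's. The paper never makes $\mathcal{L}_{\psi,C}$ explicit at all. For $r$ small it simply observes that $\mathcal{L}_{\psi,C}$ is defined on the whole open disc $\mathbb{D}_C$ (not just the punctured disc), and any $\Lambda$-local system on $\mathbb{D}_C$ trivializes on a small enough ball because the local ring $\mathcal{O}_{\mathbb{D}_C,0}$ is strictly henselian; this gives slope $1$ at both $p_{0,r}^\pm$ with no computation. For $r$ close to $1$ it recycles the vanishing $H^\ast_c(\mathbb{D}_C,\mathcal{L}_{\psi,C})=0$, already established in the proof of \Cref{description-push-forward-l-psi}: by Huber's Euler-characteristic formula on a large closed ball $\mathbb{B}_{C,r}$ (where $\chi_c(\mathbb{B}_{C,r},\Lambda)=1$), this forces $\beta_{0,r}(\mathcal{L}_{\psi,C})=1$. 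The discriminant function $\delta_{\mathcal{L}_{\psi,C}}$ then has exactly the two slopes $0$ and $-1$, hence a single breakpoint $c'$, and the explicit form of the slope at $p_{0,r}^+$ drops out of Ramero's relation between $\delta$ and the slope decomposition.

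What each approach buys: the paper's argument is short and self-contained because the two inputs (extension across $0$, cohomological vanishing) are already in hand; it avoids entirely the bookkeeping of Tate twists and the comparison of the abstract cup-product pairing with the explicit Fontaine/Lubin--Tate presentation, which you rightly flag as the main obstacle in your plan. Your route, if carried out, would yield a more explicit description of the kernel (essentially $\mathcal{L}_{\psi,C}\cong f^\ast\mathcal{M}_\psi$ with $f=\tilde{\check v}\cdot\log_{LT}$), which could be useful for finer computations beyond the Swan conductor; but for the statement at hand it is considerably more work, and the Newton-polygon step for $r\to 1$ would still need the discriminant-function machinery to pin down that $\gamma^\sharp=1$ exactly, which is what the paper gets for free from the Euler characteristic.
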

\begin{proof}
  If $\mathcal{F}$ is any $\Lambda$-local system on $\mathbb{D}_C$, then for $r>0$ sufficiently small the restriction of $\mathcal{F}$ to the disc $\mathbb{B}_{C,r}$ will be trivial because the local ring $\mathcal{O}_{\mathbb{D}_C,0}$ of $\mathbb{D}_C$ is strictly henselian. In particular, the slopes of $\mathcal{L}_{\psi,C}$ at $p_{0,r}^-$ and $p^+_{0,r}$ will be $1$.
  
 From the proof of \Cref{description-push-forward-l-psi}, we know that
    \[
      H^\ast_c(\mathbb{D}_C,\mathcal{L}_{\psi,C})=0.
    \]
    This implies that
    \[
      H^\ast_c(\mathbb{B}_{C,r},\mathcal{L}_{\psi,C})=0
    \]
    for each sufficiently large $0<r<1$. By \cite[Corollary 10.4]{huber2001swan} this implies (because $\chi_c(\mathbb{B}_{C,r},\Lambda)=1$) that
    \[
      \beta_{0,r}(\mathcal{L}_{C,\psi})=1
    \]
    for sufficiently large $0<r<1$.
    
    We can by \Cref{sec:slopes-local-systems-1-linearity-convexity-derivative-for-discrimiant-function} and \Cref{sec:slopes-local-systems-2-discriminant-and-swan-conductor-for-outer-point} conclude that the discriminant function
    \[
      \delta_{\mathcal{L}_{\psi,C}}\colon (0,\infty)\cap (-\mathrm{log}(|C^\times|))\to \mathbb{R}_{\geq 0}
    \]
    has the slopes $-1, 0$ and therefore a unique break point $c^\prime\in (0,\infty)$. As $\delta_{\mathcal{L}_{\psi,C}}(s)=0$ for $s>c^\prime$ we can conclude that
    \[
      \delta_{\mathcal{L}_{\psi,C}}(s)=-s+c^\prime
    \]
    for $0<s\leq c^\prime$.
    By \cite[3.3.2]{ramero2005local} we can conclude that if $\gamma=\gamma^\flat\cdot \gamma^\sharp$ is the (unique) slope of $\mathcal{L}_{C,\psi}$, then
    \[
      c^\prime+\mathrm{log}(r)=\delta_{\mathcal{L}_{\psi,C}}(-\mathrm{log}(r))=-d\cdot \mathrm{log}(\gamma^\flat)
    \]
    for some constant $d\in \Z_{>0}$. This implies
    \[
      \gamma^\flat=\mathrm{exp}(-c^\prime/d)r^{-d}
    \]
    if $0<-\log(r)\leq c^\prime$. As $\gamma^\sharp$ is determined by the Swan conductor, we get that
    \[
      \gamma=\mathrm{exp}(-c^\prime/d)r^{-d}\gamma_{p^+_{0,r}}
    \]
    as desired by taking $c=c^\prime/d$.
\end{proof}

Next, we turn to the computation of the slopes of $\widetilde{\mathbb{L}}_C$. Let $F\cong \mathbb{F}_q((t))$ be the field of norms (in the sense of Fontaine-Wintenberger, \cite{wintenberger_corps_des_normes}) attached to the Lubin-Tate extension $E_\infty$ of $E$. We choose the identification 
$$
\mathcal{G}^\circ \cong \mathbb{D}_{\overline{\mathbb{F}_p}}^{\ast \diamond}
$$
induced by the isomorphism between the completion of the perfection of $F$ and $E_\infty$. In this way, $\widetilde{\mathbb{L}}$ corresponds to a representation $V$ of the Weil group of $F$.

If $\sigma$ is of dimension $n=1$, $V$ is trivial, by local class field theory, and thus its Swan conductor is zero. In general, we have the following result.

\begin{proposition}
\label{conjecture-for-the-swan-conductor}
We keep the above notations, and assume moreover that $\Lambda$ is an algebraically closed field, that $\sigma$ is irreducible of dimension $n$ and that its Swan conductor (with respect to $E$) cannot be lowered by twisting by a character. The slope of $V$ (with respect to $F$) is then
$$
\mathrm{sl}(V)=q^{\lfloor \mathrm{sl}(\sigma) \rfloor }(q-1)(\mathrm{sl}(\sigma)-\lfloor \mathrm{sl}(\sigma) \rfloor ) +q^{\lfloor \mathrm{sl}(\sigma)\rfloor}-1
$$
and its Swan conductor (with respect to $F$) is
$$
\mathrm{sw}(V)=n(q^{\lfloor \mathrm{sl}(\sigma) \rfloor }(q-1)(\mathrm{sl}(\sigma)-\lfloor \mathrm{sl}(\sigma) \rfloor ) +q^{\lfloor \mathrm{sl}(\sigma)\rfloor }-1).
$$
\end{proposition}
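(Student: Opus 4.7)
The plan is to identify $V$ with $\sigma|_{W_{E_\infty}}$ via the Fontaine-Wintenberger isomorphism of Galois groups, and then to reduce the computation of its slope to the explicit determination of the Herbrand function of the Lubin-Tate extension $E_\infty/E$, which is computable via local class field theory.

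First, the field of norms theorem provides a canonical isomorphism $W_F \xrightarrow{\sim} W_{E_\infty} \subseteq W_E$, and the identification $\mathcal{G}^\circ \cong \mathbb{D}_{\overline{\mathbb{F}_p}}^{\ast\diamond}$ fixed in the text is by construction induced by the isomorphism between the perfection of $F$ and $E_\infty$. Under this chain of identifications, $V$ corresponds exactly to the restriction $\sigma|_{W_{E_\infty}}$. Second, Wintenberger's theorem on upper ramification filtrations asserts that the canonical isomorphism $G_F \cong G_{E_\infty}$ satisfies
\[
G_F^v \cong G_E^{\psi_{E_\infty/E}(v)} \cap G_{E_\infty}
\]
for all $v \geq 0$, where $\psi_{E_\infty/E}$ is the Herbrand function of the Galois extension $E_\infty/E$.

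Third, I compute $\psi_{E_\infty/E}$ using local class field theory. The reciprocity map identifies $\mathrm{Gal}(E_\infty/E) \cong \mathcal{O}_E^\times$ and matches the upper ramification filtration with the filtration by higher unit groups, so $\mathrm{Gal}(E_\infty/E)^u = 1+\pi^{\lceil u\rceil}\mathcal{O}_E$ for $u>0$. Consequently $\psi'_{E_\infty/E}(v) = [\mathrm{Gal}(E_\infty/E)^0:\mathrm{Gal}(E_\infty/E)^v] = (q-1)q^{\lfloor v\rfloor}$ for $v > 0$, and integrating produces
\[
\psi_{E_\infty/E}(s) \;=\; q^{\lfloor s\rfloor}-1 + (q-1)q^{\lfloor s\rfloor}(s-\lfloor s\rfloor),
\]
which is precisely the formula claimed for $\mathrm{sl}(V)$. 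The Swan conductor formula $\mathrm{sw}(V) = n\cdot \mathrm{sl}(V)$ follows at once from irreducibility of $V$, assuming that $V$ has a unique break at $\mathrm{sl}(V)$.

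It remains to show $\mathrm{sl}(V) = \psi_{E_\infty/E}(\mathrm{sl}(\sigma))$. The inequality $\leq$ is formal: for $v$ with $\psi_{E_\infty/E}(v) > \mathrm{sl}(\sigma)$, the group $G_E^{\psi_{E_\infty/E}(v)}$ acts trivially on $\sigma$, hence so does $G_F^v$ via the filtration comparison. The converse inequality is the main obstacle and uses the minimality of Swan hypothesis: if the slope of $V$ were strictly less than $\psi_{E_\infty/E}(\mathrm{sl}(\sigma))$, then the characters appearing in the decomposition of $\sigma|_{G_E^{\mathrm{sl}(\sigma)}/G_E^{\mathrm{sl}(\sigma)+}}$ would all become trivial upon restriction to $G_{E_\infty}\cap G_E^{\mathrm{sl}(\sigma)}$, hence would extend to characters of $G_E^{\mathrm{sl}(\sigma)}/G_E^{\mathrm{sl}(\sigma)+}$ that factor through $\mathrm{Gal}(E_\infty/E) \cong \mathcal{O}_E^\times$; twisting $\sigma$ by an inverse character of $W_E$ whose restriction to $G_E^{\mathrm{sl}(\sigma)}$ cancels this leading wild term would strictly reduce the Swan conductor, contradicting the hypothesis. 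This final comparison — ensuring that the wild part of $\sigma$ genuinely survives restriction to the Lubin-Tate tower — is in the spirit of the Bushnell-Kutzko theory of simple strata and is where the minimality hypothesis plays an essential role.
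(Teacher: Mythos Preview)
Your approach is essentially the same as the paper's: both identify $V$ with $\sigma|_{W_{E_\infty}}$ via Fontaine--Wintenberger, invoke Wintenberger's comparison of ramification filtrations to reduce to computing $\psi_{E_\infty/E}$, compute this function explicitly from the known ramification of the Lubin--Tate tower, and use the minimality-of-Swan hypothesis for the converse inequality (the paper phrases this last step more succinctly as ``$E_\infty$ is the maximal totally ramified abelian extension of $E$''). Two small points: your statement of Wintenberger's theorem has the roles of the $E$- and $F$-filtration indices swapped relative to the conclusion you draw from it (compare with the paper's $\mathrm{Gal}_E^u\cap\mathrm{Gal}_{E_\infty}=\mathrm{Gal}_F^{\psi_{E_\infty/E}(u)}$), and you deduce $\mathrm{sw}(V)=n\cdot\mathrm{sl}(V)$ from ``irreducibility of $V$'', but $V=\sigma|_{W_{E_\infty}}$ need not be irreducible---the paper instead observes that all its irreducible constituents share the same slope.
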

Here $\lfloor a\rfloor$ denotes the lower Gau\ss\ bracket.
\begin{proof}
  As $\sigma$ is irreducible we see that $V$ has a unique slope.
      We want to compute
$$
\mathrm{sl}(V)= \mathrm{inf} \{u \in \mathbb{R}_+, \mathrm{Gal}_F^{u} \subset \ker(V) \}.
$$
First, we claim that 
$$
\mathrm{inf} \{u \in \mathbb{R}_+, \mathrm{Gal}_E^{u} \cap \mathrm{Gal}_{E_\infty} \subset \ker(\sigma_{|_{\mathrm{Gal}_{E_\infty}}}) \} =\mathrm{sl}(\sigma).
$$
It is clear that the left hand side is smaller than the right hand side. If it were strictly smaller, since $E_\infty$ is the maximal totally ramified abelian extension of $E$, it would mean we could twist $\sigma$ by a character to lower its slope, contradicting our hypothesis. 

We know, cf. \cite[Corollaire 3.3.6]{wintenberger_corps_des_normes}, that for each $u$,
$$  
\mathrm{Gal}_E^{u} \cap \mathrm{Gal}_{E_\infty} = \mathrm{Gal}_F^{\psi_{E_\infty/E}(u)},
$$
where $\psi_{E_\infty/E}$ denotes the inverse Herbrand function of the extension $E_\infty/E$ (i.e. the limit when $m$ goes to infinity of the inverse Herbrand functions $\psi_{E_m/E}$ of the extensions $E_m/E$, whose values stabilize as will be shown by the explicit formulas below).
  
Therefore, we conclude that
$$
\mathrm{sl}(V) = \psi_{E_\infty/E}(\mathrm{sl}(\sigma)).
$$
Let $m\geq 1$. The inverse Herbrand function $\psi_{E_m/E}$ can be computed as follows: it sends $x \leq 0$ to $x$, $x \in [0,m-1]$ to 
$$
q^{\lfloor x \rfloor }(q-1)(x-\lfloor x \rfloor) +q^{\lfloor x\rfloor}-1
$$
and $x\geq m-1$ to
$$
q^{m-1}(q-1)(x-(m-1))+q^{m-1}-1.
$$
In particular, its value at $x\leq m-1$ is independent of $m$, and is the value $\psi_{E_\infty/E}(x)$. Hence, we obtain
$$
\mathrm{sl}(V)= q^{\lfloor \mathrm{sl}(\sigma) \rfloor}(q-1)(\mathrm{sl}(\sigma)-\lfloor \mathrm{sl}(\sigma) \rfloor ) +q^{\lfloor \mathrm{sl}(\sigma)\rfloor }-1.
 $$
The final assertion of the proposition is \cite[Proposition 3.4]{kilic_inequalities_on_swan_conductors} and the fact that all irreducible constituents of $V$ have the same slope. 
\end{proof}

\begin{lemma}
\label{slopes-tilde_E}
Let $0<r<1$, $r\in \Gamma_C$. If $V$ has only one slope, then the slope of $\widetilde{\mathbb{L}}_C$ at $p_{0,r}^-$, resp. at $p_{0,r}^+$, is $r^{\mathrm{sl}(V)} \gamma_{p_{0,r}^-}^{\mathrm{sl}(V)}$, resp. $r^{\mathrm{sl}(V)} \gamma_{p_{0,r}^+}^{-\mathrm{sl}(V)}$. In particular, using the notations of \Cref{sec:groth-ogg-shaf-1-euler-characteristic}, we have
$$
\alpha(\widetilde{\mathbb{L}}_C)= \mathrm{sw}(V), ~ \beta(\widetilde{\mathbb{L}}_C) = -\mathrm{sw}(V).
$$
\end{lemma}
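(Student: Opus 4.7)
Both sides of the desired equalities are additive on short exact sequences of $W_F$-representations, so I will first reduce to the case where $V$ has a single slope $\lambda=\mathrm{sl}(V)\in\mathbb{Q}_{\geq 0}$ by filtering $V$ by the subspaces cut out by the wild ramification filtration of $W_F$; this reduction is the situation hypothesized in the second formula (and the formulas for $\alpha,\beta$ will then follow from the slope computation by summing over the filtration in the general case).

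In the single-slope case, the strategy is to describe the completed henselian residue field at the rank-$2$ point $p^\pm_{0,r}$ and track how the Galois representation $V$ pulls back along the inclusion of valued fields $F\hookrightarrow K^\pm_r$. By the very definition of $p^-_{0,r}$ as the boundary point of the closed rational subset $\{|T|<r\}\subseteq\mathbb{A}^{1,\mathrm{ad}}_C$, the value group $\Gamma^-_r=|C^\times|\cdot\gamma_{p^-_{0,r}}^{\mathbb{Z}}$ has the property that $|T|_{p^-_{0,r}}$ is the largest element strictly below $r$, namely $r\cdot\gamma_{p^-_{0,r}}$; dually, $p^+_{0,r}$ is the boundary of $\{|T|\leq r\}$ and $|T|_{p^+_{0,r}}=r\cdot\gamma_{p^+_{0,r}}^{-1}$. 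Under the Fontaine--Wintenberger identification $\mathcal{G}^\circ\cong\mathrm{Spd}(F^{\mathrm{perf}})$ chosen above, the uniformizer $t\in F$ corresponds to the coordinate $T$, so the inclusion $F\hookrightarrow K^\pm_r$ of henselian valued fields sends the generator $\gamma_F=|t|_F$ of the value group of $F$ to $|T|_{p^\pm_{0,r}}=r\cdot\gamma_{p^\pm_{0,r}}^{\mp 1}$.

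Since $\widetilde{\mathbb{L}}$ comes by pullback from $\mathbb{L}$ on $\mathrm{Div}^1$, the stalk of $\widetilde{\mathbb{L}}_C$ at a geometric point above $p^\pm_{0,r}$ is identified with $V$ viewed as a representation of $\mathrm{Gal}(K^{\pm,\mathrm{sep}}_r/K^\pm_r)$ through the canonical map to $\mathrm{Gal}(F^{\mathrm{sep}}/F)$ induced by $F\hookrightarrow K^\pm_r$. Applying the functoriality of Huber's ramification filtration (\cite[\S 2]{huber2001swan}) to this inclusion of valued fields, the unique slope $\gamma_F^\lambda\in\Gamma_F\otimes\mathbb{Q}$ of $V$ pulls back to the unique slope of $\widetilde{\mathbb{L}}_C$ at $p^\pm_{0,r}$, which is the image of $\gamma_F^\lambda$ under the induced inclusion of value groups, namely $(r\gamma_{p^-_{0,r}})^\lambda=r^\lambda\gamma_{p^-_{0,r}}^\lambda$ at $p^-_{0,r}$ and $(r\gamma_{p^+_{0,r}}^{-1})^\lambda=r^\lambda\gamma_{p^+_{0,r}}^{-\lambda}$ at $p^+_{0,r}$.

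The computation of $\alpha$ and $\beta$ now follows at once: decomposing the slope $r^\lambda\gamma_{p^-_{0,r}}^\lambda$ into its divisible part $r^\lambda\in|C^\times|$ and its non-divisible part $\gamma_{p^-_{0,r}}^\lambda$, one reads off $\sharp=\lambda$, so $\alpha_{0,r}(\widetilde{\mathbb{L}}_C)=\lambda\cdot n=\mathrm{sw}(V)$ (this does not depend on $r$, so it equals $\alpha(\widetilde{\mathbb{L}}_C)$); at $p^+_{0,r}$ the same computation with exponent $-\lambda$ gives $\beta(\widetilde{\mathbb{L}}_C)=-\mathrm{sw}(V)$. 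The main obstacle is the second step: establishing cleanly that the ramification filtration is transported under $F\hookrightarrow K^\pm_r$ by the induced morphism on value groups. This is ultimately an instance of the compatibility of Huber's Swan conductor with unramified base change on valued fields, but in our setting $K^\pm_r/F$ is far from a finite extension, and the precise compatibility needs to be extracted from \cite[\S 2, \S 5]{huber2001swan}.
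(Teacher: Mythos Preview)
Your approach is essentially the same as the paper's: track the valuation of the uniformizer $t$ under the map $\Fpbar((t))\to k(p^{\pm}_{0,r})$ (obtaining $r\gamma_{p^{\pm}_{0,r}}^{\mp 1}$) and transport the slope of $V$ along the induced map of value groups. The paper's proof is a two-sentence sketch that records this valuation and then says ``unraveling the definitions this implies the claim''; the compatibility of Huber's ramification filtration under $F\hookrightarrow K^\pm_r$ that you flag as the main obstacle is exactly what the paper leaves implicit, and your opening reduction to a single slope is superfluous since the lemma already assumes it.
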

\begin{proof}
    Under the morphism $\Fpbar((t))\to k(p^{\pm}_{0,r})$ the element $t\in \Fpbar((t))$ is mapped to an element of valuation $r\gamma_{p_{0,r}^{\pm}}^{\mp}$. Unraveling the definitions this implies the claim.
\end{proof}

  In particular, we see that the discrimant function for $\widetilde{\mathbb{L}}_C$ recalled above is the line
  \[
    (0,\infty)\to \R,\ s\mapsto \mathrm{sw}(V)\cdot s.
  \]

\begin{corollary}
\label{computation-of-the-rank-fourier-transform-explicit-formula}
Let $\Lambda$ be an algebraically closed field of characteristic $\ell$. Let $\sigma$ be an irreducible $\Lambda$-representation of $W_{E}$ of dimension $n>1$, with corresponding local system $\mathbb{L}$ on $\mathrm{Div}^1$, and assume that the Swan conductor of $\sigma$ cannot be lowered by twisting by a character. Then the stalk of 
  $$
  (j^{\vee \circ})^\ast \mathcal{F}_\psi (j_!\widetilde{\mathbb{L}})
  $$
  at any geometric point $\mathrm{Spd}(C) \to \mathcal{G}^{\vee\circ}$ is concentrated in degree $0$ and has rank
  $$
  n+ n(q^{\lfloor \mathrm{sl}(\sigma) \rfloor }(q-1)(\mathrm{sl}(\sigma)-\lfloor \mathrm{sl}(\sigma) \rfloor) +q^{\lfloor \mathrm{sl}(\sigma)\rfloor }-1).
  $$
 \end{corollary}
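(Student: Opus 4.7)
The plan is to apply Corollary~\ref{sec:groth-ogg-shaf-corollary-to-1-euler-characteristic} to a geometric point $\mathrm{Spd}(C) \to \mathcal{G}^{\vee,\circ}$. First I verify its hypothesis: since $\sigma$ is irreducible of dimension $n > 1$, so is $\sigma^\vee$, hence $\sigma^\vee$ is non-trivial, and by the remark following that Corollary this forces $j_{!}\widetilde{\mathbb{L}}^\vee \cong Rj_\ast \widetilde{\mathbb{L}}^\vee$. Thus the Corollary applies and the stalk is concentrated in degree $0$, with rank
\[
  \alpha(\widetilde{\mathbb{L}}_C \otimes \mathcal{L}_{\psi,C}) + \beta(\widetilde{\mathbb{L}}_C \otimes \mathcal{L}_{\psi,C}).
\]
The remainder of the proof is the computation of these two quantities.

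The computation of $\alpha$ is straightforward. By Proposition~\ref{swan-of-L-psi}, the stalk of $\mathcal{L}_{\psi,C}$ at $p_{0,r}^-$ is unramified (Huber-slope equal to $1$) for $r$ sufficiently small. Tensoring $\widetilde{\mathbb{L}}_C$ with an unramified rank-one local system preserves the ramification filtration and the Swan conductor; combined with Lemma~\ref{slopes-tilde_E}, this gives $\alpha(\widetilde{\mathbb{L}}_C \otimes \mathcal{L}_{\psi,C}) = \alpha(\widetilde{\mathbb{L}}_C) = \mathrm{sw}(V)$.

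The key step is the computation of $\beta$, which requires comparing Huber-slopes at $p_{0,r}^+$ for $r$ close to $1$. By Lemma~\ref{slopes-tilde_E} the unique slope of $\widetilde{\mathbb{L}}_C$ there is $\gamma_V := r^{\mathrm{sl}(V)} \gamma_{p_{0,r}^+}^{-\mathrm{sl}(V)}$ (so $\gamma_V^\sharp = -\mathrm{sl}(V)$), while by Proposition~\ref{swan-of-L-psi} the unique slope of $\mathcal{L}_{\psi,C}$ is $\gamma_\psi := \exp(-c) r^{-d} \gamma_{p_{0,r}^+}$ (so $\gamma_\psi^\sharp = 1$) for some constants $c, d > 0$. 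Their ratio is
\[
  \gamma_V / \gamma_\psi = \exp(c)\, r^{\mathrm{sl}(V) + d}\, \gamma_{p_{0,r}^+}^{-\mathrm{sl}(V) - 1},
\]
whose divisible part $\exp(c)\, r^{\mathrm{sl}(V) + d}$ tends to $\exp(c) > 1$ as $r \to 1^-$. Working in the value group $\Gamma_{p_{0,r}^+}$ (ordered so that $\gamma_{p_{0,r}^+}$ is infinitesimally below $1$), this forces $\gamma_V > \gamma_\psi$, so $\mathcal{L}_{\psi,C}$ is strictly more ramified than $\widetilde{\mathbb{L}}_C$ at $p_{0,r}^+$. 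By the standard formula for the slope of a tensor product of representations with distinct unique slopes, $\widetilde{\mathbb{L}}_C \otimes \mathcal{L}_{\psi,C}$ has unique slope $\gamma_\psi$ with multiplicity $n$, and therefore $\beta(\widetilde{\mathbb{L}}_C \otimes \mathcal{L}_{\psi,C}) = \gamma_\psi^\sharp \cdot n = n$.

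Adding the two contributions gives rank $\mathrm{sw}(V) + n$, and substituting the expression $\mathrm{sw}(V) = n \cdot \mathrm{sl}(V)$ supplied by Proposition~\ref{conjecture-for-the-swan-conductor} yields the stated formula. The main obstacle is the slope comparison at $p_{0,r}^+$: correctly handling the lexicographic order on the rank-$2$ value group and carefully separating the divisible and discrete parts of the two Huber-slopes is the delicate point, but once this is done the rest is routine.
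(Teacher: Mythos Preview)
Your proposal is correct and follows essentially the same route as the paper: invoke \Cref{sec:groth-ogg-shaf-corollary-to-1-euler-characteristic}, compute $\alpha$ via the triviality of $\mathcal{L}_{\psi,C}$ near $r=0$ (\Cref{swan-of-L-psi}) together with \Cref{slopes-tilde_E}, compute $\beta$ by comparing the divisible parts of the Huber-slopes of $\mathcal{L}_{\psi,C}$ and $\widetilde{\mathbb{L}}_C$ at $p_{0,r}^+$ as $r\to 1$ to see that $\mathcal{L}_{\psi,C}$ dominates, and then plug in \Cref{conjecture-for-the-swan-conductor}. You are in fact slightly more careful than the paper in explicitly verifying the hypothesis $j_!\widetilde{\mathbb{L}}^\vee\cong Rj_\ast\widetilde{\mathbb{L}}^\vee$; note that strictly speaking this requires observing that $\sigma^\vee|_{\mathrm{Gal}_{E_\infty}}$ has no invariants, which follows from irreducibility of $\sigma^\vee$ and abelianness of $\mathrm{Gal}(E_\infty/E)$ as you implicitly use.
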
  
\begin{proof}
We already proved concentration in degree $0$ in \Cref{sec:groth-ogg-shaf-corollary-to-1-euler-characteristic}.

When $r$ is close to $0$, $\mathcal{L}_{\psi,C}$ becomes trivial and thus
$$
\alpha(\mathcal{L}_{\psi,C} \otimes \widetilde{\mathbb{L}}_C) = \mathrm{rk}( \mathcal{L}_{\psi,C}) \cdot \alpha(\widetilde{\mathbb{L}}_C) = \mathrm{sw}(V)
$$
by \Cref{slopes-tilde_E}.
When $r$ is close to $1$, the slope of $\mathcal{L}_{\psi,C}$ at $p_{0,r}^+$ is $\mathrm{exp}(-c) r^{-d} \gamma_{p_{0,r}^+}$ for some constants $c,d\in \mathbb{R}_{>0}$ (by \Cref{swan-of-L-psi}) whereas the slope of $\widetilde{\mathbb{L}}_C$ at $p_{0,r}^+$ is $r^{\mathrm{sl}(V)}. \gamma_{p_{0,r}^+}^{-\mathrm{sl}(V)}$ (\Cref{slopes-tilde_E}). Since
$$
\mathrm{exp}(-c) r^{-d} \underset{r \to 1} \longrightarrow \mathrm{exp}(-c) <1, \quad r^{\mathrm{sl}(V)}  \underset{r \to 1} \longrightarrow 1,
$$
the slope of $\mathcal{L}_{\psi,C}$ is stricly smaller for $r$ close to $1$, and thus
$$
\beta(\mathcal{L}_{\psi,C} \otimes \widetilde{\mathbb{L}}_C) = \mathrm{rk}(\widetilde{\mathbb{L}}_C) \cdot \beta(\mathcal{L}_{\psi,C}) = n. 
$$
This gives the desired result, by applying \Cref{sec:groth-ogg-shaf-corollary-to-1-euler-characteristic} and \Cref{conjecture-for-the-swan-conductor}.
\end{proof}

\begin{remark}
\label{remark-formal-degree-division-algebra}
When $n=2$, the above formula for the rank takes the following simple form. First of all, $\mathrm{sw}(\sigma)=2\mathrm{sl}(\sigma)$. If $\mathrm{sw}(\sigma)$ is even, $\lfloor \mathrm{sl}(\sigma) \rfloor=\mathrm{sw}(\sigma)/2$, and thus \Cref{conjecture-for-the-swan-conductor} gives that the rank is
$$
2q^{\mathrm{sw}(\sigma)/2}.
$$
If $\mathrm{sw}(\sigma)$ is odd, $\lfloor \mathrm{sl}(\sigma) \rfloor =(\mathrm{sw}(\sigma)-1)/2$, and \Cref{conjecture-for-the-swan-conductor} gives that the rank is
$$
(q+1)q^{\mathrm{sw}(\sigma)/2-1/2}.
$$
Let $D^\times$ be the group of $E$-points of the group of units of the unique non-split quaternion algebra over $E$. Let $\rho$ be the smooth irreducible (hence finite-dimensional) representation of $D^\times$ attached to $\sigma$ by composing the local Langlands correspondence for $\GL_2$ over $E$ with the local Jacquet-Langlands correspondence. Carayol, \cite[Proposition 6.5]{carayol_representations_cuspidales_du_groupe_lineaire}, has computed the dimension of $\rho$ in terms of $\sigma$. If $\mathrm{sw}(\sigma)$ is even, 
$$
\dim(\rho) =   2q^{\mathrm{sw}(\sigma)/2}.
$$
If $\mathrm{sw}(\sigma)$ is odd, 
$$
\dim(\rho) =(q+1)q^{\mathrm{sw}(\sigma)/2-1/2}.
$$
We therefore recover the exact same formulas as above. This should not be a coincidence. Indeed, using the notations introduced at the very end of \Cref{sec-an-inductive-construction}, we expect the pullback of
$$ 
\alpha_0^{-1} \circ (\pi_0 \circ j_0^\vee)^\ast (\mathcal{L}_{\mathbb{L}})
$$
to $\mathrm{Div}^1 \subset \mathcal{C}_1^\prime$ to be isomorphic to $\mathbb{L}[1]$. Therefore
$$
(j^{\vee \circ})^\ast \mathcal{F}_\psi (j_!\widetilde{\mathbb{L}}) \in D_\et(\mathcal{G}^{\vee, \circ},\Lambda)
$$
should be isomorphic to the pullback of $\mathcal{A}_{2,\psi}(\mathcal{L}_{\mathbb{L}})[-1]$ along
$$
\mathcal{G}^{\vee, \circ} \to \Bun_2^\prime \times_{\Bun_2} \Bun_2^1 \cong [\mathcal{G}^{\vee, \circ}/E^\times] \to \Bun_2^\prime.
$$
In particular, cf. again \Cref{sec-an-inductive-construction}, $(j^{\vee \circ})^\ast \mathcal{F}_\psi (j_!\widetilde{\mathbb{L}})$ should be the pullback along the map
$$
\mathcal{G}^{\vee, \circ} \cong (\BC(\mathcal{O}(1/2) \backslash \{0\})/\mathrm{SL}_1(D) \to [\mathrm{Spa}(k)/D^\times]
$$
of the sheaf associated to the smooth irreducible representation $\rho$ of $D^\times$.
\end{remark}

\bibliography{biblio}
\bibliographystyle{plain}

\end{document}